\title{Entropic Projections and Dominating Points}
\author{Christian L\'eonard}
\date{March 09 - Revised version (round 2)}
\newtheorem{theorem}[equation]{Theorem}
\newtheorem{lemma}[equation]{Lemma}
\newtheorem{proposition}[equation]{Proposition}
\newtheorem{corollary}[equation]{Corollary}
\newtheorem{definition}[equation]{Definition}
\newtheorem{definitions}[equation]{Definitions}
\theoremstyle{remark}
\newtheorem{remark}[equation]{Remark}
\newtheorem{remarks}[equation]{Remarks}
\newtheorem{example}[equation]{Example}
\numberwithin{equation}{section}
\newcommand{\R}{\mathbb{R}}
\newcommand{\1}{\textbf{1}}
\newcommand{\dom}{\mathrm{dom\,}}
\newcommand{\icordom}{\mathrm{icordom\,}}
\newcommand{\icor}{\mathrm{icor\,}}
\newcommand{\cl}{\mathrm{cl\,}}
\newcommand{\inter}{\mathrm{int\,}}
\newcommand{\lsc}{lower semicontinuous}
\newcommand{\usc}{upper semicontinuous}
\newcommand{\boulette}[1]{$\bullet$\ Proof of #1.}
\newcommand{\Boulette}[1]{\par\medskip\noindent $\bullet$\ Proof of #1.}
\newcommand\seq[2]{(#1_#2)_{#2\ge1}}
\newcommand\Lim[1]{\lim_{#1\rightarrow\infty}}
\newcommand\Liminf[1]{\liminf_{#1\rightarrow\infty}}
\newcommand\Limsup[1]{\limsup_{#1\rightarrow\infty}}
\newcommand{\limn}{\lim_{n\rightarrow\infty}}
\newcommand{\Ib}{\bar I}
\newcommand{\Jb}{\bar J}
\newcommand{\Il}{I_\lambda}
\newcommand{\Ils}{I_{\lambda^*}}
\newcommand{\Ilsb}{\Ib_{\lambda^*}}
\newcommand{\YY}{\mathcal{Y}}
\newcommand{\YYo}{\YY_o}
\newcommand{\YYt}{\widetilde{\YY}}
\newcommand{\YYb}{\overline{\YY}}
\newcommand{\XX}{\mathcal{X}}
\newcommand{\XXo}{\XX_o}
\newcommand{\ZZ}{\mathcal{Z}}
\newcommand{\CC}{\mathcal{C}}
\newcommand{\CCb}{\overline{\CC}}
\newcommand{\CX}{C\cap\XX}
\newcommand{\UUo}{\mathcal{U}_o}
\newcommand{\LLo}{\mathcal{L}_o}
\newcommand{\Yy}{\YYo}
\newcommand{\YE}{\YY}
\newcommand{\YL}{\YY}
\newcommand{\Xx}{\XXo}
\newcommand{\XE}{\XX}
\newcommand{\XL}{\XX}
\newcommand{\MZ}{M_\ZZ}
\newcommand{\PZ}{P_\ZZ}
\newcommand{\OZ}{O_{\mathrm{exp}}}
\newcommand{\Eexp}{\mathcal{E}_{\mathrm{exp}}}
\newcommand{\Lexp}{\mathcal{L}_{\mathrm{exp}}}
\newcommand{\lmax}{{\lambda_{\diamond}}}
\newcommand{\lmaxs}{{\lambda_{\diamond}^*}}
\newcommand{\El}{E_\lmax}
\newcommand{\Ll}{L_\lmax}
\newcommand{\Lls}{L_\lmaxs}
\newcommand{\LlsR}{L_\lmaxs R}
\newcommand{\IZ}{\int_{\ZZ}}
\newcommand{\pc}{\mathrm{P}_C}
\newcommand{\pbc}{\overline{\mathrm{P}}_C}
\newcommand{\pcc}{\mathcal{P}_\CC}
\newcommand{\dc}{\mathrm{D}_C}
\newcommand{\dbc}{\overline{\mathrm{D}}_C}
\newcommand{\dtc}{\widetilde{\mathrm{D}}_C}
\newcommand{\pE}{\mathrm{P}_E}
\newcommand{\PE}{($\pE$)}
\newcommand{\pL}{\mathrm{P}_L}
\newcommand{\PL}{($\pL$)}
\newcommand{\dE}{\mathrm{D}_E}
\newcommand{\dL}{\mathrm{D}_L}
\newcommand{\sLE}{\sigma(\LlsR,\El)}
\newcommand{\sLL}{\sigma(\LlsR,\Ll)}
\newcommand{\NF}{\|\cdot\|_{\lmax}}
\newcommand{\lb}{\bar\ell}
\newcommand{\lh}{\hat\ell}
\newcommand{\xb}{\bar x}
\newcommand{\xh}{\hat x}
\newcommand{\ob}{\bar \omega}
\newcommand{\ot}{\tilde \omega}
\newcommand{\od}{\omega_{\diamond}}
\newcommand{\moyn}{\frac 1n\sum_{i=1}^n}
\newcommand{\Qh}{\widehat{Q}}
\newcommand{\Qd}{Q_\diamond}
\newcommand{\ul}{\langle u,\ell\rangle}
\newcommand{\yx}{\langle y,x\rangle}
\newcommand{\yt}{\langle y,\theta\rangle}
\newcommand{\ott}{\langle \ot,\theta\rangle}
\newcommand{\tl}{\langle\theta,\ell\rangle}
\newcommand{\limsupdn}{\limsup_{\delta\rightarrow 0}\Limsup n}
\newcommand{\liminfdn}{\liminf_{\delta\rightarrow 0}\Liminf n}
\newcommand{\AB}{{A\!\times\! B}}
\newcommand{\IAB}{\int_{\AB}}
\begin{document}

% ************** page de garde ******************************

 \address{Modal-X, Universit\'e Paris Ouest. B\^at.\! G,
 200 av. de la R\'epublique. 92000 Nanterre, France}
 \email{christian.leonard@u-paris10.fr}
 \keywords{Conditional laws of large numbers, random measures,
 large deviations, entropy, convex optimization, entropic projections, dominating points, Orlicz spaces}
 \subjclass[2000]{Primary: 60F10, 60F99, 60G57 Secondary: 46N10}

\begin{abstract}
Entropic projections and dominating points are solutions to convex
minimization problems related to conditional laws of large
numbers. They appear in many areas of applied mathematics such as
statistical physics, information theory, mathematical statistics,
 ill-posed inverse problems or large deviation theory. By means of convex conjugate
duality and functional analysis, criteria are derived for the
existence of entropic projections, generalized entropic
projections and dominating points. Representations of the
generalized entropic projections are obtained. It is shown that
they are the ``measure component" of the solutions to some
extended entropy minimization problem. This approach leads to new
results and offers a unifying point of view. It also  permits to
extend previous results on the subject by removing unnecessary
topological restrictions. As a by-product, new proofs of already
known results are provided.
\end{abstract}

\maketitle \tableofcontents

% ************* corps du texte ****************************

\section{Introduction}
Entropic projections and dominating points are solutions to convex
minimization problems related to conditional laws of large
numbers. They appear in many areas of applied mathematics such as
statistical physics, information theory, mathematical statistics,
 ill-posed inverse problems or large deviation theory.

\subsection*{Conditional laws of large numbers}
Suppose that the empirical measures
\begin{equation}\label{eq-26}
    L_n:=\moyn \delta_{Z_i},\quad n\ge1,
    \index{Ln@$L_n$}
\end{equation}
of the $\ZZ$-valued random variables $Z_1,Z_2,\dots$ ($\delta_z$
is the Dirac measure at $z$) obey a Large Deviation Principle
(LDP) in the set $\PZ\index{PZ@$\PZ,$ set of probability measures
on $\ZZ$}$ of all probability measures on $\ZZ$ with the rate
function $I.$ This approximately means that $\mathbb{P}(L_n\in
\mathcal{A})\underset{n\rightarrow \infty}{\asymp}\exp
[-n\inf_{P\in \mathcal{A}}I(P)]$ for $\mathcal{A}\subset\PZ.$ With
regular enough subsets $\mathcal{A}$ and $\CC\index{C1@$\CC,$
constraint set}$ of $\PZ,$ we can expect that for ``all''
$\mathcal{A}$
\begin{equation}\label{eq-02}
    \Lim n\mathbb{P}(L_n\in
\mathcal{A}\mid L_n\in
\CC)=\left\{%
\begin{array}{ll}
    1, & \hbox{if } \mathcal{A}\ni P_* \\
    0, & \hbox{otherwise} \\
\end{array}%
\right.
\end{equation}
where $P_*$ is a minimizer of $I$ on $\CC.$ To see this, remark
that (formally) $\mathbb{P}(L_n\in \mathcal{A}\mid L_n\in
\CC)\underset{n\rightarrow \infty}{\asymp}\exp[-n(\inf_{P\in
\mathcal{A}\cap \CC}I(P)-\inf_{P\in \CC}I(P))].$ A rigorous
statement is given at Theorem \ref{res-09}.
\\
If  $(Z_1,\dots,Z_n)$ is exchangeable for each $n$, then
\eqref{eq-02} is equivalent to
\begin{equation*}
    \mathcal{L}(Z_1\mid
    L_n\in\CC)\underset{n\rightarrow\infty}{\longrightarrow}P_*
\end{equation*}
which means that conditionally on $L_n\in\CC,$ the law of any
tagged ``particle" (here we chose the first one) tends to $P_*$ as
$n$ tends to infinity.

If $I$ is strictly convex and $\CC$ is convex, $P_*$ is unique and
\eqref{eq-02} roughly means that conditionally on $L_n\in \CC,$ as
$n$ tends to infinity $L_n$ tends to the solution $P_*$ of the
minimization problem
\begin{equation}\label{eq-23}
      \textsl{minimize } I(P)
    \textsl{ subject to } P\in\CC,\quad P\in\PZ
\end{equation}
Such conditional Laws of Large Numbers (LLN) appear in information
theory and in statistical physics where they are often called
Gibbs conditioning principles (see \cite[Section 7.3]{DZ} and the
references therein). If the variables $Z_i$ are independent and
identically distributed with law $R,$ the LDP for the empirical
measures is given by Sanov's theorem and the rate function $I$ is
the relative entropy
    $$I(P|R)=\IZ\log\left(\frac{dP}{dR}\right)\,dP, \quad P\in\PZ.
    \index{I6@$I(\cdot\mid R),$ relative entropy, see \eqref{eq-54}}
    $$
Instead of the empirical probability measure of a random sample,
one can consider another kind of random measure. Let
$z_1,z_2,\dots$ be deterministic points in $\ZZ$ such that the
empirical measure $\moyn \delta_{z_i}$ converges to
$R\in\PZ.$\index{R@$R,$ reference measure} Let $ W_1,W_2,\dots$ be
a sequence of \emph{independent} random real variables. The random
measure of interest is
\begin{equation}\label{eq-27}
    L_n=\moyn W_i\delta_{z_i}
    \index{Ln@$L_n$}
\end{equation}
where the $W_i$'s are interpreted as random weights. If the
weights are independent copies of $W,$ as $n$ tends to infinity,
$L_n$ tends to the deterministic measure $\mathbb{E}W.R$ and obeys
the LDP in the space $\MZ\index{MZ@$\MZ,$ space of signed measures
on $\ZZ$}$ of measures on $\ZZ$ with rate function
$I(Q)=\IZ\gamma^*(\frac{dQ}{dR})\,dR,$ $Q\in\MZ$ where $\gamma^*$
is the Cram\'er transform of the law of $W.$ In case the $W_i$'s
are not identically distributed, but have a law which depends
(continuously) on $z_i,$ one can again show that under additional
assumptions $L_n$ obeys the LDP in $\MZ$ with rate function
\begin{equation}\label{eq-25}
    I(Q)=\left\{%
\begin{array}{ll}
    \IZ \gamma^*_z(\frac{dQ}{dR}(z))\,R(dz), & \hbox{if }Q\prec R \\
    +\infty, & \hbox{otherwise} \\
\end{array}%
\right.    , \quad Q\in\MZ
\end{equation}
\index{I3@$I,$ entropy, see \eqref{eq-16}, \eqref{eq-08}} where
$\gamma^*_z$
    \index{Functions@Functions of $(s,z)$ or $(t,z)$!gammastar@$\gamma^*,$ integrand, see \eqref{eq-25}}
is the Cram\'er transform of $W_z.$ As $\gamma^*$ is the convex
conjugate of the log-Laplace transform of $W,$ it is a convex
function: $I$ is a convex integral functional which is often
called an \emph{entropy}. Again, conditional LLNs hold for $L_n$
and lead to the entropy minimization problem:
\begin{equation}\label{eq-24}
      \textsl{minimize } I(Q)
    \textsl{ subject to } Q\in\CC,\quad Q\in\MZ
\end{equation}
The large deviations of these random measures and their
conditional LLNs enter the framework of Maximum Entropy in the
Mean (MEM) which has been studied among others by
Dacunha-Castelle, Csisz\'ar, Gamboa, Gassiat, Najim see
\cite{CGG,DDCG90,GG97,Naj02} and also \cite[Theorem 7.2.3]{DZ}.
This problem also arises in the context of statistical physics. It
has been studied among others by Boucher, Ellis, Gough, Puli and
Turkington, see \cite{BET99,EGP93}.

The relative entropy corresponds to $\gamma^*(t)=t\log t-t+1$ in
\eqref{eq-25}: the problem \eqref{eq-23} is a special case of
\eqref{eq-24} with the additional constraint that $Q(\ZZ)=1.$

In this paper, the constraint set $\CC$ is assumed to be
\emph{convex} as is almost always done in the literature on the
subject. This allows to rely on convex analysis, saddle-point
theory or on the geometric theory of projection on convex sets.

The convex indicator of a subset $A$ of $X$ is denoted by
\begin{equation}\label{eq-42}
    \iota_A(x)=\left\{%
\begin{array}{ll}
    0, & \hbox{if }x\in A \\
    +\infty, & \hbox{otherwise} \\
\end{array}%
\right.,\quad x\in X. \index{I1@$\iota_A,$ convex indicator, see
\eqref{eq-42}}
\end{equation}
Note that although $\iota_A$ is a convex function if and only if
$A$ is a convex set, we call it a \emph{convex} indicator in any
case to differentiate it from the probabilistic indicator
$\mathbf{1}_A.$
\subsection*{Examples}
We give a short list of popular entropies. They are described at
\eqref{eq-25} and characterized by their integrand $\gamma^*.$
\begin{enumerate}\index{Functions@Functions of $(s,z)$ or $(t,z)$!gammastar@$\gamma^*,$
integrand, see \eqref{eq-25}}
    \item The relative entropy: $\gamma^*(t)=t\log t-t+1+\iota_{\{t\ge0\}}.$
    \item The reverse relative entropy: $\gamma^*(t)=t-\log t-1+\iota_{\{t>0\}}.$
    \item The Fermi-Dirac entropy: $\gamma^*(t)=\frac 12[(1+t)\log(1+t)+(1-t)\log(1-t)]+\iota_{\{-1\le t\le 1\}}.$
    \item  The $L_p$ norm ($1<p<\infty$): $\gamma^*(t)=|t|^p/p$ and
    \item the $L_p$ entropy ($1<p<\infty$): $\gamma^*(t)=t^p/p+\iota_{\{t\ge0\}}.$
\end{enumerate}
Note that the reverse relative entropy is $P\mapsto
I(R|P).\index{reverse relative entropy}$
\\
In the case of the relative and reverse relative entropies, the
global minimizer is $R$ and one can interpret  $I(P|R)$ and
$I(R|P)$ as some type of distances between $P$ and $R.$ Note also
that the positivity of the minimizers of \eqref{eq-24} is
guaranteed by $\{\gamma^*<\infty\}\subset [0,\infty).$
Consequently, the restriction that $P$ is a probability measure is
insured by the only unit mass constraint $P(\ZZ)=1.$

A very interesting Bayesian interpretation of \eqref{eq-24} is
obtained in \cite{DDCG90,GG97} in terms of the LDPs for $L_n$
defined at \eqref{eq-27}. The above entropies correspond to iid
weights $W$  distributed as follows:
\begin{enumerate}
    \item the relative entropy: Law($W$)=Poisson(1);
    \item the reverse relative entropy: Law($W$)=Exponential(1);
    \item the Fermi-Dirac entropy: Law($W$)=$(\delta_{-1}+\delta_{+1})/2;$
    \item  the $L_2$ norm: Law($W$)=Normal(0,1).
\end{enumerate}

\subsection*{Entropic and generalized entropic projections}
The minimizers of \eqref{eq-24} are called \emph{entropic
projections}. It may happen that even if the minimizer is not
attained, any minimizing sequence converges to some measure $Q_*$
which does not belong to $\CC.$ This intriguing phenomenon was
discovered by Csisz\'ar \cite{Csi84}. Such a $Q_*$ is called a
\emph{generalized} entropic projection.

In the special case where $I$ is the relative entropy, Csisz\'ar
has obtained existence results in \cite{Csi75} together with dual
equalities. His proofs are based on geometric properties of the
relative entropy; no convex analysis is needed. Based on the same
geometric ideas, he obtained later in \cite{Csi84} a powerful
Gibbs conditioning principle for noninteracting particles. For
general entropies as in \eqref{eq-25}, he studies the problem of
existence of entropic and generalized entropic projections in
\cite{Csi95}.

The minimization problem \eqref{eq-24} is interesting in its own
right, even when conditional LLNs are not at stake. The literature
on this subject is huge, for instance see \cite{Gzyl94,maxent} and
the references therein.

\subsection*{The extended minimization problem}
As will be seen, it is worth introducing an \emph{extension} of
\eqref{eq-24} to take the \emph{generalized} projections into
account. A solution to \eqref{eq-24} is in $A_\ZZ:$ the space of
all $Q\in\MZ$ which are absolutely continuous with respect to $R.$
We consider an extension $\Ib$ of the entropy $I$ to a vector
space $L_\ZZ$ which is the direct sum $L_\ZZ=A_\ZZ\oplus S_\ZZ$ of
$A_\ZZ$ and a vector space $S_\ZZ$ of \emph{singular linear forms}
(acting on numerical functions) which may not be
$\sigma$-additive. Any $\ell$ in $L_\ZZ$ is uniquely decomposed
into $\ell=\ell^a+\ell^s$ with $\ell^a\in A_\ZZ$ and $\ell^s\in
S_\ZZ$ and $\Ib$ has the following shape
\begin{equation*}
    \Ib(\ell)=I(\ell^a)+I^s(\ell^s)
\end{equation*}
where $I^s$ is a positively homogeneous function on $S_\ZZ.$ See
\eqref{III} for the precise description of $\Ib.$ For instance,
the extended relative entropy is
\begin{equation*}
\Ib(\ell| R) = I(\ell^a| R) + \sup\left\{\langle\ell^s,u\rangle ;
u, \IZ e^u\,dR<\infty\right\},\quad \ell\in L_\ZZ
\end{equation*}
and actually $\langle\ell^s,u\rangle=0$ for any $\ell$ such that $
\Ib(\ell)<\infty$ and any $u$ such that $\IZ e^{a|u|}\,dR<\infty$
for \emph{all} $a>0.$ The reverse entropy,
 $L_1$-norm and $L_1$-entropy also admit nontrivial extensions.
On the other hand, the extensions of the Fermi-Dirac, $L_p$-norm
and $L_p$-entropy with $p>1$ are trivial: $\{k\in
S_\ZZ;I^s(k)<\infty\}=\{0\}.$

The extended problem is
\begin{equation}\label{eq-24b}
     \textsl{minimize } \Ib(\ell) \textsl{ subject to }\ell\in \CC, \quad\ell\in L_\ZZ
\end{equation}
In fact, $\Ib$ is chosen to be the largest convex \lsc\ extension
of $I$ to $L_\ZZ$ with respect to some weak topology. This
guarantees tight relations between \eqref{eq-24} and
\eqref{eq-24b}. In particular, one can expect that their values
are equal for a large class of convex sets $\CC.$

Even if $I$ is strictly convex, $\Ib$ isn't strictly convex in
general since $I^s$ is positively homogeneous, so that
\eqref{eq-24b} may admit several minimizers.

Examples will be given of interesting situations where
\eqref{eq-24} is not attained in $A_\ZZ$ while \eqref{eq-24b} is
attained in $L_\ZZ.$

\subsection*{Dominating points}
Let the constraint set $\CC$ be described by
\begin{equation}\label{eq-29}
    \CC=\{Q\in\MZ; TQ\in C\}
    \index{C1@$\CC,$ constraint set}
    \index{C2@$C,$ constraint set}
\end{equation}
where $C$ is a subset of a vector space $\XX$ and $T:\MZ\to\XX$ is
a linear operator.  As a typical example, one can think of
\begin{equation}\label{eq-06}
TQ=\IZ\theta(z)\,Q(dz)
    \index{T2@$\theta,$ constraint function, see \eqref{eq-52}}
\end{equation}
where $\theta:\ZZ\to\XX$ is some function and the integral should
be taken formally for the moment. With $L_n$ given at
\eqref{eq-26} or \eqref{eq-27}, if $T$ is regular enough, we
obtain by the contraction principle that $X_n:=TL_n=\moyn
\theta(Z_i)\in\XX$ or $X_n:=TL_n=\moyn W_i\theta(z_i)$ obeys the
LDP in $\XX$ with rate function $J(x)=\inf\{I(Q);Q\in\MZ,TQ=x\},$
$x\in\XX.$ Once again, the conditional LLN for $X_n$ is of the
form: For ``all'' $A\subset\XX,$
$$
\Lim n\mathbb{P}(X_n\in A\mid X_n\in C)=\left\{%
\begin{array}{ll}
    1, & \hbox{if } A\ni x_* \\
    0, & \hbox{otherwise} \\
\end{array}%
\right.
$$
where $x_*$ is a solution to the minimization problem
\begin{equation}\label{eq-28}
      \textsl{minimize } J(x)\textsl{ subject to } x\in C,\quad x\in\XX
\end{equation}
The minimizers of \eqref{eq-28} are called \emph{dominating
points}. This notion was introduced by Ney \cite{Ney83, Ney84} in
the special case where  $\seq Zi$ is an iid sequence in $\ZZ=\R^d$
and $\theta$ is the identity, i.e.\! $X_n=\moyn Z_i.$  Later,
Einmahl and Kuelbs \cite{EK96, Kue00} have extended this study to
a Banach space $\ZZ.$ In this iid case, $J$ is the Cram\'er
transform of the law of $Z_1.$

\subsection*{Presentation of the results}
We treat the problems of existence of entropic projections and
dominating points in a unified way, taking advantage of the
mapping $TQ=x.$ Hence, we mainly concentrate on the entropic
projections and then transport the results to the dominating
points.

It will be proved at Proposition \ref{res:Iproj} that the entropic
projection exists on $\CC$ if the supporting hyperplanes of $\CC$
are directed by sufficiently integrable functions. In some cases
of not enough integrable supporting hyperplanes, the
representation of the generalized projection is still available
and given at Theorem \ref{res-05}.

It will appear that the generalized projection is the ``measure"
part of the minimizers of \eqref{eq-24b}. For instance, with the
relative entropy $I(.|R),$ the projection exists in $\CC$ if its
supporting hyperplanes are directed by functions $u$ such that
$\IZ e^{\alpha|u|}\,dR<\infty$ for \emph{all} $\alpha>0,$ see
Proposition \ref{res:entrop1}. If these $u$ only satisfy $\IZ
e^{\alpha|u|}\,dR<\infty$ for \emph{some} $\alpha>0,$ the
projection may not exist in $\CC,$ but the generalized projection
is computable: its Radon-Nykodym derivative with respect to $R$ is
characterized at Proposition \ref{res:entrop2}.

We find again some already known results of Csisz\'ar
\cite{Csi84,Csi95}, U.~Einmahl and Kuelbs \cite{EK96,Kue00} with
different proofs and a new point of view. The representations of
the generalized projections are new results. The conditions on $C$
to obtain dominating points are improved and an interesting
phenomenon noticed in \cite{Kue00} is clarified at Remark
\ref{rem-steep} by connecting it with the generalized entropic
projection.

Finally, a probabilistic interpretation of the singular components
of the generalized projections is proposed at Section
\ref{sec:condLLN}. It is obtained in terms of conditional LLNs.

The main results are Theorems \ref{res-07}, \ref{res-minseq},
\ref{res-05} and \ref{res:dompoint}.

\subsection*{Outline of the paper}
At Section \ref{sec-minent}, we give precise formulations of the
entropy minimization problems \eqref{eq-24} and \eqref{eq-24b}.
Then we recall at Theorems \ref{res-02} and \ref{res-03} results
from \cite{Leo08} about the existence and uniqueness of the
solutions of these problems, related dual equalities and the
characterizations of their solutions in terms of integral
representations.

Examples of standard entropies and constraints are presented at
Section \ref{sec-expl}.

We show at Theorem \ref{res-minseq} in Section \ref{sec:critical}
that under ``critical'' constraints, although the problem
\eqref{eq-24} may not be attained, its minimizing sequences may
converge in some sense to some measure $Q_*:$ the
\emph{generalized} entropic projection.

Section \ref{sec-entproj} is mainly a restatement of Sections
\ref{sec-minent} and \ref{sec:critical} in terms of entropic
projections. The results are also stated  explicitly for the
special important case of the relative entropy.

Section \ref{sec:dompoints} is devoted to dominating points. As
they are continuous images of entropic projections, the main
results of this section are corollaries of the results of Section
\ref{sec-entproj}.

At Section \ref{sec:condLLN}, we illustrate our results in terms
of conditional LLNs, see \eqref{eq-02}. In particular, the
generalized projections and singular components of the minimizers
of \eqref{eq-24b} are interpreted in terms of these conditional
LLNs.

\subsection*{Notation}
Let $X$ and $Y$ be topological vector spaces. The algebraic dual
space of $X$ is $X^{\ast},$ the topological dual space of $X$ is
$X'.$ The topology of $X$ weakened by $Y$ is $\sigma(X,Y)$ and we
write $\langle X,Y\rangle$ to specify that $X$ and $Y$ are in
separating duality.
\\
Let $f: X\rightarrow [-\infty,+\infty]$ be an extended numerical
function. Its convex conjugate\index{convexconj@convex conjugate,
$f^*,$ see \eqref{eq-61}} with respect to $\langle X,Y\rangle$ is
\begin{equation}\label{eq-61}
    f^*(y)=\sup_{x\in X}\{\langle x,y\rangle-f(x)\}\in [-\infty,+\infty],\quad y\in Y.
\end{equation}
Its subdifferential at $x$ with respect to $\langle X,Y\rangle$ is
$$\partial_Y f(x)=\{y\in Y; f(x+\xi)\geq f(x)+\langle
y,\xi\rangle, \forall
 \xi\in X\}.$$ If no confusion occurs, we write
$\partial f(x).$
\\
Let $A$ be a subset of $X,$ its intrinsic core is
    $
\icor A=\{x\in A; \forall x'\in\mathrm{aff}A, \exists t>0,
[x,x+t(x'-x)[\subset A\}
    \index{icor@$\icor$}
    $
where $\mathrm{aff}A$ is the affine space spanned by $A.$ Let us
denote $\dom f=\{x\in X; f(x)<\infty\}$ the effective domain of
$f$ and $\icordom f \index{icordom@$\icordom$}$ the intrinsic core
of $\dom f.$
\\
The convex indicator $\iota_A$ of a subset $A$ of $X$ is defined
at \eqref{eq-42}.
\\
We write
$$
I_\varphi(u):=\IZ \varphi(z,u(z))\,R(dz)=\IZ\varphi(u)\,dR
\index{I2@$I_\varphi,$ integral functional}
$$
and $I=I_{\gamma^*}$ for short, instead of \eqref{eq-25}.
\\
The inf-convolution\index{infconv@inf-convolution, $f\Box g$} of
$f$ and $g$ is $f\Box g(z)=\inf\{f(x)+g(y); x,y: x+y=z\}.$

An index of notation is provided at the end of the article.

\section{Minimizing entropy under convex constraints}\label{sec-minent}

In this section, the main results of \cite{Leo08} are recalled.
They are  Theorems \ref{res-02} and \ref{res-03} below and their
statements necessitate the notion of Orlicz spaces. First, we
recall basic definitions and notions about these function spaces
which are natural extensions of the standard $L_p$ spaces.

\subsection{Orlicz spaces} \label{sec:Orlicz}

The fact that the generalized projection may not belong to $\CC$
is connected with some properties of Orlicz spaces associated to
$I.$ Let us recall some basic definitions and results. A standard
reference is \cite{RaoRen}.

\newcommand{\Er}{E_\rho}
\newcommand{\Lr}{L_\rho}
\newcommand{\Lrs}{L_{\rho^*}}
\newcommand{\Lrsi}{L_{\rho}^s}
\newcommand{\la}{\ell^a}
\newcommand{\lsing}{\ell^s}

A set $\ZZ$ is furnished with a $\sigma$-finite nonnegative
measure $R$ on a $\sigma$-field which is assumed to be
$R$-complete. A function $\rho:\ZZ\times\R$ is said to be a
\emph{Young function}  \index{Orlicz spaces!rho@$\rho,$ Young
function}\index{Functions@Functions of $(s,z)$ or
$(t,z)$!rho@$\rho,$ Young function} if for $R$-almost every $z,$
$\rho(z,\cdot)$ is a convex even $[0,\infty]$-valued function on
$\R$ such that $\rho(z,0)=0$ and there exists a measurable
function $z\mapsto s_z>0$ such that $0<\rho(z,s_z)<\infty .$
\\
In the sequel, every numerical function on $\ZZ$ is supposed to be
measurable.

\begin{definitions}[The Orlicz spaces $\Lr$ and $\Er$] \index{Orlicz spaces}
The \emph{Orlicz space} associated with $ \rho $ is defined by
$\Lr=\{u:\ZZ\rightarrow\mathbb{R}; \Vert u\Vert_ \rho < + \infty
\}$ where the Luxemburg norm  \index{Orlicz
spaces!Luxemburg@$\Vert\cdot\Vert_ \rho,$ Luxemburg norm} $ \Vert
\cdot \Vert_ \rho $ is defined by
$$\Vert u\Vert_ \rho= \inf \left\{ \beta
>0\ ;\ \IZ\rho(z,u(z)/\beta)\, R(dz)\leq  1 \right\}$$ and $R$-a.e.\! equal functions are identified. Hence,
 $$
 \Lr=\left\{u:  \ZZ \rightarrow  \R \ ; \exists \alpha_o>0, \IZ
 \rho \Big(z,\alpha_o u(z)\Big)\,R(dz)<\infty \right\}.
  \index{Orlicz spaces!Lrho@$\Lr,$ Orlicz space}
 $$
 A subspace of interest is
 \begin{equation*}
     \Er=\left\{u:  \ZZ \rightarrow  \R \ ; \forall \alpha >0, \IZ
 \rho \Big(z,\alpha u(z)\Big)\,R(dz)<\infty \right\}.
   \index{Orlicz spaces!Erho@$\Er,$ small Orlicz space}
\end{equation*}
\end{definitions}

Taking $\rho_p(z,s)=|s|^p/p$ with $p\ge1$ gives
$L_{\rho_p}=E_{\rho_p}=L_p$ and the corresponding Luxemburg norm
is $ \Vert \cdot \Vert_{\rho_p} =p^{-1/p}\|\cdot\|_p$ where
$\|\cdot\|_p$ is the usual $L_p$ norm.

The convex conjugate $\rho^*$ of a Young function is still a Young
function, so that $L_{\rho^*}$ is also an Orlicz space. H\"older's
inequality in Orlicz spaces is
\begin{equation}\label{eq-43}
    \|uv\|_1\le 2\|u\|_{\rho}\|v\|_{\rho^*},\quad u\in L_\rho, v\in
    L_{\rho^*}.
\end{equation}
For instance, with $\rho_p^*(t)=|t|^q/q,$  $1/p+1/q=1,$
\eqref{eq-43} is $ \|uv\|_1\le
2p^{-1/p}q^{-1/q}\|u\|_{p}\|v\|_{q}.$ Note that
$2p^{-1/p}q^{-1/q}\ge1$ with equality when $p=2.$

\begin{theorem}[Representation of $\Er'$]\label{res-B1}
Suppose that $\rho$ is a finite Young function. Then, the dual
space of $\Er$ is isomorphic to $\Lrs,$ the Orlicz space
associated with the Young function $\rho^*$ which is the convex
conjugate of $\rho.$
\end{theorem}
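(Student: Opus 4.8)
The plan is to realise the isomorphism through the canonical duality bracket $\langle u,v\rangle:=\IZ uv\,dR$ and to show that $v\mapsto\langle\cdot,v\rangle$ is a bicontinuous linear bijection of $\Lrs$ onto $\Er'$ (the statement claims an isomorphism, not an isometry, which is consistent with the factor $2$ in H\"older's inequality \eqref{eq-43}). First I would set up the embedding $\Lrs\hookrightarrow\Er'$: for $v\in\Lrs$ and $u\in\Er$, \eqref{eq-43} gives $|\langle u,v\rangle|\le\|uv\|_1\le 2\|u\|_\rho\|v\|_{\rho^*}$, so $v\mapsto\langle\cdot,v\rangle$ is well defined, linear, and of norm at most $2$. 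Injectivity is where finiteness of $\rho$ first enters: since $R$ is $\sigma$-finite and $\rho(z,\cdot)$ is finite-valued for $R$-a.e.\ $z$, one can exhaust $\ZZ$ by an increasing sequence of measurable sets $\ZZ_n\uparrow\ZZ$ with $R(\ZZ_n)<\infty$ and $\IZ\1_{\ZZ_n}\rho(z,k)\,R(dz)<\infty$ for all $n,k\ge1$; then every bounded function supported in some $\ZZ_n$ lies in $\Er$, so $\Er$ contains enough indicator functions to separate the points of $\Lrs$, and $\langle\cdot,v\rangle=0$ forces $v=0$ $R$-a.e.

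The substance of the theorem is surjectivity: every $\xi\in\Er'$ should be of the form $\langle\cdot,v\rangle$ for some $v\in\Lrs$. I would recover $v$ by a Radon--Nikodym argument. For measurable $A\subset\ZZ_n$ one has $\1_A\in\Er$, and $A\mapsto\xi(\1_A)$ is finitely additive and vanishes on $R$-null sets; it is moreover $\sigma$-additive on each $\ZZ_n$ because the Luxemburg norm is absolutely continuous on $\Er$: if $A_k\downarrow\emptyset$ inside $\ZZ_n$ then $\IZ\1_{A_k}\rho(z,k_0)\,R(dz)\to0$ for every $k_0$, whence $\|\1_{A_k}\|_\rho\to0$ and $\xi(\1_{A_k})\to0$. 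Radon--Nikodym then provides consistent densities on the $\ZZ_n$ that patch into a measurable function $v$ on $\ZZ$ with $\xi(u)=\IZ uv\,dR$ for every simple $u$ supported in some $\ZZ_n$. Since $\rho$ is finite, such simple functions are dense in $\Er$ (again because elements of $\Er$ have absolutely continuous norm, so they are $\|\cdot\|_\rho$-limits of their truncations to bounded functions supported in the $\ZZ_n$), so the identity $\xi(u)=\IZ uv\,dR$ will extend to all of $\Er$ once we know $v\in\Lrs$.

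The step I expect to be the main obstacle is precisely showing $v\in\Lrs$. Here I would invoke the equivalence, within a factor $2$, between the Luxemburg norm $\|\cdot\|_{\rho^*}$ on $\Lrs$ and the Orlicz norm $N_{\rho^*}(v):=\sup\{|\IZ uv\,dR|\ ;\ u\text{ simple},\ \IZ\rho(z,u(z))\,R(dz)\le1\}$. For any admissible simple $u$ one has $\|u\|_\rho\le1$, hence $|\IZ uv\,dR|=|\xi(u)|\le\|\xi\|_{\Er'}$, and taking the supremum yields $N_{\rho^*}(v)\le\|\xi\|_{\Er'}<\infty$, i.e.\ $v\in\Lrs$; replacing $v$ by its truncations $v\1_{\{|v|\le N\}\cap\ZZ_N}$ and using monotone convergence for the Luxemburg norm makes this rigorous and shows that restricting the supremum to simple functions costs nothing. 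Combining the two directions, $v\mapsto\langle\cdot,v\rangle$ is a linear bijection of $\Lrs$ onto $\Er'$ with $\tfrac12 N_{\rho^*}(v)\le\|\langle\cdot,v\rangle\|_{\Er'}\le 2\|v\|_{\rho^*}$, hence a Banach-space isomorphism $\Er'\simeq\Lrs$. The recurring difficulty is the measure-theoretic bookkeeping needed to construct the sets $\ZZ_n$ and to run Radon--Nikodym and the density of simple functions on them; finiteness of $\rho$ is exactly what makes $\Er$ large enough for all of this to close.
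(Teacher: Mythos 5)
Your argument is correct in outline and it is exactly the classical proof of this duality (Hölder embedding, Radon--Nikodym on an exhaustion $\ZZ_n$ made possible by the finiteness of $\rho$, the Luxemburg--Orlicz norm comparison to get $v\in L_{\rho^*}$, and density of simple functions in $E_\rho$ via absolute continuity of the norm); the paper does not prove the theorem at all but simply recalls it from the standard reference on Orlicz spaces, where this is the argument given. The only step that deserves the care you already flag is the inequality $\|v\|_{\rho^*}\le\sup\{|\IZ uv\,dR|;\ u\ \text{simple},\ \IZ\rho(u)\,dR\le1\}$, which must be established for a merely measurable $v$ (via truncation and monotone convergence) before you may conclude $v\in L_{\rho^*}$.
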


 A continuous linear form $\ell\in\Lr'$ is said to be \emph{singular}
 if for all $u\in\Lr,$ there exists a decreasing sequence of
 measurable sets $(A_n)$ such that $R(\cap_n A_n)=0$ and for all
 $n\geq 1,$ $\langle \ell,u\1_{\ZZ\setminus A_n}\rangle=0.$
\begin{proposition}\label{res-B4}
 Let us assume that $\rho$ is finite. Then, $\ell\in\Lr'$ is
 singular if and only if $\langle \ell,u\rangle =0,$ for all $u$ in
 $\Er.$
 \end{proposition}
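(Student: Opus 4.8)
The plan is to prove the two implications separately; the ``only if'' direction is immediate and the converse carries all the weight. So, first, assume $\ell$ is singular and fix $u\in\Er$, feeding it (as an element of $\Lr$) into the definition of singularity: this gives a decreasing sequence $(A_n)$ of measurable sets with $R(\bigcap_nA_n)=0$ and $\langle\ell,u\1_{\ZZ\setminus A_n}\rangle=0$ for all $n$; writing $u=u\1_{A_n}+u\1_{\ZZ\setminus A_n}$ with both summands in $\Lr$, this yields $\langle\ell,u\rangle=\langle\ell,u\1_{A_n}\rangle$ for every $n$. Since $\ell$ is norm-continuous it then suffices to check $\|u\1_{A_n}\|_\rho\to0$. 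Fix $\alpha>0$: as $u\in\Er$, the function $z\mapsto\rho(z,\alpha u(z))$ is $R$-integrable, and $\1_{A_n}$ decreases $R$-a.e.\ to $0$, so by dominated convergence $\int_\ZZ\rho(z,\alpha u(z))\1_{A_n}(z)\,R(dz)\to0$; hence this integral is $\le1$ for $n$ large, i.e.\ $\|u\1_{A_n}\|_\rho\le1/\alpha$ eventually. As $\alpha$ is arbitrary, $\|u\1_{A_n}\|_\rho\to0$, so $\langle\ell,u\rangle=\lim_n\langle\ell,u\1_{A_n}\rangle=0$.

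For the converse, suppose $\langle\ell,u\rangle=0$ for all $u\in\Er$ and fix an arbitrary $u\in\Lr$. The idea is to produce a measurable $v:\ZZ\to(0,\infty)$ lying in $\Er$ and then set $A_n:=\{|u|\ge nv\}$. These sets decrease, and $\bigcap_nA_n=\{v=0\}$ (since $u$ is real-valued), so $R(\bigcap_nA_n)=0$. On $\ZZ\setminus A_n=\{|u|<nv\}$ one has $|u\1_{\ZZ\setminus A_n}|\le nv$, hence, using that $\rho(z,\cdot)$ is even and nondecreasing on $[0,\infty)$, $\rho(z,\alpha\,u(z)\1_{\ZZ\setminus A_n}(z))\le\rho(z,\alpha n\,v(z))$ for every $\alpha>0$; since $v\in\Er$ the right-hand side is $R$-integrable, so $u\1_{\ZZ\setminus A_n}\in\Er$. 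By hypothesis $\langle\ell,u\1_{\ZZ\setminus A_n}\rangle=0$ for all $n$, which is exactly what the definition of singularity requires.

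The remaining — and only nontrivial — point is the existence of a strictly positive $v\in\Er$, and this is where the finiteness of $\rho$ is indispensable: if $\rho$ is not finite one may have $\Er=\{0\}$, so that $\ell$ vanishes on $\Er$ without being singular, and the statement itself fails; I expect this to be the main obstacle. I would argue as follows. Decompose $\ZZ=\bigsqcup_kD_k$ with $R(D_k)<\infty$. On a fixed $D_k$, finiteness of $\rho$ makes each $z\mapsto\rho(z,c)$ (for $c\in\mathbb{N}$) a finite measurable function, so $R(\{z\in D_k:\rho(z,c)>N\})\to0$ as $N\to\infty$; choosing thresholds suitably and intersecting over $c$ exhausts $D_k$, up to a null set, by an increasing sequence of measurable sets on each of which every $\rho(\cdot,c)$ is bounded, hence $R$-integrable. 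A dominated-convergence argument — letting the value of $v$ decrease to $0$ quickly enough along these successive pieces — then produces a measurable $v_k:D_k\to(0,\infty)$ with $\int_{D_k}\rho(z,mv_k(z))\,R(dz)\le2^{-k}$ for all integers $m\le k$. Then $v:=\sum_kv_k\1_{D_k}$ is strictly positive and, for each $m$, $\int_\ZZ\rho(z,mv(z))\,R(dz)=\sum_k\int_{D_k}\rho(z,mv_k(z))\,R(dz)<\infty$ (the finitely many terms with $k<m$ are finite individually, the tail is dominated by $\sum_{k\ge m}2^{-k}$), so $v\in\Er$, which completes the proof. The same fact can also be extracted from the structure theory of $\Er$ in \cite{RaoRen}.
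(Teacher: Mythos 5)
The paper itself gives no proof of Proposition \ref{res-B4}: it is recalled from the Orlicz-space literature (Rao--Ren \cite{RaoRen}), so there is no internal argument to compare with; judged on its own, your proof follows the standard route and is correct in outline. The ``only if'' direction is complete: norm-continuity of $\ell$ together with $\|u\mathbf{1}_{A_n}\|_\rho\to0$, obtained by dominated convergence at every scale $\alpha$, is exactly right. The converse is correctly reduced to producing a strictly positive $v\in E_\rho$ (and you rightly identify finiteness of $\rho$ as the point where this can fail), after which $A_n=\{|u|\ge nv\}$ does the job, since $|u\mathbf{1}_{\mathcal{Z}\setminus A_n}|\le nv$ forces $u\mathbf{1}_{\mathcal{Z}\setminus A_n}\in E_\rho$.

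The one step that needs tightening is the final verification that $v=\sum_k v_k\mathbf{1}_{D_k}\in E_\rho$. The property you actually impose on $v_k$ (modular of $mv_k$ over $D_k$ at most $2^{-k}$ for integers $m\le k$) does not by itself justify the parenthetical claim that ``the finitely many terms with $k<m$ are finite individually'': finiteness of $\int_{D_k}\rho(z,kv_k)\,dR$ never implies finiteness of $\int_{D_k}\rho(z,mv_k)\,dR$ for $m>k$ --- that is precisely the $L_\rho$ versus $E_\rho$ distinction the proposition is about. The gap is easily closed inside your own construction: writing $D_k$ (up to a null set) as the union of increasing sets $E_{k,j}$ of finite measure on which every $\rho(\cdot,c)$, $c\in\mathbb{N}$, is bounded, take $v_k$ constant equal to $\epsilon_{k,j}\in(0,1]$ on $F_{k,j}=E_{k,j}\setminus E_{k,j-1}$ and choose $\epsilon_{k,j}$ so small that $\int_{F_{k,j}}\rho(z,m\epsilon_{k,j})\,R(dz)\le 2^{-k-j}$ for all integers $m\le k+j$ (finitely many conditions, satisfiable since $\rho(z,m\epsilon)\le\epsilon\,\rho(z,m)$ by convexity and $\rho(\cdot,m)$ is bounded on $E_{k,j}$). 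Then for each fixed $m$ the sum over $j$ has only finitely many uncontrolled terms, each finite because $\rho(\cdot,m)$ is bounded on a set of finite measure, while the tail is summable; this yields $\int_{D_k}\rho(z,mv_k)\,dR<\infty$ for every $m$ and $\le 2^{-k+1}$ for $m\le k$, and your concluding estimate for $v$ then goes through verbatim. With this adjustment the proof is complete and self-contained, which is arguably a small gain over the paper's bare citation of \cite{RaoRen}.
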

Let us denote respectively $\Lrs R=\{fR;f\in\Lrs\} \index{Orlicz
spaces!decomposition!Lrs@$\Lrs R,$ space of absolutely continuous
forms}$ and $\Lrsi \index{Orlicz spaces!decomposition!Lrs@$\Lrsi,$
space of singular forms}$ the subspaces of $\Lr'$ of all
absolutely continuous and singular forms.

\begin{theorem}
 [Representation of $\Lr'$]\label{res-B2}
 Let  $\rho $ be any Young function. The dual space of $\Lr$ is isomorphic to the direct sum
$
 \Lr'=\Lrs R\oplus \Lrsi.
 $
  This implies that any $\ell\in \Lr'$ is uniquely
 decomposed as
 \begin{equation}\label{decomp}
     \index{Orlicz spaces!decomposition!l1@$\la,$ absolutely continuous part of $\ell,$ see \eqref{decomp}}
     \index{Orlicz spaces!decomposition!l2@$\lsing,$ singular part of $\ell,$ see \eqref{decomp}}
 \ell=\la+\lsing
\end{equation}
with $\la\in\Lrs R$ and $\lsing\in\Lrsi.$
 \end{theorem}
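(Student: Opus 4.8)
The plan is to peel the absolutely continuous part off an arbitrary $\ell\in\Lr'$ by testing it against the subspace $\Er$, where Theorem \ref{res-B1} supplies a representing density, and then to recognize the remainder as a singular form via the characterization in Proposition \ref{res-B4}; since both of those results require $\rho$ finite, I would first treat a finite Young function and reduce the general case to it afterwards. Concretely, for finite $\rho$ and $\ell\in\Lr'$, the restriction of $\ell$ to $\Er\subset\Lr$ is continuous, so Theorem \ref{res-B1} yields $f\in\Lrs$ with $\langle\ell,u\rangle=\IZ uf\,dR$ for all $u\in\Er$. By H\"older's inequality \eqref{eq-43}, $\|vf\|_1\le 2\|v\|_\rho\|f\|_{\rho^*}<\infty$ for every $v\in\Lr$, so $\la:=fR$, that is $v\mapsto\IZ vf\,dR$, is a well-defined element of $\Lrs R\subset\Lr'$ agreeing with $\ell$ on $\Er$. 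Hence $\lsing:=\ell-\la$ vanishes on $\Er$, and Proposition \ref{res-B4} gives $\lsing\in\Lrsi$, so $\ell=\la+\lsing$.

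For uniqueness I would check that $\Lrs R\cap\Lrsi=\{0\}$, which needs no finiteness of $\rho$: let $fR$ be singular and fix $u\in\Lr$, and choose a decreasing sequence $(A_n)$ with $R(\cap_nA_n)=0$ and $\langle fR,u\1_{\ZZ\setminus A_n}\rangle=0$ for all $n$. Since $uf\in L_1(R)$ by \eqref{eq-43} and $\1_{\ZZ\setminus A_n}\to 1$ $R$-a.e., dominated convergence forces $\IZ uf\,dR=0$. As $u\in\Lr$ was arbitrary and $\Lr$ contains $\mathrm{sgn}(f)\1_{B_k}$ for a suitable sequence $B_k\uparrow\ZZ$ (using the $\sigma$-finiteness of $R$ and the fact that $\rho(z,\cdot)$ is finite near the origin for $R$-a.e.\ $z$), we get $f=0$ $R$-a.e. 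Together with the previous step this proves $\Lr'=\Lrs R\oplus\Lrsi$, with the decomposition \eqref{decomp} unique, whenever $\rho$ is finite.

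For a general Young function $\rho$ I would split $\ZZ=\ZZ_\infty\sqcup\ZZ_f$, where $\ZZ_\infty=\{z:\rho(z,s)<\infty\ \forall s\}$ and $\ZZ_f$ is its complement. The map $v\mapsto(v\1_{\ZZ_\infty},v\1_{\ZZ_f})$ splits $\Lr$ into two complemented subspaces, hence $\Lr'$ into the corresponding summands. Over $\ZZ_\infty$ the Young function is finite and the argument above applies. Over $\ZZ_f$ the integrability condition forces every element of $\Lr$ to be essentially bounded by a fixed multiple of $b(z):=\sup\{s:\rho(z,s)<\infty\}$, so $\Lr(\ZZ_f)$ is isomorphic to a subspace of a weighted $L_\infty$ space, and splitting its dual into an $L_1$-type part and a purely finitely additive part is the classical Yosida--Hewitt decomposition; one then checks that the purely finitely additive part fulfils the defining condition of $\Lrsi$ and that the $L_1$-type part has the form $fR$. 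Reassembling the two regions gives $\Lr'=\Lrs R\oplus\Lrsi$ in general.

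I expect the delicate points to be two. First, verifying that the remainder $\ell-\la$ is singular \emph{in the precise sense of the definition} --- a shrinking family $(A_n)$ for \emph{every} test function --- which in the finite case is exactly the content of Proposition \ref{res-B4}, so the real work there is the interplay between Theorem \ref{res-B1} and H\"older's inequality that keeps $\la$ inside $\Lr'$. Second, the passage to an arbitrary, possibly infinite-valued, Young function, where Proposition \ref{res-B4} no longer characterizes singularity and the $L_\infty$-type component over $\ZZ_f$ must be handled separately.
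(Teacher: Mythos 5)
The paper itself does not prove Theorem \ref{res-B2}: it is recalled as background from the Orlicz-space literature (the section points to the reference of Rao and Ren), so your attempt can only be compared with the standard proof. Your treatment of the case of a \emph{finite} Young function is correct and is exactly that standard argument, and it uses the two quoted auxiliary results the way they are meant to interlock: restrict $\ell$ to $E_\rho$, represent the restriction by some $f\in L_{\rho^*}$ via Theorem \ref{res-B1}, check with H\"older's inequality \eqref{eq-43} that $fR$ defines an element of $L_\rho'$, and certify that $\ell-fR$ is singular by Proposition \ref{res-B4}; your direct verification that $L_{\rho^*}R\cap L_\rho^s=\{0\}$ is also fine. (One small point: to pass from this to uniqueness of the decomposition you also need $L_\rho^s$ to be a linear subspace, so that the difference of two decompositions lands in $L_{\rho^*}R\cap L_\rho^s$; for finite $\rho$ this is immediate from Proposition \ref{res-B4}, but for general $\rho$ it requires its own short argument from the definition of singularity.)

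The genuine gap is the passage to an arbitrary Young function, which is precisely what the theorem asserts (``any Young function'') and what the paper needs: for the reverse relative entropy, $\lambda_{\diamond}(s)=-\log(1-|s|)-|s|$ for $|s|<1$ and $+\infty$ otherwise is infinite-valued. The reduction of the piece over $\ZZ_f$ to a weighted $L_\infty$ space plus Yosida--Hewitt does not work as stated. First, $L_\rho$ over $\ZZ_f$ is only continuously \emph{injected} into the weighted sup-norm space $\{u;\ \|u/b\|_\infty<\infty\}$; the Luxemburg norm is in general strictly stronger. For instance $\rho(z,s)=s^2$ for $|s|\le1$ and $+\infty$ otherwise, with $R$ an infinite $\sigma$-finite measure, gives $L_\rho=L_2\cap L_\infty$, whose norm is equivalent to $\max(\|\cdot\|_2,\|\cdot\|_\infty)$ and not to $\|\cdot\|_\infty$. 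Yosida--Hewitt decomposes the dual of $L_\infty$ (a space of bounded functions); it says nothing directly about the dual of a subspace carrying a finer norm, and the splitting does not transfer automatically to such a dual. Second, even where a splitting into countably additive and purely finitely additive parts can be arranged, the theorem demands more: the countably additive part must be shown to be of the form $fR$ with $f$ in $L_{\rho^*}$ --- this is where $\rho^*$ must enter, and your sketch never produces it on $\ZZ_f$ --- and the purely finitely additive part must be shown singular in the precise sense used here (a shrinking sequence $(A_n)$ for \emph{every} test function), for which Proposition \ref{res-B4} is no longer available. So the finite case stands, but the general statement is not reached by the proposed reduction; in the literature this general decomposition is a genuinely harder theorem (Rao--Ren, and Kozek-type results for integrand-dependent, possibly infinite-valued Young functions), not a corollary of the classical $L_\infty$ decomposition.
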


In the decomposition \eqref{decomp}, $\la$ is called the
\emph{absolutely continuous} part of $\ell$ while $\lsing$ is its
\emph{singular part}.

The function $\rho $ is said to satisfy the $\Delta_2$-condition
\index{Orlicz spaces!Delta2@$\Delta_2$-condition, see
\eqref{delta2}} if
\begin{equation}
\label{delta2}
 \textrm{ there exist } \kappa>0,
 s_o\geq 0 \textrm{ such that } \forall s\geq s_o, z\in\ZZ, \rho_z (2s)\leq
 \kappa\rho_z (s)
\end{equation}
  When $R$ is bounded, in order that
$\Er=\Lr,$ it is enough that $\rho $ satisfies the
$\Delta_2$-condition.  Consequently, in this situation we have
$\Lr'=\Lrs
 R$ so that $\Lrsi$ reduces to the null vector space.
\\
We shall see below that the important case of the relative entropy
leads to a Young function $\rho$ which doesn't satisfy the
$\Delta_2$-condition; non-trivial singular components will appear.

\subsection{The assumptions}
Let $R$ be a positive measure\index{R@$R,$ reference measure} on a
space $\ZZ\index{ZZ@$\ZZ,$ reference space}$ and take a
$[0,\infty]$-valued measurable function
$\gamma^*$\index{Functions@Functions of $(s,z)$ or
$(t,z)$!gammastar@$\gamma^*,$ integrand, see \eqref{eq-25}} on
$\ZZ\times \mathbb{R}$ to define $I$ as at \eqref{eq-25}. In order
to define the constraint in a way similar to \eqref{eq-06}, take a
function
$$
    \theta:\ZZ\rightarrow\XXo\index{T2@$\theta,$ constraint function, see \eqref{eq-52}}
$$
where $\XXo\index{XXo@$\XXo,$ algebraic dual of $\YYo$}$ is the
algebraic dual space of some vector space
$\YYo.\index{YYa@$\YYo$}$ Let us collect the assumptions on
$R,\gamma^*,\theta$ and $C.$

\par\medskip\noindent\textbf{Assumptions (A).}\index{Assumptions (A)}\
\begin{itemize}
  \item[(A$_R$)]\index{Assumptions (A)!A1@(A$_R$) on the measure $R$}
    It is assumed that the reference measure $R$ is a bounded
    positive measure on a space $\ZZ$ endowed with some
    $R$-complete $\sigma$-field.
    \item[(A$_C$)]\index{Assumptions (A)!A2@(A$_C$) on the set $C$} $C$ is a convex subset of $\XXo.$
  \item[(A$_{\gamma^*}$)]\index{Assumptions (A)!A3@(A$_{\gamma^*}$) on $\gamma^*$} \emph{Assumptions on $\gamma^*.$}
     \begin{enumerate}
    \item
    $\gamma^*(\cdot,t)$ is $z$-measurable for all $t$ and
    for $R$-almost every $z\in \ZZ,$ $\gamma^*(z,\cdot)$
    is a \lsc\ strictly convex $[0,+\infty]$-valued function on
    $\mathbb{R}.$
    \item
    It is also assumed that for $R$-almost every $z\in \ZZ,$ $\gamma^*(z,\cdot)$
    attains a unique minimum denoted by $m(z),$\index{Functions@Functions of $(s,z)$ or $(t,z)$!m@$m,$ see (A$_{\gamma^*}$)}  the minimum value is
    $
    \gamma^*_z(m(z))=0,\ \forall z\in\ZZ
    $
    and there exist $a(z),b(z)>0$ such that
    $0<\gamma^*(z,m(z)+a(z))<\infty$ and
    $0<\gamma^*(z,m(z)-b(z))<\infty.$
    \item
    $\IZ \gamma^*((1+\alpha) m)\,dR+\IZ \gamma^*((1-\alpha)
    m)\,dR<\infty,$ for some $\alpha>0.$
     \end{enumerate}
 \end{itemize}
    The function  $\gamma^*_z$ is the convex conjugate of the
    \lsc\ convex function $\gamma_z=\gamma_z^{**}.\index{Functions@Functions
of $(s,z)$ or $(t,z)$!gamma@$\gamma,$ see \eqref{eq-49}}$ Defining
\begin{equation}\label{eq-49}
     \lambda(z,s)=\gamma(z,s)-m(z)s,\quad z\in \ZZ, s\in\mathbb{R},
     \index{Functions@Functions of $(s,z)$ or $(t,z)$!lambda@$\lambda,$ see \eqref{eq-49}}
\end{equation}
we see that for $R$-a.e.\! $z,$ $\lambda_z$ is a nonnegative
convex function and it vanishes at $s=0.$
 \begin{itemize}
  \item[(A$_\theta$)]\index{Assumptions (A)!A4@(A$_\theta$) on $\theta$} \emph{Assumptions on $\theta.$}
    \begin{enumerate}
    \item for any $y\in\YYo,$ the function $z\in\ZZ\mapsto\langle
    y,\theta(z)\rangle\in\R$ is measurable;
     \item for any $y\in\YYo,$ $\langle y,\theta(\cdot)\rangle=0, R\hbox{-a.e.}$ implies that $y=0;$
     \item[($\exists$)] $\forall y\in\YYo, \exists\alpha>0,\quad \IZ \lambda(\alpha\langle y, \theta\rangle)\,
        dR<\infty.$
    \end{enumerate}
\end{itemize}

Since
\begin{equation}\label{eq-50}
    \lmax(z,s)=\max[\lambda(z,s),\lambda(z,-s)]\in [0,\infty], \quad z\in \ZZ, s\in \mathbb{R}
    \index{Functions@Functions of $(s,z)$ or $(t,z)$!lambdamax@$\lmax,$ see \eqref{eq-50}}
\end{equation}
is a Young function, one can consider the corresponding Orlicz
spaces $\Ll\index{LL@$\Ll$}$ and $\Lls\index{Lls@$\Lls$}$ where
$\lmaxs(z,\cdot)$ is the convex conjugate of $\lmax(z,\cdot).$

\begin{remarks}[Some comments about these assumptions]\label{rem-01}\
\begin{enumerate}
    \item Assuming $R$ to be $\sigma$-finite is standard when working with
integral functionals. Here, $R$ is assumed to be bounded for
simplicity.
    \item Thanks to the theory of normal integrands \cite{Roc68},
$\gamma=\gamma^{**}$ is jointly measurable.
    \\ The function $m$ is measurable since its graph $\{(z,u): u=m(z)\}$ is
    measurable. Indeed, it is the $(z,u)$-projection of the intersection of
    the horizontal hyperplane $u=0$ with the measurable graph
    $\{(t,z,u):\gamma^*_z(t)=u\}$ of $\gamma^*.$ Note that we use
    the uniqueness assumption in (A$_{\gamma^*}^2$) to avoid a
    measurable selection argument.
    \\ As a consequence, $\lambda$ is a
    jointly measurable function.
    \item The assumption (A$_{\gamma^*}^3$) implies that  $m$ is in the Orlicz
    space $\Lls,$ see \eqref{eq-50}. In turn, the centered form $\ell
    -mR$ is in $\Ll'$ whenever $\ell$ is. This will be used later
    without warning. It also follows from \eqref{eq-49} that
    $\gamma^*_z(t)=\lambda^*_z(t-m(z)).$ These two facts allow us to write
    \begin{equation*}
    I_{\gamma^*}(fR)=I_{\lambda^*}(fR-mR),\quad f\in\Lls
\end{equation*}
where $\lambda^*_z$ is the convex conjugate of $\lambda_z.$
    \item The assumption \eqref{A-exists} is equivalent to $\langle
    y,\theta(\cdot)\rangle\in\Ll$ for all $y\in\YYo,$ see
    \eqref{A-critical} below.
\end{enumerate}
\end{remarks}

\subsection*{Examples}
We consider the entropies which were introduced after
\eqref{eq-42} and give the corresponding  functions $\lambda$ and
$\lmax.$
\begin{enumerate}
    \item The relative entropy: $\lambda(s)=e^s-s-1,$ $\lmax(s)=\lambda(|s|).$
    \item The reverse relative entropy:
    $\lambda(s)=-\log(1-s)-s+\iota_{\{s<1\}},$ $\lmax(s)=\lambda(|s|).$
    \item The Fermi-Dirac entropy: $\lambda(s)=\lmax(s)=\log\cosh s.$
    \item  The $L_p$ norm ($1<p<\infty$): $\lambda(s)=\lmax(s)=|s|^q/q$ with $1/p+1/q=1$ and
    \item the $L_p$ entropy ($1<p<\infty$): $\lambda(s)=\left\{\begin{array}{ll}
      0 & \textrm{if }s\le0 \\
      s^q/q & \textrm{if }s\ge0 \\
    \end{array}\right.,$ $\lmax(s)=|s|^q/q.$
\end{enumerate}

\subsection{The entropy minimization problems ($\pc$) and ($\pbc$)}

Because of the Remark \ref{rem-01}-(3), under (A$_{\gamma^*}^3$)
the effective domain of $I$ given at \eqref{eq-25} is included in
$\LlsR$ and the entropy functional to be considered is defined by
\begin{equation}\label{eq-16}
    I(fR)= \IZ \gamma^*(f)\,dR,\quad f\in\Lls.
        \index{I3@$I,$ entropy, see \eqref{eq-16}, \eqref{eq-08}}
\end{equation}
Assuming (A$_\theta$), that is
    \begin{equation}\label{A-critical}
    \langle\YYo,\theta(\cdot)\rangle\subset\Ll,
    \end{equation}
H\"older's inequality in Orlicz spaces, see \eqref{eq-43}, allows
to define the constraint operator
    $
    T:\ell\in\Ll'\mapsto\langle\theta,\ell\rangle\in\XXo
    $
by:
   \begin{equation}\label{eq-52}
\Big\langle y,\langle\theta,\ell\rangle\Big\rangle_{\YYo,\XXo}
 =\Big\langle\langle y,\theta\rangle ,\ell\Big\rangle_{\Ll,\Ll'},
  \forall y\in\YYo.
  \index{T1@$T,$ constraint operator, see \eqref{eq-52}}
  \index{T2@$\theta,$ constraint function, see \eqref{eq-52}}
\end{equation}
If $\ell=Q\in\LlsR\subset\MZ,$ one writes
$TQ=\langle\theta,Q\rangle=\IZ\theta\,dQ$ to mean \eqref{eq-52};
this is a $\XXo$-valued weak integral.
\begin{example}[Moment constraint] This is the easiest constraint
to think of. Let $ \theta=(\theta_k)_{1\leq k\leq K} $ be a
measurable function from $\ZZ$ to  $\XXo=\R^K.$ The moment
constraint is specified by the operator
$$
\IZ\theta\,d\ell=\left(\IZ\theta_k\,d\ell\right)_{1\leq k\leq
K}\in\R^K,
$$
which is defined for each $\ell\in\MZ$ which integrates all the
real valued measurable functions $\theta_k.$
\end{example}
The minimization problem \eqref{eq-24} becomes
\begin{equation}\label{pc}
      \textsl{minimize } I(Q)
    \textsl{ subject to } \IZ\theta\,dQ\in C,\quad Q\in \LlsR \tag{$\pc$}
    \index{Primal problems!P1@\eqref{pc}}
\end{equation}
where $C$ is a convex subset of $\XXo.$ The extended entropy is
defined by
\begin{equation}\label{III}
    \Ib(\ell)=I(\ell^a)+I^s(\ell^s),\quad \ell\in\Ll'
    \index{I4@$\Ib,$ extended entropy, see \eqref{III}, \eqref{eq-08}}
\end{equation}
where, using the notation of Theorem \ref{res-B2},
\begin{equation}\label{eq-51}
    I^s(\ell^s)=\iota^*_{\dom
I_\gamma}(\lsing)=\sup\left\{\langle\ell^s, u\rangle ; u\in\Ll,
I_\gamma(u)<\infty\right\}\in [0,\infty].
    \index{I5@$I^s,$ singular entropy, see \eqref{eq-51}}
\end{equation}
It is proved in \cite{Leo03} that $\Ib$ is the greatest convex
$\sigma(\Ll',\Ll)$-\lsc\ extension of $I$ to $\Ll'\supset\LlsR.$
The associated extended minimization problem is
\begin{equation}\label{pbc}
     \textsl{minimize } \Ib(\ell) \textsl{ subject to }
  \langle\theta,\ell\rangle\in C, \quad\ell\in\Ll'
  \tag{$\pbc$}
  \index{Primal problems!P2@\eqref{pbc}}
\end{equation}

\subsection{Good and critical constraints}\label{sec-const}

If the Young function $\lmax$ doesn't satisfy the
$\Delta_2$-condition  \eqref{delta2} as for instance with the
relative entropy and the reverse relative entropy (see items (1)
and (2) of the Examples above), the \emph{small} Orlicz space
$\El$ may be a proper subset of $\Ll.$ Consequently, for some
functions $\theta,$ the integrability property
\begin{equation}\label{A-good}
    \langle\YYo,\theta(\cdot)\rangle\subset\El
\end{equation}
or equivalently
\begin{equation}\label{A-forall}
 \forall y\in\YYo, \IZ \lambda(\langle y,
\theta\rangle)\,
  dR<\infty  \tag{A$_\theta^\forall$}
  \index{Assumptions (A)!A4@(A$_\theta$) on $\theta$!AA1@\eqref{A-forall}}
  \end{equation}
may not be satisfied while the weaker property
\begin{equation}\label{A-exists}
    \forall y\in\YYo, \exists\alpha>0, \IZ \lambda(\alpha\langle y,
\theta\rangle)\,
  dR<\infty \tag{A$^\exists_\theta$}
  \index{Assumptions (A)!A4@(A$_\theta$) on $\theta$!AA2@\eqref{A-exists}}
\end{equation}
 which is equivalent to \eqref{A-critical}, holds. In this situation, analytical
complications occur, see Section \ref{sec:critical}. This is the
reason why constraint satisfying \eqref{A-forall} are called
\emph{good constraints}, while constraints satisfying
\eqref{A-exists} but not \eqref{A-forall} are called
\emph{critical constraints}.

\subsection{Definitions of $\YY, \XX,$  $T^*,$ $\Gamma^*$ and ($\dc$)}\label{sec-def}
These objects will be necessary to state the relevant dual
problems. The general hypotheses (A) are assumed.

\par\smallskip\noindent \textsf{The space $\YY$.}\index{YY@$\YY,$ completion of $\YYo$}\
Because of the hypotheses (A$_\theta^2$) and \eqref{A-exists},
$\YYo$ can be identified with the subspace
$\langle\YYo,\theta(\cdot)\rangle$ of $\Ll.$ The space $\YY$ is
the extension of $\YYo$ which is isomorphic to the
$\|\cdot\|_\lmax$-closure of $\langle\YYo,\theta(\cdot)\rangle$ in
$\Ll.$

\par\smallskip\noindent \textsf{The space $\XX$.}\index{XX@$\XX,$ topological dual of $\YYo$ and $\YY$}\
The topological dual space of $\YY$ is $\XX=\YY'\subset\XXo.$
$\XX$ is identified with $\Ll'/\mathrm{ker\,}T.$
\\
Since $\lmax$ is finite, we can apply Proposition \ref{res-B4}. It
tells us that under the assumption \eqref{A-forall},
$T\Ll^s=\{0\}$ so that $\XX\cong\LlsR/\mathrm{ker\,}T.$

\par\smallskip\noindent\textsf{The operator $T^\ast.$}\index{T3@$T^\ast,$ adjoint of $T$}\
Let us define the adjoint $T^\sharp
:\XX^\ast\rightarrow\Ll^{\prime\ast}$ for all $\omega\in\XX^\ast$
by: $
  \langle  \ell,T^\sharp\omega\rangle_{\Ll',\Ll^{\prime\ast}}=\langle T\ell,\omega\rangle_{\XX,\XX^\ast},
  \forall \ell\in\Ll'.
$ We have the inclusions $\YYo\subset\YY\subset\XX^\ast.$ The
adjoint operator $T^*$ is the restriction of $T^\sharp$ to $\YYo.$
With some abuse of notation, we still denote $T^*y=\yt$ for
$y\in\YY.$ This can be interpreted as a dual bracket between
$\XXo^*$ and $\XXo$ since $T^*y=\langle\tilde{y},\theta\rangle$
$R$-a.e.\! for some $\tilde{y}\in\XXo^*.$

\par\smallskip\noindent\textsf{The function $\Gamma^\ast.$}\
 The basic dual problem associated with \eqref{pc} and
\eqref{pbc} is
\begin{equation*}
    \textsl{maximize } \inf_{x\in C}\yx-\Gamma(y),
   \quad y\in\YYo
\end{equation*}
where
\begin{equation}\label{eq-45}
    \Gamma(y) =I_\gamma(\yt),\quad y\in\YYo.
    \index{Gamma@$\Gamma,$ see \eqref{eq-45}}
\end{equation}
Let us denote
\begin{equation}\label{eq-53}
  \Gamma^*(x) =
  \sup_{y\in\YYo}\left\{\yx-I_\gamma(\yt)\right\},\quad x\in\XXo
  \index{Gammaast@$\Gamma^\ast,$ see \eqref{eq-53}}
\end{equation}
its convex conjugate. It is shown in \cite[Section 4]{p-Leo07a}
that $\dom\Gamma^*\subset\XX.$

\par\smallskip\noindent\textsf{The dual problem ($\dc$).}\
Another dual problem associated with \eqref{pc} and \eqref{pbc} is
\begin{equation}\label{dc}
    \textsl{maximize } \inf_{x\in \CX}\yx-I_\gamma(\yt),
   \quad y\in\YY\tag{$\dc$}
   \index{Dual problems!D1@\eqref{dc}}
\end{equation}

\subsection{Solving ($\pc$)}\label{sec-pc} In this section,
we study \eqref{pc} under the good constraint hypothesis
\eqref{A-forall} which imposes that $T^*\YY\subset \El$ and
$T(\LlsR)\subset\XX.$

\par\smallskip\noindent\textsf{The extended dual problem ($\dtc$).}\
The extended dual problem associated with \eqref{pc} is
\begin{equation}\label{dtc}
    \textsl{maximize } \inf_{x\in \CX}\langle\omega,x\rangle-I_\gamma(\langle\omega,\theta\rangle),
   \quad \omega\in\YYt\tag{$\dtc$}
   \index{Dual problems!D2@\eqref{dtc}}
\end{equation}
where $\YYt\subset\XX^*\index{YYt@$\YYt$}$ is some cone which
contains $\YYo.$ Its exact definition is given at  Appendix
\ref{annex}.

\begin{theorem}\label{res-02}
Suppose that
\begin{enumerate}
    \item the hypotheses \emph{(A)} and \eqref{A-forall} are satisfied;
    \item The convex set $C$ is assumed to be such that
 \begin{equation}\label{eq-05}
   T^{-1}C\cap\LlsR=\bigcap_{y\in Y}\left\{fR\in\LlsR;\IZ \yt f\,dR\ge a_y\right\}
\end{equation}
for some subset $Y\in\XXo^*$ with $\yt\in \El$ for all $y\in Y$
and some function $y\in Y\mapsto a_y\in\R.$ In other words,
$T^{-1}C\cap\LlsR$ is a $\sigma(\LlsR,\El)$-closed convex subset
of $\LlsR.$
\end{enumerate}
Then:
\begin{enumerate}[(a)]
 \item The dual equality for \eqref{pc} is
    \begin{equation*}
    \inf(\pc)=\sup(\dc)=\sup(\dtc)=\inf_{x\in C}\Gamma^*(x)\in [0,\infty].
    \end{equation*}
 \item If $C\cap \dom\Gamma^*\not=\emptyset$ or equivalently $C\cap T\dom I\not=\emptyset,$
 then \eqref{pc}
 admits a unique solution $\Qh\index{Qh@$\Qh,$ minimizer of \eqref{pc}}$ in $\LlsR$ and any minimizing sequence $\seq Qn$ converges to
 $\Qh$ with respect to the topology   $\sigma(\LlsR,\Ll).$
\end{enumerate}
 Suppose that in addition $C\cap \icordom\Gamma^*\not=\emptyset$ or equivalently $C\cap \icor(T\dom I)\not=\emptyset.$
\begin{enumerate}[(a)]
 \item[(c)] Let us define $\xh\index{xh@$\xh,$ see \eqref{eq-36}, \eqref{eq-40}}:= \IZ\theta\,d\Qh$ in the weak sense with respect to
    the duality $\langle\YYo,\XXo\rangle.$
 There exists  $\ot\in\YYt\index{o1@$\ot,$ see \eqref{eq-36}}$ such that
 \begin{equation}\label{eq-36}
    \left\{\begin{array}{cl}
      (a) & \xh\in C\cap\dom\Gamma^* \\
      (b) & \langle \ot,\xh\rangle_{\XX^*,\XX} \leq \langle \ot,x\rangle_{\XX^*,\XX}, \forall x\in C\cap\dom\Gamma^* \\
      (c) & \Qh(dz)=\gamma'_z(\langle\ot, \theta(z)\rangle)\,R(dz). \\
    \end{array}\right.
\end{equation}
Furthermore, $\Qh\in\LlsR$ and $\ot\in\YYt$ satisfy \eqref{eq-36}
if and only if $\Qh$ solves \eqref{pc} and $\ot$ solves
\eqref{dtc}.
    \item[(d)]
    Of course, \eqref{eq-36}-c implies
\begin{equation}\label{eq:rep-xb}
    \xh=\IZ \theta\gamma'(\ott)\,dR
\end{equation}
in the weak sense.
    Moreover,
    \begin{enumerate}[1.]
        \item $\xh$ minimizes $\Gamma^*$ on $C,$
        \item
        $I(\Qh)=\Gamma^*(\xh)=\IZ\gamma^*\circ\gamma'(\ott)\,dR<\infty$
        and
        \item $I(\Qh)+\IZ\gamma(\ott)\,dR=\IZ\ott\,d\Qh.$
    \end{enumerate}
\end{enumerate}
\end{theorem}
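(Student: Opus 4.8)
The plan is to deduce Theorem \ref{res-02} from the abstract convex duality machinery applied to the pair \eqref{pc}--\eqref{dtc}, viewing \eqref{pc} as the minimization of the convex integral functional $I$ over the $\sigma(\LlsR,\El)$-closed convex set $T^{-1}C\cap\LlsR$. The hypothesis \eqref{A-forall} is what makes the duality tractable: it forces $T^*\YY\subset\El$ and $T(\LlsR)\subset\XX$, so the good pairing $\langle\LlsR,\El\rangle$ (equivalently $\langle\XX,\YY\rangle$ after quotienting by $\ker T$) is available, and by Proposition \ref{res-B4} the singular forms play no role, i.e.\! $T\Ll^s=\{0\}$. I would organize the argument in the natural order (a)$\to$(b)$\to$(c)$\to$(d).

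\textbf{Parts (a) and (b).} For the dual equality (a), first rewrite $\inf(\pc)=\inf_{\ell\in\Ll'}\{\Ib(\ell)+\iota_{T^{-1}C}(\ell)\}$ and recognize, using Remark \ref{rem-01}-(3) that $I_{\gamma^*}(fR)=I_{\lambda^*}(fR-mR)$ together with the fact that $\Gamma=I_\gamma$ is the convex conjugate of $I_{\gamma^*}$ along the pairing $\langle\El,\LlsR\rangle$, that the Fenchel--Moreau/Rockafellar duality theorem for integral functionals gives $\inf(\pc)=\sup(\dc)$; the identity $\sup(\dc)=\sup(\dtc)$ follows because enlarging $\YY$ to the cone $\YYt$ does not change the supremum (the extended dual is constructed precisely so that its value coincides — this is recorded in Appendix \ref{annex}), and $\sup(\dc)=\inf_{x\in C}\Gamma^*(x)$ is a direct computation interchanging the two infima/suprema once one notes $\inf_{x\in\CX}\yx-I_\gamma(\yt)$ has conjugate $\Gamma^*$ by \eqref{eq-53}. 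The ``equivalently'' clauses are just the identities $\dom\Gamma^*\subset\XX$ (cited from \cite[Section 4]{p-Leo07a}) and $\dom\Gamma^*\cap C\ne\emptyset \iff T\dom I\cap C\ne\emptyset$, which follows from $\Gamma^*=I\circ(\text{partial inverse of }T)$ on $\XX$. For (b), the no-gap condition $C\cap\dom\Gamma^*\ne\emptyset$ makes $\inf(\pc)<\infty$; then existence and uniqueness of $\Qh$ come from strict convexity of $\gamma^*_z$ (assumption (A$_{\gamma^*}^1$)) plus a compactness/lower-semicontinuity argument: minimizing sequences are bounded in $\Ll'$, have a $\sigma(\Ll',\Ll)$-cluster point, $\Ib$ is $\sigma(\Ll',\Ll)$-\lsc, and strict convexity forces the whole minimizing sequence to converge in $\sigma(\LlsR,\Ll)$ — here one also uses that under \eqref{A-forall} the minimizer has no singular part, so it lies in $\LlsR$.

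\textbf{Parts (c) and (d).} Under the stronger qualification $C\cap\icordom\Gamma^*\ne\emptyset$, I would invoke the subdifferential/Kuhn--Tucker characterization: a point $\xh\in C$ minimizes $\Gamma^*$ over $C$ iff there is a ``Lagrange multiplier'' $\ot$ in the normal cone, which lives in $\YYt\subset\XX^*$ precisely because the constraint directions $\yt=\langle y,\theta\rangle$ need only lie in $\Ll$ (not $\El$), so $\ot$ may fail to be in $\YY$; the interior-type condition guarantees the multiplier exists (no duality gap \emph{and} attainment on the dual side). Setting $\xh:=\int\theta\,d\Qh$, items (a)--(b) of \eqref{eq-36} are the primal feasibility and the variational inequality for $\xh$; item (c), the representation $\Qh=\gamma'_z(\langle\ot,\theta\rangle)\,R$, comes from the stationarity condition $0\in\partial\Ib(\Qh)-T^\sharp\ot$, i.e.\! $\langle\ot,\theta\rangle\in\partial\gamma^*_z(\Qh/R)$ pointwise, inverted to $\Qh/R=\gamma'_z(\langle\ot,\theta\rangle)$ (the derivative exists a.e.\! by strict convexity of $\gamma^*_z$, equivalently essential smoothness of $\gamma_z$). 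The ``if and only if'' is the standard equivalence between the KKT system and joint optimality of primal and extended-dual solutions. Finally (d) is bookkeeping: \eqref{eq:rep-xb} is just (c) substituted into the definition of $\xh$; the equality $I(\Qh)=\Gamma^*(\xh)=\int\gamma^*\circ\gamma'(\langle\ot,\theta\rangle)\,dR$ follows from the dual equality (a) combined with the pointwise Fenchel identity $\gamma^*(\gamma'(t))=t\gamma'(t)-\gamma(t)$, and the last display $I(\Qh)+\int\gamma(\langle\ot,\theta\rangle)\,dR=\int\langle\ot,\theta\rangle\,d\Qh$ is the same Fenchel equality integrated, using $\Qh=\gamma'(\langle\ot,\theta\rangle)R$.

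\textbf{Main obstacle.} The delicate point is part (c): justifying that the multiplier $\ot$ exists and lives in the correct cone $\YYt$, and that the pointwise inversion of the subdifferential inclusion is legitimate (exchanging $\partial$ of the integral functional with the pointwise $\partial\gamma^*_z$, which requires the normal-integrand interchange theorem of \cite{Roc68}, and needing $\langle\ot,\theta\rangle$ to be well-defined as an element of $\Ll$ even though $\ot\notin\YY$). Equally, proving $\sup(\dc)=\sup(\dtc)$ rigorously rests on the precise construction of $\YYt$ in the Appendix; I would treat that construction as given. Everything else — the lower-semicontinuity, the strict-convexity uniqueness, the Fenchel identities — is routine once the functional-analytic framework (Orlicz duality of Theorems \ref{res-B1}--\ref{res-B2}, Proposition \ref{res-B4}) is in place.
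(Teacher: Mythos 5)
The first thing to note is that this paper does not prove Theorem \ref{res-02} at all: its ``proof'' is the single line ``This result is \cite[Theorem 3.2]{Leo08}'', so the statement is imported wholesale from the earlier paper, and there is no in-text argument to compare yours with step by step. Your outline does follow the same general strategy as that reference (Fenchel duality for the integral functional $I$ in the Orlicz pairing, an extended dual problem to recover attainment, Young--Fenchel equality to get the representation \eqref{eq-36}-(c)), but as it stands it is a statement of the strategy rather than a proof, because the two points that carry all the difficulty are deferred to facts that are not actually available where you say they are.

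Concretely: (i) you assert that $\sup(\dc)=\sup(\dtc)$ and the existence of the maximizer $\ot\in\YYt$ ``follow because the extended dual is constructed precisely so that its value coincides --- this is recorded in Appendix \ref{annex}''. It is not: Appendix \ref{annex} only \emph{defines} the cone $\YYt$ (as a closure of $T^*\YYo$ in a nonstandard topology); it contains no proof that the dual value is unchanged, nor that the sup is attained there. Establishing dual attainment under the purely algebraic qualification $C\cap\icordom\Gamma^*\neq\emptyset$ --- with no topological interior point, in a non-reflexive Orlicz setting where the natural multiplier space $\YY$ is too small --- is precisely the substantive content of \cite[Theorem 3.2]{Leo08}, so invoking it amounts to assuming the theorem. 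The subsequent pointwise inversion of the subdifferential inclusion, and even the meaning of $\langle\ot,\theta\rangle$ as an element of $\Ll$ when $\ot\notin\YY$, depend on that construction and on the approximation statements of the type recorded in Proposition \ref{res-07bis}; you flag this as the ``main obstacle'' but do not close it. (ii) In part (b), the claimed convergence of \emph{every} minimizing sequence to $\Qh$ in the topology $\sigma(\LlsR,\Ll)$ (not merely $\sLE$) is not delivered by your argument. Strict convexity of $\gamma^*$ identifies the absolutely continuous parts of $\sigma(\Ll',\Ll)$-cluster points, but $\Ib$ is positively homogeneous, hence affine, in singular directions, so strict convexity cannot rule out nonzero singular components of cluster points; and $I^s(\ell^s)=0$ does not by itself force $\ell^s=0$, since $I^s$ is only a support function of the (asymmetric) set $\dom I_\gamma$. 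Showing that under the good-constraint hypothesis \eqref{A-forall} the cluster points carry no singular mass is an additional argument (again part of \cite{Leo08}) that your sketch omits. The bookkeeping in (a) and (d) --- the chain $\inf(\pc)=\sup(\dc)=\inf_C\Gamma^*$, the Fenchel identity $\gamma^*(\gamma'(s))=s\gamma'(s)-\gamma(s)$ integrated against $R$ --- is indeed routine once (i) and (ii) are in place, but with those two gaps open the proposal does not yet constitute a proof.
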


\begin{proof}
This result is \cite[Theorem 3.2]{Leo08}.
\end{proof}

Figure \ref{fig1} illustrates the items (a) and (b) of
\eqref{eq-36} which, with $T\Qh=\xh,TQ=x\in C,$ can be rewritten:
$\Qh\in T^{-1}C$ and $\langle T^*\ot, Q-\Qh\rangle\ge0, \forall
Q\in T^{-1}C:$ the shaded area.

\begin{figure}[h]
\begin{center}
\scalebox{1} % Change this value to rescale the drawing.
{
\begin{pspicture}(0,-1.72)(12.442813,1.72)
\definecolor{color5b}{rgb}{0.8,0.8,0.8}
\psellipse[linewidth=0.04,dimen=outer](5.4209375,-0.58)(1.6,0.62)
\psellipse[linewidth=0.04,dimen=outer](6.6409373,-0.72)(3.42,1.0)
\psellipse[linewidth=0.04,dimen=outer,fillstyle=solid,fillcolor=color5b](8.940937,0.88)(1.2,0.84)
\psline[linewidth=0.03cm](4.8209376,1.0)(11.700937,-0.64)
\psdots[dotsize=0.12](4.6209373,-0.6)
\psdots[dotsize=0.15](8.580937,0.08)
\psline[linewidth=0.03cm,arrowsize=0.05291667cm
4.0,arrowlength=1.4,arrowinset=0.4]{->}(10.900937,-0.44)(11.300938,1.08)
\psdots[dotsize=0.12](8.180938,0.92)
\psline[linewidth=0.03cm,arrowsize=0.05291667cm
4.0,arrowlength=1.4,arrowinset=0.4]{->}(8.540937,0.12)(8.180938,0.92)
\usefont{T1}{ptm}{m}{n}
\rput(5.0223436,-0.77){$R$}
\usefont{T1}{ptm}{m}{n}
\rput(8.612344,-0.33){$\Qh$}
\usefont{T1}{ptm}{m}{n}
\rput(8.672344,0.91){$Q$}
\usefont{T1}{ptm}{m}{n}
\rput(7.1923437,1.51){$T^{-1}C$}
\usefont{T1}{ptm}{m}{n}
\rput(11.742344,0.07){$T^*\ot$}
\psbezier[linewidth=0.024,arrowsize=0.05291667cm
2.0,arrowlength=1.4,arrowinset=0.4]{->}(2.4209375,-0.32)(3.2209375,0.08)(3.2609375,-0.08)(3.9409375,-0.4)
\psbezier[linewidth=0.024,arrowsize=0.05291667cm
2.0,arrowlength=1.4,arrowinset=0.4]{->}(2.4609375,-0.32)(2.9809375,-0.4)(3.1409376,-0.32)(3.4209375,-0.44)
\usefont{T1}{ptm}{m}{n}
\rput(1.4223437,-0.25){$I=\textrm{const.}$}
\end{pspicture}
}
\end{center}
 \caption{}\label{fig1}
 \end{figure}
As $R$ is the unconstrained minimizer of $I,$ the sub-level sets
of $I$ form an increasing family of convex sets  which expends
from $R.$ One sees that the hyperplane $\{Q: \langle T^*\ot,
Q-\Qh\rangle=0\}$ separates the convex set $T^{-1}C$ and the
convex level set of $I$ which is ``tangent" to $T^{-1}C.$
Informally, the gradient of $I$ at $\Qh$ is a normal vector of
$T^{-1}C$ at $\Qh:$ it must be a multiple of $T^*\ot.$ Expressing
this orthogonality relation by means of the convex conjugation
 \eqref{eq-53} leads to the representation formula
\eqref{eq-36}-(c). Clearly, Hahn-Banach theorem has something to
do with the existence of $T^*\ot.$ This picture is a guide for the
proof in \cite{Leo08}.

 Following the terminology of Ney \cite{Ney83,Ney84}, as
it shares the properties \eqref{eq-36}-(a,b) and
\eqref{eq:rep-xb}, the minimizer $\xh$ is called a
\emph{dominating point} of $C$ with respect to $\Gamma^*$ (see
Definition \ref{defdomp} below).

\subsection{Solving ($\pbc$)}\label{sec-pbc}
In this section, we study \eqref{pbc} under the critical
constraint hypothesis \eqref{A-exists} which imposes that
$T^*\YY\subset \Ll.$

By Theorem \ref{res-B2}, we have $L_\rho''=[L_{\rho}\oplus
L_{\rho^*}^s]\oplus L_{\rho}^{s\prime}.$ For any $\zeta\in
L_{\rho}''=( L_{\rho^*}R\oplus L_{\rho}^s)',$ let us denote the
restrictions $\zeta_1=\zeta_{| L_{\rho^*}R}$ and $\zeta_2=\zeta_{|
L_{\rho}^s}.$ Since, $( L_{\rho^*}R)'\simeq L_{\rho}\oplus
L_{\rho^*}^s,$ we see that any $\zeta\in L_{\rho}''$ is uniquely
decomposed into
\begin{equation}\label{eq-dec}
    \zeta=\zeta_1^a+\zeta_1^s+\zeta_2
    \index{Orlicz spaces!decomposition!l3@$\zeta_1^a,\zeta_1^s,\zeta_2,$ see \eqref{eq-dec}}
\end{equation}
with $\zeta_1=\zeta_1^a+\zeta_1^s\in L_{\rho^*}',$ $\zeta_1^a\in
L_{\rho},$ $\zeta_1^s\in L_{\rho^*}^s$ and $\zeta_2\in
L_{\rho}^{s\prime}.$

\par\smallskip\noindent\textsf{The extended dual problem ($\dbc$).}\
 The extended dual problem associated with
\eqref{pbc} is
\begin{equation}\label{dbc}
   \textsl{maximize } \inf_{x\in \CX}\langle\omega,x\rangle-
   I_\gamma\big([T^*\omega]_1^a\big),
   \quad \omega\in\YYb \tag{$\dbc$}
   \index{Dual problems!D3@\eqref{dbc}}
\end{equation}
where $\YYb\subset\XX^*\index{YYb@$\YYb$}$ is some cone which
contains $\YYo.$ Its exact definition is given at  Appendix
\ref{annex}.

\begin{theorem}\label{res-03}
Suppose that
\begin{enumerate}
    \item the hypotheses \emph{(A)} are satisfied;
    \item The convex set $C$ is assumed to be such that
 \begin{equation}\label{eq-10}
   T^{-1}C\cap\Ll'=\bigcap_{y\in Y}\left\{\ell\in\Ll';\langle\yt,\ell\rangle \ge a_y\right\}
\end{equation}
for some subset $Y\subset\XXo^*$ with $\yt\in\Ll$ for all $y\in Y$
and some function $y\in Y\mapsto a_y\in\R.$ In other words,
$T^{-1}C$ is a $\sigma(\Ll',\Ll)$-closed convex subset of $\Ll'.$
\end{enumerate}
Then:
\begin{enumerate}[]
\item[(a)] The dual equality for \eqref{pbc} is
\begin{equation*}
    \inf(\pbc)=\inf_{x\in C}\Gamma^*(x)=\sup(\dc)=\sup(\dbc)\in [0,\infty].
    \end{equation*}
 \item[(b)] If $C\cap \dom\Gamma^*\not=\emptyset$ or equivalently $C\cap T\dom \Ib\not=\emptyset,$
 then \eqref{pbc}
 admits  solutions in $\Ll',$ any minimizing sequence admits $\sigma(\Ll',\Ll)$-cluster points
 and every such point is a solution to \eqref{pbc}.
\end{enumerate}
 Suppose that in addition we have
    $
    C\cap \icordom\Gamma^*\not=\emptyset
    $
 or equivalently $C\cap \icor(T\dom \Ib)\not=\emptyset.$ Then:
\begin{enumerate}[]
 \item[(c)]
 Let $\lh\in\Ll'\index{lh@$\lh,$ solution to \eqref{pbc}, see \eqref{eq-40}}$ be any solution to \eqref{pbc}
 and denote $\xh:= T\lh\index{xh@$\xh,$ see \eqref{eq-36}, \eqref{eq-40}}.$ There exists
    $\ob\in\YYb\index{o2@$\ob,$ see \eqref{eq-40}, \eqref{eq-40bis}}$
such that
 \begin{equation}\label{eq-40}
    \left\{\begin{array}{cl}
      (a) & \xh\in C\cap\dom\Gamma^* \\
      (b) & \langle \ob,\xh\rangle_{\XX^*,\XX} \leq \langle \ob,x\rangle_{\XX^*,\XX}, \forall x\in C\cap\dom\Gamma^* \\
      (c) & \lh\in\gamma'_z([T^*\ob]_1^a)\,R+D^\bot([T^*\ob]_2) \\
    \end{array}\right.
\end{equation}
where we used notation \eqref{eq-dec} and $D^\bot([T^*\ob]_2)$ is
some cone in $\Ll^s$ which is pointed at zero and whose direction
depends on $[T^*\ob]_2$ (see Appendix \ref{annex} for its precise
definition).
\\
There exists some $\ot\in\XXo^*$ such that
$$
[T^*\ob]_1^a=\langle\ot,\theta(\cdot)\rangle_{\XXo^*,\XXo}
$$
is a measurable function which can be approximated in some sense
(see Appendix \ref{annex}) by sequences $(\langle
y_n,\theta(\cdot)\rangle)_{n\ge1}$ with $y_n\in\YYo$ and
$\IZ\lambda(\langle y_n,\theta\rangle)\,dR<\infty$ for all $n.$
\\
Furthermore, $\lh\in\Ll'$ and $\ob\in\YYb$ satisfy \eqref{eq-40}
if and only if $\lh$ solves \eqref{pbc} and $\ob$ solves
\eqref{dbc}.
    \item[(d)]
     Of course, \eqref{eq-40}-c implies
    $
\xh=\IZ \theta\gamma'(\ott)\,dR+\langle\theta,\lh^s\rangle.
    $
    Moreover,
    \begin{enumerate}[1.]
        \item $\xh$ minimizes $\Gamma^*$ on $C,$
        \item
        $\Ib(\lh)=\Gamma^*(\xh)=\IZ\gamma^*\circ\gamma'(\ott)\,dR+\sup\{\langle u,\lh^s\rangle;u\in\dom I_\gamma\}<\infty$
        and
        \item $\Ib(\lh)+\IZ\gamma(\ott)\,dR=\IZ\ott\,d\lh^a+\langle [T^*\ob]_2,\lh^s \rangle.$
    \end{enumerate}
\end{enumerate}
\end{theorem}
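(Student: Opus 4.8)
This is established in \cite{Leo08}; I sketch the route one would follow. The idea is to run the proof of Theorem \ref{res-02} one level higher in the duality tower. By Remark \ref{rem-01}-(4) the critical hypothesis \eqref{A-exists} says exactly that the multipliers $\langle y,\theta\rangle$, $y\in\YYo$, now lie in $\Ll$ rather than in the small Orlicz space $\El$; accordingly the density space $\LlsR$ is replaced by the full dual $\Ll'=\LlsR\oplus\Ll^s$ of Theorem \ref{res-B2}, and $I$ by its greatest convex $\sigma(\Ll',\Ll)$-\lsc\ extension $\Ib$ of \eqref{III}, whose existence is guaranteed by \cite{Leo03}. Two properties of $\Ib$ carry the whole argument: (i) $\Ib$ is convex and $\sigma(\Ll',\Ll)$-\lsc, so that $\Ib^{**}=\Ib$ for the duality $\langle\Ll',\Ll\rangle$; and (ii) the convex conjugate of $\Ib$ for $\langle\Ll',\Ll\rangle$ is $\Ib^*=I_\gamma$ on $\Ll$, because the absolutely continuous block produces $\sup_{f\in\Lls}\{\IZ uf\,dR-I(fR)\}=I_\gamma(u)$ by interchange of supremum and integral for the normal integrand $\gamma^*$, while the singular block contributes, through \eqref{eq-51}, exactly $\iota_{\cl(\dom I_\gamma)}$, which is absorbed by $I_\gamma$. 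In particular $\Ib=(I_\gamma)^*$, and this is the fact I would use for compactness.

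\emph{Dual equality (a).} Weak duality $\sup(\dbc)\le\sup(\dc)\le\inf_{x\in C}\Gamma^*(x)\le\inf(\pbc)$ comes from the Fenchel--Young inequality: for $y\in\YYo$ and a feasible $\ell$ (i.e.\ $\tl\in C$), \eqref{A-exists} gives $\yt\in\Ll$, whence $\Ib(\ell)+I_\gamma(\yt)\ge\langle\yt,\ell\rangle=\langle y,\tl\rangle$ by \eqref{eq-52}, and a supremum over $y$ yields $\Ib(\ell)\ge\Gamma^*(\tl)\ge\inf_{x\in C}\Gamma^*(x)$; one then passes from $\YYo$ to $\YY$ by $\|\cdot\|_\lmax$-density and to $\YYb$ as in Appendix \ref{annex}. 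For the reverse inequalities, the standard device is to prove $\inf\{\Ib(\ell);T\ell=x\}=\Gamma^*(x)$ for $x\in\dom\Gamma^*$, a Fenchel--Rockafellar duality for the composition $\Gamma=I_\gamma\circ T^*$ which relies on $\dom\Gamma^*\subset\XX$ (\cite[Section 4]{p-Leo07a}), and then to minimize over $x\in C$; the $\sigma(\Ll',\Ll)$-closedness of $T^{-1}C$ granted by \eqref{eq-10} is precisely what legitimizes the minimax exchange turning $\inf_{x\in C}\sup_{y}\{\langle y,x\rangle-I_\gamma(\yt)\}$ into $\sup(\dc)$, and likewise into $\sup(\dbc)$.

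\emph{Existence (b).} Writing $\gamma_z(\cdot)=\lambda_z(\cdot)+m(z)(\cdot)$ as in Remark \ref{rem-01}-(3), with $\lambda$ dominated by the Young function $\lmax$ of $\Ll$ and $m\in\Lls$ by (A$_{\gamma^*}^3$), H\"older's inequality \eqref{eq-43} and the very definition of the Luxemburg norm show that $I_\gamma$ is bounded above on the unit ball of $\Ll$; hence $\Ib=(I_\gamma)^*$ is norm-coercive on $\Ll'$, its sublevel sets $\{\Ib\le r\}$ are norm-bounded, and, being $\sigma(\Ll',\Ll)$-closed by (i), they are $\sigma(\Ll',\Ll)$-compact by the Banach--Alaoglu theorem in $\Ll'=(\Ll)'$. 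Since $T:\Ll'\to\XXo$ is continuous for the topologies $\sigma(\Ll',\Ll)$ and $\sigma(\XXo,\YYo)$ (because $\langle y,\tl\rangle=\langle\yt,\ell\rangle$ with $\yt\in\Ll$), and $T^{-1}C$ is $\sigma(\Ll',\Ll)$-closed by \eqref{eq-10}, minimizing $\Ib$ over the $\sigma(\Ll',\Ll)$-compact set $T^{-1}C\cap\{\Ib\le r\}$ for $r>\inf(\pbc)$ produces a solution as soon as the feasible set meets $\dom\Ib$, that is $C\cap T\dom\Ib=C\cap\dom\Gamma^*\ne\emptyset$; moreover any minimizing sequence is eventually trapped in such a compact set, hence has $\sigma(\Ll',\Ll)$-cluster points, each of which is feasible ($T$ continuous, $T^{-1}C$ closed) and, by (i), optimal.

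\emph{Representation (c)--(d).} This is where I expect the real work, since it rests on the bidual bookkeeping \eqref{eq-dec} and on the cones $\YYb$ and $D^\bot(\cdot)$ constructed in Appendix \ref{annex}. Under $C\cap\icordom\Gamma^*\ne\emptyset$ the qualification is strong enough to make \eqref{dbc} attained, say at $\ob\in\YYb$, and to turn all the Fenchel--Young inequalities of step (a) into equalities at the pair $(\lh,\ob)$. Feasibility of $\lh$ gives $\xh=T\lh\in C$, and $\Gamma^*(\xh)\le\Ib(\lh)=\inf(\pbc)=\inf_{x\in C}\Gamma^*(x)$ gives at once $\xh\in\dom\Gamma^*$ (completing \eqref{eq-40}-a) and the claim in (d) that $\xh$ minimizes $\Gamma^*$ on $C$. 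The saddle-point relations then read: the bidual element $[T^*\ob]$, decomposed as $[T^*\ob]_1^a+[T^*\ob]_1^s+[T^*\ob]_2$ in the sense of \eqref{eq-dec}, belongs to $\partial\Ib(\lh)$, while $\xh$ minimizes $x\mapsto\langle\ob,x\rangle$ on $C\cap\dom\Gamma^*$, which is \eqref{eq-40}-b. It remains to unwind $[T^*\ob]\in\partial\Ib(\lh)$: since $\Ib(\ell)=I(\ell^a)+I^s(\ell^s)$ splits over the direct sum $\LlsR\oplus\Ll^s$, so does its subdifferential, so the absolutely continuous block yields $[T^*\ob]_1^a\in\partial I(\lh^a)$ --- equivalently, by the pointwise inversion relating $\partial\gamma_z$ and $\partial\gamma^*_z$ (licit thanks to the strict convexity in (A$_{\gamma^*}$)), $\lh^a=\gamma'_z([T^*\ob]_1^a)\,R$ --- while the singular block $I^s=\iota^*_{\dom I_\gamma}$ forces $\lh^s$ into the normal cone to $\dom I_\gamma$ at the point selected by $[T^*\ob]_2$, which is precisely $D^\bot([T^*\ob]_2)$; this is \eqref{eq-40}-c. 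That $[T^*\ob]_1^a=\langle\ot,\theta\rangle$ is a genuine measurable function, $\|\cdot\|_\lmax$-approximable by functions $\langle y_n,\theta\rangle$ with $\IZ\lambda(\langle y_n,\theta\rangle)\,dR<\infty$, is built into the definition of $\YYb$ in Appendix \ref{annex}. As each of these steps is reversible, $(\lh,\ob)$ satisfies \eqref{eq-40} if and only if $\lh$ solves \eqref{pbc} and $\ob$ solves \eqref{dbc}. Finally (d) amounts to reading off the now-sharp Fenchel--Young equalities: $\Gamma^*(\xh)=\Ib(\lh)$, the value formula $\Ib(\lh)=\IZ\gamma^*\!\circ\gamma'(\ott)\,dR+\sup\{\langle u,\lh^s\rangle;u\in\dom I_\gamma\}$, and $\Ib(\lh)+\IZ\gamma(\ott)\,dR=\IZ\ott\,d\lh^a+\langle[T^*\ob]_2,\lh^s\rangle$, together with $\xh=\IZ\theta\gamma'(\ott)\,dR+\langle\theta,\lh^s\rangle$ coming from \eqref{eq-40}-c.
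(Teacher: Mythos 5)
The paper gives no internal proof of this theorem: it simply cites \cite[Theorem 4.2]{Leo08}, so your proposal cannot be compared with an argument in the text, only with the machinery the paper recalls elsewhere. Judged on that basis, your sketch is faithful to that machinery and essentially reconstructs the route of \cite{Leo08}: your identification $\Ib=(I_\gamma)^*$ is \eqref{eq-08}, your coercivity-plus-Banach--Alaoglu argument for part (b) is exactly the inf-compactness of $\Ib$ that the paper quotes as \cite[Lemma 6.2]{Leo08}, and your treatment of (a) parallels the value-function argument the paper itself uses later (Lemma \ref{res:lemlsc1} and Theorem \ref{res-01}); note, though, that for the minimax exchange in (a) closedness of $T^{-1}C$ alone is not enough --- you also need the lower semicontinuity of the value function, which comes from the same inf-compactness you invoke only in (b). Where your proposal stays genuinely below the level of a proof is (c)--(d): the attainment of the extended dual problem \eqref{dbc} in the enlarged cone $\YYb$ under $C\cap\icordom\Gamma^*\neq\emptyset$ is the crux of \cite[Theorem 4.2]{Leo08} and you assert it rather than derive it; also, the subdifferential splitting only gives $[T^*\ob]_1=[T^*\ob]_1^a+[T^*\ob]_1^s\in\partial I(\lh^a)$, and passing to the density formula $\lh^a=\gamma'_z([T^*\ob]_1^a)R$ uses the condition $\zeta_1^s=0$ that is built into the definition of $T^*\YYb$ in Appendix \ref{annex}, a step you pass over silently. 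So: a correct and well-aligned sketch, but for (c)--(d) it ultimately rests on the same citation the paper does rather than providing an independent argument.
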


\begin{proof}
This result is \cite[Theorem 4.2]{Leo08}.
\end{proof}

The exact definitions of $\YYt,$  $\YYb$ and $D^\bot$ as well as
the precise statement of Theorem \ref{res-03}-(c) are given at
Appendix \ref{annex}. In particular, the complete statement of
Theorem \ref{res-03}-(c) is given at Theorem \ref{res-03bis}.

Figure \ref{fig2} illustrates \eqref{eq-40} which, with $\xh=T\lh$
can be rewritten: $\lh\in T^{-1}C$ and $\langle T^*\ob,
\ell-\lh\rangle\ge0, \forall \ell\in T^{-1}C.$ As in Figure
\ref{fig1}, one sees that the hyperplane $\{\ell: \langle T^*\ob,
\ell-\lh\rangle=0\}$ separates the convex set $T^{-1}C$ and the
convex level set of $\Ib$ which is ``tangent" to $T^{-1}C:$ the
shaded area. The same kind of conclusions follow.

\begin{figure}[ht]
\begin{center}
\scalebox{1} % Change this value to rescale the drawing.
{
\begin{pspicture}(0,-3.5831075)(12.842813,3.6081078)
\definecolor{color331b}{rgb}{0.8,0.8,0.8}
\psbezier[linewidth=0.024,fillstyle=solid,fillcolor=color331b](5.7609377,-0.010329752)(5.747474,0.012433295)(6.3869605,1.8430234)(7.3609376,2.0696702)(8.334914,2.296317)(9.728796,1.6295751)(10.120937,0.70967025)(10.513079,-0.21023464)(8.071575,-0.51840436)(8.020938,-0.5703297)(7.9703,-0.6222551)(5.7744007,-0.0330928)(5.7609377,-0.010329752)
\psline[linewidth=0.03cm](4.9409375,0.14967024)(11.820937,-1.4903297)
\psdots[dotsize=0.12](5.9409375,-1.9303298)
\psdots[dotsize=0.15](7.6209373,-0.49032974)
\psline[linewidth=0.03cm,arrowsize=0.05291667cm
4.0,arrowlength=1.4,arrowinset=0.4]{->}(11.020938,-1.2903297)(11.280937,-0.37032974)
\usefont{T1}{ptm}{m}{n}
\rput(5.5823436,-2.0203297){$R$}
\usefont{T1}{ptm}{m}{n}
\rput(7.722344,-0.18032975){$\lh$}
\usefont{T1}{ptm}{m}{n}
\rput(5.432344,0.81967026){$T^{-1}C$}
\usefont{T1}{ptm}{m}{n}
\rput(12.102344,-0.98032975){$T^*\ob$}
\psbezier[linewidth=0.024,arrowsize=0.05291667cm
2.0,arrowlength=1.4,arrowinset=0.4]{->}(2.5809374,-1.1703298)(3.3809376,-0.7703298)(3.7409375,-1.2103298)(4.4609375,-1.5303297)
\psbezier[linewidth=0.024,arrowsize=0.05291667cm
2.0,arrowlength=1.4,arrowinset=0.4]{->}(2.5809374,-1.1703298)(3.1009376,-1.2503297)(3.2609375,-1.1703298)(3.5409374,-1.2903297)
\usefont{T1}{ptm}{m}{n}
\rput(1.5623437,-1.1003298){$\Ib=\textrm{const.}$}
\psbezier[linewidth=0.03](6.765821,-0.29032975)(6.644788,-0.27915227)(3.651193,-0.2587999)(3.5211258,-1.2503297)(3.3910584,-2.2418597)(4.8475676,-3.4433591)(7.126343,-3.4503298)(9.405118,-3.4573004)(8.840938,-0.7703298)(8.420938,-0.69032973)(8.000937,-0.61032975)(6.886854,-0.30150723)(6.765821,-0.29032975)
\psline[linewidth=0.03cm,arrowsize=0.05291667cm
4.0,arrowlength=1.4,arrowinset=0.4]{->}(7.3609376,-2.1703298)(10.860937,3.1496704)
\psline[linewidth=0.03cm,arrowsize=0.05291667cm
4.0,arrowlength=1.4,arrowinset=0.4]{->}(7.3609376,-2.1703298)(5.9009376,3.3496702)
\psarc[linewidth=0.03](8.080937,0.069670245){2.88}{44.409344}{131.03534}
\usefont{T1}{ptm}{m}{n}
\rput(8.052343,3.4196703){$D^\bot$}
\psdots[dotsize=0.12](7.3609376,-2.1703298)
\usefont{T1}{ptm}{m}{n}
\rput(7.132344,-2.4603298){$\Qd$}
\psline[linewidth=0.078cm,tbarsize=0.07055555cm
5.0]{|*-|*}(6.8409376,-0.29032975)(7.9409375,-0.5703297)
\psbezier[linewidth=0.03](6.9809375,-0.97032976)(6.9409375,-0.97032976)(4.3809376,-1.1712627)(4.3809376,-1.6503297)(4.3809376,-2.129397)(7.0711684,-3.5681078)(7.9409375,-3.0103297)(8.810707,-2.4525518)(8.020938,-1.2103298)(7.9809375,-1.2103298)(7.9409375,-1.2103298)(7.0209374,-0.97032976)(6.9809375,-0.97032976)
\end{pspicture}
}
\end{center}
 \caption{}\label{fig2}
 \end{figure}

The measure $\Qd=\lh^a=\gamma'_z([T^*\ob]_1^a)\,R$ is the
absolutely continuous part of $\lh;$ its expression is given at
\eqref{eq-18} below. The set of solutions $\lh$ of (\ref{pbc}) is
represented by the bold type segment, they all share the same
absolutely continuous part $\Qd.$  The relation \eqref{eq-40}-(c)
can be rewritten: $\lh\in\Qd +D^\bot$ which is the convex cone
with vertex $\Qd$ and direction $D^\bot.$ The cone $D^\bot$ only
contains singular directions. One sees with \eqref{III} and
\eqref{eq-51} that $D^\bot$ contributes to the positively
homogeneous part $I^s$ of $\Ib.$ This is the reason why we decided
to draw a flat part for the level lines of $\Ib$ when they cut
$\Qd+D^\bot.$
\\
Figure \ref{fig2} is only a guide for illustrating Theorem
\ref{res-03}. It shouldn't be taken too seriously. For instance a
better finite dimensional analogue would have been given by
$\Ib(x,y,z)=x^2+|y+z|$ which requires a 3D graphical
representation.

\section{Some examples}\label{sec-expl}

\subsection{Some examples of entropies}
Important examples of entropies occur in statistical physics,
probability theory and mathematical statistics. Among them the
relative entropy plays a prominent role.

\subsubsection*{Relative entropy}  The reference measure $R$ is assumed to be
a probability measure. The relative entropy of $Q\in\MZ$ with
respect to $R\in\PZ$ is
\begin{equation}\label{eq-54}
    I(Q|R)=\left\{%
\begin{array}{ll}
    \IZ \log\left(\frac{dQ}{dR}\right)\,dQ & \hbox{if }Q\prec R \hbox{ and }Q\in\PZ\\
    +\infty & \hbox{otherwise} \\
\end{array}%
\right.,\quad Q\in\MZ.
    \index{I6@$I(\cdot\mid R),$ relative entropy, see \eqref{eq-54}}
\end{equation}
It corresponds to $\gamma^*_z(t)=\left\{%
\begin{array}{ll}
    t\log t-t+1 & \hbox{if }t>0 \\
     1 & \hbox{if }t=0 \\
    +\infty & \hbox{if }t<0 \\
\end{array}%
\right.,   $ $m(z)=1$ and
\begin{equation}\label{eq-01}
     \lambda_z(s)=e^s-s-1,\quad s\in\R, z\in \ZZ.
\end{equation}

\subsubsection*{A variant} Taking the same $\gamma^*$ and removing the unit mass constraint gives
\begin{equation*}
    H(Q|R)=\left\{%
\begin{array}{ll}
    \IZ \left[\frac{dQ}{dR}\log\left(\frac{dQ}{dR}\right)-\frac{dQ}{dR}+1\right]\,dR & \hbox{if }0\le Q\prec R \\
    +\infty, & \hbox{otherwise} \\
\end{array}%
\right.,\quad Q\in\MZ
\end{equation*}
This entropy is the rate function of \eqref{eq-27} when $\seq Wi$
is an iid sequence of Poisson(1) random weights. If $R$ is
$\sigma$-finite, it is the rate function of the LDP of normalized
Poisson random measures, see \cite{Leo00}.

\subsubsection*{Extended relative entropy}

Since  $\lambda(s)=e^s-s-1$ and $R\in\PZ$ is a bounded measure, we
have $\lmax(s)=\tau(s):=e^{|s|}-|s|-1
    \index{Functions@Functions of $(s,z)$ or $(t,z)$!tau@$\tau$}$
and the relevant Orlicz spaces are
\begin{eqnarray*}
  L_{\tau^*} &=& \{f:\ZZ\rightarrow\mathbb{R}; \IZ |f|\log |f|\,dR<\infty\} \\
  E_\tau &=&  \{u:\ZZ\rightarrow\mathbb{R};\forall\alpha>0, \IZ e^{\alpha |u|}\,dR<\infty\}\\
 L_\tau &=&  \{u:\ZZ\rightarrow\mathbb{R};\exists\alpha>0, \IZ e^{\alpha |u|}\,dR<\infty\}
 \index{Ltaustar@$L_{\tau^*}$}\index{Ltau@$L_\tau$}\index{Etau@$E_\tau$}
\end{eqnarray*}
since $\tau^*(t)=(|t|+1)\log(|t|+1)-|t|.\index{Functions@Functions
of $(s,z)$ or $(t,z)$!taustar@$\tau^*$}$ The extended relative
entropy is defined by
\begin{equation}\label{ext-entrop}
\Ib(\ell| R) = I(\ell^a| R) + \sup\left\{\langle\ell^s,u\rangle ;
u, \IZ e^u\,dR<\infty\right\},\quad \ell\in\OZ
    \index{I7@$\Ib(\cdot\mid R),$ extended relative entropy, see \eqref{ext-entrop}}
\end{equation}
where $\ell=\ell^a+\ell^s$ is the decomposition into absolutely
continuous and singular parts of $\ell$ in
$L_\tau'=L_{\tau^*}\oplus L_\tau^s,$ and
\begin{equation}\label{eq-44}
    \OZ=\{\ell\in L_\tau' ; \ell\geq 0, \langle\ell,\1\rangle=1\}.
    \index{Oexp@$\OZ,$ see \eqref{eq-44}}
\end{equation}
Note that $\OZ$ depends on $R$ and that for all $\ell\in\OZ,$
$\ell^a\in\PZ\cap L_{\tau^*}R.$

\subsection{Some examples of constraints}\label{sec:constraints}

Let us consider the two standard constraints which are the moment
constraints and the marginal constraints.

\subsubsection*{Moment constraints}
Let $ \theta=(\theta_k)_{1\leq k\leq K} $ be a measurable function
from $\ZZ$ to  $\XXo=\R^K.$ The moment constraint is specified by
the operator
$$
\IZ\theta\,d\ell=\left(\IZ\theta_k\,d\ell\right)_{1\leq k\leq
K}\in\R^K,
$$
which is defined for each $\ell\in\MZ$ which integrates all the
real valued measurable functions $\theta_k.$ The adjoint operator
is
\begin{equation*}
   T^*y= \langle y,\theta\rangle=\sum_{1\leq k\leq K} y_k\theta_k,\quad
    y=(y_1,\dots,y_K)\in\R^K.
\end{equation*}

\subsubsection*{Marginal constraints}
Let $\ZZ=\AB$ be a product space, $M_{AB}$ be the space of all
\emph{bounded} signed measures on $\AB$ and $U_{AB}$ be the space
of all measurable bounded functions $u$ on $\AB.$ Denote
$\ell_A=\ell(\cdot\times B)$ and $\ell_B=\ell(A\times\cdot)$ the
marginal measures of $\ell\in M_{AB}.$ The constraint of
prescribed marginal measures is specified by
$$
\IAB\theta\,d\ell=(\ell_A,\ell_B)\in M_{A}\times M_{B},\quad
\ell\in M_{AB}
$$
where $M_{A}$ and $M_{B}$ are the spaces of all bounded signed
measures on $A$ and $B.$ The function $\theta$ which gives the
marginal constraint is
$$
\theta(a,b)=(\delta_{a}, \delta_{b}),\ a\in A, b\in B
$$
where $\delta_a$ is the Dirac measure at $a.$ Indeed,
$(\ell_A,\ell_B)=\IAB (\delta_a,\delta_b)\,\ell(dadb).$
\\
More precisely, let $U_{A},$ $U_{B}$ be the spaces of measurable
functions on $A$ and $B$ and take $\YYo=U_{A}\times U_{B}$ and
$\XXo=U_{A}^*\times U_{B}^*.$ Then, $\theta$ is a measurable
function from $\ZZ=\AB$ to $\XXo=U_{A}^*\times U_{B}^*$ and the
adjoint of the marginal operator
\begin{equation*}
   T\ell= \langle\theta,\ell\rangle=(\ell_A,\ell_B)\in U_{A}^*\times U_{B}^*,\quad
    \ell\in U_{AB}^*
\end{equation*}
where $\langle f,\ell_A\rangle:=\langle f\otimes 1,\ell\rangle$
and $\langle g,\ell_B\rangle:=\langle 1\otimes g,\ell\rangle$ for
all $f\in U_A$ and $g\in U_B,$ is given by
\begin{equation}\label{eq-19}
   T^*(f,g)= \langle(f,g),\theta\rangle=f\oplus g\in U_{AB},\quad f\in U_{A}, g\in U_{B}
\end{equation}
where $f\oplus g(a,b):=f(a)+g(b),$ $a\in A, b\in B.$

\section{Minimizing sequences}\label{sec:critical}

In this section, the minimization problem \eqref{pc} is considered
when the constraint function $\theta$ satisfies \eqref{A-exists}
but not necessarily \eqref{A-forall}. This means that the
constraint is \emph{critical.} Problem \eqref{pc} may not be
attained anymore. Nevertheless, minimizing sequences may admit a
limit in some sense. As will be seen at Section \ref{sec-entproj},
this phenomenon is tightly linked to the notion of
\emph{generalized entropic projection} introduced by Csisz\'ar.

\subsection{Statements of the results} We start this section stating its
main results at Theorems \ref{res-07} and \ref{res-minseq}.

\begin{theorem}[Attainment]\label{res-07}
The hypotheses of Theorem \ref{res-03} are assumed.
\begin{enumerate}[(a)]
    \item Suppose that $C\cap\dom\Gamma^*\not=\emptyset.$
 Then, the minimization problem \eqref{pbc} is attained in $\Ll'$
and all its solutions share the same absolutely continuous part
$\Qd\in\LlsR.$
    \item Suppose that $C\cap\icordom\Gamma^*\not=\emptyset.$ Then,
   \eqref{dtc} is attained in $\YYt$ and
\begin{equation}\label{eq-18}
     \Qd(dz)=\gamma'_z(\langle\od, \theta(z)\rangle)\,R(dz)
     \index{Qd@$\Qd,$ see \eqref{eq-18}}
\end{equation}
where $\od\in\YYt\index{od@$\od,$ see \eqref{eq-18}}$ is any
solution to \eqref{dtc}.
    \item See also an additional statement at Proposition
    \ref{res-07bis}.
\end{enumerate}
\end{theorem}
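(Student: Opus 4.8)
The plan is to deduce Theorem \ref{res-07} from Theorem \ref{res-03} by comparing the problems \eqref{pc}, \eqref{pbc}, \eqref{dc}, \eqref{dtc} and \eqref{dbc} under the critical constraint hypothesis. First I would observe that the hypotheses of Theorem \ref{res-03} are exactly assumed here, so all of its conclusions are available; in particular the dual equality $\inf(\pbc)=\inf_{x\in C}\Gamma^*(x)=\sup(\dc)=\sup(\dbc)$ holds, and under $C\cap\dom\Gamma^*\neq\emptyset$ the problem \eqref{pbc} has at least one solution $\lh\in\Ll'$, every minimizing sequence has $\sigma(\Ll',\Ll)$-cluster points, and each such point solves \eqref{pbc}.

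For part (a), the key point is uniqueness of the absolutely continuous part. I would take two solutions $\lh_1,\lh_2$ of \eqref{pbc} and use the structure $\Ib(\ell)=I(\ell^a)+I^s(\ell^s)$ from \eqref{III}, together with strict convexity of $I$ on $\LlsR$ (which comes from strict convexity of $\gamma^*_z$ in (A$_{\gamma^*}^1$) via \eqref{eq-16}) and convexity of $\Ib$. Since $\Ib$ is convex and $T^{-1}C$ is convex, the midpoint $(\lh_1+\lh_2)/2$ is again admissible with $\Ib((\lh_1+\lh_2)/2)\le\tfrac12\Ib(\lh_1)+\tfrac12\Ib(\lh_2)=\inf(\pbc)$, hence is also a solution; decomposing into absolutely continuous and singular parts and using that $I^s$ is positively homogeneous (so affine along the segment of singular parts) forces the $I$-term to be affine along the segment $[\lh_1^a,\lh_2^a]$, which by strict convexity of $I$ gives $\lh_1^a=\lh_2^a$. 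Call this common value $\Qd$; it lies in $\LlsR$ because $\ell^a\in\LlsR$ for every $\ell\in\Ll'$ with $\Ib(\ell)<\infty$, by Theorem \ref{res-B2} and Remark \ref{rem-01}-(3).

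For part (b), under the stronger assumption $C\cap\icordom\Gamma^*\neq\emptyset$ I would invoke Theorem \ref{res-03}-(c),(d): there exists $\ob\in\YYb$ with $[T^*\ob]_1^a=\langle\ot,\theta(\cdot)\rangle$ for some $\ot\in\XXo^*$, satisfying \eqref{eq-40}, and in particular $\lh\in\gamma'_z([T^*\ob]_1^a)\,R+D^\bot([T^*\ob]_2)$, so that the absolutely continuous part is $\Qd(dz)=\gamma'_z(\langle\ot,\theta(z)\rangle)\,R(dz)$. What remains is to show that this $\ot$ can be taken in $\YYt$ and actually solves \eqref{dtc} — i.e. to identify, under the critical hypothesis, the relevant solution of the extended dual \eqref{dbc} with a solution of \eqref{dtc}. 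I expect this identification to be the main obstacle: it requires unwinding the definitions of $\YYt$, $\YYb$ and the cone $D^\bot$ given in Appendix \ref{annex} and checking that the approximation of $\langle\ot,\theta(\cdot)\rangle$ by functions $\langle y_n,\theta(\cdot)\rangle$ with $y_n\in\YYo$ and $\IZ\lambda(\langle y_n,\theta\rangle)\,dR<\infty$ (from Theorem \ref{res-03}-(c)) is precisely what places $\ot$ in $\YYt$, and that the value $\Gamma^*(\xh)$ reached through \eqref{dbc} coincides with $\sup(\dtc)$ via the common chain of dual equalities, so that $\ot$ is optimal for \eqref{dtc}. Once the cones are matched, formula \eqref{eq-18} follows directly from \eqref{eq-40}-(c), and the fact that any solution $\od$ of \eqref{dtc} yields the same $\Qd$ follows from part (a) together with the representation \eqref{eq-36}-(c) of Theorem \ref{res-02} applied in this extended setting. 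The remaining statement (c) simply defers to Proposition \ref{res-07bis}.
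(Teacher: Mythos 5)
Your part (a) is essentially the paper's own argument: attainment comes from Theorem \ref{res-03}-(b), and the common absolutely continuous part is obtained by playing the strict convexity of $I$ against the convexity of $I^s$ along a convex combination of two minimizers. One quibble: your parenthetical claim that positive homogeneity of $I^s$ makes it affine along the segment of singular parts is neither true in general nor needed; convexity of $I^s$ alone gives the contradiction, exactly as in the paper.

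Part (b), however, contains a genuine gap, and you flag it yourself: you never prove that the linear form $\ot$ furnished by Theorem \ref{res-03}-(c) belongs to the picture of \eqref{dtc} as a \emph{maximizer}. The route you sketch --- that ``the value $\Gamma^*(\xh)$ reached through \eqref{dbc} coincides with $\sup(\dtc)$ via the common chain of dual equalities'' --- is not available here: under the critical hypothesis \eqref{A-exists} the chain provided by Theorem \ref{res-03} contains $\sup(\dc)$ and $\sup(\dbc)$ but not $\sup(\dtc)$ (the equality $\sup(\dtc)=\inf_C\Gamma^*$ of Theorem \ref{res-02} is only stated under \eqref{A-forall}), and the value of $\ot$ in \eqref{dtc}, namely $\inf_{x\in C\cap\XX}\langle\ot,x\rangle-I_\gamma(\ott)$, differs from the value of $\ob$ in \eqref{dbc} by contributions of the singular component (the terms $I^s(\lb^s)$ and $\langle[T^*\ob]_2,\lb^s\rangle$ of Theorem \ref{res-03}-(d)), so a direct comparison of values does not close the argument. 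The paper's proof resolves this with a translation device absent from your proposal: writing $\xb^a=T\lb^a$ and $\xb^s=T\lb^s$ for a solution $\lb$ of \eqref{pbc}, the relations \eqref{eq-40} are reread as the optimality system \eqref{eq-36} for the pair $(\lb^a,\ot)$ relative to the \emph{translated} constraint set $C-\xb^s$; the equivalence in Theorem \ref{res-02}-(c) then shows that $\ot$ solves $(\widetilde{\mathrm{D}}_{C-\xb^s})$, and the identities $\inf_{x\in C}\langle\ob,x\rangle=\langle\ott,\lb^a\rangle+\Ib(\lb^s)$ and $\sup(\dbc)=\inf(\pbc)=\inf(\overline{\mathrm{P}}_{C-\xb^s})+\Ib(\lb^s)=\sup(\overline{\mathrm{D}}_{C-\xb^s})+\Ib(\lb^s)$ transfer this optimality from the translated dual to \eqref{dtc}, after which one takes $\od=\ot$ and \eqref{eq-18} follows from \eqref{eq-40}-(c) together with part (a). Without this step (or an equivalent one) the attainment of \eqref{dtc} and the representation \eqref{eq-18} remain unproved; likewise your closing appeal to ``\eqref{eq-36}-(c) applied in this extended setting'' to treat an arbitrary solution $\od$ again uses Theorem \ref{res-02} outside the good-constraint regime without justification.
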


\begin{proof}
\boulette{(a)} The attainment  statement is Theorem
\ref{res-03}-b. Let us show that as $\gamma^*$ is strictly convex,
if $k_*$ and $\ell_*$ are two solutions of \eqref{pbc}, their
absolutely continuous parts match:
\begin{equation}\label{eq-17}
k_*^a=\ell_*^a.
\end{equation}
$k_*, \ell_*$ are in the convex set $\{\ell\in \Ll'; T\ell\in C\}$
and $\inf(\pbc)=\Ib(k_*)=\Ib(\ell_*).$ For all $0\leq p, q\leq 1$
such that $p+q=1,$ as $I$ and $I^s$ are convex functions, we have
\begin{eqnarray*}
  \inf(\pbc) &\leq& \Ib(pk_*+q\ell_*)\\
    &=& I(pk_*^a+q\ell_*^a)+I^s(pk_*^s+q\ell_*^s)\\
    &\leq& pI(k_*^a)+qI(\ell_*^a)+pI^s(k_*^s)+qI^s(\ell_*^s)\\
    &=& p\Ib (k_*)+q\Ib(\ell_*)=\inf(\pbc)
\end{eqnarray*}
It follows that
$I(pk_*^a+q\ell_*^a)+I^s(pk_*^s+q\ell_*^s)=pI(k_*^a)+qI(\ell_*^a)
+pI^s(k_*^s)+qI^s(\ell_*^s).$ Suppose that $k_*^a\not =\ell_*^a.$
As $I$ is strictly convex, with $0<p<1,$ we get:
$I(pk_*^a+q\ell_*^a)<pI(k_*^a)+qI(\ell_*^a)$ and this implies that
$I^s(pk_*^s+q\ell_*^s)>pI^s(k_*^s)+qI^s(\ell_*^s)$ which is
impossible since $I^s$ is convex. This proves \eqref{eq-17}.

    \Boulette{(b)}
Let $\lb$ be any solution to ($\pbc$). Denoting
$\xb^a=T\lb^a\index{xxx@$\xb^a, \xb^s$}$ and $\xb^s=T\lb^s$ we see
with \eqref{eq-40} that
\begin{equation*}
    \left\{\begin{array}{cl}
      (a) & \xb^a\in [C-\xb^s]\cap\dom\Gamma^* \\
      (b) & \langle \ob,\xb^a\rangle \leq \langle \ob,x\rangle, \forall x\in [C-\xb^s]\cap\dom\Gamma^* \\
      (c) & \lb^a=\gamma'_z(\ott)\,R \\
    \end{array}\right.
\end{equation*}
By Theorem \ref{res-02}-c, this implies that $\ot$ solves
($\widetilde{\mathrm{D}}_{C-\xb^s}$).
\\
It remains to show that $\ot$ also solves
($\widetilde{\mathrm{D}}_{C}$). Thanks to Theorem \ref{res-03}, we
have: $\inf_{x\in C}\langle\ob,x\rangle=\langle
T^*\ob,\lb\rangle=\langle\ott,\lb^a\rangle+\Ib(\lb^s)$ and
$\inf_{x\in C}\langle\ob,x\rangle-I_\gamma(\ott)
=\sup(\dbc)=\inf(\pbc)
=I(\lb^a)+\Ib(\lb^s)=\inf(\overline{\mathrm{P}}_{C-\xb^s})+\Ib(\lb^s)
=\sup(\overline{\mathrm{D}}_{C-\xb^s})+\Ib(\lb^s).$ Therefore
    $\inf_{x\in C}\langle\ob,x\rangle
    =\inf_{x\in C-\xb^s}\langle\ot,x\rangle+\Ib(\lb^s)$
and subtracting $\Ib(\lb^s)$ from $\sup(\dbc)$, we see that $\ot$
which solves ($\widetilde{\mathrm{D}}_{C-\xb^s}$) also solves
($\widetilde{\mathrm{D}}_{C}$). We  complete the proof of the
theorem, taking $\od=\ot.$
\end{proof}

\begin{remark}
Replacing $\lb^s$ with $t\lb^s,$ the same proof shows that $\ot$
solves ($\widetilde{\mathrm{D}}_{C+(t-1)\xb^s}$) for any $t\ge0.$
\end{remark}

From now on,  we denote $\Qd\in \LlsR$  the absolutely continuous
part shared by all the solutions of \eqref{pbc}. Let us introduce
\begin{eqnarray}
    \CC&=&\left\{Q\in\LlsR ;\ T Q:=\IZ\theta\, dQ\in C\right\}\label{eq-15}\\
    \CCb&=&\left\{\ell\in\Ll' ;\ T \ell:=\langle\theta,\ell\rangle\in
    C\right\}\nonumber
    \index{C1@$\CC,$ constraint set}
    \index{C2@$C,$ constraint set}
    \index{C3@$\CCb,$ extended constraint set}
\end{eqnarray}
the constraint sets $T^{-1}C\cap\LlsR$ and $T^{-1}C$ on which $I$
and $\Ib$ are minimized. We have: $\CC=\CCb\cap\LlsR$ and
$I=\Ib+\iota_{\LlsR}.$ Hence, $\inf(\pbc)\le\inf(\pc).$ Clearly,
$\CC\cap\dom I\not=\emptyset\Leftrightarrow\inf(\pc)<\infty$
implies $\CCb\cap\dom
\Ib\not=\emptyset\Leftrightarrow\inf(\pbc)<\infty\Leftrightarrow
C\cap\dom\Gamma^*\not=\emptyset.$
\\
Of course, if $\CC\cap\dom I\not=\emptyset,$ \eqref{pc} admits
nontrivial minimizing sequences. Theorem \ref{res-minseq} below
gives some details about them.

The present paper is concerned with
\begin{equation}\label{pcc}
    \textsl{minimize } I(Q) \textsl{ subject to }  Q\in \CC.
      \tag{$\pcc$}
      \index{C1@$\CC,$ constraint set}
      \index{Primal problems!P3@\eqref{pcc}}
\end{equation}
where  $\CC$ has the form \eqref{eq-15}. But this is not a
restriction as explained in the following remarks.
\begin{remarks}\label{rem-04}\
\begin{enumerate}
    \item Taking $T^*$ to be the identity on $\YYo=\El$ or
    $\Ll$ (being careless with a.e.\! equality, this corresponds
    to $\theta(z)$ to be the Dirac measure $\delta_z$),
we see that \eqref{A-forall} or \eqref{A-exists} is satisfied
respectively. Hence, with $C=\CC,$ \eqref{pc} is \eqref{pcc}.
Consequently, the specific form with $\theta$ and $C$ adds details
to the description of $\CC$ without loss of generality.

    \item With $\theta,$ $\YYo$ and $C$ as in (1),  the
    assumptions on $C=\CC$ are:
    \begin{enumerate}
        \item Under \eqref{A-forall}, \eqref{eq-05} is equivalent to $\CC$ is $\sLE$-closed.\\ Note that if
        $\lmax$ and $\lmaxs$ both
        satisfy the $\Delta_2$-condition, as $\CC$ is convex, this
        is equivalent to $\CC$ being $\|\cdot\|_{\lmaxs}$-closed.
        \item Under \eqref{A-exists}, \eqref{eq-10} is equivalent to $\CC$ is
        $\sigma(\LlsR,\Ll)$-closed.\\
        Note that if $\lmaxs$
        satisfies the $\Delta_2$-condition, as $\CC$ is convex, this
        is equivalent to $\CC$ being $\|\cdot\|_{\lmaxs}$-closed.
    \end{enumerate}
\end{enumerate}
\end{remarks}

We denote $\|\cdot\|_{\lmaxs}$-$\inter(\CC)$ the interior of $\CC$
in $\LlsR$ with respect to the strong topology of $\Lls.$

\begin{theorem}[Minimizing sequences of \eqref{pc}]\label{res-minseq}
Assume that the hypotheses \emph{(A)} hold,
\begin{equation}\label{A4}
    \lim_{t\rightarrow \pm\infty}\gamma_z^*(t)/t=+\infty,\quad
    \hbox{for $R$-almost every }z\in \ZZ
\end{equation}
and $\CC$ which is defined at \eqref{eq-15} is $\sLL$-closed and
satisfies $\CC\cap\dom I\not=\emptyset.$
\\
Let us consider the following additional conditions.
\begin{enumerate}
    \item
        \begin{enumerate}[a-]
            \item There
are finitely many moment constraints, i.e.\! $\XXo=\R^K$ (see
Section \ref{sec:constraints})
            \item $\CC\cap\icordom I\not=\emptyset.$
        \end{enumerate}
    \item
    $\CC$ has a nonempty $\|\cdot\|_{\lmaxs}$-interior.
\end{enumerate}
Under one of these additional conditions (1) or (2), we have
\begin{equation*}
    I(\Qd)\le\inf(\pc)=\inf(\pbc)
\end{equation*}
and any minimizing sequence of \eqref{pc} converges to $\Qd$ with
respect to $\sigma(\LlsR, \El)$ and in variation norm (i.e.\!
strongly in $L_1R$).
\end{theorem}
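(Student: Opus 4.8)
The plan is to compare the original problem \eqref{pc} with its extension \eqref{pbc}, exploiting that under the hypotheses of Theorem \ref{res-03} the extended problem is attained with common absolutely continuous part $\Qd$. The inequality $I(\Qd)\le\inf(\pbc)=\inf(\pc)$ should come from Theorem \ref{res-07}(a): since $\Qd=\lh^a$ is the absolutely continuous part of a minimizer $\lh$ of \eqref{pbc} and $\Ib(\lh)=I(\lh^a)+I^s(\lh^s)\ge I(\lh^a)=I(\Qd)$ because $I^s\ge 0$, we get $I(\Qd)\le\Ib(\lh)=\inf(\pbc)$. The nontrivial equality $\inf(\pc)=\inf(\pbc)$ always gives $\le$; the reverse, $\inf(\pc)\le\inf(\pbc)$, is where one of the additional conditions (1) or (2) enters — it amounts to showing that $\Qd$ (or at least nearby measures in $\CC$) can be used to build minimizing sequences for \eqref{pc}, i.e.\ that no ``entropy mass escapes to the singular part''. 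The condition \eqref{A4} (superlinearity of $\gamma^*_z$) guarantees that sublevel sets of $I$ are uniformly integrable, which is what upgrades $\sigma(\LlsR,\El)$-convergence of minimizing sequences to convergence in variation; this is the standard de la Vallée-Poussin / Dunford-Pettis mechanism and I would invoke it once the minimizing sequence is shown to be $I$-bounded and $\sigma(\LlsR,\El)$-convergent to $\Qd$.

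First I would establish that any minimizing sequence $(Q_n)$ of \eqref{pc} is relatively compact for $\sigma(\LlsR,\El)$. Because $I(Q_n)\to\inf(\pc)<\infty$, the sequence lies (eventually) in a sublevel set $\{I\le c\}$; by \eqref{A4} this set is uniformly integrable, hence (Dunford-Pettis) relatively weakly compact in $L_1R$, and a fortiori $\sigma(\LlsR,\El)$-relatively compact since $\El\subset L_\infty$-type test functions embed appropriately. Let $Q_\infty$ be any cluster point. Lower semicontinuity of $I$ for $\sigma(\LlsR,\El)$ — which holds because $\gamma^*$ is a convex normal integrand and $I=I_{\gamma^*}$ is the classical integral functional, lsc for this weak topology — gives $I(Q_\infty)\le\inf(\pc)$. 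The point is then to identify $Q_\infty$ with $\Qd$; once that is done, uniqueness of the cluster point forces the whole sequence to converge, and uniform integrability promotes this to convergence in variation norm. So the crux is: \emph{$Q_\infty=\Qd$}.

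To get $Q_\infty=\Qd$ I would argue that $Q_\infty$, viewed as an element of $\Ll'$, is a minimizer of \eqref{pbc} — whence $Q_\infty=(Q_\infty)^a=\Qd$ by Theorem \ref{res-07}(a). The obstacle is that $Q_\infty$ need not lie in $\CC$: the constraint $TQ_n\in C$ is $\sigma(\LlsR,\El)$-continuous only if $T^*\YY\subset\El$, which fails precisely in the critical case. This is exactly where conditions (1) and (2) do their work. Under (2), $\CC$ has nonempty $\|\cdot\|_{\lmaxs}$-interior; pick $Q_0$ in this interior with $I(Q_0)<\infty$ (possible since $\CC\cap\dom I\ne\emptyset$ and one can move slightly toward an interior point keeping $I$ finite by convexity and \eqref{A4}-type coercivity), and for $t\in(0,1)$ consider $Q_n^t=(1-t)Q_n+tQ_0\in\CC$. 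Then $Q_n^t\to(1-t)Q_\infty+tQ_0$ strongly in $L_1R$ along the convergent subnet, and because $Q_0$ is $\|\cdot\|_{\lmaxs}$-interior, the limit $(1-t)Q_\infty+tQ_0$ actually lies in $\CC$ for every $t\in(0,1)$; letting $t\downarrow 0$ and using that $\CC$ is convex and $\sigma(\LlsR,\Ll)$-closed (hence $\sigma(\LlsR,\El)$-closed is not available, but $L_1R$-closed along this one-parameter family suffices) we conclude $Q_\infty\in\CCb$, i.e.\ $TQ_\infty\in C$ in the extended sense. Convexity of $I$ gives $I(Q_n^t)\le(1-t)I(Q_n)+tI(Q_0)$, so $\limsup_n I(Q_n^t)\le(1-t)\inf(\pc)+tI(Q_0)$; combined with lsc this shows $(1-t)Q_\infty+tQ_0$ is a near-minimizer, and passing $t\downarrow0$ that $Q_\infty$ minimizes $\Ib$ over $\CCb$. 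Under (1) the finite-dimensionality of $\XXo=\R^K$ together with $\CC\cap\icordom I\ne\emptyset$ plays the analogous role: the constraint $TQ_n\in C\subset\R^K$ is a finite family of linear conditions, $TQ_n\to TQ_\infty$ coordinatewise since each $T^*y=\langle y,\theta\rangle\in\Ll$ pairs with $L_1R$-convergence when combined with uniform integrability of $(Q_n)$, and the $\icordom$ hypothesis lets one avoid the boundary pathology. Either way, $Q_\infty$ solves \eqref{pbc}, so $Q_\infty=\Qd$; the proof then concludes as above, with $I(\Qd)\le\inf(\pc)=\inf(\pbc)$ and convergence of the full sequence in $\sigma(\LlsR,\El)$ and in variation. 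I expect the main difficulty to be the passage $TQ_n\to TQ_\infty$ with $TQ_\infty\in C$ (not merely $\overline C$) in the critical case — reconciling the weak-$*$ nature of the limit with the closedness of $C$ — which is why the two sufficient conditions (1) and (2) are imposed.
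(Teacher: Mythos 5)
Your skeleton (get $I(\Qd)\le\inf(\pbc)$ from $I^s\ge0$, prove $\inf(\pc)=\inf(\pbc)$, then identify the weak limit of minimizing sequences with $\Qd$) matches the paper's, but the central identification step is wrong and cannot be repaired. You propose to show that a $\sigma(\LlsR,\El)$-cluster point $Q_\infty$ of a minimizing sequence of \eqref{pc} still satisfies the constraint ($TQ_\infty\in C$, i.e.\ $Q_\infty\in\CCb$) and is itself a minimizer of \eqref{pbc}, so that $Q_\infty=\Qd$. In the critical case this is false: the theorem is designed precisely for the situation where every solution $\lh$ of \eqref{pbc} has a nonzero singular part, $T\Qd\notin C$ and $I(\Qd)<\inf(\pc)$ (see the remark following the theorem, where $I^s(\lb^s)$ is identified as the gap of lower semicontinuity, and Csisz\'ar's Example \ref{exCsi1}, which satisfies condition (1) yet has $T\Qd=x_*<c$). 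Your argument under (2) also begs the question: you let $Q_n^t$ converge ``strongly in $L_1R$'' when at that stage only weak convergence is available, and a $\sigma(\LlsR,\Ll)$-closed convex $\CC$ need not be $\sigma(\LlsR,\El)$-closed — that failure is exactly what the critical case is about; similarly, under (1) the pairing of $Q_n$ with $\langle y,\theta\rangle\in\Ll\setminus\El$ does not pass to the $\sigma(\LlsR,\El)$-limit. In the paper, conditions (1) and (2) are not used to push the constraint to the limit for the measure part; they are used to prove the value identity $\inf(\pc)=\inf(\pbc)$ by convex duality (Lemma \ref{res:lemdual}: the identity is equivalent to lower semicontinuity of the primal value function $\varphi_E$ at $0$; Lemma \ref{res:lemlsc2}-a under (1), Lemma \ref{res-03a} and Corollary \ref{res-02a}-b under (2)). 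Once that identity holds, a minimizing sequence of \eqref{pc} is a minimizing sequence of \eqref{pbc}; the $\sigma(\Ll',\Ll)$-inf-compactness of $\Ib$ yields cluster points $\ell_*\in\Ll'$ which are minimizers of \eqref{pbc} — generally with $\ell_*^s\not=0$ — and only the absolutely continuous part $\ell_*^a=\Qd$ is the $\sigma(\LlsR,\El)$-limit (Lemma \ref{res:aa}); the constraint is satisfied by $\ell_*$ in $\Ll',$ not by $\Qd.$

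There is a second gap at the end: uniform integrability plus weak convergence does not upgrade to convergence in variation (uniformly integrable oscillating densities converge weakly in $L_1$ but not in norm). The paper obtains the strong $L_1R$ statement from the strict convexity of $\gamma^*$ via the Borwein--Lewis theorem \cite[Thm 3.7]{BL94}, applied after the weak limit has been identified; some such strict-convexity or extremality argument is indispensable here.
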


\begin{proof} This proof relies on results which are stated and
proved in the remainder of the present section. It is shown at
Lemma \ref{res:aa} that any minimizing sequence of \eqref{pc}
converges in the sense of the $\sigma(\LlsR, \El)$-topology to
$\Qd,$ whenever $\inf(\pc)=\inf(\pbc).$ But this equality holds
thanks to Lemma \ref{res:lemdual} and
\begin{itemize}
    \item under condition (1): Lemma \ref{res:lemlsc2}-a;
    \item under condition (2): Corollary \ref{res-02a}-b.
  %  \item under condition (3): Corollary \ref{res-02b} and Remark\ref{rem-05}-2.
\end{itemize}
Let us have a look at the last inequality. For any
$\lb=\lb^a+\lb^s=\Qd+\lb^s$ minimizer of \eqref{pbc} and any $\seq
Qn$ minimizing sequence of \eqref{pc}, we obtain
\begin{eqnarray*}
  \inf_n I(Q_n) &=& \inf(\pc)=\inf(\pbc)=\Ib(\lb)\\
   &=& I(\Qd)+I^s(\lb^s) \\
   &\geq& I(\Qd)
\end{eqnarray*}
with a strict inequality if $I^s(\lb^s)>0.$
\\
Finally, \eqref{A4} implies that $\lmax$ is finite. As $R$ is
assumed to be bounded, we have $L_\infty\subset \El.$  But, we
also have $\LlsR\subset L_1R$ so that the
$\sigma(\LlsR,\Ll)$-convergence implies the
$\sigma(L_1R,L_\infty)$-convergence of any minimizing sequence.
The strong convergence in $L_1R$ now follows from \cite[Thm
3.7]{BL94}.
\end{proof}

\begin{remarks}\
\begin{enumerate}
    \item As regards condition (1), it is not assumed that $\CC$
    has a nonempty interior.

    \item As regards condition (2):
    \begin{enumerate}[(i)]
        \item Any $\sigma(\LlsR,\Ll)$-closed convex
    set has the form
    $$\CC=\bigcap_{u\in U}\left\{\ell\in\LlsR;\langle u,\ell\rangle \ge
    a_u\right\}$$ for some $U\subset\Ll;$
        \item For $\CC$ to have a nonempty $\|\cdot\|_{\lmaxs}$-interior, it is
enough that $C\cap\XL$ has a nonempty interior in $\XL$ endowed
with the  dual norm $|\cdot|^*_\lmax$ defined at \eqref{eq-46}.
This is a consequence of Lemma \ref{L2}-(a) below.
    \end{enumerate}

 \item The last quantity $I^s(\lb^s)=\inf(\pc)-I(\Qd)$ is
precisely the \emph{gap} of lower $\sigma( \LlsR,
\El)$-semicontinuity of $I:$ $\lim_n Q_n=\Qd$ and
$\inf(\pc)=\liminf_n I(Q_n)\geq I(\lim_n Q_n)=I(\Qd).$
\end{enumerate}
\end{remarks}

\subsection{A preliminary lemma.}
Preliminary results for the proof of Theorem \ref{res-minseq} are
stated below at Lemma \ref{res:aa}. The assumption
\eqref{A-exists} about the critical constraint is $
    T^\ast \YYo\subset \Ll.
$

\begin{lemma}\label{res:aa}
Assume that the hypotheses \emph{(A)} and \eqref{A4} hold, $\CC$
is $\sLL$-closed, $\inf(\pc)<\infty$ and
\begin{equation}\label{eq-14}
    \inf(\pc)=\inf(\pbc)
\end{equation}
Then, any minimizing sequence of \eqref{pc} converges to $\Qd$
with respect to $\sigma( \LlsR, \El).$
\end{lemma}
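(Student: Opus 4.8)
The plan is to combine three ingredients: the dual equality \eqref{eq-14}, the representation of $\Qd$ from Theorem \ref{res-07}(b) (or rather its consequences in Theorems \ref{res-02}--\ref{res-03}), and a lower-semicontinuity/uniform-integrability argument that upgrades weak convergence of minimizing sequences. First I would take an arbitrary minimizing sequence $(Q_n)$ of \eqref{pc}, so $Q_n\in\CC$ and $I(Q_n)\to\inf(\pc)$. Since $\CC\subset\CCb$ and $\Ib\le I$ on $\LlsR$, the sequence $(Q_n)$, viewed in $\Ll'$, is also a minimizing sequence for \eqref{pbc} by \eqref{eq-14}. Now I invoke Theorem \ref{res-03}(b): any minimizing sequence of \eqref{pbc} admits $\sigma(\Ll',\Ll)$-cluster points and every such cluster point solves \eqref{pbc}. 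By Theorem \ref{res-07}(a), every solution of \eqref{pbc} has the same absolutely continuous part $\Qd$; I must argue that the \emph{singular} part of every cluster point of $(Q_n)$ vanishes, since $Q_n\in\LlsR$ has no singular part to begin with — but here the subtlety is that a $\sigma(\Ll',\Ll)$-limit of purely absolutely continuous forms need not be absolutely continuous, so I cannot conclude the cluster point equals $\Qd$ for free.

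The key observation that resolves this is the superlinearity hypothesis \eqref{A4}: $\gamma_z^*(t)/|t|\to\infty$. This forces the integrand $\gamma^*$ to control, along any sequence with bounded entropy $\sup_n I(Q_n)<\infty$, a de la Vallée-Poussin type uniform integrability of the densities $dQ_n/dR$ in $L_1R$. Concretely, I would show $\sup_n\IZ\gamma^*(dQ_n/dR)\,dR<\infty$ implies $(dQ_n/dR)$ is uniformly integrable, hence relatively weakly compact in $L_1R$ by the Dunford--Pettis theorem. Thus every $\sigma(L_1R,L_\infty)$-cluster point $Q_*$ of $(Q_n)$ lies in $L_1R$, in particular is absolutely continuous, so it has no singular part. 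Combining with the previous paragraph — the cluster point, seen in $\Ll'$, is a solution of \eqref{pbc} with zero singular part — forces $Q_*=\Qd$. Since $\El\subset L_\infty'$ acts through bounded-enough functions and $\Qd\in\LlsR$, the Dunford--Pettis cluster point is unique and equals $\Qd$; a standard subsequence argument then gives $Q_n\to\Qd$ for the full sequence in $\sigma(L_1R,L_\infty)$, and one checks this coincides with (or implies) $\sigma(\LlsR,\El)$-convergence because $\El\subset L_\infty + (\text{small perturbations integrable against the UI family})$, using that $R$ is bounded so $L_\infty\subset\El$.

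The main obstacle I anticipate is the last step: promoting $\sigma(L_1R,L_\infty)$-convergence to $\sigma(\LlsR,\El)$-convergence. This requires testing against every $u\in\El$, not just bounded $u$, and the bridge is precisely the uniform integrability of $(dQ_n/dR)$ together with the defining property of $\El$ (for all $\alpha>0$, $\IZ\lmax(\alpha u)\,dR<\infty$): one splits $u=u\1_{\{|u|\le M\}}+u\1_{\{|u|>M\}}$, handles the first term by $\sigma(L_1R,L_\infty)$-convergence, and controls the tail uniformly in $n$ via a Hölder/Young inequality in the Orlicz duality $\langle\Ll,\Lls\rangle$, using that $\sup_n\|dQ_n/dR\|_{\lmaxs}<\infty$ (which follows again from $\sup_n I(Q_n)<\infty$ via Remark \ref{rem-01}(3), rewriting $I_{\gamma^*}(fR)=I_{\lambda^*}(fR-mR)$) and that the Orlicz norm of $\alpha u\1_{\{|u|>M\}}$ can be made small by choosing $M$ large when $u\in\El$. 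I would then record that identifying the limit as $\Qd$ rather than some other solution uses that $\Qd$ is the \emph{unique} absolutely continuous part, established in Theorem \ref{res-07}(a) via strict convexity of $\gamma^*$. This is the content that the subsequent Lemmas \ref{res:lemdual}, \ref{res:lemlsc2}, \ref{res-02a} in the section are designed to supply, so in the write-up I would defer the dual equality \eqref{eq-14} itself to them and concentrate the proof of this lemma on the weak-compactness and identification argument just outlined.
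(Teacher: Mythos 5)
There is a genuine gap, and it sits at the heart of your argument: the claim that uniform integrability of the densities $dQ_n/dR$ (via de la Vall\'ee-Poussin and Dunford--Pettis) forces the $\sigma(\Ll',\Ll)$-cluster points of the minimizing sequence to have \emph{zero} singular part, so that ``the cluster point, seen in $\Ll'$, is a solution of \eqref{pbc} with zero singular part.'' This is false in exactly the critical-constraint regime the lemma is designed for. Uniform integrability only controls the pairings with bounded functions (more generally with $u\in\El$); against $u\in\Ll\setminus\El$ mass can escape to infinity, and the $\sigma(\Ll',\Ll)$-limit then picks up a nonzero singular component even though every weak-$L_1$ cluster point is trivially absolutely continuous. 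Csisz\'ar's Example \ref{exCsi1} is the canonical counterexample: there the minimizing sequences converge even in variation to $P_1$, yet $\IZ z\,dQ_n\ge c>x_*=\IZ z\,dP_1$, and since $\CCb$ is $\sigma(\Ll',\Ll)$-closed the cluster point must be $P_1+\ell^s$ with $\ell^s\neq0$. If your claim were correct, all solutions of \eqref{pbc} arising as such limits would be absolutely continuous and $\Qd$ would lie in $\CC$, which would contradict the generalized-projection phenomenon (and the strict inequality $I(\Qd)<\inf(\pc)$ that can occur) that this section is about. The two topologies must not be conflated: the weak-$L_1$ (or $\sigma(\LlsR,\El)$) limit equals the absolutely continuous \emph{part} of the $\Ll'$-cluster point, not the cluster point itself.

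The repair does not need Dunford--Pettis at all, and it is what the paper does: since $\Ib$ is $\sigma(\Ll',\Ll)$-inf-compact, extract a convergent subnet with limit $\ell_*\in\CCb$; inf-compactness makes $\ell_*$ a minimizer of \eqref{pbc}, so $\ell_*^a=\Qd$ by Theorem \ref{res-07}-a; and by Proposition \ref{res-B4} the singular part $\ell_*^s$ vanishes on $\El$, so for every $u\in\El$ one has $\lim \IZ u\,dQ_\alpha=\IZ u\,d\Qd$ --- i.e.\ one does not kill the singular part, one merely tests against functions that cannot see it. Separability of $\El$ makes $\sigma(\El',\El)=\sigma(\LlsR,\El)$ metrizable on the relevant sets, so subnets can be replaced by subsequences, and the usual ``every subsequence has a sub-subsequence converging to $\Qd$'' argument gives convergence of the whole minimizing sequence. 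Your uniform-integrability machinery, and the splitting $u=u\1_{\{|u|\le M\}}+u\1_{\{|u|>M\}}$ to pass from $\sigma(L_1R,L_\infty)$ to $\sigma(\LlsR,\El)$, is not needed here; it is only relevant to the stronger variation-norm convergence asserted in Theorem \ref{res-minseq}, where the paper instead invokes \cite[Thm 3.7]{BL94} after the weak convergence is established. Deferring the dual equality \eqref{eq-14} to Lemmas \ref{res:lemdual}, \ref{res:lemlsc2} and Corollary \ref{res-02a} is fine, since \eqref{eq-14} is a hypothesis of the present lemma.
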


\proof Let $\seq Qn$ be a minimizing sequence of \eqref{pc}. Since
it is assumed that $\inf(\pc)=\inf(\pbc),$ $\seq Qn$ is also a
minimizing sequence of \eqref{pbc}. By \cite[Lemma 6.2]{Leo08},
$\Ib$ is $\sigma( \Ll', \Ll)$-inf-compact. Hence, we can extract a
$\sigma( \Ll', \Ll)$-convergent subnet $(Q_\alpha)_\alpha$ from
$\seq Qn.$ Let $\ell_*\in\CCb$ denote its limit: we have
$\lim_\alpha \IZ u\,dQ_\alpha=\langle\ell_*,u\rangle$ for all
$u\in \Ll.$ As $\langle\ell_*^s,u\rangle=0,$ for all $u\in \El$
(see Proposition \ref{res-B4}), we obtain: $\lim_\alpha \IZ u\,
dQ_\alpha =\IZ u\, d\ell_*^a$ for all $u\in \El.$ This proves that
$(Q_\alpha)_\alpha$ $\sigma(\El', \El)$-converges to $\ell_*^a.$
As $\El$ is a separable Banach space ($\Ll$ is not separable in
general), the topology $\sigma(\El', \El)=\sigma(\LlsR, \El)$ is
metrizable and we can extract a convergent sub\emph{sequence}
$\seq{\tilde{Q}}k$ from the convergent net $(Q_\alpha)_\alpha.$
Hence, $\seq{\tilde{Q}}k$ is a subsequence of $\seq Qn$ which
$\sigma(\LlsR, \El)$-converges to $\ell_*^a.$
\\
Since $\Ib$ is inf-compact, $\ell_*$ is a minimizer of \eqref{pbc}
and by Theorem \ref{res-07}-a, there is a unique $\Qd$ such for
any minimizing sequence $\seq Qn,$ $\ell_*^a=\Qd.$ Therefore, any
convergent subsequence of $\seq Qn$ converges to $\Qd$ for
$\sigma( \LlsR, \El).$ As any subsequence of a minimizing sequence
is still a minimizing sequence, we have proved that from any
subsequence of $\seq Qn,$ we can extract a sub-subsequence which
converges to $\Qd.$ This proves that $\seq Qn$ converges to $\Qd$
with respect to $\sigma( \LlsR, \El).$
\endproof

\subsection{Sufficient conditions for $\inf(\pc)=\inf(\pbc)$.}
Our aim now is to obtain sufficient conditions for the identity
$\inf(\pc)=\inf(\pbc)$ to hold. Let us rewrite the problems
\eqref{pc} and \eqref{pbc} in order to emphasize their differences
and analogies. Denote
\begin{eqnarray*}
    \Phi_L(u)&=&\Il(u)=\IZ\lambda(u)\,dR,\quad u\in \Ll\\
  \Phi_E(u) &=& \Phi_L(u)+\iota_{\El}(u),\quad u\in \Ll
  \index{Fi@$\Phi_E, \Phi_L$}
\end{eqnarray*}
where $E$ and $L$ refer to $\El$ and $\Ll.$ Their convex
conjugates are
\begin{eqnarray*}
  \Phi_E^*(\ell) &=& \sup_{u\in \El}\{\langle \ell, u\rangle - \Il(u)\},\quad \ell\in \LlsR \\
  \Phi_L^*(\ell) &=& \sup_{u\in \Ll}\{\langle\ell,u\rangle -\Il(u)\},\quad \ell\in \Ll'
  \index{Fis@$\Phi_E^*, \Phi_L^*$}
\end{eqnarray*}
It is shown at \cite[Lemma 6.2]{Leo08} that under the assumption
\eqref{A4}
\begin{equation}\label{eq-08}
\left\{
\begin{array}{rcll}
  I(\ell) &=& \Phi_E^*(\ell-mR),  &\ell\in \El'=\LlsR \\
  \Ib(\ell) &=& \Phi_L^*(\ell-mR), &\ell\in \Ll'=\LlsR\oplus\Ll^s \\
\end{array}
\right.
 \index{I3@$I,$ entropy, see \eqref{eq-16}, \eqref{eq-08}}
    \index{I4@$\Ib,$ extended entropy, see \eqref{III}, \eqref{eq-08}}
\end{equation}
Hence, considering the minimization problems
\begin{equation}\label{eq-55}
    \textsl{minimize } \Phi_E^*(\ell) \textsl{ subject to } \tl \in C_o,\quad \ell\in \LlsR
    \tag{$\pE$}
    \index{Primal problems!P4@\eqref{eq-55}}
\end{equation}
and
\begin{equation}\label{eq-56}
    \textsl{minimize } \Phi_L^*(\ell) \textsl{ subject to }\tl\in C_o,\quad \ell\in \Ll'
    \tag{$\pL$}
    \index{Primal problems!P5@\eqref{eq-56}}
\end{equation}
with $C_o=C-\langle\theta,mR\rangle,$ we see that $\ell_*$ is a
solution of \eqref{pc} [resp. \eqref{pbc}] if and only if
$\ell_*-mR$ is a solution of \PE\ [resp. \PL]. It is enough to
prove
\begin{equation}\label{eq-07}
    \inf(\pE)=\inf(\pL)
\end{equation}
 to get $\inf(\pc)=\inf(\pbc).$

\par\medskip
\noindent\textbf{Basic facts about convex duality.} The proof of
\eqref{eq-07} will rely on standard convex duality considerations.
Let us recall some related facts, as developed in \cite{Roc74}.
\\
Let $A$ and $X$ be two vector spaces, $h: A\to[-\infty,+\infty]$ a
convex function, $T:A\to X$ a linear operator and $C$ a convex
subset of $X.$ The primal problem to be considered is the
following convex minimization problem
\begin{equation}\label{eq-57}
    \textsl{minimize } h(a) \textsl{ subject to } Ta\in C,\quad a\in A
    \tag{$\mathcal{P}$}
    \index{Primal problems!P6@\eqref{eq-57}}
\end{equation}
The primal value-function corresponding to the Fenchel
perturbation $F(a,x)=h(a)+\iota_C(Ta+x),$ $a\in A, x\in X$ is
$\varphi(x)=\inf_{a\in A}F(a,x),$ i.e.
\[
\varphi(x)=\inf\{h(a); a\in A, Ta\in C-x\}\quad x\in X.
\]
Denote $A^*$ the algebraic dual space of $A.$ The convex conjugate
of $h$ with respect to the dual pairing $\langle A,A^*\rangle$ is
\begin{equation*}
    h^*(\nu)=\sup_{a\in A}\{\langle \nu,a\rangle -h(a)\},\quad \nu\in A^*.
\end{equation*}
Let $Y$ be a vector space in dual pairing with $X.$ The adjoint
operator $T^*: Y\to A^*$ of $T$ is defined for all $y\in Y$  by
\begin{equation*}
   \langle T^*y,a\rangle_{A^*,A}=\langle y,Ta\rangle_{Y,X},\quad \forall a\in A
\end{equation*}
The dual problem associated with ($\mathcal{P}$) is
\begin{equation}\label{eq-58}
    \textsl{maximize } \inf_{x\in C}\langle y,x\rangle - h^*(T^\ast y),\quad y\in Y
    \tag{$\mathcal{D}$}
    \index{Dual problems!D4@\eqref{eq-58}}
\end{equation}
Let $U$ be some subspace of $A^*.$  The dual value-function is
\begin{equation*}
    \psi(u)=\sup_{y\in Y} \left\{\inf_{x\in C}\yx-h^*(T^*y+u)\right\},\quad
    u\in U
\end{equation*}
We say that $\langle X,Y\rangle$ is a \emph{topological dual
pairing} if $X$ and $Y$ are topological vector spaces and their
topological dual spaces $X'$ and $Y'$ satisfy $X'=Y$ and $Y'=X$ up
to some isomorphisms.

\begin{theorem}[Criteria for the dual equality]\label{res-01}
We assume that  $A,U,X$ and $Y$ are locally convex Hausdorff
topological vector spaces such that $\langle A,U\rangle$ and
$\langle X,Y\rangle$ are topological dual pairings. For the dual
equality
$$\inf(\mathcal{P})=\sup(\mathcal{D})$$ to hold, it is enough that
\begin{enumerate}
    \item
    \begin{enumerate}
        \item $h$ is a convex function and $C$ is a convex subset of
        $X,$
        \item $\varphi$  is \lsc\ at $0\in X$ and
        \item $\sup(\mathcal{D}) >-\infty$
    \end{enumerate}
\end{enumerate}
or
\begin{enumerate}
    \item[(2)]
    \begin{enumerate}
        \item $h$ is a convex function and $C$ is a closed convex subset of
        $X,$
        \item $\psi$ is \usc\ at $0\in U$ and
        \item $\inf(\mathcal{P})<+\infty.$
    \end{enumerate}
\end{enumerate}
\end{theorem}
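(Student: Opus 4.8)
The plan is to run the standard perturbation/value-function argument of \cite{Roc74}, organised around the primal value-function $\varphi$ and the dual value-function $\psi$ introduced above. First note that $\inf(\mathcal{P})=\varphi(0)$ by the definition of $\varphi$, and that $\varphi$ is convex: the Fenchel perturbation $F(a,x)=h(a)+\iota_C(Ta+x)$ is jointly convex on $A\ttimes X$ (because $h$ and $C$ are convex and $(a,x)\mapsto Ta+x$ is linear), and an infimum over $a$ of a jointly convex function is convex in $x$. Symmetrically $\sup(\mathcal{D})=\psi(0)$ and $\psi$ is concave on $U$. A short conjugate computation, using the substitution $w=Ta+x$ and the adjoint relation $\langle Ta,y\rangle=\langle T^*y,a\rangle$, gives $\varphi^*(y)=\iota_C^*(y)+h^*(-T^*y)$ for the convex conjugate of $\varphi$ with respect to $\langle X,Y\rangle$, so that $-\varphi^*(-y)=\inf_{x\in C}\langle x,y\rangle-h^*(T^*y)$ is precisely the objective of $(\mathcal{D})$ at $y$; hence $\sup(\mathcal{D})=\sup_{y\in Y}\{-\varphi^*(-y)\}=-\inf_y\varphi^*(y)=\varphi^{**}(0)$. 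In particular $\sup(\mathcal{D})=\varphi^{**}(0)\le\varphi(0)=\inf(\mathcal{P})$ (weak duality), so only the reverse inequality must be established, and one may assume both values finite: if $\sup(\mathcal{D})=+\infty$ or $\inf(\mathcal{P})=-\infty$, weak duality already gives equality.

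\emph{Case (1).} Since $\varphi^{**}(0)=\sup(\mathcal{D})>-\infty$ by (1c) and is finite, $\varphi^*$ is not identically $+\infty$, so $\varphi$ admits a continuous affine minorant; consequently its \lsc\ convex hull $\cl\varphi$ is proper and coincides with $\varphi^{**}$ (the biconjugate theorem — the hypothesis $\sup(\mathcal{D})>-\infty$ is exactly what excludes the degenerate case where $\cl\varphi$ takes the value $-\infty$). Being convex, $\varphi$ satisfies $(\cl\varphi)(0)=\liminf_{x\to 0}\varphi(x)$, the $\liminf$ being the same in every topology compatible with $\langle X,Y\rangle$, in particular the given one; hence the lower semicontinuity of $\varphi$ at $0$, hypothesis (1b), yields $(\cl\varphi)(0)=\varphi(0)$. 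Combining, $\sup(\mathcal{D})=\varphi^{**}(0)=(\cl\varphi)(0)=\varphi(0)=\inf(\mathcal{P})$.

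\emph{Case (2).} The idea is to run the same argument on the dual pair. Viewing $(\mathcal{D})$ as a primal problem perturbed by $u\in U$, its value-function is $\psi$, and with $\tilde\psi:=-\psi$ (convex, $\tilde\psi(0)=-\sup(\mathcal{D})$) the analogous conjugate computation — now using that $C$ is \emph{closed} convex, so that $\iota_C^{**}=\iota_C$ for $\langle X,Y\rangle$ — gives $\tilde\psi^*(a)=h^{**}(a)+\iota_C(Ta)$, whence $\tilde\psi^{**}(0)=-\inf\{h^{**}(a);\,Ta\in C\}=-\inf(\mathcal{P})$ (here $h$ may be replaced by its closure without changing $(\mathcal{D})$, harmless since in the applications $h$ is already a conjugate function, hence closed). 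This value is finite by (2c) and the assumed finiteness, so $\tilde\psi^{**}=\cl\tilde\psi$ is proper; and the upper semicontinuity of $\psi$ at $0$, hypothesis (2b), is the lower semicontinuity of $\tilde\psi$ at $0$, which as before forces $(\cl\tilde\psi)(0)=\tilde\psi(0)$. Hence $-\sup(\mathcal{D})=\tilde\psi(0)=\tilde\psi^{**}(0)=-\inf(\mathcal{P})$, the dual equality. Equivalently, (2) is nothing but (1) applied to the pair $(\mathcal{D}),(\mathcal{P})$, the closedness of $C$ being what forces $(\mathcal{P})$ into the role $(\mathcal{D})$ occupies automatically.

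The only genuinely delicate point — the ``main obstacle'' — is the bookkeeping with $\pm\infty$: the identity $f^{**}=\cl f$ for a convex function $f$ breaks down exactly when $\cl f$ is somewhere $-\infty$, and hypotheses (1c), (2c) are inserted precisely to exclude this for the primal, resp.\ dual, value-function, the remaining infinite cases being disposed of by weak duality. A secondary point is the verification that perturbing $(\mathcal{D})$ reproduces $(\mathcal{P})$ as its dual; this is where closedness of $C$ is used, it is the source of the asymmetry between the two hypothesis sets, and it also means $h$ in $(\mathcal{P})$ should be read as its closure (harmless in the intended applications). Everything else — convexity of $\varphi$, concavity of $\psi$, the two conjugate computations, and the topology-independence of lower semicontinuity of a convex function among compatible topologies — is routine, and I would cite \cite{Roc74} for the precise forms of the biconjugate and closure facts invoked.
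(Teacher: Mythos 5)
The paper offers no proof of this theorem: it is recalled verbatim from \cite{Roc74}, so your proposal can only be measured against the standard perturbation--duality argument it points to. Your Case (1) is exactly that argument ($\varphi$ convex, $\varphi^*(y)=\iota_C^*(y)+h^*(-T^*y)$, $\sup(\mathcal{D})=\varphi^{**}(0)$, biconjugation plus lower semicontinuity at $0$, with the compatible-topologies remark in the right place), and it is correct.

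Case (2) has a genuine gap. Your identity $\tilde\psi^*(a)=h^{**}(a)+\iota_C(Ta)$ is obtained by substituting $v=T^*y+u$ and splitting the supremum over $(u,y)$ into two independent suprema; this requires $T^*y+U=U$ for every $y\in Y$, i.e.\ $T^*Y\subset U$, and the remark placed immediately after the theorem insists that Criterion (2) does \emph{not} assume this inclusion. Without it the split is invalid: one only obtains $\tilde\psi^*(a)\le h(a)+\iota_C(Ta)$ (which merely re-proves weak duality) together with the $y=0$ bound $\tilde\psi^*(a)\ge\sup_{v\in U}\{\langle v,a\rangle-h^*(v)\}$, and these do not combine into the equality your argument needs.

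Second, even where the computation is legitimate, what it yields is $\sup(\mathcal{D})=\inf\{(h^*|_U)^*(a);\,Ta\in C\}$, and identifying this with $\inf(\mathcal{P})$ uses that $h$ equals its $\sigma(A,U)$-closed hull. You dismiss this in a parenthesis as a harmless rereading, but it is an additional hypothesis, and it is in fact indispensable: take $A=U=\R^2$, $X=Y=\R$, $T(a_1,a_2)=a_1$, $C=\{0\}$, and the convex, non-\lsc\ function $h=0$ on $\{a_1>0\}$, $h=1$ on $\{a_1=0\}$, $h=+\infty$ on $\{a_1<0\}$; then $\psi\le 0$ everywhere (hence \usc\ at $0$) and $\inf(\mathcal{P})=1<\infty$, yet $\sup(\mathcal{D})=0<1=\inf(\mathcal{P})$. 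So Criterion (2) must be read as the symmetric criterion of \cite{Roc74} for a closed convex perturbation function, and a complete proof has either to assume $h$ $\sigma(A,U)$-closed and justify the decoupling, or to argue at the level of $F^{**}(a,0)$ and explain why its infimum equals $\inf(\mathcal{P})$; your write-up does neither explicitly, nor does it treat the case $\sup(\mathcal{D})=-\infty$, which (2c) does not exclude. Your argument does cover the only situation in which the paper invokes Criterion (2) (Lemma \ref{res-03a} and Corollary \ref{res-02a}), where $h=I=\Phi_E^*$ is a conjugate, hence closed, and $T^*\YL\subset\Ll=U$ by \eqref{eq-30}; but as a proof of the theorem in the stated generality it is incomplete.
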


\begin{remarks} About the space $U.$
\begin{enumerate}[(a)]
    \item As regards Criterion (1), the space $U$ is unnecessary.
    \item As regards Criterion (2), it is not assumed that $T^*Y\subset
    U.$
\end{enumerate}
\end{remarks}

\par\medskip
\noindent\textbf{The continuity of $T$ and $T^*$.} The following
Lemma \ref{L2} will be useful for the proof of \eqref{eq-07}. We
go back to our usual notation. The Luxemburg norm on $\Ll$ is
$\NF$ and its dual norm is
$$
\|\ell\|_{\lmax}^*:= \sup_{u, \|u\|_{\lmax}\leq 1}|\ul|,\quad
\ell\in\Ll'
$$
Let us define
\begin{equation*}
    |y|_\lmax= \|\langle y,\theta\rangle\|_\lmax,\quad y\in\YYo
\end{equation*}
Under the assumption (A$_\theta$), $ |\cdot|_\lmax$ is a norm on
$\YYo.$ The dual space of $(\YYo,|\cdot|_\lmax)$ is $\XX$ and the
corresponding dual  norm is
\begin{equation}\label{eq-46}
    |x|_\lmax^*:= \sup_{y, |y|_\lmax\leq 1}|\yx|, \quad x\in\XX
\end{equation}

\begin{lemma}\label{L2}
Let us assume \emph{(A$_\theta$)}.
\begin{enumerate}[(a)]
    \item $T: \Ll'\to\XX$ is  $\NF^*$-$|\cdot|_\lmax^*$-continuous
    \item $T^\ast \YY\subset\Ll$ and $T^\ast: \YY\to\Ll$ is $\sigma(\YY,\XX)$-$\sigma(\Ll,\Ll')$-continuous
    \item $T: \Ll'\to\XX$ is $\sigma(\Ll',\Ll)$-$\sigma(\XX,\YY)$-continuous
\end{enumerate}
\end{lemma}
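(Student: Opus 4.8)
The plan is to prove the three continuity statements of Lemma \ref{L2} essentially by unwinding the definitions, using H\"older's inequality in Orlicz spaces \eqref{eq-43} and the defining relation \eqref{eq-52} for $T$ together with the identification of $\YYo$ with a subspace of $\Ll$ via $y\mapsto\langle y,\theta\rangle$.

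\medskip\noindent\textbf{Part (a).} First I would test the linear form $T\ell\in\XX$ against an arbitrary $y\in\YYo$: by \eqref{eq-52}, $\langle y,T\ell\rangle=\langle\langle y,\theta\rangle,\ell\rangle$, and by definition of the dual Luxemburg norm this is bounded by $\|\langle y,\theta\rangle\|_\lmax\,\|\ell\|_\lmax^*=|y|_\lmax\,\|\ell\|_\lmax^*$. Taking the supremum over $y$ with $|y|_\lmax\le1$ gives $|T\ell|_\lmax^*\le\|\ell\|_\lmax^*$, which is exactly the asserted $\NF^*$-$|\cdot|_\lmax^*$-continuity (indeed with operator norm $\le1$). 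Since $\XX=\YY'$ and $|\cdot|_\lmax^*$ is the norm dual to $(\YYo,|\cdot|_\lmax)$ (hence to its completion $\YY$), this estimate is meaningful on all of $\Ll'$.

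\medskip\noindent\textbf{Part (b).} For the inclusion $T^*\YY\subset\Ll$: for $y\in\YYo$ one has $T^*y=\langle y,\theta\rangle\in\Ll$ by \eqref{A-critical} (which is assumption (A$_\theta$)), and $\|T^*y\|_\lmax=|y|_\lmax$ by the very definition of $|\cdot|_\lmax$, so $T^*$ is an isometry from $(\YYo,|\cdot|_\lmax)$ into $\Ll$; it therefore extends continuously to the completion $\YY$ with values in $\Ll$ (closedness of $\Ll$ is not needed — one only needs that the isometric image is automatically a Cauchy-complete subspace). For the weak-weak continuity, let $y_\alpha\to y$ in $\sigma(\YY,\XX)$; then for every $\ell\in\Ll'$, $\langle T^*y_\alpha,\ell\rangle=\langle T\ell,y_\alpha\rangle\to\langle T\ell,y\rangle=\langle T^*y,\ell\rangle$ since $T\ell\in\XX$ by part (a), proving $\sigma(\Ll,\Ll')$-convergence of $T^*y_\alpha$.

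\medskip\noindent\textbf{Part (c).} This is the adjoint statement of (b): if $\ell_\alpha\to\ell$ in $\sigma(\Ll',\Ll)$, then for every $y\in\YY$ we have $T^*y\in\Ll$ by (b), hence $\langle T\ell_\alpha,y\rangle=\langle\ell_\alpha,T^*y\rangle\to\langle\ell,T^*y\rangle=\langle T\ell,y\rangle$, i.e.\ $T\ell_\alpha\to T\ell$ in $\sigma(\XX,\YY)$. The only point requiring a word of care — and the mild ``obstacle'' such as it is — is the consistent identification of the various adjoints: one must check that the operator $T^*$ appearing here (the restriction to $\YY$ of the algebraic adjoint $T^\sharp$, as defined in Section \ref{sec-def}) does land in $\Ll'\supset\Ll$ and agrees with $y\mapsto\langle y,\theta\rangle$ on $\YYo$, so that \eqref{eq-52} can legitimately be read as the duality bracket $\langle T\ell,y\rangle=\langle\ell,T^*y\rangle$ in both directions; this is precisely what (A$_\theta$) via \eqref{A-critical} guarantees, together with the density of $\langle\YYo,\theta\rangle$ in $\YY$ used to pass estimates to the completion. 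Once these identifications are in place, all three items are one-line consequences of H\"older \eqref{eq-43} and the symmetry of the duality bracket.
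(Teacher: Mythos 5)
Your proof is correct: the three items are exactly the standard duality statements, and your verification via \eqref{eq-52}, the definition of the dual norms, and the density of $\langle\YYo,\theta\rangle$ in $\YY$ is sound; note that the paper itself gives no argument here but simply cites \cite[Section 4]{p-Leo07a}, so there is nothing in-text to compare against. The only blemish is your parenthetical in (b) claiming that completeness of $\Ll$ is not needed: the isometric image of $\YYo$ is not automatically Cauchy-complete, and what actually makes the extension land in $\Ll$ is either the completeness of the Orlicz space $(\Ll,\NF)$ or, more directly, the paper's very definition of $\YY$ as (isomorphic to) the $\NF$-closure of $\langle\YYo,\theta\rangle$ \emph{inside} $\Ll$, which makes $T^*\YY\subset\Ll$ true by construction.
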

\begin{proof}
See \cite[Section 4]{p-Leo07a}.
\end{proof}

\par\smallskip
\noindent\textbf{Back to our problem.} Let us particularize this
framework for the problems \PE\ and \PL. Assuming that $m\equiv
0,$ we see that \eqref{pc}=\PE, \eqref{pbc}=\PL,
$I=\Ils=\Phi_E^*,$ $\Ib=\Ilsb=\Phi_L^*,$ $\gamma=\lambda,$ $C=C_o$
and so on. \emph{This simplifying requirement will be assumed
during the proof without loss of generality, see the proof of
\cite[Theorem 4.2]{Leo08}.}

Let us first apply the criterion (1) of Theorem \ref{res-01}.

\par\noindent Problem \PE\ is obtained with $A=\LlsR,$ $X=\XL,$ $Y=\YL$ equipped with the weak topologies
$\sigma(\XL,\YL)$ and $\sigma(\YL,\XL),$ and $h=\Phi_E^*=I.$ The
corresponding primal value-function is
\begin{equation*}
  \varphi_E(x) = \inf\left\{I(Q); \IZ\theta\,dQ\in C-x, Q\in \LlsR\right\},\quad x\in
  \XL
\end{equation*}
Under the underlying assumption \eqref{A-exists}, with Lemma
\ref{L2}-(b) we have:
\begin{equation}\label{eq-30}
    T^\ast \YL\subset \Ll.
\end{equation}
Hence, we only need to compute $h^*$ on $\Ll\subset
[\LlsR]^*=A^*.$ For each $u\in \Ll,$ $h^*(u)=\sup_{f\in \Lls}\{\IZ
uf\,dR-\IZ\lambda^*(f)\,dR\}$ and it is proved in \cite{Roc68}
that
\begin{equation}\label{eq-11}
    h^*(u)=\IZ\lambda(u)\,dR,\quad u\in\Ll
\end{equation}
Therefore, the dual problem associated to \PE\ is
\begin{equation}\label{eq-59}
    \textsl{maximize } \inf_{x\in C}\langle y,x\rangle-\IZ\lambda(T^\ast y)\,dR,\quad y\in \YL
    \tag{$\dE$}
    \index{Dual problems!D5@\eqref{eq-59}}
\end{equation}

Let us go on with \PL. Take $A=\Ll',$ $X=\XL,$ $Y=\YL$ equipped
with the weak topologies $\sigma(\XL,\YL)$ and $\sigma(\YL,\XL),$
and $h=\Phi_L^*=\Ib.$ The function $h^*$ in restriction to $\Ll$
is still given by \eqref{eq-11} since
$u\in\Ll\mapsto\IZ\lambda(u)\,dR$ is closed convex (Fatou's
lemma). The primal value-function is
\begin{equation*}
  \varphi_L(x) = \inf\{\Ib(\ell); \tl\in C-x, \ell\in \Ll'\},\quad x\in
  \XL
\end{equation*}
and the dual problem associated to \PL\ is
\begin{equation*}
 (\dL)=(\dE).
\end{equation*}

\begin{lemma}\label{res:lemlsc1}
    Suppose that $T^*\YL\subset\Ll$  and $C\cap\XL$ is
    $\sigma(\XL,\YL)$-closed, then $\varphi_L$
    is $\sigma(\XL,\YL)$-\lsc.
\end{lemma}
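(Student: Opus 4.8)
The plan is to show that every sublevel set $\{\varphi_L\le r\}$ is $\sigma(\XL,\YL)$-closed, the key tool being the $\sigma(\Ll',\Ll)$-inf-compactness of the extended entropy. Recall that in the present reduction (with $m\equiv 0$) one has $\Ib=\Phi_L^*$, so $\Ib\ge 0$ is $\sigma(\Ll',\Ll)$-\lsc\ as a convex conjugate and, by \cite[Lemma 6.2]{Leo08}, $\Ib$ is $\sigma(\Ll',\Ll)$-inf-compact; hence $\varphi_L$ is $[0,\infty]$-valued and for each $s\ge 0$ the set $K_s:=\{\ell\in\Ll';\Ib(\ell)\le s\}$ is $\sigma(\Ll',\Ll)$-compact. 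Moreover, since $T^*\YL\subset\Ll$, the operator $T$ maps $\Ll'$ into $\XL$, and by Lemma \ref{L2}-(c) it is $\sigma(\Ll',\Ll)$-$\sigma(\XL,\YL)$-continuous; consequently $TK_s$ is a $\sigma(\XL,\YL)$-compact subset of $\XL$.

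Next I would rewrite, for $x\in\XL$, the constraint $\langle\theta,\ell\rangle\in C-x$ as $T\ell+x\in C\cap\XL$ (legitimate since $T\ell\in\XL$ and $x\in\XL$), so that $\varphi_L(x)=\inf\{\Ib(\ell);\ell\in\Ll',\ T\ell+x\in C\cap\XL\}$. Unfolding the infimum, one gets for every $r\ge 0$
\[
\{x\in\XL;\varphi_L(x)\le r\}=\bigcap_{\varepsilon>0}\big((C\cap\XL)-TK_{r+\varepsilon}\big),
\]
because $\varphi_L(x)\le r$ if and only if for each $\varepsilon>0$ there is $\ell\in K_{r+\varepsilon}$ with $T\ell+x\in C\cap\XL$, i.e. $x\in(C\cap\XL)-T\ell\subset(C\cap\XL)-TK_{r+\varepsilon}$.

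Finally, each $(C\cap\XL)-TK_{r+\varepsilon}$ is the difference of the $\sigma(\XL,\YL)$-closed set $C\cap\XL$ and the $\sigma(\XL,\YL)$-compact set $TK_{r+\varepsilon}$, hence is $\sigma(\XL,\YL)$-closed; as an intersection of such sets, $\{\varphi_L\le r\}$ is $\sigma(\XL,\YL)$-closed, and $r$ being arbitrary this proves that $\varphi_L$ is $\sigma(\XL,\YL)$-\lsc. The computation giving the displayed identity must be handled with some care, since the infimum defining $\varphi_L$ is not assumed attained (whence the intersection over $\varepsilon>0$); the closedness of the constraint set and the continuity of $T$ from Lemma \ref{L2} take care of the rest, and the genuinely substantive input is the $\sigma(\Ll',\Ll)$-inf-compactness of $\Ib$, which is what forces $TK_{r+\varepsilon}$ to be compact and thus $(C\cap\XL)-TK_{r+\varepsilon}$ to be closed. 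This is where the coercivity assumption \eqref{A4} enters, through \cite[Lemma 6.2]{Leo08}.
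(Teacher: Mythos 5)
Your proposal is correct and rests on exactly the same ingredients as the paper's proof: the $\sigma(\Ll',\Ll)$-inf-compactness of $\Ib$ from \cite[Lemma 6.2]{Leo08} (where \eqref{A4} enters), the $\sigma(\Ll',\Ll)$-$\sigma(\XL,\YL)$-continuity of $T$ from Lemma \ref{L2}-(c), and the closedness of $C\cap\XL$. The paper packages these as ``$\tilde\varphi=\Jb\Box\iota_{-C}$ is the inf-convolution of an inf-compact function and a \lsc\ function,'' and your sublevel-set computation with the intersection over $\varepsilon>0$ is precisely the unpacked version of that general fact (closed set plus compact set is closed), so it is essentially the same argument.
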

\proof Defining $\tilde\varphi(x):=\varphi_L(-x)$ and
$\Jb(x):=\inf\{\Ib(\ell);\ell\in\Ll': \tl=x\},$ $x\in\XL,$ we
obtain that $\tilde\varphi$ is the inf-convolution of $\Jb$ and
the convex indicator of $-C:$ $\iota_{-C}.$ That is
$\tilde\varphi(x)=(\Jb\Box
\iota_{-C})(x)=\inf\{\Jb(y)+\iota_{-C}(z); y,z, y+z=x\}.$
\\
As already seen, $\Ib$ is $\sigma(\Ll',\Ll)$-inf-compact and $T$
is $\sigma(\Ll',\Ll)$-$\sigma(\XL,\YL)$-continuous, see Lemma
\ref{L2}-(c). It follows that $\Jb$ is
$\sigma(\XL,\YL)$-inf-compact. As $C\cap\XL$ is assumed to be
$\sigma(\XL,\YL)$-closed, $\iota_{-C}$ is \lsc. Finally, being the
inf-convolution of an inf-compact function and a \lsc\ function,
$\tilde\varphi$ is \lsc, and so is $\varphi_L.$
\endproof
As $\Ib$ and $C$ are assumed to be convex and $\sup(\dL)\ge
\inf_{x\in C}\langle 0,x\rangle-\IZ\lambda(T^\ast
0)\,dR=0>-\infty,$ this lemma allows us to apply Criterion (1) of
Theorem \ref{res-01} to obtain
\begin{equation}\label{eq-13}
    \inf(\pL)=\sup(\dL)
\end{equation}
Since $\Ib$ and $I$ match on $\LlsR,$ we have
$\inf(\pL)\leq\inf(\pE).$ Putting together these considerations
gives us
\begin{equation*}
\sup(\dE)=\sup(\dL)=\inf(\pL)\leq\inf (\pE).
\end{equation*}
Since the desired equality \eqref{eq-14} is equivalent to
$\inf(\pL)=\inf(\pE),$ we have proved
\begin{lemma}\label{res:lemdual}
The equality \eqref{eq-14} holds if and only if we have the dual
equality
\begin{equation}\label{eq-12}
    \inf(\pE)=\sup(\dE).
\end{equation}
This happens if and only if $\varphi_E$ is $\sigma(\XL,\YL)$-\lsc\
at $x=0.$
\end{lemma}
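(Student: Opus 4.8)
The plan is to chase through the equivalences that have already been set up in the preceding paragraphs, using the basic convex-duality machinery of Theorem \ref{res-01}. First I would record what has already been established: namely the chain $\sup(\dE)=\sup(\dL)=\inf(\pL)\leq\inf(\pE)$, which came from Lemma \ref{res:lemlsc1} (lower semicontinuity of $\varphi_L$, hence Criterion (1) of Theorem \ref{res-01} applies to \PL) together with the trivial bound $\inf(\pL)\le\inf(\pE)$ coming from $\Ib\le I$ on $\LlsR$ and the identity $(\dL)=(\dE)$. Since \eqref{eq-14} reads $\inf(\pc)=\inf(\pbc)$ and, under the normalization $m\equiv 0$, this is literally $\inf(\pE)=\inf(\pL)$, the first equivalence is immediate from this chain: $\inf(\pE)=\inf(\pL)$ forces $\inf(\pE)=\sup(\dE)$, and conversely $\inf(\pE)=\sup(\dE)$ combined with $\sup(\dE)=\inf(\pL)$ gives $\inf(\pE)=\inf(\pL)$. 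This proves the ``if and only if'' between \eqref{eq-14} and the dual equality \eqref{eq-12}.

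For the second equivalence, I would invoke Theorem \ref{res-01} in the \PE\ setting, with $A=\LlsR$, $X=Y=\XL$ (resp. $\YL$) in the weak topologies, and $h=\Phi_E^*=I$. The sufficiency direction uses Criterion (1): $I$ is convex and $C$ is convex (hypothesis (A$_C$)); $\sup(\dE)\ge 0>-\infty$ by taking $y=0$; so if $\varphi_E$ is $\sigma(\XL,\YL)$-lower semicontinuous at $0$, then $\inf(\pE)=\sup(\dE)$. For the necessity direction one uses the general fact from convex duality that $\sup(\dE)=\varphi_E^{**}(0)$ (the biconjugate of the primal value function evaluated at $0$, with respect to the pairing $\langle\XL,\YL\rangle$), while $\inf(\pE)=\varphi_E(0)$. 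Hence the dual equality $\inf(\pE)=\sup(\dE)$ says precisely $\varphi_E(0)=\varphi_E^{**}(0)$; since $\varphi_E$ is convex (being the value function of a convex program — inf over $Q$ of the jointly convex $I(Q)+\iota_C(TQ+x)$), this equality is equivalent to $\varphi_E$ being lower semicontinuous at $0$. One should note here that $\varphi_E^{**}$ is computed in the pairing $\langle\XL,\YL\rangle$, and that the relevant dual objects of $\varphi_E$ live in $\YL$; this is legitimate because under \eqref{A-exists} we have $T^*\YL\subset\Ll$ by Lemma \ref{L2}-(b), so that the dual problem \eqref{eq-59} is well-posed and $\sup(\dE)$ really is the value of the Fenchel dual in this pairing.

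The main obstacle — and it is a mild one — is making the identification $\sup(\dE)=\varphi_E^{**}(0)$ fully rigorous, i.e. checking that the Fenchel dual of \PE\ in the stated topological pairing coincides with the biconjugate of $\varphi_E$ at the origin, and that $\varphi_E$ is genuinely convex so that ``$\varphi_E(0)=\varphi_E^{**}(0)$ $\iff$ $\varphi_E$ lsc at $0$'' holds. Convexity of $\varphi_E$ follows from convexity of $(Q,x)\mapsto I(Q)+\iota_{C}(TQ+x)$ and the standard fact that partial infima of jointly convex functions are convex; the value-function/biconjugate identity is exactly the content of the perturbational duality scheme recalled before Theorem \ref{res-01} (with perturbation $F(Q,x)=I(Q)+\iota_C(TQ+x)$). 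Everything else is just unwinding definitions, so no long computation is needed. Finally I would assemble the two equivalences into the single statement: \eqref{eq-14} $\iff$ \eqref{eq-12} $\iff$ $\varphi_E$ is $\sigma(\XL,\YL)$-lsc at $0$.
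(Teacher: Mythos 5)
Your proposal is correct and follows essentially the same route as the paper: the first equivalence comes from the chain $\sup(\dE)=\sup(\dL)=\inf(\pL)\le\inf(\pE)$ obtained from Lemma \ref{res:lemlsc1} and Criterion (1) of Theorem \ref{res-01} together with the identification of \eqref{eq-14} with $\inf(\pE)=\inf(\pL)$ under the normalization $m\equiv0$, and the second equivalence is the standard value-function characterization of the dual equality (Criterion (1) for sufficiency, $\sup(\dE)=\varphi_E^{**}(0)\le\varphi_E(0)=\inf(\pE)$ for necessity), which the paper leaves implicit. Your extra care about $\varphi_E^{**}$ versus the lsc hull is harmless here since $\varphi_E\ge0$.
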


Let us now give a couple of simple criteria for this property to
be realized.
\begin{lemma}\label{res:lemlsc2}\quad
\begin{enumerate}[(a)]
    \item
    Suppose that there are finitely many constraints (i.e.\! $\XXo$
    is finite dimensional) and $\CC\cap\icordom I\not=\emptyset,$
    then $\varphi_E$ is continuous at $0.$
    \item
    Suppose that \eqref{A-forall} is satisfied and $C$ is
    $\sigma(\XE,\YE)$-closed, then $\varphi_E$
    is $\sigma(\XL,\YL)$-\lsc.
\end{enumerate}
\end{lemma}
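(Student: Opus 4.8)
\emph{Part (b).} The plan is to reduce everything to Lemma \ref{res:lemlsc1}. Under the good‑constraint hypothesis \eqref{A-forall} one has $T^*\YY\subset\El\subset\Ll,$ and hence $T\Ll^s=\{0\}$ (Proposition \ref{res-B4}, as already recorded in Section \ref{sec-def}). Consequently, for every $\ell=\ell^a+\ell^s\in\Ll'$ the constraint $T\ell\in C-x$ reads $T\ell^a\in C-x,$ and since $\Ib(\ell)=I(\ell^a)+I^s(\ell^s)$ with $I^s\ge0$ and $I^s(0)=0,$ the infimum defining $\varphi_L(x)$ is only decreased by replacing $\ell$ with $\ell^a.$ Therefore $\varphi_L(x)=\inf\{I(\ell^a);T\ell^a\in C-x\}=\varphi_E(x)$ on $\XL.$ It then suffices to invoke Lemma \ref{res:lemlsc1}, whose two hypotheses hold here: $T^*\YL\subset\Ll$ by the display above, and $C\cap\XL$ is $\sigma(\XL,\YL)$-closed because $\XE=\XL,$ $\YE=\YL$ and $C$ is assumed $\sigma(\XE,\YE)$-closed; this gives that $\varphi_L=\varphi_E$ is $\sigma(\XL,\YL)$-\lsc. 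The only genuinely new step is the identity $\varphi_E=\varphi_L.$

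\emph{Part (a).} Here $\XL=\XXo=\R^K$ is finite dimensional and $\varphi_E$ is convex on $\R^K,$ being the value function of the convex program \eqref{eq-55} (equivalently, the infimal projection in $x$ of the jointly convex map $(x,Q)\mapsto I(Q)+\iota_C(TQ+x)$). Since a convex function on $\R^K$ is continuous at every interior point of its effective domain, it is enough to show $0\in\inter(\dom\varphi_E).$ Now $\varphi_E(x)<\infty$ precisely when some $Q\in\dom I$ satisfies $TQ\in C-x,$ so $\dom\varphi_E=C-T(\dom I);$ and from the elementary inclusion $\inter(C-B)\supseteq C-\inter B$ it suffices to produce a point of $C$ lying in $\inter(T\dom I).$ The hypothesis $\CC\cap\icordom I\neq\emptyset$ provides $Q_0\in\icor(\dom I)$ with $x_0:=TQ_0\in C,$ and, because a linear map carries the intrinsic core of a convex set into the intrinsic core of its image, $x_0\in\icor(T\dom I).$ Thus one is done as soon as $\icor(T\dom I)=\inter(T\dom I),$ i.e.\ as soon as $T(\dom I)$ is full dimensional in $\R^K.$

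\emph{The main obstacle} is precisely this full‑dimensionality, and it is where Assumptions (A) must be used. Condition (A$_\theta^2$) forces $\theta_1,\dots,\theta_K$ to be linearly independent modulo $R$-null functions, while (A$_{\gamma^*}$) makes $\dom I$ large enough in $\LlsR$: it contains $mR$ together with perturbations $(1+g)mR$ for $\|g\|_\infty$ small (their entropy being controlled by convexity of $\gamma^*_z$ on $[(1-\alpha)m(z),(1+\alpha)m(z)]$ together with (A$_{\gamma^*}^3$)), or, when $m$ vanishes on a large set, a suitable ball of densities in $L^\infty;$ in either case, given $y\neq0$ one exhibits two elements of $\dom I$ on which $\langle y,T\cdot\rangle=\IZ\langle y,\theta\rangle\,d(\cdot)$ takes different values, so $\langle y,T\cdot\rangle$ is nonconstant on $\dom I$ for every $y\neq0$ and hence $T(\dom I)$ lies in no affine hyperplane, i.e.\ $\mathrm{aff}(T\dom I)=\R^K.$ Feeding this back, $0=x_0-x_0\in C-\inter(T\dom I)\subset\inter(C-T\dom I)=\inter\dom\varphi_E,$ which yields continuity of $\varphi_E$ at $0.$ The remaining ingredients (convex functions on $\R^K$ are continuous on the interior of their domain; linear images of intrinsic cores; $\inter(A-B)\supseteq A-\inter B$) are routine; the technical work is entirely in making precise the ``largeness'' of $\dom I$ and the attendant integrability.
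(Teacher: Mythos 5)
Your part (b) is correct and close in spirit to the paper's: the paper re-runs the proof of Lemma \ref{res:lemlsc1} at the level of the small space $\El$ (using that \eqref{A-forall} makes $T$ $\sigma(\LlsR,\El)$-$\sigma(\XE,\YE)$-continuous), whereas you first identify $\varphi_E=\varphi_L$ via $T\Ll^s=\{0\}$ (Proposition \ref{res-B4}, legitimate here since \eqref{A4} makes $\lmax$ finite) and then quote Lemma \ref{res:lemlsc1} verbatim. That reduction is valid and even a bit more economical, since it avoids re-checking inf-compactness in the $E$-duality.

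Part (a) is where you genuinely depart from the paper, and where the gap is. The paper never needs $T(\dom I)$ to be full dimensional: it only checks $0\in\icordom\varphi_E$ and uses that a convex function on $\R^K$ is \lsc\ (continuous relative to the affine hull of its domain) on the intrinsic core of its domain, which is all that is needed downstream in Lemma \ref{res:lemdual}. You instead insist on $0\in\inter\dom\varphi_E$, which forces the auxiliary claim $\mathrm{aff}(T\dom I)=\R^K$, i.e.\ that $\langle y,T\cdot\rangle$ is nonconstant on $\dom I$ for every $y\neq0$. That claim is true under (A), but your justification does not hold as written. The multiplicative perturbations $(1+g)mR$, $\|g\|_\infty\le\alpha$, do lie in $\dom I$ by convexity and (A$_{\gamma^*}^3$), but they only separate those $y$ for which $\langle y,\theta\rangle m$ is not $R$-a.e.\ zero; they give nothing on $\{m=0\}$. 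And there your fallback, ``a suitable ball of densities in $L^\infty$'', need not be contained in $\dom I$: (A$_{\gamma^*}^2$) provides only pointwise, non-uniform room $a(z),b(z)>0$ around $m(z)$, so one may have $\gamma^*_z(t)=+\infty$ for all $t>\varepsilon(z)$ with $\varepsilon(z)\downarrow0$, in which case no $L^\infty$-ball of densities is in $\dom I$.

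The repair is an additive, truncated perturbation: given $y\neq0$, (A$_\theta^2$) gives, say, $R(\langle y,\theta\rangle>0)>0$; set $A_n=\{z:\gamma^*_z(m(z)+1/n)\le n\}\cap\{\langle y,\theta\rangle>0\}$, which is measurable ($\gamma^*$ is jointly measurable and $m$ is measurable) and has $R(A_n)>0$ for large $n$ by (A$_{\gamma^*}^2$) and convexity; then $Q_n:=(m+\tfrac1n\1_{A_n})R\in\dom I$ because $\IZ\gamma^*(m+\tfrac1n\1_{A_n})\,dR\le nR(A_n)<\infty$, while $\langle y,TQ_n\rangle-\langle y,T(mR)\rangle=\tfrac1n\int_{A_n}\langle y,\theta\rangle\,dR>0$ (both brackets being finite by H\"older, since $\dom I\subset\LlsR$ and $\langle y,\theta\rangle\in\Ll$). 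With this lemma in place your argument closes, and it even buys something the paper's icor-based proof does not literally deliver, namely genuine continuity of $\varphi_E$ at $0$ on all of $\R^K$ rather than continuity relative to $\mathrm{aff}(\dom\varphi_E)$; without it, your step ``$\icor(T\dom I)=\inter(T\dom I)$'' is unsupported.
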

\proof \boulette{(a)} To get (a), remark that a convex function on
a finite dimensional space is \lsc\ on the intrinsic core of its
effective domain. By Lemma \ref{L2}-(c), $T$ is $\sLL$-continuous
and the assumption $\CC\cap\icordom I\not=\emptyset$ implies that
$0$ belongs to $\icordom\varphi_E.$

\Boulette{(b)} It is similar to the proof of Lemma
\ref{res:lemlsc1}. The assumption \eqref{A-forall} insures that
$T$ is $\sLE$-$\sigma(\XE,\YE)$-continuous.
\endproof

It follows from Lemma \ref{res:lemlsc2}-b, Lemma
\ref{res:lemdual}, the remark at \eqref{eq-07} and Lemma
\ref{res:aa} that under the good constraint  assumption
(A$^\forall_\theta$), if $C\cap\dom\Gamma^*\not=\emptyset,$ then
any minimizing sequence of \eqref{pc} converges with respect to
the topology $\sigma( \LlsR, \El)$ to the unique solution $\Qh$ of
\eqref{pc}. This is Theorem \ref{res-02}-b.

\par\medskip\noindent\textbf{Using Criterion (2).}\ Up to now, we
only used Criterion (1) of Theorem \ref{res-01}. In the following
lines, we are going to use Criterion (2) to prove \eqref{eq-12}
under additional assumptions.

Let us go back to Problem \PE. It is still assumed without loss of
generality that $m=0$ and $\gamma=\lambda.$ We introduce a space
$U$ and a dual value-function $\psi$ on $U.$ The framework of
Theorem \ref{res-01} is preserved when taking $X=\XL,$ $Y=\YL$
with the weak topologies $\sigma(\XL,\YL)$ and $\sigma(\YL,\XL),$
$h=I$ on $A=\LlsR$ as before and adding the following topological
pairing $\langle A,U\rangle.$ We endow $A=\LlsR$ with the topology
$\sigma(\LlsR,\Ll)$ and take
$$U=A'=(\LlsR)'\simeq\Ll$$ with the topology $\sigma(\Ll,\Lls).$
By \eqref{eq-11}:  $h^*=\Il,$ this leads to the dual
value-function
\begin{equation*}
    \psi(u)=\sup_{y\in \YL} \left\{\inf_{x\in C}\yx-\Il(T^*y+u)\right\},\quad
    u\in \Ll
\end{equation*}
To apply Criterion (2), let us establish the following

\begin{lemma}\label{res-03a}
For $\psi$ to be $\sigma(\Ll,\Lls)$-\usc, it is enough that
\begin{enumerate}[(a)]
    \item   $T^*\YL$ is a $\sigma(\Ll,\Lls)$-closed subspace
    of $\Ll$ and
    \item the interior of $\CC$ in $\LlsR$ with respect to $\|\cdot\|_{\lmaxs}$ is nonempty.
\end{enumerate}
\end{lemma}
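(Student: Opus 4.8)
The plan is to rewrite $\psi$ as (minus) an inf‑convolution on the Banach Orlicz space $\Ll$, and then to deduce its $\sigma(\Ll,\Lls)$‑upper semicontinuity from the $\sigma(\Ll,\Lls)$‑lower semicontinuity of that inf‑convolution, with hypotheses (a) and (b) controlling, respectively, the subspace $T^*\YL$ along which the ``singular'' troubles live and the growth of the term contributed by $C$. First I would introduce the support function $\sigma_C(x^*):=\sup_{x\in C}\langle x^*,x\rangle$, $x^*\in\YL$, of the (under the standing closedness assumption on $C\cap\XL$) closed convex set $C\cap\XL$; it is proper convex $\sigma(\YL,\XL)$-\lsc\ and $\inf_{x\in C}\langle y,x\rangle=-\sigma_C(-y)$, so that
\[
\psi(u)=-\inf_{y\in\YL}\big\{\sigma_C(-y)+\Il(T^*y+u)\big\},\qquad u\in\Ll .
\]
By construction $\YY=\YL$ is isometrically isomorphic through $T^*$ to $V:=T^*\YL$, the $\|\cdot\|_{\lmax}$‑closure of $\langle\YYo,\theta(\cdot)\rangle$ in $\Ll$ ($T^*$ being injective since $|\cdot|_\lmax$ is a norm on $\YYo$ under \emph{(A$_\theta$)}); substituting $w=T^*y$ and recalling from \eqref{eq-11} that $\Il(\cdot)=\IZ\lambda(\cdot)\,dR$ is convex and $\sigma(\Ll,\Lls)$-\lsc, the infimum becomes $\inf_{w\in V}\{\sigma_C(-(T^*)^{-1}w)+\Il(w+u)\}=(\check\rho\,\Box\,\Il)(u)$, where $\check\rho(w):=\sigma_C((T^*)^{-1}w)$ for $w\in V$ and $+\infty$ otherwise. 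Hence $\psi=-(\check\rho\,\Box\,\Il)$ is concave, and proving it $\sigma(\Ll,\Lls)$-\usc\ amounts to proving that the convex function $\check\rho\,\Box\,\Il$ is $\sigma(\Ll,\Lls)$-\lsc\ — and only at $u=0$, which is all Criterion (2) of Theorem \ref{res-01} requires.

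Hypothesis (a) is what makes $\check\rho$ itself $\sigma(\Ll,\Lls)$-\lsc: the subspace $V=T^*\YL$ is automatically $\|\cdot\|_{\lmax}$‑closed, but we need $\iota_V$ — hence, composed with the $\sigma(\YL,\XL)$-\lsc\ function $\sigma_C(-\cdot)$ through the isomorphism $T^*$, the whole of $\check\rho$ — to have $\sigma(\Ll,\Lls)$‑closed epigraph, which is exactly the $\sigma(\Ll,\Lls)$‑closedness of $V$. Hypothesis (b) enters as a Slater/coercivity estimate. If the $\|\cdot\|_{\lmaxs}$‑ball $B(Q_0,r)$ lies in $\CC$, then $C\cap\XL\supset TQ_0+r\,T(B(0,1))$, so by the adjoint identity \eqref{eq-52} and Orlicz‑space duality (see \eqref{eq-43} and Theorem \ref{res-B1}),
\[
\check\rho(w)=\sigma_C\big((T^*)^{-1}w\big)\ \ge\ \langle w,Q_0\rangle+r\,\|w\|_{\lmax},\qquad w\in V .
\]
Thus $\check\rho(\cdot)-\langle\cdot,Q_0\rangle$ grows at least linearly in $\|\cdot\|_{\lmax}$ on $V$; combined with $\Il\ge0$, this bounds $\|w\|_{\lmax}$ on the sublevel sets $\{w:\check\rho(w)+\Il(u-w)\le c\}$, uniformly for $u$ in $\|\cdot\|_{\lmax}$‑bounded sets. (The perturbing linear term $\langle\cdot,Q_0\rangle$ is disposed of by a standard reduction placing the Slater point at the origin, which — exactly as in the reduction $m\equiv0$ already used in the proof — leaves $\Ll$ and $\Lls$ unchanged and merely translates $C$, and which preserves the dual equality ultimately at stake.) Together with (a) the sublevel sets of $\check\rho$ are in addition $\sigma(\Ll,\Lls)$‑closed, so $\check\rho$ is $\sigma(\Ll,\Lls)$‑inf‑compact, and the inf‑convolution of a $\sigma(\Ll,\Lls)$‑inf‑compact convex function with the $\sigma(\Ll,\Lls)$-\lsc\ convex function $\Il$ is $\sigma(\Ll,\Lls)$-\lsc\ — exactly as in the proof of Lemma \ref{res:lemlsc1}, where an inf‑compact function was inf‑convolved with a \lsc\ one — whence $\check\rho\,\Box\,\Il$ is $\sigma(\Ll,\Lls)$-\lsc\ and $\psi$ is $\sigma(\Ll,\Lls)$-\usc.

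The step I expect to be the main obstacle is the inf‑compactness of $\check\rho$ in the coarse topology $\sigma(\Ll,\Lls)$: a $\|\cdot\|_{\lmax}$‑bounded, $\sigma(\Ll,\Lls)$‑closed set need not be $\sigma(\Ll,\Lls)$‑compact, since $\Ll=L_\lmax$ is typically non‑reflexive and $\Lls$ is only a proper part of $\Ll'$ (Theorem \ref{res-B2}). So the genuine work lies in confining the near‑minimizers $w_i$ of $\check\rho(w)+\Il(u_i-w)$, for $u_i\to0$ only $\sigma(\Ll,\Lls)$‑weakly, to a set that is honestly $\sigma(\Ll,\Lls)$‑compact — where one expects to lean on the separability of $\El$, as in Lemma \ref{res:aa}, to pass from nets to sequences. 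This is precisely the junction at which hypotheses (a) (the $\sigma(\Ll,\Lls)$‑closedness of $V=T^*\YL$, in the spirit of Proposition \ref{res-B4}) and (b) (the coercivity of $\check\rho$ just derived) have to be used in tandem.
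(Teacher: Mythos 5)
You follow the same skeleton as the paper---rewrite $-\psi$ as $\Il$ inf-convolved with a constraint term, use (a) for weak closedness, (b) for coercivity, and conclude as in Lemma \ref{res:lemlsc1}---but the execution has a genuine gap precisely where the hypotheses must do their work. Your constraint term is $\check\rho(T^*y)=\sup_{x\in C\cap\XX}\langle y,x\rangle$ (plus $\iota_V$), and you claim it is $\sigma(\Ll,\Lls)$-\lsc\ because $V=T^*\YL$ is $\sigma(\Ll,\Lls)$-closed and $\sigma_C$ is $\sigma(\YL,\XL)$-\lsc. That composition argument needs $(T^*)^{-1}:(V,\sigma(\Ll,\Lls))\to(\YL,\sigma(\YL,\XL))$ to be continuous, and in the critical case it is not: for $x\in\XX$ one has $\langle (T^*)^{-1}w,x\rangle=\langle w,\ell_x\rangle$ for any $\ell_x\in\Ll'$ with $T\ell_x=x$; since the annihilator of $V$ in $\Ll'$ is contained in $\mathrm{ker\,}T$, this form can be represented on $V$ by some $fR$ with $f\in\Lls$ only if $x\in T(\LlsR)$, and $T(\LlsR)\subsetneq\XX$ is exactly what happens under critical constraints (this is the source of the nonzero $x^s$ in Section \ref{sec:dompoints}). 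So for $x\notin T(\LlsR)$ the evaluation is $\sigma(\Ll,\Ll')$- but not $\sigma(\Ll,\Lls)$-continuous, and the \lsc\ of your $\check\rho$ is unsubstantiated; note also that $\check\rho(T^*y)$ is in general strictly larger than the function the paper manipulates, namely $\iota^*_\CC(T^*y)=\sup_{x\in C\cap T(\LlsR)}\langle y,x\rangle$. The paper is organized to avoid exactly this: it writes $-\psi=\Il\Box G$ with $G=\iota^*_\CC+\iota_V$ and uses (a) not for a continuity-of-composition argument but to \emph{remove} $\iota_V$: closedness of $V$ gives $\iota_V=\iota^*_{V^\perp}$ with $V^\perp=\mathrm{ker\,}T\cap\LlsR$, and $\CC+V^\perp=\CC$ yields $G=\iota^*_\CC$, a support function of a subset of $\LlsR$, hence automatically $\sigma(\Ll,\Lls)$-\lsc\ with weakly closed sublevel sets.

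Second, even granting that point, your conclusion needs the recentred constraint term to be $\sigma(\Ll,\Lls)$-inf-compact, and you explicitly leave that open. Your coercivity estimate from (b) is correct and is how the paper exploits the interior point ($0\in\inter(\CC-k)$ forces $\iota^*_{\CC-k}\ge r\|\cdot\|_{\lmax}$ up to the linear term $\langle\cdot,k\rangle$), but norm-boundedness of sublevel sets is not compactness: $\Ll\cong(E_{\lmaxs})'$, so Alaoglu only yields $\sigma(\Ll,E_{\lmaxs})$-compactness of balls, and when $\lmaxs$ fails the $\Delta_2$-condition the topology $\sigma(\Ll,\Lls)$ is strictly finer; the separability of $\El$ you propose to lean on concerns the other pairing $\langle\LlsR,\El\rangle$ used in Lemma \ref{res:aa} and does not bridge this. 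Since Criterion (2) of Theorem \ref{res-01} is applied with the pairing $\langle\LlsR,\Ll\rangle$, the upper semicontinuity you must deliver is precisely in $\sigma(\Ll,\Lls)$; as written, both the \lsc\ of your $\check\rho$ and the inf-compactness step are missing, whereas the paper reduces both issues to properties of the single support function $\iota^*_{\CC-k}$ of a subset of $\LlsR$ with nonempty $\|\cdot\|_{\lmaxs}$-interior, which is the key structural move your proposal bypasses.
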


\begin{proof}
During this proof, unless specified the topology on $\Ll$ is
$\sigma(\Ll,\Lls).$ For all $u\in\Ll,$
\begin{eqnarray*}
   -\psi(u)
   &=& \inf_{y\in\YL}\Big\{\sup\{\langle-T^*y,\ell\rangle;\ell\in\LlsR:T\ell\in C\}+\Il(T^*y+u)\Big\} \\
    &=& \inf_{v\in V}\Big\{\sup\{\langle-v,\ell\rangle;\ell\in\CC\}+\Il(v+u)\Big\}\\
    &=& \Il\Box G (u)
\end{eqnarray*}
where  $V=T^*\YL$ and $\Il\Box
G(u)=\inf_{v\in\Ll}\{G(v)+\Il(u-v)\}$ is the inf-convolution of
$\Il$ and
    $
    G(u)=\iota_\CC^*(u)+\iota_{V}(u), u\in\Ll.
    $
Let us show that under the assumption (a),
\begin{equation*}
    G=\iota_\CC^*
\end{equation*}
 As $V$ is assumed to be closed, we have
$\iota_V=\iota^*_{V^\bot}$ with $V^\bot=\{k\in\LlsR; \langle
v,k\rangle=0, \forall v\in V\}.$ This gives for each $u\in\Lls',$
$G(u)=\iota^*_\CC(u)+\iota^*_{V^\bot}(u)=\sup_{\ell\in\CC}\langle
u,\ell\rangle+\sup_{k\in V^\bot}\langle u,k\rangle=\sup\{\langle
u,\ell+k\rangle;\ell\in\CC, k\in
V^\bot\}=\iota^*_{\CC+V^\bot}(u)=\iota^*_{\CC}(u),$ where the last
equality holds since $\CC+V^\bot=\CC,$ note that
$\mathrm{ker\,}T=V^\bot.$
\\
As convex conjugates, $\iota_\CC^*$ and $\Il=h^*$ are closed
convex functions.
\\
Since for all $u,v\in\Ll,$
\begin{eqnarray*}
  \iota_\CC^*(v)+\Il(u-v)
  &=& \iota_{\CC-k}^*(v)-\langle v,k\rangle+\Il(u-v) \\
  &=& \iota_{\CC-k}^*(v)+\langle u-v,k\rangle+\Il(u-v)-\langle
  u,k\rangle,
\end{eqnarray*}
we have
$$
-\psi+\langle \cdot,k\rangle=\iota^*_{\CC-k}\Box
(\Il-\langle\cdot,k\rangle)
$$
But, by the assumption (b) there exists some $k$ in $\LlsR$ such
that $0\in\inter (\CC-k).$ It follows that $\iota^*_{\CC-k}$ is
inf-compact. Finally, $-\psi+\langle \cdot,k\rangle$ is \lsc,
being the inf-convolution of a \lsc\ and an inf-compact functions.
\end{proof}

\begin{corollary}\label{res-02a} Assume that the hypotheses \emph{(A)} and \eqref{A4}
hold.
\begin{enumerate}[(a)]
    \item Assume in addition that $C$ is $\sigma(\XL,\YL)$-closed convex, $T^*\YL$ is a
$\sigma(\Ll,\Lls)$-closed subspace of $\Ll, $ $\CC$ has a nonempty
$\|\cdot\|_{\lmaxs}$-interior and $\inf(\pE)<\infty.$ Then,
\eqref{eq-12} is satisfied.
    \item In particular, if $\CC$ is $\sigma(\LlsR,\Ll)$-closed convex set with a nonempty $\|\cdot\|_{\lmaxs}$-interior and
$\inf(\pE)<\infty,$ then \eqref{eq-12} is satisfied.
\end{enumerate}
\end{corollary}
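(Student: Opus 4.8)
The plan is to obtain \eqref{eq-12}, i.e.\ the dual equality $\inf(\pE)=\sup(\dE)$, from Criterion~(2) of Theorem \ref{res-01} applied to Problem \PE\ in the topological setting fixed just before Lemma \ref{res-03a}: one takes $A=\LlsR$ endowed with $\sigma(\LlsR,\Ll)$ and paired with $U\simeq\Ll$ carrying $\sigma(\Ll,\Lls)$ (a topological dual pairing, after the identification $f\leftrightarrow fR$, $\Lls\leftrightarrow\LlsR$), $X=\XL$ and $Y=\YL$ with their weak topologies, and $h=I$, so that $h^*=\Il$ on $T^*\YL\subset\Ll$ by \eqref{eq-30} and \eqref{eq-11}, and the dual value-function is the $\psi$ displayed right before Lemma \ref{res-03a}. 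As explained there and in \cite[Theorem 4.2]{Leo08}, we may assume $m\equiv 0$, hence $\gamma=\lambda$ and $C_o=C$, throughout.

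For part (a) it then suffices to verify the three hypotheses of Criterion~(2). The function $h=I$ is convex because $\gamma^*_z$ is convex by (A$_{\gamma^*}$), and $C$ is a $\sigma(\XL,\YL)$-closed convex set by assumption; the requirement $\inf(\mathcal{P})=\inf(\pE)<\infty$ is assumed; and the remaining condition, that $\psi$ be upper semicontinuous at $0\in U$, is exactly the conclusion of Lemma \ref{res-03a}, whose two hypotheses — $T^*\YL$ a $\sigma(\Ll,\Lls)$-closed subspace of $\Ll$, and $\CC$ with nonempty $\|\cdot\|_{\lmaxs}$-interior — are precisely the extra assumptions of part (a). Criterion~(2) then yields \eqref{eq-12}.

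For part (b) I would simply invoke (a) with the ``trivial'' constraint operator of Remark \ref{rem-04}(1): take $\YYo=\Ll$ and $\theta(z)=\delta_z$, so that $T^*$ is the identity of $\Ll$, the constraint becomes $\ell\in\CC$, and \PE\ coincides with \eqref{pcc} (with $m=0$). Here $\YL=\Ll$ and $\XL=\LlsR$, whence $T^*\YL=\Ll$ is trivially a $\sigma(\Ll,\Lls)$-closed subspace, the assumed $\sigma(\LlsR,\Ll)$-closedness of $\CC$ is the $\sigma(\XL,\YL)$-closedness of $C=\CC$, and the hypotheses on the $\|\cdot\|_{\lmaxs}$-interior and on $\inf(\pE)$ transfer verbatim; part (a) then gives \eqref{eq-12}. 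I do not expect a genuine obstacle here — the analytic substance is already packaged in Lemma \ref{res-03a} and Theorem \ref{res-01} — the only delicate points being the bookkeeping of which weak topology sits on which space, and, for (b), checking that the identity-operator reduction really recovers \PE\ with $C=\CC$, which is the content of Remark \ref{rem-04}(1).
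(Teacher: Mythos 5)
Your proof is correct and follows exactly the paper's route: part (a) is Criterion (2) of Theorem \ref{res-01} combined with Lemma \ref{res-03a} (whose hypotheses are precisely your extra assumptions), and part (b) is the reduction $\YL=\Ll$, $T^*=\mathrm{Id}$ from Remarks \ref{rem-04}. You have merely spelled out the hypothesis-checking that the paper leaves implicit.
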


\begin{proof}
    \boulette{(a)}
Apply the criterion (2) of Theorem \ref{res-01} with  Lemma
\ref{res-03a}.

    \Boulette{(b)} This is (a) with $\YL=\Ll$ and
    $T^*=\mathrm{Id},$ taking advantage of Remarks \ref{rem-04}.
\end{proof}

\section{Entropic projections} \label{sec-entproj}

The results of the preceding sections are translated in terms of
entropic projections. We consider the problem \eqref{pcc} where
$\CC$ is a convex subset of $\MZ.$

\subsection{Generalized entropic projections.}
Let us start recalling an interesting result of Csisz\'ar.

\begin{definitions} Let $Q$ and  $\seq Qn$ in $\MZ$ be absolutely continuous with
respect to $R.$
\begin{enumerate}
    \item  One says that $\seq Qn$ converges  in
$R$-measure to $Q$ if $\frac{dQ_n}{dR}$ converges in $R$-measure
to $\frac{dQ}{dR}.$

  \item  One says that $\seq Qn$ converges $\sLE$ to $Q$ in
$\LlsR$ if $\frac{dQ_n}{dR}$ $\sLE$-converges to $\frac{dQ}{dR}.$

    \item One says that $\seq Qn$ converges in
variation to $Q$ if $\frac{dQ_n}{dR}$ converges to $\frac{dQ}{dR}$
in $L_1.$
\end{enumerate}
\end{definitions}

\begin{definition}[Generalized entropic projection]\label{defCsiszar}
\cite[Csisz\'ar]{Csi84}. Suppose that $\CC\cap\dom
I\not=\emptyset$ and that any minimizing sequence of the problem
\eqref{pcc} converges in variation to some
$Q_*\in\MZ.\index{Qs@$Q_*,$ generalized  $I$-projection}$ This
$Q_*$ is called the \emph{generalized  $I$-projection} of $mR$ on
$\CC$ with respect to $I.$ It may not belong to $\CC.$ In case
$Q_*$ is in $\CC,$ it is called the \emph{$I$-projection} of $mR$
on $\CC.$
    \index{genIproj@generalized $I$-projection, see Definition \ref{defCsiszar}}
    \index{Iproj@$I$-projection, see Definition \ref{defCsiszar}}
     \index{genentproj@generalized entropic projection, see Definition \ref{defCsiszar}}
    \index{entproj@entropic projection, see Definition \ref{defCsiszar}}
\end{definition}

\begin{theorem}[Csisz\'ar, \cite{Csi95}]\label{res:Csis}
Suppose that \emph{(A$_R$)} and \emph{(A$_{\gamma^*}$)} are
satisfied. Then, $mR$ has a generalized $I$-projection on any
convex subset $\CC$ of $\MZ$ such that $\CC\cap\dom
I\not=\emptyset.$
\end{theorem}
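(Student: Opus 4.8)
The plan is to reprove Csisz\'ar's existence theorem by his geometric method, using only the strict convexity of $\gamma^*$ and the convexity of $\CC$; in particular this needs neither the good/critical distinction nor the extra hypotheses of Theorems \ref{res-07} and \ref{res-minseq}. Put $c:=\inf\{I(Q);Q\in\CC\}$; since $\CC\cap\dom I\neq\emptyset$ and $I\ge 0$, we have $c\in[0,\infty)$. Let $\seq Qn$ be any minimizing sequence in $\CC$, so $I(Q_n)\to c$; after discarding finitely many terms $I(Q_n)<\infty$, hence $Q_n\in\dom I\subset\LlsR$ and $Q_n=f_nR$ with $f_n\in\Lls$. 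The goal is to show that $\seq Qn$ converges in variation to a limit independent of the chosen minimizing sequence.

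The engine is a convexity-defect inequality. For $Q=fR$ and $Q'=gR$ in $\dom I$, convexity of $\CC$ gives $\tfrac12(Q+Q')\in\CC$, hence $I\!\left(\tfrac12(Q+Q')\right)\ge c$, and therefore the defect
\[
\Delta(Q,Q'):=\tfrac12 I(Q)+\tfrac12 I(Q')-I\!\left(\tfrac{Q+Q'}{2}\right)=\IZ\Big[\tfrac12\gamma^*(f)+\tfrac12\gamma^*(g)-\gamma^*\!\big(\tfrac{f+g}{2}\big)\Big]\,dR\ \ge 0
\]
obeys $0\le\Delta(Q_m,Q_n)\le\tfrac12 I(Q_m)+\tfrac12 I(Q_n)-c\to0$ as $m,n\to\infty$. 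The key step is then the lemma that, along a sequence with $\sup_n I(Q_n)\le M<\infty$, vanishing of this defect forces convergence in variation. For $M$-sublevel densities one first obtains convergence in $R$-measure, by splitting $\ZZ$ according to whether $\gamma^*(f_m)+\gamma^*(f_n)$ exceeds a large threshold $K$: the truncated part has $R$-measure $\le 2M/K$ by Chebyshev, while on the rest $f_m,f_n$ range over a fixed bounded set on which $\gamma^*_z$ is strictly convex (sub-level sets of the $\gamma^*_z$ are bounded since $\gamma^*_z$ is strictly convex with a minimum, hence grows at least linearly away from $m(z)$ within its domain), so the defect controls $|f_m-f_n|$ there. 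The upgrade from convergence in $R$-measure to convergence in variation, using $\lim_n I(Q_n)=c$ and the strict convexity of $\gamma^*$, is a Visintin / Balder--L\'eonard type strong-convergence theorem — precisely of the kind invoked as \cite[Thm.\ 3.7]{BL94} in the proof of Theorem \ref{res-minseq} — and it is here that the growth of $\gamma^*$ along its recession direction excludes any escape of mass. Granting the lemma, $\seq Qn$ is Cauchy in variation, hence converges in variation to some $Q_*=f_*R$ with $f_*\in L_1(R)$; thus $Q_*\in\MZ$, and $I(Q_*)\le\liminf_n I(Q_n)=c<\infty$ by Fatou, so $Q_*\in\dom I\subset\LlsR$.

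For uniqueness of the limit: if $\seq Qn$ and $\seq{Q'}n$ are two minimizing sequences then the interlaced sequence $Q_1,Q_1',Q_2,Q_2',\dots$ still has $I$-values tending to $c$, hence is minimizing and converges in variation by the above, which forces $\lim_n Q_n=\lim_n Q_n'$. Thus $mR$ has a well-defined generalized $I$-projection $Q_*$ on $\CC$ in the sense of Definition \ref{defCsiszar}, which belongs to $\CC$ (and is then the $I$-projection) in particular whenever $\CC$ is closed in variation. The step I expect to be the real obstacle is the lemma of the second paragraph, namely passing from pointwise strict convexity of $\gamma^*$ to a uniform control of the variation distance by the convexity defect on $M$-sublevel sets of $I$: since no super-linearity is assumed on $\gamma^*$ (the reverse relative entropy is the typical example), the region where the densities are large cannot be dismissed by a de la Vall\'ee-Poussin uniform-integrability argument and must be handled through the recession behaviour of $\gamma^*$, which is exactly what the cited strong-convergence theorem supplies. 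The remaining ingredients — finiteness of $c$, the convexity inequality, the Chebyshev split, completeness of $L_1$, Fatou, and the interlacing trick — are routine.
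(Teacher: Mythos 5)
Your overall strategy (convexity defect along minimizing sequences, a strong-convergence lemma, then the interlacing trick) is exactly Csisz\'ar's, and the paper itself offers no independent proof of Theorem \ref{res:Csis}: it cites \cite{Csi95}, whose entire content is the lemma you defer, proved there via a parallelogram-type identity showing that any minimizing sequence is Cauchy in variation. The gap in your proposal is that the substitute you offer for this lemma does not work. The appeal to \cite[Thm 3.7]{BL94} fails on two counts. First, that result is a strong-rotundity (Kadec--Klee type) theorem whose hypotheses require superlinear growth of the integrand; in the paper it is invoked only under the extra assumption \eqref{A4}, which Theorem \ref{res:Csis} does not make and which fails precisely in the cases you name (reverse relative entropy, $L_1$-type entropies), where $\gamma^*$ has linear recession slopes. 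So the recession behaviour is not ``supplied'' by that theorem; it is excluded by its hypotheses. Second, even when \eqref{A4} holds, a Kadec--Klee property upgrades weak convergence to norm convergence only when $I(Q_n)\to I(Q_*)$, and this is false in general here: $I(Q_n)\to c=\inf_\CC I$, which may strictly exceed $I(Q_*)$ --- the lower-semicontinuity gap $I^s(\lb^s)=\inf(\pc)-I(\Qd)$ is exactly the generalized-projection phenomenon, already visible in Example \ref{exCsi1}. In the paper, \cite{BL94} is used in Theorem \ref{res-minseq} only after the limit $\Qd$ has been identified through the extended problem \eqref{pbc} and under additional hypotheses on $\CC$ (closedness plus condition (1) or (2)); none of that machinery is available under the bare assumptions of Theorem \ref{res:Csis}, which concerns an arbitrary convex $\CC$ with $\CC\cap\dom I\neq\emptyset$.

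Two further points on the first half of your lemma. The sublevel sets $\{t:\gamma^*_z(t)\le K\}$ are bounded only by $z$-dependent constants (through $m(z),a(z),b(z)$), so pointwise strict convexity must first be converted into a modulus that is uniform on a set of large $R$-measure before the defect controls $|f_m-f_n|$ in $R$-measure; this is repairable but not automatic. What is not repairable as written is the passage to convergence in variation: without superlinearity, Chebyshev bounds only the $R$-measure, not the $L_1$-mass, of the exceptional set, and no de la Vall\'ee-Poussin argument is available, as you yourself observe. To close the argument you would have to prove a quantitative inequality bounding the variation distance between $Q_m$ and $Q_n$ by their convexity defect plus the entropy excess $I(Q_m)+I(Q_n)-2c$ with no growth condition on $\gamma^*$ --- that is, reproduce Csisz\'ar's parallelogram inequality, which is precisely the content of the reference the paper relies on.
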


In \cite{Csi95} $\gamma^*$ doesn't depend on the variable $z,$ but
the proof remains unchanged with a $z$-dependence.

Csisz\'ar's proof of Theorem \ref{res:Csis} is based on a
parallelogram identity which allows to show that any minimizing
sequence is a Cauchy sequence. This result is general but it
doesn't tell much about the nature of $Q_*.$ Let us give some
details on the generalized entropic projections in specific
situations.

As a direct consequence of Theorem \ref{res-02}, if the
constraints are good, the generalized entropic projection is the
entropic projection.

\begin{proposition}\label{res:Iproj}
   Suppose that  \emph{(A)}, \emph{(A$_{\theta}^\forall$)} and \eqref{A4} hold,
   $C$ is convex, $\CC$ given at \eqref{eq-15} is $\sLE$-closed and $\CC\cap\dom
   I\not=\emptyset.$ Then, the  $I$-projection $Q_*$ exists and is equal to
$$Q_*=\Qh\in\CC$$
   where $\Qh$ is the minimizer of \eqref{pc} which is described at Theorem \ref{res-02}.
   \\
    Moreover, any minimizing sequence  $\sLL$-converges to $Q_*.$
   \end{proposition}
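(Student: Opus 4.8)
The plan is to read off Proposition \ref{res:Iproj} as essentially a corollary of Theorem \ref{res-02} together with the minimizing-sequence analysis of Section \ref{sec:critical}. The key observation is that under the \emph{good} constraint hypothesis (A$_\theta^\forall$), i.e.\! \eqref{A-forall}, the conclusions of Theorem \ref{res-02} already give the existence of a unique minimizer $\Qh$ of \eqref{pc} in $\LlsR$ and, crucially, the $\sigma(\LlsR,\Ll)$-convergence of \emph{every} minimizing sequence to $\Qh$. So the real content to be checked is: (i) that the hypotheses we are assuming here do imply the hypotheses of Theorem \ref{res-02}; (ii) that $\Qh$ actually lies in $\CC$ (so that it is a genuine $I$-projection, not merely a generalized one); and (iii) that convergence of minimizing sequences holds not only for $\sigma(\LlsR,\Ll)$ but also for the finer topology $\sigma(\LlsR,\El)$ as demanded by Definition \ref{defCsiszar} — and here one needs convergence in variation.

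First I would verify the hypotheses. The assumption that $\CC$ defined at \eqref{eq-15} is $\sigma(\LlsR,\El)$-closed is, by Remarks \ref{rem-04}-(2a), exactly condition \eqref{eq-05} of Theorem \ref{res-02}. Hypotheses (A) and \eqref{A-forall} are assumed verbatim. The condition $\CC\cap\dom I\neq\emptyset$ says $\inf(\pc)<\infty$, which (as noted in the text after \eqref{eq-15}) is equivalent to $C\cap\dom\Gamma^*\neq\emptyset$; this is hypothesis (b) of Theorem \ref{res-02}. Hence Theorem \ref{res-02}-(b) applies and yields: \eqref{pc} has a unique solution $\Qh\in\LlsR$, and every minimizing sequence converges to $\Qh$ for $\sigma(\LlsR,\Ll)$. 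Since $\Qh$ solves \eqref{pc} it satisfies the constraint $TQ\in C$, i.e.\! $\Qh\in\CC$; so the $I$-projection exists and equals $\Qh$.

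Next I would upgrade the mode of convergence. The point is that under (A$_\theta^\forall$) the ``good'' case is in force, so by Lemma \ref{res:lemlsc2}-(b) the value-function $\varphi_E$ is $\sigma(\XL,\YL)$-\lsc, whence by Lemma \ref{res:lemdual} the dual equality \eqref{eq-12}, i.e.\! $\inf(\pE)=\sup(\dE)$, holds; via the reduction \eqref{eq-07}--\eqref{eq-08} this is exactly $\inf(\pc)=\inf(\pbc)$. Then Lemma \ref{res:aa} (whose remaining hypotheses (A), \eqref{A4}, $\sigma(\LlsR,\Ll)$-closedness of $\CC$ and $\inf(\pc)<\infty$ are all in hand — note $\sigma(\LlsR,\El)$-closedness is formally stronger, but one also has the weaker one, or one simply invokes the good-constraint remark immediately following Lemma \ref{res:lemlsc2}) gives that every minimizing sequence of \eqref{pc} converges to $\Qd$ for $\sigma(\LlsR,\El)$; and since $\inf(\pbc)$ is attained at $\lb$ with $\lb^a=\Qd$ and, in the good case, $T\Ll^s=\{0\}$ forcing $\Ib=\Ib+\iota_{\LlsR}=I$ effectively so that $\Qd=\Qh$, the limit is $\Qh=Q_*$. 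Finally the variation-norm convergence: as in the proof of Theorem \ref{res-minseq}, \eqref{A4} makes $\lmax$ finite, $R$ is bounded so $L_\infty\subset\El$ and $\LlsR\subset L_1R$, hence $\sigma(\LlsR,\Ll)$-convergence implies $\sigma(L_1R,L_\infty)$-convergence of the minimizing sequence, and one concludes strong $L_1R$-convergence by \cite[Thm 3.7]{BL94} (the densities converge weakly to $\frac{d\Qh}{dR}$ and $\int\gamma^*(\cdot)\,dR$ converges to its value at the limit, which forces norm convergence). Assembling: $Q_*=\Qh\in\CC$, minimizing sequences converge in variation (a fortiori $\sigma(\LlsR,\El)$, hence in the sense of Definition \ref{defCsiszar}), and $\sigma(\LlsR,\Ll)$-convergence is the statement of Theorem \ref{res-02}-(b).

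**The main obstacle** I anticipate is purely bookkeeping rather than conceptual: matching the closedness hypothesis ``$\CC$ is $\sigma(\LlsR,\El)$-closed'' with the hypotheses actually needed downstream — Theorem \ref{res-02} wants \eqref{eq-05} (which \emph{is} this $\El$-closedness, via Remarks \ref{rem-04}), while Lemma \ref{res:aa} is phrased with $\sigma(\LlsR,\Ll)$-closedness; one must observe that in the good-constraint setting these are reconciled because supporting hyperplanes directed by $\El$-functions are in particular $\Ll$-functions, and more to the point the route through Lemma \ref{res:lemlsc2}-(b)$\to$Lemma \ref{res:lemdual}$\to$Lemma \ref{res:aa} only uses $\sigma(\XE,\YE)$-closedness of $C$, which the hypothesis supplies. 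Care must also be taken that $\Qd$ (defined as the common absolutely continuous part of solutions of \eqref{pbc}) genuinely coincides with $\Qh$; this is immediate from the good-constraint fact $T\Ll^s=\{0\}$ (Proposition \ref{res-B4} together with \eqref{A-forall}), which collapses \eqref{pbc} onto \eqref{pc}.
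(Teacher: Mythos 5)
Your proposal is correct and follows the paper's own route: the paper's proof is precisely the one-line observation that the proposition is a corollary of Theorem \ref{res-02}, which is your first step, and your variation-norm upgrade (needed to meet Definition \ref{defCsiszar}) reproduces the argument from the proof of Theorem \ref{res-minseq} via \cite[Thm 3.7]{BL94}. One small slip: since $\El\subset\Ll$, the topology $\sLE$ is \emph{coarser} (not finer) than $\sLL$, so the detour through Lemmas \ref{res:lemlsc2}-(b), \ref{res:lemdual} and \ref{res:aa} is redundant --- the $\sLL$-convergence already provided by Theorem \ref{res-02}-(b) implies $\sLE$-convergence, and your direct collapse argument ($T\Ll^s=\{0\}$ under \eqref{A-forall}, so \eqref{pbc} reduces to \eqref{pc} and $\Qd=\Qh$) together with the BL94 step already yields the convergence in variation.
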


\proof This is an easy corollary of Theorem \ref{res-02}.
\endproof

As a direct corollary of Proposition \ref{res:Iproj} and Remarks
\ref{rem-04}, we obtain the following result which is essentially
\cite[Thm 3-(iii)]{Csi95}.
\begin{corollary}
Suppose that  \emph{(A)} and \eqref{A4} hold. Let $\CC$ be any
$\sLE$-closed convex set such that $\CC\cap\dom
   I\not=\emptyset.$ Then, the  $I$-projection of $mR$ on $\CC$ exists.
\end{corollary}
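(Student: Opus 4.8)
The plan is to derive the Corollary as a direct specialization of Proposition \ref{res:Iproj}, using the device already spelled out in Remarks \ref{rem-04}. First I would invoke Remark \ref{rem-04}-(1): given an arbitrary $\sLE$-closed convex set $\CC\subset\MZ$ with $\CC\cap\dom I\neq\emptyset$, take $\YYo=\El$, let $\theta(z)$ be the Dirac form $\delta_z$ (i.e.\ $T^*$ the identity on $\El$), and set $C=\CC$. As observed in Remark \ref{rem-04}-(1), this choice makes the good-constraint hypothesis \eqref{A-forall}, equivalently (A$_\theta^\forall$), automatically satisfied, and with $C=\CC$ the constraint set $T^{-1}C\cap\LlsR$ of \eqref{eq-15} is exactly $\CC$, so that \eqref{pc} coincides with \eqref{pcc}.

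Next I would check that the remaining hypotheses of Proposition \ref{res:Iproj} hold in this setup. The assumptions (A) are in force by hypothesis. Condition \eqref{A4} is assumed. The convexity of $C=\CC$ is given. The requirement that $\CC$ be $\sLE$-closed is precisely our hypothesis on $\CC$; by Remark \ref{rem-04}-(2)(a) this is exactly the reformulation \eqref{eq-05} needed by Theorem \ref{res-02}, on which Proposition \ref{res:Iproj} rests. Finally $\CC\cap\dom I\neq\emptyset$ is assumed. Thus all hypotheses of Proposition \ref{res:Iproj} are met.

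Applying Proposition \ref{res:Iproj} then yields that the $I$-projection $Q_*$ of $mR$ on $\CC$ exists (and in fact equals the minimizer $\Qh$ of \eqref{pc}$=$\eqref{pcc}, with every minimizing sequence $\sLL$-converging to it), which is the assertion of the Corollary.

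I do not anticipate a genuine obstacle here: the content is entirely the translation mechanism of Remarks \ref{rem-04} plus a bookkeeping verification that the hypotheses transfer. The only point requiring a moment's care is confirming that $\theta=\delta_{\cdot}$ with $\YYo=\El$ really does force \eqref{A-forall} and that $C=\CC$ gives $T^{-1}C\cap\LlsR=\CC$ with the closedness hypothesis matching \eqref{eq-05} — but this is exactly what Remark \ref{rem-04} records, so the proof is a one-line appeal to Proposition \ref{res:Iproj}.

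\proof
Apply Proposition \ref{res:Iproj} with the data furnished by Remarks \ref{rem-04}: take $\YYo=\El$, let $\theta$ be given by $T^*=\mathrm{Id}$ on $\El$ (so that (A$_\theta^\forall$) holds), and put $C=\CC$. Then $T^{-1}C\cap\LlsR=\CC$, so \eqref{pc} is \eqref{pcc}, and by Remark \ref{rem-04}-(2)(a) the assumption that $\CC$ is $\sLE$-closed is exactly \eqref{eq-05}. Since (A), \eqref{A4} and $\CC\cap\dom I\neq\emptyset$ are assumed and $\CC$ is convex, Proposition \ref{res:Iproj} applies and gives the existence of the $I$-projection of $mR$ on $\CC$.
\endproof
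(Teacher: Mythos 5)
Your proof is correct and follows exactly the route the paper intends: the paper states the corollary as "a direct consequence of Proposition \ref{res:Iproj} and Remarks \ref{rem-04}," i.e.\ precisely the specialization $\YYo=\El$, $T^*=\mathrm{Id}$ (so that \eqref{A-forall} holds and \eqref{pc} becomes \eqref{pcc}) with $C=\CC$, the $\sLE$-closedness of $\CC$ being the form \eqref{eq-05} required by Theorem \ref{res-02}. Your bookkeeping of the hypotheses matches the paper's implicit argument, so there is nothing to add.
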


\begin{theorem}\label{res-05}
Suppose that  \emph{(A)} and \eqref{A4} hold,
  $\CC$ given at \eqref{eq-15} is $\sLL$-closed and $\CC\cap\dom
   I\not=\emptyset.$
Let us consider the additional conditions:
\begin{enumerate}
    \item
        \begin{enumerate}[a-]
            \item  There are finitely many moment constraints, i.e.\! $\XXo=\R^K$ (see
Section \ref{sec:constraints})
            \item $\CC\cap\icordom I\not=\emptyset;$
        \end{enumerate}
    \item
    $\CC$ is a $\sigma(\LlsR,\Ll)$-closed convex set with a nonempty $\|\cdot\|_{\lmaxs}$-interior.
\end{enumerate}
Then, under one of the conditions (1) or (2), the generalized
 $I$-projection $Q_*$ of $mR$ on $\CC$ is
$$
Q_*=\Qd
$$
the absolutely continuous component described at \eqref{eq-18} and
$I(Q_*)\le\inf_\CC I.$
\end{theorem}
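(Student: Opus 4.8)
The plan is to deduce this theorem directly from Theorem \ref{res-minseq} together with Theorem \ref{res-07}-(a). First I would observe that the two additional conditions (1) and (2) listed here are \emph{exactly} the two additional conditions of Theorem \ref{res-minseq}: condition (1) is ``finitely many moment constraints plus $\CC\cap\icordom I\neq\emptyset$'', and condition (2) is ``$\sigma(\LlsR,\Ll)$-closed convex with nonempty $\|\cdot\|_{\lmaxs}$-interior''. (Under condition (2), $\sLL$-closedness of $\CC$ is automatic, so the standing hypothesis $\CC$ is $\sLL$-closed is only a genuine extra assumption under condition (1); in either case the standing hypotheses of Theorem \ref{res-minseq} are met, since \eqref{A4} and $\CC\cap\dom I\neq\emptyset$ are assumed here.) Hence Theorem \ref{res-minseq} applies and yields: $I(\Qd)\le\inf(\pc)=\inf(\pbc)$, and every minimizing sequence of \eqref{pc} converges to $\Qd$ both for $\sigma(\LlsR,\El)$ and in variation norm (i.e. strongly in $L_1R$).

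Next I would connect this with the definition of the generalized $I$-projection (Definition \ref{defCsiszar}). Since $\CC\cap\dom I\neq\emptyset$, problem \eqref{pcc} (which is \eqref{pc} with the constraint set $\CC$ of the form \eqref{eq-15}) has nontrivial minimizing sequences, and by the variation-norm convergence just quoted, every such minimizing sequence converges in variation to the single measure $\Qd\in\LlsR$. This is precisely the hypothesis in Definition \ref{defCsiszar} with $Q_*=\Qd$; therefore the generalized $I$-projection of $mR$ on $\CC$ exists and equals $\Qd$, the absolutely continuous component described at \eqref{eq-18} (its existence and representation as $\gamma'_z(\langle\od,\theta(z)\rangle)\,R(dz)$ come from Theorem \ref{res-07}, whose hypotheses — those of Theorem \ref{res-03} — are implied by the present assumptions, noting that $C\cap\dom\Gamma^*\neq\emptyset$ is equivalent to $\CC\cap\dom I\neq\emptyset$, and the intrinsic-core condition $C\cap\icordom\Gamma^*\neq\emptyset$ needed for the representation \eqref{eq-18} holds under each of (1) and (2) as well — under (1) via $\CC\cap\icordom I\neq\emptyset$, under (2) via the nonempty norm-interior).

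Finally, the inequality $I(Q_*)\le\inf_\CC I$ is just a restatement of $I(\Qd)\le\inf(\pc)$, since $\inf_\CC I=\inf(\pcc)=\inf(\pc)$. I would remark that the inequality is generally strict: by Theorem \ref{res-minseq}'s proof the gap equals $I^s(\lb^s)$ for any minimizer $\lb=\Qd+\lb^s$ of \eqref{pbc}, which is positive whenever the singular part $\lb^s$ is nonzero. The main obstacle in writing this out cleanly is bookkeeping: one must check carefully that under \emph{both} of the listed conditions the full set of hypotheses of Theorems \ref{res-03}, \ref{res-07} and \ref{res-minseq} are in force — in particular the equivalences $\CC\cap\dom I\neq\emptyset\Leftrightarrow C\cap\dom\Gamma^*\neq\emptyset$ and $\CC\cap\icordom I\neq\emptyset\Leftrightarrow C\cap\icordom\Gamma^*\neq\emptyset$, which were recorded just before the statement — but no new analytic idea is needed beyond what those theorems already provide.
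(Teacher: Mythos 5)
Your proposal is correct and follows essentially the same route as the paper, which proves Theorem \ref{res-05} simply as a direct consequence of Theorem \ref{res-minseq} (the variation-norm convergence of every minimizing sequence to $\Qd$ being exactly what Definition \ref{defCsiszar} requires), with the representation of $\Qd$ supplied by Theorem \ref{res-07}. Your extra bookkeeping of the hypotheses of Theorems \ref{res-03}, \ref{res-07} and \ref{res-minseq} is consistent with, and if anything more explicit than, what the paper records.
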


\begin{proof}
This is a direct consequence of Theorem \ref{res-minseq}.
\end{proof}

\subsection{The special case of relative entropy}\label{sec:entropy2}
The relative entropy $I(P|R)$  and its extension $\Ib(\ell| R)$
are described at Section \ref{sec-expl}. The minimization problem
is
\begin{equation}\label{eq-21}
     \textsl{minimize } I(P|R) \textsl{ subject to }
  \IZ\theta\,dP\in C,\quad P\in \PZ
\end{equation}
and its extension is
\begin{equation}\label{eq-22}
     \textsl{minimize } \Ib(\ell| R) \textsl{ subject to }
  \langle\theta,\ell\rangle\in C,\quad \ell\in \OZ
\end{equation}
Recall that $\OZ$ is defined at \eqref{eq-44}. We introduce the
Cram\'er transform of the image law of $R$ by
 $\theta$ on $\Xx:$
\begin{equation}\label{cramer}
\Xi(x)=\sup_{y\in\Yy}\left\{\langle y,x\rangle-\log\IZ e^{\langle
y,\theta\rangle}\,dR\right\}\in [0,\infty],\quad x\in\Xx
    \index{Xi@$\Xi,$ see \eqref{cramer}}
\end{equation}

\begin{proposition}[Relative entropy subject to good constraints]\label{res:entrop1}

Let us assume that $\theta$ satisfies the``good constraint"
assumption
\begin{equation}\label{gc}
\forall y\in\Yy, \IZ e^{\langle y,\theta(z)\rangle}\,R(dz) <\infty
\end{equation}
and that $C\cap\XE$ is a $\sigma(\XE,\YE)$-closed convex subset of
$\XE.$

\begin{enumerate}[(a)]
    \item The following dual equality holds:
\begin{equation*}%\label{dualeq-Pb}
\inf\{I(P| R); \langle\theta,P\rangle\in C, P\in \PZ\}
=\sup_{y\in\Yy}\left\{\inf_{x\in C}\langle y,x\rangle -\log \IZ
e^{\langle y,\theta\rangle}\,dR\right\}\in [0,\infty]
\end{equation*}

    \item Suppose that in addition $C\cap\dom\Xi\not=\emptyset.$
Then, the minimization problem \eqref{eq-21} has a unique solution
$\widehat{P}$ in $\PZ$, $\widehat{P}$ is the entropic projection
of $R$ on $\CC=\{P\in\PZ, \IZ\theta\,dP\in C\}.$

    \item
Suppose that in addition, $C\cap \icordom\Xi\not=\emptyset,$
  then there exists some linear form $\ot$ on $\Xx$ such that $\ott$ is measurable and
\begin{equation}\label{eq-33}
    \left\{
        \begin{array}{l}
         \xh:= \IZ\theta\,d\widehat{P}\in C\cap\dom\Xi \\
         \langle \ot,\xh\rangle \leq \langle \ot,x\rangle,
\forall x\in C\cap\dom\Xi \\
         \widehat{P}(dz)=\exp\left(\langle
\ot,\theta(z)\rangle-\log\IZ e^{\langle
\ot,\theta\rangle}\,dR\right)\,R(dz). \\
        \end{array}
    \right.
    \index{Ph@$\widehat{P},$ see \eqref{eq-33}}
\end{equation}
In this situation, $\xh$ minimizes $\Xi$ on $C,$
$I(\widehat{P}\mid R)=\Xi(\xh)$ and
\begin{equation}\label{eq-31}
 \xh=\IZ \theta(z)\exp\left(\langle
\ot,\theta(z)\rangle-\log\IZ e^{\langle
\ot,\theta\rangle}\,dR\right)\,R(dz)
\end{equation}
in the weak sense. Moreover, $\ot$ satisfies \eqref{eq-33} if and
only if it is the solution to
\begin{equation}\label{eq-35}
     \textsl{maximize } \inf_{x\in \CX}\langle\omega,x\rangle
     -\log\IZ e^{\langle \omega,\theta\rangle}\,dR,
   \quad \omega\in\YYt
\end{equation}
where $\YYt$ is the cone of all measurable functions $u$ such that
$u_-$ is in the $\|\cdot\|_1$-closure of
$\langle\YYo,\theta\rangle_-$ and $u_+$ is in the
$\sigma(L_\tau,L_{\tau^*})$-closure of
$\langle\YYo,\theta\rangle_+.$
\end{enumerate}
\end{proposition}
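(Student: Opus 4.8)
The plan is to derive the proposition from Theorem~\ref{res-02} and Proposition~\ref{res:Iproj} by the standard device of absorbing the unit mass constraint into the moment constraint. Write $I(P|R)=H(P|R)+\iota_{\{\langle\1,P\rangle=1\}}(P)$, where $H(\cdot|R)$ is the variant entropy of Section~\ref{sec-expl}, i.e.\ the functional \eqref{eq-16} for $\gamma^*_z(t)=t\log t-t+1+\iota_{\{t\ge0\}}$ and $m\equiv1$, so that $\gamma(s)=e^s-1$, $\gamma'(s)=e^s$. Then \eqref{eq-21} is \eqref{pc} for $H$ once $\theta$ is replaced by $\tilde\theta:=(\1,\theta):\ZZ\to\R\ttimes\XXo$ and $C$ by $\tilde C:=\{1\}\ttimes C$: on the feasible set $\langle\1,P\rangle=1$, and since $\gamma^*=+\infty$ on $(-\infty,0)$ forces $P\ge0$, every feasible $P$ lies in $\PZ$ with $H(P|R)=I(P|R)$. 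First I would verify that Theorem~\ref{res-02} applies to this augmented problem: \eqref{gc} is exactly \eqref{A-forall} for $\tilde\theta$ --- since $R$ is bounded, $\IZ e^{\langle y,\theta\rangle}\,dR<\infty$ for all $y\in\Yy$ is equivalent (apply it to $\pm\alpha y$) to $\langle\Yy,\theta\rangle\subset E_\tau$, and the constant $\1$ lies in $E_\tau$ --- the remaining items of (A) hold trivially with $m\equiv1$ and $R$ bounded, and the $\sigma(\XE,\YE)$-closedness of $C\cap\XE$ gives, through continuity of the augmented constraint operator (Lemma~\ref{L2}), the closedness required at \eqref{eq-05}.

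Next comes the only genuine computation: the function $\Gamma^*$ of Theorem~\ref{res-02} for the augmented problem, call it $\tilde\Gamma^*$, equals $\Xi$ on the relevant slice. Optimizing out the scalar component $c$ of $y=(c,y')$ in $\tilde\Gamma(c,y')=\IZ(e^{c+\langle y',\theta\rangle}-1)\,dR$ converts the linear Laplace functional into its logarithm and gives, for $t>0$,
\[
\tilde\Gamma^*(t,x)=t\log t-t+1+t\,\Xi(x/t),\qquad\text{so}\quad\tilde\Gamma^*(1,x)=\Xi(x).
\]
Hence $\inf_{\tilde C}\tilde\Gamma^*=\inf_C\Xi$, and $\tilde C$ meets $\dom\tilde\Gamma^*$ (resp.\ $\icordom\tilde\Gamma^*$) iff $C$ meets $\dom\Xi$ (resp.\ $\icordom\Xi$), a short affine-hull check since $\dom\tilde\Gamma^*$ is essentially the cone over $\{1\}\ttimes\dom\Xi$. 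Now (a) and (b) follow: Theorem~\ref{res-02}(a) identifies the value of the augmented \eqref{pc} with $\inf_{\tilde C}\tilde\Gamma^*=\inf_C\Xi$, which after the same scalar optimization is the supremum over $\Yy$ in (a) (equivalently, exactness of the basic dual problem of Section~\ref{sec-def}); and when $C\cap\dom\Xi\neq\emptyset$, Theorem~\ref{res-02}(b) yields a unique minimizer $\widehat P\in\LlsR$ of the augmented \eqref{pc}, which lies in $\PZ$, hence solves \eqref{eq-21} and, by Proposition~\ref{res:Iproj}, is the entropic projection of $R$ on $\CC$.

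For (c), under $C\cap\icordom\Xi\neq\emptyset$, I would apply Theorem~\ref{res-02}(c)--(d) to the augmented problem, obtaining a multiplier $\omega$ (the ``$\ot$'' of that theorem, now a linear form on $\R\ttimes\XXo$) in the augmented cone $\YYt$ satisfying \eqref{eq-36}. Writing $\langle\omega,\tilde\theta(z)\rangle=c_0+\langle\ot,\theta(z)\rangle$ with $c_0:=\langle\omega,(\1,0)\rangle$ a constant and $\ot:=\langle\omega,(0,\cdot)\rangle$, and inserting $\gamma(s)=e^s-1$, $\gamma'(s)=e^s$, the three lines of \eqref{eq-36} become the three lines of \eqref{eq-33}, with $c_0=-\log\IZ e^{\langle\ot,\theta\rangle}\,dR$ forced by $\widehat P(\ZZ)=1$; Theorem~\ref{res-02}(d) then supplies minimality of $\xh:=\IZ\theta\,d\widehat P$ for $\Xi$ on $C$, the identity $I(\widehat P|R)=\Xi(\xh)$, and \eqref{eq-31} by reading the $\theta$-component of \eqref{eq:rep-xb}. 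The equivalence ``$\ot$ satisfies \eqref{eq-33} $\iff$ $\ot$ solves \eqref{eq-35}'' then follows from the last sentence of Theorem~\ref{res-02}(c) --- the pair $(\Qh,\ot)$ satisfies \eqref{eq-36} iff $\Qh$ solves \eqref{pc} and $\ot$ solves \eqref{dtc} --- since $\widehat P$ is the unique primal solution and \eqref{eq-35} is $(\dtc)$ for the augmented problem after the scalar optimization. I expect the main obstacle to be matching the cone $\YYt$ of \eqref{eq-35} with the explicit description in (c): one has to unwind the Appendix's definition of $\YYt$ for the Young function $\lmax=\tau=e^{|\cdot|}-|\cdot|-1$, specialize it to $\tilde\theta$, and check it reduces to ``$u_-$ in the $\|\cdot\|_1$-closure of $\langle\Yy,\theta\rangle_-$ and $u_+$ in the $\sigma(L_\tau,L_{\tau^*})$-closure of $\langle\Yy,\theta\rangle_+$'', which hinges on the one-sidedness of $\tau$-integrability (on a level set one controls $e^u$ but not $e^{-u}$) and on following the cone through the augmentation. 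The rest is routine bookkeeping.
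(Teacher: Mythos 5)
Your proposal is correct and follows essentially the same route as the paper: the paper proves Proposition \ref{res:entrop1} by applying Theorem \ref{res-02} and Proposition \ref{res:Iproj} to the problem with $\gamma(s)=e^s-1$ and the augmented constraint $\langle(\1,\theta),\ell\rangle\in\{1\}\ttimes C,$ using Lemma \ref{astuce} to identify $\Gamma^*(1,\cdot)$ with $\Xi$ and to pass between the two dual problems. Your scalar optimization over the component $c$ of $\tilde y=(c,y')$, giving $\tilde\Gamma^*(1,x)=\Xi(x)$, is exactly the content of Lemma \ref{astuce}, so the two arguments coincide up to the level of detail you spell out.
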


\begin{proposition}[Relative entropy subject to critical
constraints]\label{res:entrop2}

Let us assume that $\theta$ satisfies the ``critical constraint"
assumption
\begin{equation}\label{bc}
\forall y\in\Yy, \exists\alpha>0, \IZ e^{\alpha |\langle
y,\theta(z)\rangle|}\,R(dz) <\infty
\end{equation}
and that $C\cap\XL$ is a $\sigma(\XL,\YL)$-closed convex subset of
$\XL.$

\begin{enumerate}[(a)]
    \item The following dual equality holds:
\begin{equation*}%\label{dualeq-Pb}
\inf\{\Ib(\ell\mid R); \langle\theta,\ell\rangle\in C, \ell\in
\OZ\} =\sup_{y\in\Yy}\left\{\inf_{x\in C}\langle y,x\rangle -\log
\IZ e^{\langle y,\theta\rangle}\,dR\right\}\in [0,\infty]
\end{equation*}

    \item Suppose that in addition $C\cap\dom\Xi\not=\emptyset.$ Then, the
minimization problem \eqref{eq-22} is attained in $\OZ:$ the set
of minimizers is nonempty, convex and
$\sigma(L_{\tau}',L_{\tau})$-compact. Moreover,
 all the minimizers share the same unique absolutely continuous
part $P_\diamond\in \PZ\cap L_{\tau^*}R$ which is the generalized
entropic projection of $R$ on $\CC.$

    \item Suppose that in addition, one of the following conditions
\begin{enumerate}[(1)]
    \item
          $\XXo=\R^K$ and  $\CC\cap\icordom
          I\not=\emptyset$ or
    \item
         $\|\cdot\|_{\tau^*}$-$\inter(\CC)\not=\emptyset.$
\end{enumerate}
is satisfied. Then, there exists a linear form $\ot$ on $\XXo$
such that $\ott$ is measurable, $\IZ e^{\ott}\,dR<\infty$ and
\begin{equation}\label{eq-34}
P_\diamond(dz)=\exp\left(\langle \ot,\theta(z)\rangle -\log\IZ
e^{\ott}\,dR\right) R(dz).
    \index{Pd@$P_\diamond,$ see \eqref{eq-34}}
\end{equation}
Moreover, $\ot$ is the solution to \eqref{eq-35}.
\end{enumerate}
\end{proposition}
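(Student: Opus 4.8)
The plan is to read Proposition \ref{res:entrop2} as the relative-entropy specialization of the general machinery of Sections \ref{sec-minent} and \ref{sec:critical}, exactly as Proposition \ref{res:entrop1} is its specialization in the ``good'' case. Take $\gamma^*(z,t)=t\log t-t+1+\iota_{\{t\ge 0\}}$, so that $m\equiv 1$, $\gamma(z,s)=e^s-1$, $\gamma'(s)=e^s$, $\lambda(s)=e^s-s-1$, $\lmax=\tau$, and the relevant Orlicz triple is $(E_\tau,L_\tau,L_{\tau^*})$ of Section \ref{sec-expl}. Since $I_{\gamma^*}$ is then the variant entropy $H(\cdot| R)$ of Section \ref{sec-expl}, the relative entropy is $I_{\gamma^*}+\iota_{\{\langle\cdot,\1\rangle=1\}}$; I fold the unit-mass constraint into the abstract framework by augmenting the constraint data: replace $\YYo$ by $\R\times\YYo$, $\theta$ by $\tilde\theta=(\1,\theta)$ and $C$ by $\tilde C=\{1\}\times C$, so that $\{\ell\in L_\tau';\langle\tilde\theta,\ell\rangle\in\tilde C\}=\{\ell\in L_\tau';\langle\ell,\1\rangle=1,\ \langle\theta,\ell\rangle\in C\}$. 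Because $\dom\gamma^*\subset[0,\infty)$ and, by the description of $\Ib(\cdot| R)$ around \eqref{ext-entrop}, $\dom\Ib(\cdot| R)\cap\{\langle\ell,\1\rangle=1\}\subset\OZ$, the abstract problems \eqref{pc} and \eqref{pbc} attached to $(\tilde\theta,\tilde C)$ are precisely \eqref{eq-21} and \eqref{eq-22}. One then checks the standing hypotheses: (A$_R$) since $R\in\PZ$; (A$_{\gamma^*}$) since $t\mapsto t\log t$ is strictly convex with a unique zero-minimum at $t=1$ and $\gamma^*(1\pm\alpha)<\infty$; (A$_\theta$) for $\tilde\theta$ reduces to (A$_\theta$) for $\theta$ plus non-degeneracy of the constant component; the critical-constraint hypothesis \eqref{bc} is exactly \eqref{A-exists} for $\tilde\theta$ (the bounded constant component being harmless since $R$ is bounded); and \eqref{A4} holds because $\gamma^*(t)/t=\log t-1+1/t\to\infty$. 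Finally, the assumed $\sigma(\XL,\YL)$-closedness of $C\cap\XL$ is the closedness hypothesis of Theorem \ref{res-03} for $\tilde C$.

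Part (a) follows from Theorem \ref{res-03}-(a), which gives $\inf\eqref{eq-22}=\inf_{\tilde x\in\tilde C}\Gamma^*(\tilde x)=\sup(\dc)$, once the dual functional is simplified. For a dual variable $(y_0,y)\in\R\times\YYo$ one has $I_\gamma(\langle(y_0,y),\tilde\theta\rangle)=\IZ(e^{y_0+\langle y,\theta\rangle}-1)\,dR=e^{y_0}\IZ e^{\langle y,\theta\rangle}\,dR-1$, so the functional in $(\dc)$ equals $y_0+1+\inf_{x\in C}\langle y,x\rangle-e^{y_0}\IZ e^{\langle y,\theta\rangle}\,dR$; it is strictly concave in $y_0$ and maximized at $e^{y_0}=\bigl(\IZ e^{\langle y,\theta\rangle}\,dR\bigr)^{-1}$, where it equals $\inf_{x\in C}\langle y,x\rangle-\log\IZ e^{\langle y,\theta\rangle}\,dR$, the right-hand side of (a); passing from $\YYo$ to its $\NF$-completion $\YY$ does not change the supremum. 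The same substitution in \eqref{eq-53} identifies $\Gamma^*$ with the Cram\'er transform $\Xi$ of \eqref{cramer}; in particular $\dom\Gamma^*=\dom\Xi$, so that the hypotheses phrased in (b), (c) in terms of $\dom\Xi$ or $\icordom\Xi$ are the ones occurring in Theorems \ref{res-03}, \ref{res-07} and \ref{res-05}.

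For part (b): $C\cap\dom\Xi\ne\emptyset$ is the hypothesis of Theorem \ref{res-03}-(b), so \eqref{eq-22} is attained; the solution set is convex (convexity of $\Ib$ and of the constraint set) and $\sigma(L_\tau',L_\tau)$-compact by the $\sigma(L_\tau',L_\tau)$-inf-compactness of $\Ib$ (\cite[Lemma 6.2]{Leo08}, also used in Lemma \ref{res:aa}), and Theorem \ref{res-07}-(a) provides a common absolutely continuous part $\Qd=:P_\diamond\in\PZ\cap L_{\tau^*}R$ (a sub-probability absolutely continuous w.r.t.\ $R$ carrying the unit mass, the singular part of a minimizer having zero $R$-mass). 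That $P_\diamond$ is the generalized entropic projection of $R$ on $\CC$ in Csisz\'ar's sense, when $\CC\cap\dom I\ne\emptyset$, follows by combining Theorem \ref{res:Csis} (variation-limit $Q_*$ of every minimizing sequence of \eqref{eq-21}) with Lemma \ref{res:aa}: once $\inf\eqref{eq-21}=\inf\eqref{eq-22}$, such a sequence is minimizing for \eqref{eq-22}, hence $\sigma(L_{\tau^*}R,E_\tau)$-converges to $\Qd$, and comparison with the variation limit forces $Q_*=\Qd=P_\diamond$; this dual equality is the content of Section \ref{sec:critical} (Lemma \ref{res:lemdual} with Lemma \ref{res:lemlsc2} or Corollary \ref{res-02a}) and holds in particular under either extra hypothesis of (c). For part (c): conditions (1) and (2) there are exactly conditions (1) and (2) of Theorem \ref{res-minseq}/\ref{res-05} with $\lmaxs=\tau^*$ (the interior in (2) taken within the affine slice $\{\langle\ell,\1\rangle=1\}$), so $Q_*=\Qd=P_\diamond$ and $I(P_\diamond| R)\le\inf_\CC I$; moreover each of them yields $C\cap\icordom\Gamma^*=C\cap\icordom\Xi\ne\emptyset$ (from $\CC\cap\icordom I\ne\emptyset$ and the equivalences of Theorem \ref{res-03}-(c) in case (1); from the nonempty $\|\cdot\|_{\tau^*}$-interior, $C\cap\dom\Xi\ne\emptyset$ and Lemma \ref{L2} in case (2)), so Theorem \ref{res-07}-(b) applies and gives a solution $\od$ of \eqref{dtc} with $\Qd(dz)=\gamma'_z(\langle\od,\theta(z)\rangle)\,R(dz)=e^{\langle\od,\theta(z)\rangle}\,R(dz)$. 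Carrying out the $y_0$-elimination of part (a) on the extended dual shows that the constant component of $\od$ equals $-\log\IZ e^{\ott}\,dR$ for its $\theta$-component $\ot$; this turns the last display into \eqref{eq-34}, with $\IZ e^{\ott}\,dR<\infty$ because $\od$ lies in the admissible cone, and shows that $\ot$ solves \eqref{eq-35}, which is \eqref{dtc} after the same elimination (the cone in \eqref{eq-35} being the image of $\YYt$, described as in Appendix \ref{annex} and Proposition \ref{res:entrop1}-(c)).

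The routine part is the identification of $\gamma,\gamma^*,\lmax$ and of the Orlicz spaces, and the verification of (A) and \eqref{A4}. The substantive steps are two. First, the $y_0$-elimination: it must be done not only on the convex duals $(\dc)$/$(\dbc)$ but on the \emph{extended} dual $(\dtc)$, and one must check it is compatible with the completion defining $\YY$ and with the cone $\YYt$ (and its Appendix-\ref{annex} description), so that the normalizing factor $\log\IZ e^{\ott}\,dR$ appears correctly and \eqref{eq-35} is genuinely the reduced form of \eqref{dtc}. Second, the bookkeeping of the unit-mass augmentation in the Orlicz framework: that $\dom\Ib(\cdot| R)$ meets the unit-mass hyperplane inside $\OZ$, that the $\dom$- and $\icor$-conditions transfer through the augmentation, and that conditions (1) and (2) of (c) really deliver $C\cap\icordom\Xi\ne\emptyset$. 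The Csisz\'ar identification of $P_\diamond$ in (b) also needs a careful word precisely when the constraint is genuinely critical, where it rests on Theorem \ref{res:Csis} together with the dual equality of (a).
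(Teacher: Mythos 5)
Your proposal is correct and follows essentially the same route as the paper: the published proof simply invokes Theorems \ref{res-02}, \ref{res-03}, Proposition \ref{res:Iproj}, Theorem \ref{res-05} and Lemma \ref{astuce}, i.e.\ it specializes the general machinery to $\gamma(s)=e^s-1$ with the augmented constraint $\langle(\1,\theta),\ell\rangle\in\{1\}\times C$, and your $y_0$-elimination is exactly the computation that proves Lemma \ref{astuce} (equivalently, $\Xi(x)=\Gamma^*(1,x)$). Your extra bookkeeping --- checking (A) and \eqref{A4}, transferring the $\dom$/$\icor$ conditions through the augmentation, and flagging that the identification of $P_\diamond$ with the generalized projection in (b) rests on the dual equality of Section \ref{sec:critical} (as in Theorem \ref{res-05}) --- only makes explicit what the paper leaves as a ``direct consequence''.
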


In Proposition \ref{res:entrop1}, $\xh$ is the dominating point in
the sense of Ney (see Definition \ref{dompointNey}) of $C$ with
respect to $\Xi.$ The representation of $\xh$ has already been
obtained for $C$ with a nonempty topological interior by Ney
\cite{Ney84} and  Einmahl and Kuelbs \cite{EK96}, respectively in
$\mathbb{R}^d$ and in a Banach space setting. The representation
of the generalized projection $P_\diamond$ is obtained with a very
different proof by Csisz\'ar \cite{Csi75} and (\cite{Csi84}, Thm
3). Proposition \ref{res:entrop2} also extends corresponding
results of Kuelbs \cite{Kue00} which are obtained in a Banach
space setting.

\proof[Proof of Propositions \ref{res:entrop1} and
\ref{res:entrop2}] They are direct consequences of Theorems
\ref{res-02}, \ref{res-03}, Proposition \ref{res:Iproj}, Theorem
\ref{res-05} and Lemma \ref{astuce} below.
\endproof

The following lemma allows to apply the results of the present
paper with $\gamma(s)=e^s-1$ and the extended constraint
$\langle(\1,\theta),\ell\rangle\in \{1\}\times C:$ the first
component of the constraint insures the unit mass
$\langle\1,\ell\rangle=1,$ to obtain results in terms of
log-Laplace transform.

\begin{lemma}\label{astuce}
For all $x\in\Xx,$
\[\sup_{y\in\Yy}\left\{\langle y,x\rangle -\log\IZ
e^{\langle y,\theta\rangle}\,dR\right\}=
\sup_{\tilde{y}\in\mathbb{R}\times\Yy}\left\{\langle\tilde{y},(1,x)\rangle
-\IZ (e^{\langle \tilde{y},(1,\theta)\rangle}-1)\,dR\right\}\in
(-\infty,+\infty].\]
\end{lemma}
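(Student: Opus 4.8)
The identity is, at bottom, a one–dimensional optimisation: the right–hand supremum runs over $\tilde y=(s,y)\in\mathbb{R}\times\Yy$, and the plan is to carry out the maximisation over the extra coordinate $s$ explicitly and recognise the left–hand side. First I would rewrite the brackets using $\langle\tilde y,(1,x)\rangle=s+\langle y,x\rangle$ and $\langle\tilde y,(1,\theta(z))\rangle=s+\langle y,\theta(z)\rangle$. Since $R$ is a probability measure and $z\mapsto\langle y,\theta(z)\rangle$ is measurable by \emph{(A$_\theta$)}, the integrand $e^{s+\langle y,\theta\rangle}-1$ is bounded below by $-1$, so its integral is well defined in $(-\infty,+\infty]$ and
\[
\IZ\left(e^{s+\langle y,\theta\rangle}-1\right)\,dR=e^{s}\,a_y-1,\qquad a_y:=\IZ e^{\langle y,\theta\rangle}\,dR\in(0,+\infty],
\]
this also making sense when $a_y=+\infty$, both sides being then $+\infty$. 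Hence the right–hand side equals $\sup_{y\in\Yy}\sup_{s\in\mathbb{R}}\left\{s+\langle y,x\rangle-e^{s}a_y+1\right\}$.

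Next I would compute the inner supremum over $s$ for each fixed $y$. When $a_y=+\infty$ the bracket is identically $-\infty$, so such $y$ may be discarded (they also force $\log a_y=+\infty$, hence contribute $-\infty$ to the left–hand side); the remaining set is nonempty since $y=0$ gives $a_0=R(\ZZ)=1$. For $0<a_y<\infty$ the concave map $s\mapsto s-e^{s}a_y$ attains its maximum at $s_y=-\log a_y$, with value $-\log a_y-1$, so the inner supremum is $\langle y,x\rangle-\log a_y-1+1=\langle y,x\rangle-\log\IZ e^{\langle y,\theta\rangle}\,dR$. Taking $\sup_{y\in\Yy}$ then produces exactly the left–hand side.

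It remains only to dispose of the boundary bookkeeping, which I do not expect to be an obstacle: the choice $\tilde y=0$ (resp.\ $y=0$) gives value $0$, so both suprema lie in $[0,+\infty]\subset(-\infty,+\infty]$; and the argument above is valid verbatim when the common value is $+\infty$, a maximising net $(y_n)$ on the left lifting to $(s_{y_n},y_n)$ on the right, while conversely every $\tilde y=(s,y)$ gives a value no larger than $\langle y,x\rangle-\log a_y$. The only genuinely substantive point is the cancellation of the additive constant $R(\ZZ)-1$, which is precisely why the statement is placed in the relative–entropy framework, where $R$ is a probability measure; were $R$ merely a bounded positive measure, this constant would survive on the right–hand side.
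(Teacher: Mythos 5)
Your proof is correct and is essentially the paper's argument run in the opposite direction: the paper quotes the identity $-\log b=\sup_a\{a+1-be^a\}$ to expand the left-hand side, while you recover the same identity by explicitly maximizing over the extra scalar coordinate $s$ on the right-hand side (with a more careful treatment of the case $\IZ e^{\langle y,\theta\rangle}\,dR=+\infty$ and of the cancellation $R(\ZZ)=1$). No gap.
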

\proof Using the identity: $-\log b=\sup_a\{a+1-be^a\},$ we get:
\begin{eqnarray*}
  \sup_{y\in\Yy}\left\{\langle y,x\rangle -\log\IZ e^{\langle
y,\theta\rangle}\,dR\right\}
&=&\sup_{a\in\mathbb{R},y\in\Yy}\left\{\langle y,x\rangle +a+1
-e^a\IZ e^{\langle y,\theta\rangle}\,dR\right\}\\
&=&\sup_{a\in\mathbb{R},y\in\Yy}\left\{\langle(a,y),(1,x)\rangle
-\IZ
e^{\langle y,\theta\rangle+a}\,dR+1\right\}\\
&=&\sup_{\tilde{y}\in\mathbb{R}\times\Yy}\left\{\langle\tilde{y},(1,x)\rangle
-\IZ (e^{\langle \tilde{y},(1,\theta)\rangle}-1)\,dR\right\}
\end{eqnarray*}
which is the desired result.
\endproof

Formula \eqref{cramer} states that $\Xi=\Lambda^*$ where
\begin{equation*}
    \Lambda(y)=\log\IZ e^{\langle y,\theta\rangle}\,dR,\quad
    y\in\YYo
\end{equation*}
is the log-Laplace transform of the image of $R$ by the mapping
$\theta.$ On the other hand, Lemma \ref{astuce} states that
$\Lambda^*(x)=\Gamma^*(1,x).$ Hence, when working with the
relative entropy, one can switch from $\Gamma$ to $\Lambda.$

\begin{example}\label{exCsi1} \textit{Csisz\'ar's example}. Comparing Proposition
\ref{res:Iproj} with Theorem \ref{res-05}, one may wonder if the
$\sLE$-closedness of the convex set $\CC$ is critical for the
existence of an entropic projection. The answer is affirmative. In
\cite[Example 3.2]{Csi84}, Csisz\'ar gives an interesting example
where the generalized entropic projection can be explicitly
computed in a situation where \eqref{pc} is not attained, see also
\cite[Exercise 7.3.11]{DZ}. This example is the following one.
\\
Take the probability measure on $\ZZ=[0,\infty)$ defined by
$R(dz)= \frac 1{a_0}\frac{e^{-z}}{1+z^3}\,dz$ where $a_0$ is the
normalizing constant, $I$ the relative entropy with respect to
$R,$  $\theta: z\in [0,\infty)\mapsto z\in\XX=\R$ and
$C=[c,\infty).$ This gives $\CC=\{Q\in L_{\tau^*}R;
\int_{[0,\infty)}z\,Q(dz)\ge c, Q([0,\infty))=1\}.$
\\
From the point of view of the present article, the main point  is
that $\theta$ is in $L_\tau(R)$ but not in $E_\tau(R).$ Indeed,
the log-Laplace transform is
\begin{equation}\label{eq-62}
    \Lambda(y)=\log\int_{[0,\infty)}e^{(y-1)z}\frac{dz}{a_0(1+z^3)},\quad y\in\R
\end{equation}
 whose effective
domain is $(-\infty,1]$ which admits 0 as an interior point but is
not the whole line. A graphic representation of $\Lambda$ is drawn
at Figure \ref{fig3} below, see Example \ref{exCsi2}.

By Theorem \ref{res:entrop2}, the generalized projection of $R$ on
$\CC$ exists and is equal to $P_y=\frac
1{a_y}\frac{e^{(y-1)z}}{1+z^3}\,dz$ for some real $y\le1,$ where
$a_y=a_0e^{\Lambda(y)}$ is the normalizing constant. If it belongs
to $\CC,$ then $\Lambda'(y)=\IZ z\,P_y(dz)\ge c.$ But $\Lambda$ is
convex so that $\Lambda'$ is increasing and $\sup_{y\le1}\IZ
z\,P_y(dz)=\IZ z\,P_1(dz)=\frac 1{a_1}\IZ
\frac{z}{1+z^3}\,dz:=x_*<\infty.$ Therefore, for any $c>x_*,$
there are no entropic projection but only a generalized one which
is $P_1(dz)=\frac{dz}{a_1(1+z^3)}.$

A detailed analysis of this example in terms of singular component
is done by L\'eonard and Najim  \cite[Proposition 3.9]{LeoN02}.
This example corresponds to a $\sigma(\LlsR,\Ll)$-closed convex
set $\CC$ such that no entropic projection exists but only a
generalized one.
\\
More details about this example are given below at Examples
\ref{exCsi2} and \ref{exCsi3}.
\end{example}

\section{Dominating points}\label{sec:dompoints}

Following Ney \cite{Ney83,Ney84}, let us introduce the following
definition. A point $\xh\in\Xx$ sharing the properties
\eqref{eq-36}-(a,b) and \eqref{eq:rep-xb} of Theorem \ref{res-02}
is called a \emph{dominating point}.

\begin{definition}[Dominating point]\label{defdomp}
Let $C\subset\XXo$ be a convex set such that $C\cap\XL$ is
$\sigma(\XL,\YL)$-closed. The point $\xh\in\Xx$ is called a
$\Gamma^*$-\emph{dominating point} if
\begin{itemize}
    \item[(a)]
    $\xh\in C\cap\dom\Gamma^*$
    \item[(b)]
    there exists some linear form $\ot$ on $\Xx$ such that $\langle
    \ot,\xh\rangle\leq \langle \ot,x\rangle$ for all $x\in C\cap\dom\Gamma^*,$
     \item[(c)]
    $\langle\ot,\theta(\cdot)\rangle$ is measurable and
    $\xh=\IZ\theta(z)\gamma'(z,\langle\ot,\theta(z)\rangle)\,R(dz).$
\end{itemize}
    \index{domp@dominating point, see Definition \ref{defdomp}}
\end{definition}

In the special case where $\Gamma^*$ is replaced by the Cram\'er
transform $\Xi$ defined at \eqref{cramer}, taking Lemma
\ref{astuce} into account, this definition becomes the following
one.

\begin{definition}\label{dompointNey}
Let $C\subset\XXo$ be a convex set such that $C\cap\XL$ is
$\sigma(\XL,\YL)$-closed. The point $\xh\in\Xx$ is called a
$\Xi$-\emph{dominating point} if
\begin{itemize}
    \item[(a)]
    $\xh\in C\cap\dom\Xi$
    \item[(b)]
    there exists some linear form $\ot$ on $\Xx$ such that $\langle
    \ot,\xh\rangle\leq \langle \ot,x\rangle,$ for all $x\in C\cap\dom\Xi$ and
    \item[(c)]
    $\langle\ot,\theta(\cdot)\rangle$ is measurable and
    $\displaystyle{\xh=\IZ\theta(z)\frac{\exp(\langle\ot,\theta(z)\rangle)}{Z(\ot)}\,R(dz)}$
    where $Z(\ot)$ is the unit mass normalizing constant.
    \index{Xidomp@$\Xi$-dominating point, see Definition \ref{dompointNey}}
\end{itemize}
\end{definition}

Note that this definition is slightly different from the ones
proposed by Ney \cite{Ney84} and Einmahl and Kuelbs \cite{EK96}
since $C$ is neither supposed to be an open set nor to have a
non-empty interior and $\xh$ is not assumed to be a boundary point
of $C.$ The above integral representation (c) is \eqref{eq-31}.

We are going to investigate some relations between dominating
points and entropic projections. In the case where the constraint
is good, Theorem \ref{res-02} and Proposition \ref{res:Iproj}
state that the generalized entropic projection is the entropic
projection $Q_*=\Qh,$ the minimizer $\xb$ is the dominating point
of $C,$ it is related to $Q_*$ by the identity:
\begin{equation}\label{eq-32}
    \xb=\langle\theta,Q_*\rangle.
\end{equation}
We now look at the situation where the constraint is critical. As
remarked at Example \ref{exCsi1}, the above equality may fail. A
necessary and sufficient condition (in terms of the function
$\Gamma^*$) for $\xb$ to satisfy \eqref{eq-32} is obtained at
Theorem \ref{res:dompoint}.

Recall that the extended entropy $\Ib$ is given by \eqref{III}:
$\Ib(\ell)=I(\ell^a)+I^s(\ell^s),$ $\ell=\ell^a+\ell^s\in\Ll'$ and
define for all $x\in\Xx,$
\begin{eqnarray*}
  \Jb(x) &:=& \inf\{\Ib(\ell); \ell\in\Ll',\langle\theta,\ell\rangle=x\} \\
  J(x) &:=& \inf\{I(\ell); \ell\in\LlsR,\langle\theta,\ell\rangle=x\}\\
  J^s(x) &:=& \inf\{I^s(\ell); \ell\in\Ll^s,\langle\theta,\ell\rangle=x\}
  \index{JJJ@$\Jb,J^s$}
  \index{J@$J$}
\end{eqnarray*}
Because of the decomposition $\Ll'\simeq\LlsR\oplus\Ll^s,$ we
obtain for all $x\in\Xx,$
\begin{eqnarray*}
  \Jb(x) &=& \inf\{I(\ell_1)+I^s(\ell_2);\ell_1\in\LlsR, \ell_2\in\Ll^s, \langle\theta,\ell_1+\ell_2\rangle=x\} \\
   &=& \inf\{J(x_1)+J^s(x_2); x_1,x_2\in\Xx, x_1+x_2=x\} \\
   &=& J\Box J^s(x)
\end{eqnarray*}
where $J\Box J^s$ is the inf-convolution of $J$ and $J^s.$ By
Theorem \ref{res-03}-a, $\Jb=\Gamma^*$ and if $\Jb(x)<\infty,$
there exists $\ell_x\in\Ll'$ such that
$\langle\theta,\ell_x\rangle=x$ and $\Jb(x)=\Ib(\ell_x).$ Let us
define
\[
x^a:= \langle\theta,\ell_x^a\rangle \quad\textrm{ and }\quad
x^s:=\langle\theta,\ell_x^s\rangle.
    \index{xxxas@$x^a, x^s$}
\]
These definitions make sense since $\ell_x^a$ is the unique
(common) absolutely continuous part of the minimizers of $\Ib$ on
the closed convex set $\{\ell\in\Ll';
\langle\theta,\ell\rangle=x\},$ see Theorem \ref{res-07}-a. Of
course, we have $$x=x^a+x^s$$ and as
$\Jb(x)=\Ib(\ell_x)=I(\ell_x^a)+I^s(\ell_x^s)\geq
J(x^a)+J^s(x^s)\geq J\Box J^s(x)= \Jb(x),$ we get the following
result.
\begin{proposition}\label{prop51}
For all $x\in\dom\Jb,$ we have:
\begin{center} \(
  \Jb(x) = J(x^a)+J^s(x^s),
  J(x^a) = I(\ell_x^a)\) and
  \(J^s(x^s) = I^s(\ell_x^s).\)
  \end{center}
\end{proposition}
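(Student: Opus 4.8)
The statement is an immediate consequence of the chain of (in)equalities assembled just above it; the plan is simply to observe that its two endpoints coincide, so that every inequality in between is forced to be an equality.

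In detail, I would argue as follows. Fix $x\in\dom\Jb$, so that $\Jb(x)<\infty$. By Theorem \ref{res-03}-a we have $\Jb=\Gamma^*$, and, $\Jb(x)$ being finite, there is a minimizer $\ell_x\in\Ll'$ of $\Ib$ over the closed convex set $\{\ell\in\Ll';\langle\theta,\ell\rangle=x\}$, so that $\Jb(x)=\Ib(\ell_x)$. Decompose $\ell_x=\ell_x^a+\ell_x^s$ with $\ell_x^a\in\LlsR$ and $\ell_x^s\in\Ll^s$, and set $x^a=\langle\theta,\ell_x^a\rangle$ and $x^s=\langle\theta,\ell_x^s\rangle$; these are well defined because, by Theorem \ref{res-07}-a, all minimizers of $\Ib$ on that set share the same absolutely continuous part, namely $\ell_x^a$. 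By \eqref{III}, $\Ib(\ell_x)=I(\ell_x^a)+I^s(\ell_x^s)$. Now $\ell_x^a$ is admissible in the problem defining $J(x^a)$ and $\ell_x^s$ is admissible in the problem defining $J^s(x^s)$, whence $I(\ell_x^a)\ge J(x^a)$ and $I^s(\ell_x^s)\ge J^s(x^s)$. Finally, since $x=x^a+x^s$, the pair $(x^a,x^s)$ is admissible in the inf-convolution, so $J(x^a)+J^s(x^s)\ge J\Box J^s(x)$, and the identity $J\Box J^s(x)=\Jb(x)$ has already been established. Stringing these relations together gives
\[
\Jb(x)=\Ib(\ell_x)=I(\ell_x^a)+I^s(\ell_x^s)\ge J(x^a)+J^s(x^s)\ge J\Box J^s(x)=\Jb(x),
\]
so all the displayed relations are in fact equalities. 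In particular $\Jb(x)=J(x^a)+J^s(x^s)$; and $I(\ell_x^a)+I^s(\ell_x^s)=J(x^a)+J^s(x^s)$ combined with $I(\ell_x^a)\ge J(x^a)$ and $I^s(\ell_x^s)\ge J^s(x^s)$ forces $I(\ell_x^a)=J(x^a)$ and $I^s(\ell_x^s)=J^s(x^s)$.

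There is essentially no hard step here: the whole content is carried by the preparatory facts already quoted — the identity $\Jb=\Gamma^*$ together with the attainment of $\Jb$ (Theorem \ref{res-03}), the uniqueness of the absolutely continuous part of the minimizers (Theorem \ref{res-07}-a), and the shape \eqref{III} of $\Ib$. The only point deserving a word of care is that $x^a$ and $x^s$ do not depend on the particular minimizer $\ell_x$ chosen, which is exactly what Theorem \ref{res-07}-a guarantees for the absolutely continuous part (and then $x^s=x-x^a$ is determined as well).
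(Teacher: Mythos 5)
Your argument is correct and is essentially the paper's own proof: the author establishes the same chain $\Jb(x)=\Ib(\ell_x)=I(\ell_x^a)+I^s(\ell_x^s)\geq J(x^a)+J^s(x^s)\geq J\Box J^s(x)=\Jb(x)$ in the paragraph preceding the proposition, using exactly the same ingredients (Theorem \ref{res-03}-a for $\Jb=\Gamma^*$ and attainment, Theorem \ref{res-07}-a for the well-definedness of $x^a,x^s$, and \eqref{III}), and concludes by forcing all inequalities to be equalities. Your closing remark extracting the two individual equalities from the combined one is the same finishing step, just spelled out a bit more explicitly.
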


Now, let us have a look at the dual equalities. The
\emph{recession function} of $\Gamma^*$ is defined for all $x$ by
\begin{equation}\label{eq-60}
    \widetilde{\Gamma^*}(x):=\lim_{t\rightarrow
+\infty}\Gamma^*(tx)/t\in (-\infty,+\infty].
    \index{Gammastarti@$\widetilde{\Gamma^*},$ see \eqref{eq-60}}
\end{equation}
\begin{definition}[Recessive $x$]\label{def-01}
 Let us say that
$x$ is \emph{recessive} for $\Gamma^*$ if for some $\delta>0$ and
$\xi\in\Xx,$
$\Gamma^*(x+t\xi)-\Gamma^*(x)=t\widetilde{\Gamma^*}(\xi)$ for all
$t\in (-\delta,+\infty).$ It is said to be \emph{non-recessive}
otherwise.
    \index{recx@recessive $x,$ see Definition \ref{def-01}}
    \index{nonrecx@non-recessive $x,$ see Definition \ref{def-01}}
\end{definition}
\begin{proposition}
We have $\Jb=\Gamma^*$ and $J^s=\widetilde{\Gamma^*}.$ Moreover,
$J(x)=\Gamma^*(x)$ for all non-recessive $x\in\Xx.$
\end{proposition}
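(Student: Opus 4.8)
The equality $\Jb=\Gamma^*$ has in fact already been recorded above: it is Theorem~\ref{res-03}-(a) applied with the singleton constraint set $C=\{x\}$. Such a $C$ is convex, and $T^{-1}\{x\}$ is $\sigma(\Ll',\Ll)$-closed because $T$ is $\sigma(\Ll',\Ll)$--$\sigma(\XX,\YY)$-continuous (Lemma~\ref{L2}-(c)), so the hypotheses of Theorem~\ref{res-03} hold; hence $\Jb(x)=\inf(\overline{\mathrm P}_{\{x\}})=\inf_{x'\in\{x\}}\Gamma^*(x')=\Gamma^*(x)$ for $x\in\XX$, while both members equal $+\infty$ for $x\notin\XX=T(\Ll')\supset\dom\Gamma^*$. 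For the two remaining claims I reduce, as in the proofs of \cite{Leo08}, to $m\equiv0$, i.e. $\gamma=\lambda$; then $0\le\gamma\le\lmax$.

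For $J^s=\widetilde{\Gamma^*}$ I use that the recession function of a proper convex function is the support function of the domain of its conjugate, so that $\widetilde{\Gamma^*}(x)=\sup\{\langle y,x\rangle;\ y\in\dom\Gamma\}$, with $\dom\Gamma=\{y\in\YYo;\ \langle y,\theta\rangle\in\dom I_\gamma\}$. The inequality $\widetilde{\Gamma^*}\le J^s$ is immediate from \eqref{eq-51}: for $y\in\dom\Gamma$ and any $\ell^s\in\Ll^s$ with $\langle\theta,\ell^s\rangle=x$ one has $\langle y,x\rangle=\langle\langle y,\theta\rangle,\ell^s\rangle\le I^s(\ell^s)$, and it remains to take the supremum over $y$ and the infimum over $\ell^s$. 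The reverse inequality $J^s\le\widetilde{\Gamma^*}$ is the heart of the matter and I would prove it by Hahn--Banach. Fix $x$ with $c:=\widetilde{\Gamma^*}(x)<\infty$. The convex set $\dom I_\gamma\subset\Ll$ contains $0$ and, since $\gamma\le\lmax$, contains the closed unit ball of $\Ll$; hence its gauge $g(u):=\inf\{\beta>0;\ u/\beta\in\dom I_\gamma\}$ is a finite sublinear functional with $g\le\NF$, with $g\le1$ on $\dom I_\gamma$ and with $g\equiv0$ on $L_\infty$ (under \eqref{A4} $\gamma$ is finite, so all multiples of a bounded function lie in $\dom I_\gamma$). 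Using the injectivity of $T^*$ coming from (A$_\theta$) and the fact, guaranteed by \eqref{A-exists}, that $y/\beta\in\dom\Gamma$ for some $\beta>0$, the bound $c=\sup\{\langle y,x\rangle;\ \langle y,\theta\rangle\in\dom I_\gamma\}$ rescales to $\langle y,x\rangle\le c\,g(\langle y,\theta\rangle)$ for all $y\in\YYo$, and $\langle y,\theta\rangle\mapsto\langle y,x\rangle$ is a well-defined linear form on the subspace $T^*\YYo\subset\Ll$, dominated there by the sublinear functional $c\,g$. Extending it by Hahn--Banach to $\ell^s$ on $\Ll$ with $\ell^s\le c\,g$ everywhere, one gets $|\ell^s|\le c\,\NF$, so $\ell^s\in\Ll'$; $\ell^s$ vanishes on $L_\infty$, hence on $\El$, so $\ell^s\in\Ll^s$ by Proposition~\ref{res-B4}; $\langle\theta,\ell^s\rangle=x$; and $I^s(\ell^s)=\sup_{u\in\dom I_\gamma}\langle\ell^s,u\rangle\le c$. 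Therefore $J^s(x)\le I^s(\ell^s)\le c$, which finishes $J^s=\widetilde{\Gamma^*}$.

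For the last assertion, unfolding the definition of $J$ gives $J^*(y)=\sup_{\ell\in\LlsR}\{\langle\langle y,\theta\rangle,\ell\rangle-I(\ell)\}=I_\gamma(\langle y,\theta\rangle)=\Gamma(y)$, the middle equality being Rockafellar's conjugacy formula for integral functionals \cite{Roc68}; hence $\cl J=J^{**}=\Gamma^*$ and in particular $\Gamma^*\le J$. Conversely let $x$ be non-recessive in the sense of Definition~\ref{def-01}; we may assume $\Gamma^*(x)<\infty$. By Proposition~\ref{prop51} there is a minimizer $\ell_x$ of \eqref{pbc} with $C=\{x\}$ for which $\Gamma^*(x)=\Jb(x)=J(x^a)+J^s(x^s)$, where $x=x^a+x^s$, $x^a=\langle\theta,\ell_x^a\rangle$, $x^s=\langle\theta,\ell_x^s\rangle$. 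Suppose $x^s\ne0$. Since $\Gamma^*=\Jb=J\Box J^s$ and $J^s$ is positively homogeneous, for every $t\ge-1$
\[
\Gamma^*(x+tx^s)\le J(x^a)+J^s\big((1+t)x^s\big)=\Gamma^*(x)+t\,\widetilde{\Gamma^*}(x^s),
\]
using $J^s=\widetilde{\Gamma^*}$. Combining this with the recession inequality $\Gamma^*(x+tx^s)\ge\Gamma^*(x)+t\,\widetilde{\Gamma^*}(x^s)$ valid for $t\ge0$, and, for $-1\le t<0$, with the convexity estimate $\Gamma^*(x)\le\frac1{1-t}\Gamma^*(x+tx^s)+\frac{-t}{1-t}\Gamma^*(x+x^s)$, one obtains $\Gamma^*(x+tx^s)=\Gamma^*(x)+t\,\widetilde{\Gamma^*}(x^s)$ for all $t>-1$; that is, $x$ is recessive (with $\xi=x^s$), a contradiction. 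Hence $x^s=0$ and $\Gamma^*(x)=J(x^a)=J(x)$; and $x\notin\dom\Gamma^*$ is trivial, since then $J(x)\ge\Gamma^*(x)=+\infty$.

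The main obstacle is the reverse inequality in the second step: realizing every recession direction $x$ of $\Gamma^*$ as $\langle\theta,\ell^s\rangle$ for a genuinely \emph{singular} $\ell^s$ with $I^s(\ell^s)\le\widetilde{\Gamma^*}(x)$. What makes it go through is the choice of dominating functional $\widetilde{\Gamma^*}(x)\,g$ with $g$ the gauge of $\dom I_\gamma$: this gauge is finite and sublinear exactly because $\dom I_\gamma$ absorbs a ball of $\Ll$, and domination by it automatically forces the Hahn--Banach extension to annihilate $L_\infty$, hence $\El$, hence to be singular.
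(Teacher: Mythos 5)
The one genuine problem is in your last step, where you deduce that $x^s\ne0$ makes $x$ recessive. The inequality you invoke as ``the recession inequality'', namely $\Gamma^*(x+tx^s)\ge\Gamma^*(x)+t\,\widetilde{\Gamma^*}(x^s)$ for $t\ge0$, is the \emph{reverse} of the true one: for a convex function the difference quotient $t\mapsto[\Gamma^*(x+t\xi)-\Gamma^*(x)]/t$ is nondecreasing and increases \emph{to} $\widetilde{\Gamma^*}(\xi)$, so in general one only has $\Gamma^*(x+t\xi)\le\Gamma^*(x)+t\,\widetilde{\Gamma^*}(\xi)$ (already $f(s)=\sqrt{1+s^2}$ with $\xi=1$ violates your version). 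Quoting it begs exactly what has to be proved. Fortunately the lower bound you need is a consequence of what you had already established: your inf-convolution estimate $\Gamma^*(x+tx^s)\le\Gamma^*(x)+t\,\widetilde{\Gamma^*}(x^s)$ is valid for all $t\ge-1$, and for $-1\le t<0$, dividing by the negative $t$, it says that the left difference quotients of the convex function $t\mapsto\Gamma^*(x+tx^s)$ at $t=0$ are $\ge\widetilde{\Gamma^*}(x^s)$; monotonicity of difference quotients then yields $\Gamma^*(x+tx^s)\ge\Gamma^*(x)+t\,\widetilde{\Gamma^*}(x^s)$ for all $t\ge0$, hence equality there, and your convex-combination estimate (now using $\Gamma^*(x+x^s)=\Gamma^*(x)+\widetilde{\Gamma^*}(x^s)$) settles $-1\le t<0$. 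With this repair the contradiction argument, and hence $x^s=0$ and $J(x)=\Gamma^*(x)$ at non-recessive $x$, goes through.

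Apart from that step, your proof is correct and follows a genuinely different route from the paper's. For $\Jb=\Gamma^*$ you, like the paper, invoke Theorem~\ref{res-03}-(a) with $C=\{x\}$. For $J^s=\widetilde{\Gamma^*}$ the paper simply cites an external result (\cite{Leo01c}) identifying $J^s$ with the support function $\iota^*_{\dom\Gamma}$, whereas you give a self-contained construction: dominating $\langle y,\theta\rangle\mapsto\langle y,x\rangle$ by $\widetilde{\Gamma^*}(x)$ times the gauge of $\dom I_\gamma$ and extending by Hahn--Banach to a singular form of the right total singular entropy; this is sound, its only extra debt being $L_\infty\subset\dom I_\gamma$ (so that the extension kills $\El$), an integrability point the paper itself uses when asserting $L_\infty\subset\El$, together with the $m\equiv0$ reduction the paper also allows. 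For the final claim the paper argues tersely by comparing $\Gamma^*=J\Box\widetilde{\Gamma^*}$ with $\Gamma^*=\Gamma^*\Box\widetilde{\Gamma^*}$, while you go through the decomposition $x=x^a+x^s$ of Proposition~\ref{prop51} and show $x^s=0$ at non-recessive points, in effect anticipating Proposition~\ref{res-08}; your route is longer but more explicit, once the flawed step above is fixed.
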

\proof We have already noted that $\Jb=\Gamma^*$ and by
\cite[Theorem 2.3]{Leo01c}, we get: $J^s=\iota_{\dom\Gamma}^*:$
the support function of $\dom\Gamma.$ Therefore, it is also the
recession function of $\Gamma^*.$ Hence, we have
$\Gamma^*=\Jb=J\Box J^s=J\Box \widetilde{\Gamma^*}.$
\\
Comparing $\Gamma^*= J\Box\widetilde{\Gamma^*}$ with the general
identity $\Gamma^*= \Gamma^*\Box\widetilde{\Gamma^*},$ we obtain
that $J(x)=\Gamma^*(x),$  for all non-recessive $x\in\Xx.$
\endproof

\begin{proposition}\label{res-08}
For all $x\in\dom\Gamma^*,$ we have:
\[
\Gamma^*(x)=\Gamma^*(x^a)+\widetilde{\Gamma^*}(x^s).
\]
Moreover, $x$ is non-recessive if and only if $x^s=0.$ In
particular, $x^a$ is non-recessive.
\end{proposition}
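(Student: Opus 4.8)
The plan is to prove the additive decomposition first, read off from it the auxiliary identity $\Gamma^*(x^a)=J(x^a)$, and then settle the equivalence by two short convex-analytic arguments; the second direction of the equivalence is where the real work lies. Throughout I use that $\widetilde{\Gamma^*}$ is the recession function of the closed proper convex function $\Gamma^*$ (it is closed convex since $\Gamma=\Gamma^{**}$, and $\Gamma^*\ge0$ because $\gamma$ vanishes at the origin), that $\Ib=I$ on $\LlsR$ hence $J\ge\Jb=\Gamma^*$ everywhere, and that $I^s$ and $J^s$ are nonnegative positively homogeneous functions, $I^s$ being the support function of $\dom I_\gamma$ (see \eqref{eq-51}), hence subadditive.

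\emph{The formula.} Fix $x\in\dom\Gamma^*$ and let $\ell_x$ be a minimizer of $\Ib$ on $\{\ell\in\Ll':\langle\theta,\ell\rangle=x\}$, which exists by Theorem~\ref{res-03}(b); since $\Ib(\ell_x)=\Jb(x)=\Gamma^*(x)<\infty$, Proposition~\ref{prop51} gives $I(\ell_x^a)=J(x^a)<\infty$ and $I^s(\ell_x^s)=J^s(x^s)=\widetilde{\Gamma^*}(x^s)<\infty$. In particular $\Gamma^*(x^a)\le J(x^a)<\infty$, so $x^a\in\dom\Gamma^*$, and the recession inequality yields $\Gamma^*(x)=\Gamma^*(x^a+x^s)\le\Gamma^*(x^a)+\widetilde{\Gamma^*}(x^s)$; on the other hand Proposition~\ref{prop51} together with $J^s=\widetilde{\Gamma^*}$ and $J\ge\Gamma^*$ gives $\Gamma^*(x)=\Jb(x)=J(x^a)+J^s(x^s)=J(x^a)+\widetilde{\Gamma^*}(x^s)\ge\Gamma^*(x^a)+\widetilde{\Gamma^*}(x^s)$. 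Combining, $\Gamma^*(x)=\Gamma^*(x^a)+\widetilde{\Gamma^*}(x^s)$; comparing this with the middle expression $J(x^a)+\widetilde{\Gamma^*}(x^s)$ and cancelling the finite term $\widetilde{\Gamma^*}(x^s)$ gives the identity $\Gamma^*(x^a)=J(x^a)$, valid for every $x\in\dom\Gamma^*$.

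\emph{Triviality of $(x^a)^s$ and the easy implication.} By the identity just obtained, $\ell_x^a$ is purely absolutely continuous, has $\langle\theta,\ell_x^a\rangle=x^a$ and $\Ib(\ell_x^a)=I(\ell_x^a)=J(x^a)=\Gamma^*(x^a)=\Jb(x^a)$, so it is a minimizer of $\Ib$ on $\{\ell\in\Ll':\langle\theta,\ell\rangle=x^a\}$; by Theorem~\ref{res-07}(a) every minimizer on that set has the same absolutely continuous part, which is therefore the purely absolutely continuous $\ell_x^a$, so $(x^a)^a=x^a$ and $(x^a)^s=0$. Next, suppose $x^s\neq0$ and put $\xi:=x^s$. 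The convex function $g(s):=\Gamma^*(x^a+sx^s)$ on $[0,\infty)$ is finite by the recession inequality $g(s)\le\Gamma^*(x^a)+s\widetilde{\Gamma^*}(x^s)$, satisfies $g(0)=\Gamma^*(x^a)$ and, by the formula, $g(1)=\Gamma^*(x)=\Gamma^*(x^a)+\widetilde{\Gamma^*}(x^s)$; hence the concave function $s\mapsto\Gamma^*(x^a)+s\widetilde{\Gamma^*}(x^s)-g(s)$ is nonnegative and vanishes at $s=0$ and $s=1$, so it vanishes on all of $[0,\infty)$. Writing $x+tx^s=x^a+(1+t)x^s$, this reads $\Gamma^*(x+tx^s)-\Gamma^*(x)=t\widetilde{\Gamma^*}(x^s)$ for all $t\in(-1,\infty)$, so $x$ is recessive. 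Thus $x$ non-recessive implies $x^s=0$.

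\emph{The converse.} Now assume $x^s=0$ (so $x=x^a$ and, by the previous paragraph, $\ell_x^a$ is a purely absolutely continuous minimizer of $\Ib$ on $\{\langle\theta,\ell\rangle=x\}$) and, towards a contradiction, that $x$ is recessive, with direction $\xi$ and parameter $\delta>0$; $\xi$ is understood nonzero, for otherwise the condition is vacuous. Evaluating the defining equality at some $t\in(-\delta,0)$ and using $\Gamma^*\ge0$ forces $\widetilde{\Gamma^*}(\xi)<\infty$, and $\widetilde{\Gamma^*}(\xi)=\iota^*_{\dom\Gamma}(\xi)\ge0$. Set $x':=x-(\delta/2)\xi$; then $\Gamma^*(x')=\Gamma^*(x)-(\delta/2)\widetilde{\Gamma^*}(\xi)<\infty$, so $x'\in\dom\Gamma^*$ and (Theorem~\ref{res-03}(b)) $\Ib$ attains its minimum $\Gamma^*(x')$ on $\{\langle\theta,\ell\rangle=x'\}$, say at $h=h^a+h^s$. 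Since $\widetilde{\Gamma^*}(\xi)=J^s(\xi)$, pick, using the inf-compactness of $\Ib$ (\cite[Lemma 6.2]{Leo08}), a singular $k\in\Ll^s$ with $\langle\theta,k\rangle=\xi$ and $I^s(k)=\widetilde{\Gamma^*}(\xi)$. Then $h+(\delta/2)k$ has $\langle\theta,h+(\delta/2)k\rangle=x$, and by subadditivity of $I^s$, $\Ib(h+(\delta/2)k)\le I(h^a)+I^s(h^s)+(\delta/2)I^s(k)=\Gamma^*(x')+(\delta/2)\widetilde{\Gamma^*}(\xi)=\Gamma^*(x)=\Jb(x)$, so $h+(\delta/2)k$ is a minimizer of $\Ib$ on $\{\langle\theta,\ell\rangle=x\}$. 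By Theorem~\ref{res-07}(a) we get $h^a=\ell_x^a$, hence $I(h^a)=J(x^a)=\Gamma^*(x^a)=\Gamma^*(x)$; the singular part $h^s+(\delta/2)k$ of this minimizer satisfies $\langle\theta,h^s\rangle+(\delta/2)\xi=x-x^a=0$, i.e.\ $\langle\theta,h^s\rangle=-(\delta/2)\xi$, while $I^s(h^s)=\Ib(h)-I(h^a)=\Gamma^*(x')-\Gamma^*(x)=-(\delta/2)\widetilde{\Gamma^*}(\xi)$. As $I^s\ge0$ this forces $\widetilde{\Gamma^*}(\xi)=0$, whence $I^s(h^s)=0$ and, by positive homogeneity of $J^s$, $J^s(-\xi)\le(2/\delta)I^s(h^s)=0$, so $\widetilde{\Gamma^*}(-\xi)=0$ too; therefore $\langle y,\xi\rangle=0$ for every $y\in\dom\Gamma$. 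But by \eqref{A-exists}, together with $m\in\Lls$ and H\"older's inequality in Orlicz spaces, every $y\in\YYo$ has a positive multiple lying in $\dom\Gamma$, so $\langle y,\xi\rangle=0$ for all $y\in\YYo$, contradicting $\xi\neq0$. Hence $x^s=0$ implies $x$ non-recessive; applying the equivalence to $x^a$, which satisfies $(x^a)^s=0$, gives the last assertion.

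\emph{Main obstacle.} The routine parts are the formula and the easy implication; the genuine difficulty is the converse, where one must represent the abstract recession direction $\xi$ as $\langle\theta,k\rangle$ for a singular form $k$ realizing $\widetilde{\Gamma^*}(\xi)=J^s(\xi)$ — which rests on the inf-compactness of $\Ib$ — and then rule out the degenerate case $\widetilde{\Gamma^*}(\pm\xi)=0$ by invoking the non-degeneracy built into \eqref{A-exists}; everything else is bookkeeping with Proposition~\ref{prop51}, Theorem~\ref{res-07}(a), and the recession inequality.
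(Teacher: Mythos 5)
Your argument reaches the right conclusions, but it follows a genuinely different route from the paper's. The paper proves the identity and the equivalence in one stroke by epigraphical calculus in the spirit of Rockafellar--Wets: $I^s$ is the recession function of $\Ib$, $\Jb$ is the inf-projection of $\Ib$ under $T$, the inf-projection of a recession cone is the recession cone of the inf-projection, hence $J^s$ is the recession function of $\Jb=\Gamma^*$; the second statement is then dispatched with ``the same set of arguments'' and is not detailed. You instead work concretely with minimizers: the formula comes from the two-sided estimate $\Gamma^*(x)=J(x^a)+\widetilde{\Gamma^*}(x^s)\ge\Gamma^*(x^a)+\widetilde{\Gamma^*}(x^s)\ge\Gamma^*(x)$ (Proposition \ref{prop51}, $J\ge\Jb=\Gamma^*$, and the recession inequality), which also yields the useful by-product $J(x^a)=\Gamma^*(x^a)$; the implication ``$x^s\neq0\Rightarrow x$ recessive'' by a clean one-dimensional concavity argument; and the converse by exhibiting, from a minimizer $h$ at $x'=x-(\delta/2)\xi$ and a singular form $k$ with $\langle\theta,k\rangle=\xi$ realizing $J^s(\xi)$, an exact minimizer at $x$ whose absolutely continuous part must be $\ell_x^a$ by Theorem \ref{res-07}-(a), forcing $\widetilde{\Gamma^*}(\pm\xi)=0$ and then $\xi=0$ via \eqref{A-exists}. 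This buys an explicit, self-contained proof of the equivalence (which the paper leaves implicit), at the price of repeatedly invoking the attainment machinery of Theorem \ref{res-03} with singleton constraint sets.

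One step is not justified as written: the existence of a singular $k\in\Ll^s$ with $\langle\theta,k\rangle=\xi$ and $I^s(k)=J^s(\xi)$ does not follow from the $\sigma(\Ll',\Ll)$-inf-compactness of $\Ib$. On $\Ll^s$ one has $\Ib(\ell)=I(0)+I^s(\ell)$, and $I(0)=\IZ\gamma^*(0)\,dR$ may be infinite (reverse relative entropy), so the sub-level sets of $I^s$ on $\Ll^s$ are not sub-level sets of $\Ib$; more importantly, $\Ll^s$ is not obviously $\sigma(\Ll',\Ll)$-closed, so a weak$^*$ cluster point of a minimizing sequence of singular forms could a priori acquire an absolutely continuous part. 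The claim is nevertheless reparable in the paper's framework: since the ball $\{u\in\Ll:\NF(u)\le 1/2\}$ lies in $\dom I_\gamma$, one has $I^s\ge\frac12\|\cdot\|^*_{\lmax}$, so a minimizing sequence is norm-bounded; when $\lmax$ is finite, Proposition \ref{res-B4} identifies $\Ll^s$ with the annihilator of $\El$, which is $\sigma(\Ll',\Ll)$-closed, and weak$^*$ lower semicontinuity of the support function $I^s$ together with Lemma \ref{L2}-(c) gives attainment. (Alternatively, one can avoid exact attainment altogether: with $\varepsilon$-optimal $k_\varepsilon$ the forms $h+(\delta/2)k_\varepsilon$ are a minimizing sequence for $x$ whose pairings with every $u\in\El$ equal $\IZ u\,dh^a$ by Proposition \ref{res-B4}, and comparison with a cluster point, whose absolutely continuous part is $\ell_x^a$, again yields $h^a=\ell_x^a$.) Either repair leans on the finiteness of $\lmax$, which the paper itself invokes when applying Proposition \ref{res-B4}; with that caveat made explicit, your proof is complete.
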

\proof By \eqref{III}, we have
$\Ib(\ell_x)=I(\ell_x^a)+\widetilde{I}(\ell_x^s)$ where
$\widetilde{I}$ is the recession function of $I.$ It follows that
$\Jb(x)=J(x_a)+J^s(x^s),$ since $J(x_a)=I(\ell_x^a)$ (Proposition
\ref{prop51}) and the recession function
 of $\Jb$ is $J^s.$ To show this, note that (see \cite{RW98})
\begin{itemize}
    \item[-]
    $I^s$ is the recession function of $\Ib,$
    \item[-]
    the epigraph of $x\mapsto\inf\{f(\ell);\ell, T\ell=x\}$ (with $T$ a linear
    operator) is ``essentially" a linear projection of the epigraph of
    $f,$ (let us call it an inf-projection)
    \item[-]
    the epigraph of the recession function is the recession cone
    of the epigraph and
    \item[-]
    the inf-projection of a recession cone is the recession cone of
    the inf-projection.
\end{itemize}
 The first result now follows from $\Jb=\Gamma^*.$ The same set
of arguments also yields the second statement.
\endproof

\begin{theorem}\label{res:dompoint}
Let us assume that the hypotheses of Theorem \ref{res-03} hold and
that $C\cap\icordom\Gamma^*\not=\emptyset.$
\begin{itemize}
    \item[(a)]
     Then, a minimizer $\xb$ of $\Gamma^*$ on the set $C$ is a $\Gamma^*$-dominating
    point of  $C$ if and only if $\xb$ is non-recessive. This is also
    equivalent to the following statement: ``all the solutions of the minimization problem
   \eqref{pbc} are absolutely continuous with respect to $R.$" In such a case the
    solution of \eqref{pbc} is unique and it matches the solution of
   \eqref{pc}.
    \item[(b)]
    In particular when $\Gamma^*$ admits a degenerate recession
function, i.e.\! $\widetilde{\Gamma^*}(x)=+\infty$ for all
$x\not=0,$ then the minimizer $\xb$ is a $\Gamma^*$-dominating
point of $C.$
    \item[(c)] The same statements hold with $\Gamma^*$ replaced by $\Xi.$
\end{itemize}
\end{theorem}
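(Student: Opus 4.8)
The strategy is to collect the structural results established just above (Propositions \ref{prop51} and \ref{res-08}) and the representation theorems of Section \ref{sec-pbc} (Theorem \ref{res-03}, in particular \eqref{eq-40}), and to read off the stated equivalences. First I would fix notation: let $\xb$ be a minimizer of $\Gamma^*$ on $C$ (it exists and lies in $C\cap\dom\Gamma^*$ under the standing hypotheses, by Theorem \ref{res-03}-d), and let $\lh$ be any solution of \eqref{pbc} with $T\lh=\xb$; write $\lh=\lh^a+\lh^s$ and $\xb^a=\langle\theta,\lh^a\rangle$, $\xb^s=\langle\theta,\lh^s\rangle$ so that $\xb=\xb^a+\xb^s$ in the sense of Proposition \ref{res-08}. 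By that proposition, $\xb$ is non-recessive if and only if $\xb^s=0$. So the heart of part (a) is the chain of equivalences:
\begin{center}
$\xb$ non-recessive $\iff$ $\xb^s=0$ $\iff$ every solution of \eqref{pbc} is in $\LlsR$ $\iff$ $\xb$ is a $\Gamma^*$-dominating point.
\end{center}

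For the first ``$\iff$'' I invoke Proposition \ref{res-08} directly. For the middle equivalence: if $\xb^s=0$ then by Proposition \ref{prop51} $J^s(x^s)=I^s(\lh^s)=0$ and $\Jb(\xb)=J(\xb^a)=I(\lh^a)$; since $I$ is strictly convex (hypothesis (A$_{\gamma^*}$)) the purely absolutely continuous minimizer is unique, so by Theorem \ref{res-07}-a all solutions share this absolutely continuous part and the singular part must vanish (any $\lh^s$ with $I^s(\lh^s)=0$ that still achieves the infimum would, by the convexity argument in the proof of Theorem \ref{res-07}-a, have to give $I^s\equiv 0$ on its cone — but more simply: $\Ib(\lh^a)=\inf(\pbc)$ already, and adding any $\lh^s\ne0$ with $I^s(\lh^s)=0$ would require $\langle\theta,\lh^s\rangle=0$, i.e. $\lh^s\in\ker T$, hence represents the same point, so uniqueness in Theorem \ref{res-03}-b plus strict convexity of $I$ forces $\lh^s=0$). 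Conversely, if some solution has $\lh^s\ne0$ then, writing $\xb^s=\langle\theta,\lh^s\rangle$, the relation $\Gamma^*(\xb)=\Gamma^*(\xb^a)+\widetilde{\Gamma^*}(\xb^s)$ with $\xb^s\ne0$ forces $\xb^s\ne0$ unless $\widetilde{\Gamma^*}(\xb^s)=0$ and $\xb^s\in\ker T$-image — here one must be slightly careful, and the cleanest route is again Proposition \ref{res-08}: $\xb$ recessive $\iff x^s\ne0$, and $x^s=\langle\theta,\lh^s\rangle$, so a genuinely singular solution corresponds exactly to $\xb$ recessive. For the last equivalence: when $\xb^s=0$ the representation \eqref{eq-40}-(c) degenerates — $D^\bot([T^*\ob]_2)$ contributes only singular directions and $\lh=\lh^a$ forces that contribution to be $0$ — so $\lh^a(dz)=\gamma'_z(\langle\ot,\theta(z)\rangle)R(dz)$ and, integrating against $\theta$ (formula \eqref{eq-40}-d with $\lh^s=0$), $\xb=\IZ\theta\gamma'(\ott)\,dR$; together with \eqref{eq-40}-(a,b) this is precisely Definition \ref{defdomp}. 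Conversely a $\Gamma^*$-dominating point satisfies the integral representation (c) of Definition \ref{defdomp}, exhibiting an absolutely continuous $\ell$ with $T\ell=\xb$ and $\Ib(\ell)=I(\ell)=\Gamma^*(\xb)$ (using Theorem \ref{res-03}-d item 2 and the conjugacy $\gamma^*\circ\gamma'=\ott\gamma'-\gamma(\ott)$), hence an absolutely continuous minimizer, hence $\xb^s=0$; and uniqueness/matching with \eqref{pc} follows from Proposition \ref{res:Iproj}-type reasoning (strict convexity of $I$ on $\LlsR$).

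Part (b) is then immediate: if $\widetilde{\Gamma^*}(x)=+\infty$ for every $x\ne0$, then $\widetilde{\Gamma^*}(\xb^s)<\infty$ (which holds because $\Gamma^*(\xb)=\Gamma^*(\xb^a)+\widetilde{\Gamma^*}(\xb^s)<\infty$) forces $\xb^s=0$, so $\xb$ is non-recessive and part (a) applies. Part (c) follows by the translation device of Lemma \ref{astuce} and the remark after it: working with the relative-entropy-type problem, $\Lambda^*(x)=\Gamma^*(1,x)$ with $\gamma(s)=e^s-1$, so every statement about $\Gamma^*$ on the extended constraint $\{1\}\times C$ transfers verbatim to $\Xi=\Lambda^*$ on $C$, and Definition \ref{dompointNey} is exactly Definition \ref{defdomp} specialized this way. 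The step I expect to be the genuine obstacle is the careful handling of the middle equivalence — pinning down that ``$\xb^s=0$'' is equivalent to ``no solution of \eqref{pbc} has a nonzero singular part'' rather than merely ``some solution is absolutely continuous'' — since the solution set of \eqref{pbc} is a whole segment $\Qd+D^\bot$ and one must rule out that a nonzero $\lh^s\in D^\bot$ could be annihilated by $T$; this is where strict convexity of $I$, the uniqueness in Theorem \ref{res-03}-b, and Proposition \ref{res-08}'s clean characterization $x^s=0\iff\lh^s\in\ker T$-image all have to be combined.
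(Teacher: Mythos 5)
Your plan follows the same route as the paper, which obtains the theorem directly from Theorem \ref{res-03}, Proposition \ref{res-08} and Lemma \ref{astuce}: the reductions you perform (non-recessive $\Leftrightarrow\xb^s=0$ via Proposition \ref{res-08}, the dominating-point property read off from \eqref{eq-40}-(a,b) together with the representation \eqref{eq-18}, part (b) from finiteness of $\widetilde{\Gamma^*}(\xb^s)$, part (c) via Lemma \ref{astuce}) are the intended ones. However, there is a genuine gap at exactly the step you single out as the crux, namely ``$\xb$ non-recessive $\Rightarrow$ every solution of \eqref{pbc} is absolutely continuous'' (which also carries the uniqueness claim and the matching with \eqref{pc}). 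Your substitute arguments fail: Theorem \ref{res-03}-b contains \emph{no} uniqueness statement, and strict convexity of $I$ only governs absolutely continuous parts --- the paper insists that $\Ib$ is not strictly convex and that the solution set of \eqref{pbc} is in general a nontrivial segment $\Qd+D^\bot$ of singular perturbations, so neither tool can force $\lh^s=0$. Moreover your intermediate claim that a solution $\lh=\Qd+\lh^s$ with $\lh^s\neq0$ ``would require $\langle\theta,\lh^s\rangle=0$'' is unjustified: the constraint only requires $T\Qd+T\lh^s\in C$, not $T\lh=\xb$.

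What actually closes this step is the positive-definiteness of $I^s$ on singular forms, which you never establish. If $\xb^s=0$, then by Proposition \ref{prop51} one gets $\inf(\pbc)=\Gamma^*(\xb)=J(\xb^a)=I(\Qd)$, so every solution $\lh=\Qd+\lh^s$ of \eqref{pbc} satisfies $I^s(\lh^s)=0$; to conclude $\lh^s=0$ one must know that $I^s(k)>0$ for every nonzero $k\in\Ll^s$. This is true but needs an argument: since $\gamma_z(s)=\lambda_z(s)+m(z)s$ with $0\le\lambda\le\lmax$ and $m\in\Lls$ (Remark \ref{rem-01}-(3)), every $u\in\Ll$ with $\|u\|_{\lmax}<1$ satisfies $I_\gamma(u)\le 1+2\|m\|_{\lmaxs}<\infty$ by H\"older's inequality \eqref{eq-43}; hence the open unit ball of $\Ll$ lies in $\dom I_\gamma$ and, by \eqref{eq-51}, $I^s(k)=\iota^*_{\dom I_\gamma}(k)\ge\|k\|_{\lmax}^*$ for all $k\in\Ll^s$. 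Thus $I^s(\lh^s)=0$ does force $\lh^s=0$, and with it the uniqueness of the solution of \eqref{pbc}, its coincidence with the solution of \eqref{pc}, and (together with \eqref{eq-18} and \eqref{eq-40}-(a,b)) the dominating-point property. With such a lemma inserted in place of the appeal to ``uniqueness in Theorem \ref{res-03}-b plus strict convexity,'' your proof becomes correct and essentially coincides with the paper's.
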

\proof This is a direct consequence of Theorem \ref{res-03},
Proposition \ref{res-08} and Lemma \ref{astuce}.
\endproof

\begin{remark}\label{rem-steep} \textit{A remark about the steepness of the log-Laplace
transform.} In \cite[Thm 1]{Kue00}, with the setting of Section
\ref{sec:entropy2} where $\Xx$ is a Banach space, Kuelbs proves a
result that is slightly different from statement (b) of the above
theorem. It is proved that the existence of a $\Xi$-dominating
point for \emph{all} convex sets $C$ with a nonempty topological
interior is equivalent to some property of the G\^ateaux
derivative of the log-Laplace transform
$y\in\Xx'\mapsto\Lambda(y)=\log\int_{\Xx} \exp(\langle
y,x\rangle)\, R\circ\theta^{-1}(dx)$ on the boundary of its
domain. This property is an infinite dimensional analogue of the
steepness of the log-Laplace transform. It turns out that it is
equivalent to the following assumption: the Cram\'er transform
$\Xi=\Lambda^*$ admits a degenerate recession function.
\end{remark}

\begin{example}\label{exCsi2} \textit{Csisz\'ar's example
continued.} Recall that by Lemma \ref{astuce},
$\Xi(x)=\Lambda^*(x)=\Gamma^*(1,x)$ so that $\Gamma$ and $\Lambda$
play the same role. Clearly, $\dom\Lambda=(-\infty,1]$ and
$\Lambda'(1^-)=\int_{[0,\infty)}z\,P_1(dz):=x_*<\infty.$  It
follows that for all $x\ge x_*,$ $\Xi(x)-\Xi(x_*)=x-x_*.$

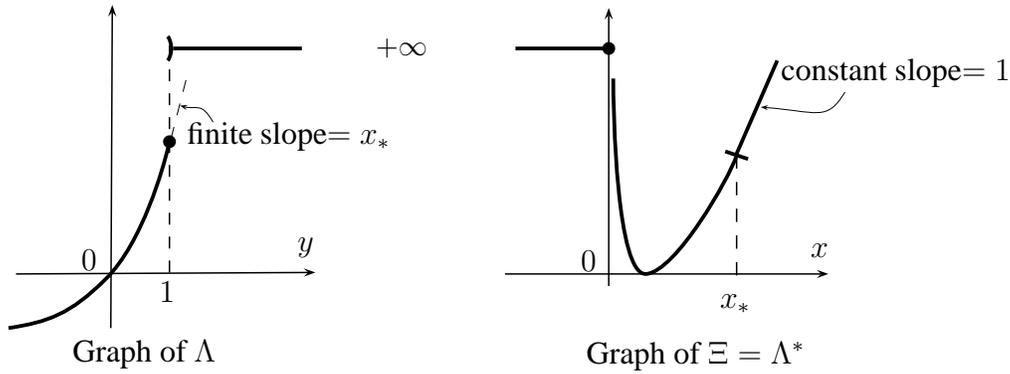
\begin{figure}[h]
\begin{center}
\scalebox{1} % Change this value to rescale the drawing.
{
\begin{pspicture}(0,-2.4684374)(13.221875,2.4384375)
\psline[linewidth=0.02cm,arrowsize=0.05291667cm
3.01,arrowlength=1.4,arrowinset=0.4]{<-}(1.38,2.4284375)(1.36,-1.8715625)
\psline[linewidth=0.02cm,arrowsize=0.05291667cm
3.0,arrowlength=1.4,arrowinset=0.4]{->}(0.1,-1.1515625)(4.08,-1.1315625)
\psline[linewidth=0.02cm,linestyle=dashed,dash=0.16cm
0.16cm](2.14,1.8884375)(2.14,-1.1315625)
\psbezier[linewidth=0.05,dotsize=0.07055555cm
2.0]{-*}(0.0,-1.8715625)(0.52,-1.7915626)(0.82,-1.6915625)(1.3,-1.2115625)(1.78,-0.7315625)(2.06,0.2684375)(2.14,0.6084375)
\psline[linewidth=0.012cm,linestyle=dashed,dash=0.16cm
0.16cm](2.16,0.6484375)(2.36,1.4284375)
\psline[linewidth=0.05cm,tbarsize=0.07055555cm
5.0,rbracketlength=0.15]{)-}(2.14,1.8484375)(3.9,1.8484375)
\psline[linewidth=0.02cm,arrowsize=0.05291667cm
3.0,arrowlength=1.4,arrowinset=0.4]{->}(6.6,-1.1515625)(10.9,-1.1515625)
\psline[linewidth=0.02cm,arrowsize=0.05291667cm
3.0,arrowlength=1.4,arrowinset=0.4]{->}(7.98,-1.3115625)(7.98,2.3884375)
\psbezier[linewidth=0.05,tbarsize=0.07055555cm
5.0]{-|*}(8.04,1.4484375)(8.04,1.0284375)(8.08,-1.1915625)(8.48,-1.1515625)(8.88,-1.1115625)(9.58,0.1284375)(9.68,0.4284375)
\psline[linewidth=0.05cm](9.68,0.4484375)(10.22,1.6884375)
\psline[linewidth=0.05cm,dotsize=0.07055555cm
2.0]{*-}(7.98,1.8484375)(6.74,1.8484375)
\usefont{T1}{ptm}{m}{n}
\rput(5.2275,1.8484375){\small $+\infty$}
\usefont{T1}{ptm}{m}{n}
\rput(1.0714062,-0.9615625){$0$}
\usefont{T1}{ptm}{m}{n}
\rput(3.9514062,-0.8015625){$y$}
\usefont{T1}{ptm}{m}{n}
\rput(7.711406,-0.9815625){$0$}
\psline[linewidth=0.02cm,linestyle=dashed,dash=0.16cm
0.16cm](9.68,0.4284375)(9.68,-1.1715626)
\usefont{T1}{ptm}{m}{n}
\rput(9.6614065,-1.5415626){$x_*$}
\psbezier[linewidth=0.01,arrowsize=0.05291667cm
2.0,arrowlength=1.4,arrowinset=0.4]{<-}(10.0,1.0684375)(10.76,1.0084375)(10.88,0.8684375)(11.12,1.2684375)
\usefont{T1}{ptm}{m}{n}
\rput(11.794844,1.5184375){constant slope$=1$}
\usefont{T1}{ptm}{m}{n}
\rput(2.1114063,-1.3815625){$1$}
\psbezier[linewidth=0.01,arrowsize=0.05291667cm
2.0,arrowlength=1.4,arrowinset=0.4]{<-}(2.2784615,1.0884376)(2.5,1.0284375)(2.56,1.1484375)(2.72,0.8684375)
\usefont{T1}{ptm}{m}{n}
\rput(3.7240624,0.6784375){finite slope$=x_*$}
\usefont{T1}{ptm}{m}{n}
\rput(10.781406,-0.8415625){$x$}
\usefont{T1}{ptm}{m}{n}
\rput(1.8045312,-2.2215624){Graph of $\Lambda$}
\usefont{T1}{ptm}{m}{n}
\rput(9.134531,-2.2415626){Graph of $\Xi=\Lambda^*$}
\end{pspicture}
}
\end{center}
 \caption{The case: $R(dz)= \frac 1{a_0}\frac{e^{-z}}{1+z^3}\,dz$}\label{fig3}
 \end{figure}

We deduce from this that $(x_*,\infty)$ is a set of recessive
points. By Theorem \ref{res:dompoint}, they cannot be dominating
points. Note also that the log-Laplace transform $\Lambda$ is not
steep.
\end{example}

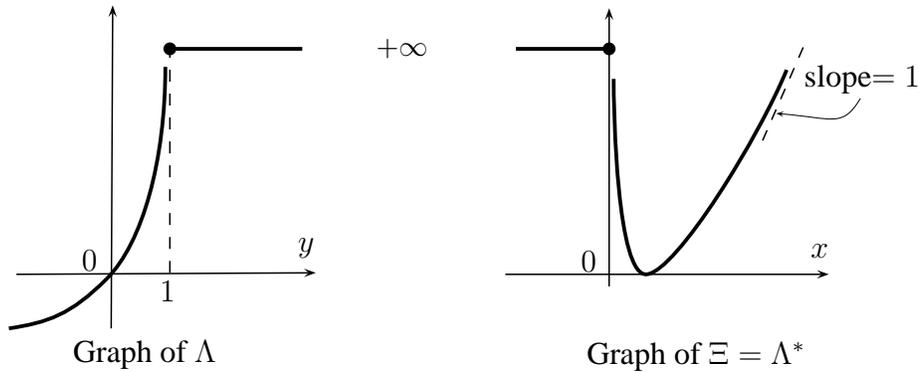
\begin{figure}[ht]
\begin{center}
\scalebox{1} % Change this value to rescale the drawing.
{
\begin{pspicture}(0,-2.4684374)(12.121875,2.4384375)
\psline[linewidth=0.02cm,arrowsize=0.05291667cm
3.01,arrowlength=1.4,arrowinset=0.4]{<-}(1.38,2.4284375)(1.36,-1.8715625)
\psline[linewidth=0.02cm,arrowsize=0.05291667cm
3.0,arrowlength=1.4,arrowinset=0.4]{->}(0.1,-1.1515625)(4.08,-1.1315625)
\psline[linewidth=0.02cm,linestyle=dashed,dash=0.16cm
0.16cm](2.14,1.8884375)(2.14,-1.1315625)
\psbezier[linewidth=0.05](0.0,-1.8715625)(0.52,-1.7915626)(0.82,-1.6915625)(1.3,-1.2115625)(1.78,-0.7315625)(2.08,0.3684375)(2.08,1.6084375)
\psline[linewidth=0.05cm,dotsize=0.07055555cm
2.0]{*-}(2.14,1.8484375)(3.9,1.8484375)
\psline[linewidth=0.02cm,arrowsize=0.05291667cm
3.0,arrowlength=1.4,arrowinset=0.4]{->}(6.6,-1.1515625)(10.9,-1.1515625)
\psline[linewidth=0.02cm,arrowsize=0.05291667cm
3.0,arrowlength=1.4,arrowinset=0.4]{->}(7.98,-1.3115625)(7.98,2.3884375)
\psbezier[linewidth=0.05](8.04,1.4484375)(8.04,1.0284375)(8.08,-1.1915625)(8.48,-1.1515625)(8.88,-1.1115625)(9.98,0.6884375)(10.34,1.5684375)
\psline[linewidth=0.02cm,linestyle=dashed,dash=0.16cm
0.16cm](10.02,0.6284375)(10.56,1.8684375)
\psline[linewidth=0.05cm,dotsize=0.07055555cm
2.0]{*-}(7.98,1.8484375)(6.74,1.8484375)
\usefont{T1}{ptm}{m}{n}
\rput(5.2275,1.8484375){\small $+\infty$}
\usefont{T1}{ptm}{m}{n}
\rput(1.0714062,-0.9615625){$0$}
\usefont{T1}{ptm}{m}{n}
\rput(3.9514062,-0.8015625){$y$}
\usefont{T1}{ptm}{m}{n}
\rput(7.711406,-0.9815625){$0$}
\psbezier[linewidth=0.01,arrowsize=0.05291667cm
2.0,arrowlength=1.4,arrowinset=0.4]{<-}(10.2,0.9884375)(10.96,0.9284375)(11.08,0.7884375)(11.32,1.1884375)
\usefont{T1}{ptm}{m}{n}
\rput(11.342188,1.4384375){slope$=1$}
\usefont{T1}{ptm}{m}{n}
\rput(2.1114063,-1.3815625){$1$}
\usefont{T1}{ptm}{m}{n}
\rput(10.781406,-0.8415625){$x$}
\usefont{T1}{ptm}{m}{n}
\rput(1.8045312,-2.2215624){Graph of $\Lambda$}
\usefont{T1}{ptm}{m}{n}
\rput(9.134531,-2.2415626){Graph of $\Xi=\Lambda^*$}
\end{pspicture}
}
\end{center}
 \caption{The case: $R(dz)= e^{-z}\,dz$}\label{fig4}
 \end{figure}

\begin{remark}\textit{A remark about Csisz\'ar's example.} For
comparison, suppose that $R$ in Csisz\'ar's Example \ref{exCsi1}
and \ref{exCsi2} is replaced by its slight modification $R(dz)=
e^{-z}\,dz:$ the exponential law with parameter 1. Then
$\Lambda(y)=-\log(1-y)+\iota_{\{y\ge1\}}$ and
$\Xi(x)=\Lambda^*(x)=x-1-\log x+\iota_{\{x\le0\}}.$ Their graphic
representations are given at Figure \ref{fig4} and one sees that
for any $c>0,$ $R$ admits the entropic projection
$P_y(dz)=(1-y)e^{(y-1)z}\,dz$ which is the exponential law with
parameter $(1-y)$ with $y$ such that
$1/(1-y)=\int_{[0,\infty)}z\,P_y(dz)=c.$ In this case, for any
$c>0$ the generalized entropic projection is the entropic
projection, $\Lambda$ is steep and $\Xi$ has no recessive points.
\end{remark}

\section{Conditional laws of large numbers}\label{sec:condLLN}

In this last section, we give a probabilistic interpretation of
the singular component of the generalized $I$-projection in terms
of a conditional LLN.

\subsection{Conditional laws of large numbers}

Conditional laws of large numbers are already well-known, see
\cite{Az80} or \cite{SZ91} for instance. Theorem \ref{res-09}
below is a general statement of a conditional LLN which is
inspired from \cite{SZ91} and \cite[Section 7.3]{DZ}. Its proof is
given at the Appendix \ref{sec:proofcondLLN}.

Let $\{L_n\}\index{Ln@$L_n$}$ be a sequence of random vectors in
the algebraic dual space $\LLo\index{LLo@$\LLo$}$ of some vector
space $\UUo\index{UUo@$\UUo$}.$ As a typical instance, one can
think of random measures $L_n$ with $\UUo$ a function space. Let
$T:\LLo\rightarrow\XXo$ be a linear operator with values in
another vector space $\XXo.$ We are going to investigate the
behavior of the conditional law $\mathbb{P}(L_n\in\cdot\mid
TL_n\in C)$ of $L_n$ as $n$ tends to infinity, for some measurable
set $C$ in $\XXo.$
 It appears that this type of
conditional law of large numbers is connected with large
deviations. We assume that $\{L_n\}$ obeys the LDP with a good
rate function $I$ in $\LLo$ endowed with the weak topology
$\sigma(\LLo,\UUo)$ and the associated cylinder $\sigma$-field. It
is also clear that one should assume that $\mathbb{P}(TL_n\in
C)>0$ for all $n,$ not to divide by zero. To overcome this
restriction, we look at $\mathbb{P}(L_n\in\cdot\mid TL_n\in
C_\delta)$ where $C_\delta$ tends to $C$ as $\delta$ tends to
zero.
\\
Let us assume that $\XXo$ is a topological vector space with its
Borel $\sigma$-field and that $T:\LLo\rightarrow\XXo$ is
continuous. The contraction principle tells us that
\begin{displaymath}
  X_n:= TL_n
\end{displaymath}
obeys the LDP in $\XXo$ with the rate function
\begin{displaymath}
J(x)=\inf\{I(\ell) ; \ell\in\LLo, T\ell=x\}.
    \index{J@$J$}
\end{displaymath}
The following set of hypotheses is based on a framework which
appears in \cite{SZ91} and \cite[Section 7.3]{DZ}.

\par\medskip\noindent\textbf{Assumptions (B).}\index{Assumptions (B)}\
\begin{itemize}
  \item[(B$_L$)]\index{Assumptions (B)!B1@(B$_L$) on $\{L_n\}$}
The sequence $\{L_n\}$ obeys the LDP in $\LLo$ with a
  \emph{good} rate function $I.$ This means that $I$ is inf-compact.

  \item[(B$_T$)]\index{Assumptions (B)!B2@(B$_T$) on $T$}
   The linear operator $T:\LLo\rightarrow \XXo$ is continuous.
    \item[(B$_C$)]\index{Assumptions (B)!B3@(B$_C$) on the constraint set $C$} \textit{Assumptions on $C$}.\
As a convention, we write $J(C)$ for $\inf_{x\in C} J(x).$
\begin{enumerate}[(1)]
 \item
      $J(C)<\infty.$
    \item
     The set $C$ is closed. It is the limit as $\delta$ decreases
      to 0: $C= \cap_{\delta}C_\delta,$ of
      a decreasing family of closed sets $C_{\delta}$ in $\XXo$ such
      that for all $\delta>0$ and all $n\geq 1,$ $\mathbb{P}(X_n\in C_\delta)>0$
 \item
       and one of the following statements
     \begin{enumerate}[(a)]
       \item
          $C_{\delta}=C$ for all $\delta >0$ and $J(\mathrm{int\
          }C)=J(C),$ or
       \item
          $C\subset \mathrm{int \ }C_{\delta}$ for all $\delta >0.$
     \end{enumerate}
       is fulfilled.
\end{enumerate}
\end{itemize}

Let $\mathcal{G}\index{G@$\mathcal{G}$}$ be the set of all
solutions of the following minimization problem:
\begin{equation}
\label{eq:cond4}
  \textsl{minimize } I(\ell) \textsl{ subject to } T\ell\in C,\quad\ell\in\LLo.
\end{equation}
Similarly, let $\mathcal{H}\index{H@$\mathcal{H}$}$ be the set of
all solutions of the following minimization problem:
\begin{equation}
  \label{eq:cond3}
  \textsl{minimize } J(x) \textsl{ subject to } x\in C,\quad x\in\XXo.
\end{equation}
We can now state a result about conditional laws of large numbers
which is proved at the Appendix.

\begin{theorem}\label{res-09} Assume \emph{(B)}.
For all open subset $G$ of $\LLo$ such that $\mathcal{G}\subset G$
and all open subset $H$ of $\XXo$ such that $\mathcal{H}\subset
H,$ we have
\begin{eqnarray*}
&&\limsupdn \frac 1n \log\mathbb{P}(L_n\not\in G\mid TL_n\in
C_\delta)<0
\textsl{\ and}\\
&&\limsupdn \frac 1n \log\mathbb{P}(X_n\not\in H\mid X_n\in
C_\delta)<0.
\end{eqnarray*}
In particular, if $C$ is convex and the rate functions $I$ and $J$
are strictly convex, we have the conditional laws of large
numbers:
\begin{eqnarray*}
  \lim_\delta \lim_n\mathbb{P}(L_n\in\cdot\mid TL_n\in C_\delta)&=&\delta_{\lb}\\
  \lim_\delta \lim_n\mathbb{P}(X_n\in\cdot\mid X_n\in C_\delta)&=&\delta_{\xb}
\end{eqnarray*}
where the limits are understood with respect to the usual weak
topologies of probability measures, $\lb$ is the unique solution
to the convex minimization problem \eqref{eq:cond4} and $\xb=T\lb$
is the unique solution to \eqref{eq:cond3}.
\end{theorem}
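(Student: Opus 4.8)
The plan is to reduce the two $\limsup$ estimates to the large deviation upper and lower bounds via an elementary conditional-probability decomposition, and then to a single inf-compactness ``gap lemma'' along the shrinking family $C_\delta$. First I would record that, by (B$_L$), (B$_T$) and the contraction principle, $J$ is again a good (inf-compact) rate function and $J(C)=\inf\{I(\ell);\ell\in\LLo, T\ell\in C\}$; since $J(C)<\infty$ by (B$_C$)(1) and $I$, $J$ are inf-compact, the problems \eqref{eq:cond4} and \eqref{eq:cond3} are attained, so $\mathcal{G}$ and $\mathcal{H}$ are nonempty (indeed compact, being closed subsets of a sublevel set). For open $G\subset\LLo$, using $\mathbb{P}(TL_n\in C_\delta)>0$ from (B$_C$)(2), I would write $\mathbb{P}(L_n\notin G\mid TL_n\in C_\delta)=\mathbb{P}(L_n\in G^c\cap T^{-1}C_\delta)/\mathbb{P}(TL_n\in C_\delta)$, the numerator set $G^c\cap T^{-1}C_\delta$ being closed by (B$_T$) and (B$_C$)(2).

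For the numerator the LDP upper bound gives $\Limsup n\frac1n\log\mathbb{P}(L_n\in G^c\cap T^{-1}C_\delta)\le-\inf_{G^c\cap T^{-1}C_\delta}I$. For the denominator I would use (B$_C$)(3): in case (a) apply the LDP lower bound to the open set $\inter C=\inter C_\delta$ together with $J(\inter C)=J(C)$, in case (b) apply it to $\inter C_\delta\supseteq C$; either way $\Liminf n\frac1n\log\mathbb{P}(TL_n\in C_\delta)\ge-J(C)$. Combining, and noting that $\delta\mapsto\inf_{G^c\cap T^{-1}C_\delta}I$ is non-decreasing as $\delta\downarrow0$ (the sets shrink), I would obtain $\limsupdn\frac1n\log\mathbb{P}(L_n\notin G\mid TL_n\in C_\delta)\le-\lim_{\delta\to0}\inf_{G^c\cap T^{-1}C_\delta}I+J(C)$.

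The heart of the matter is then the strict inequality $\lim_{\delta\to0}\inf_{G^c\cap T^{-1}C_\delta}I>J(C)$. Suppose not; since that map is non-decreasing with limit $\le J(C)$, each $\inf_{G^c\cap T^{-1}C_\delta}I\le J(C)$, so I may choose $\ell_\delta\in G^c\cap T^{-1}C_\delta$ with $I(\ell_\delta)\le J(C)+\delta$. For small $\delta$ these lie in the compact sublevel set $\{I\le J(C)+1\}$, so the net $(\ell_\delta)$ has a convergent subnet $\ell_{\delta_\beta}\to\ell_*$. For every fixed $\delta_0$, eventually $\delta_\beta<\delta_0$, so $\ell_{\delta_\beta}$ eventually lies in the closed set $G^c\cap T^{-1}C_{\delta_0}$, whence $\ell_*\in G^c\cap T^{-1}C_{\delta_0}$; letting $\delta_0\downarrow0$ and using $C=\cap_\delta C_\delta$ gives $\ell_*\in G^c$ and $T\ell_*\in C$. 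Lower semicontinuity gives $I(\ell_*)\le\liminf_\beta I(\ell_{\delta_\beta})\le J(C)$, and $T\ell_*\in C$ forces $I(\ell_*)\ge\inf_{T^{-1}C}I=J(C)$; hence $\ell_*$ minimizes $I$ over $T^{-1}C$, i.e.\ $\ell_*\in\mathcal{G}\subseteq G$, contradicting $\ell_*\in G^c$. This proves the first estimate, and the $X_n$--$H$ estimate follows from the identical argument with $I$, $\LLo$, $T^{-1}(\cdot)$, $\mathcal{G}$ replaced by $J$, $\XXo$, $(\cdot)$, $\mathcal{H}$ (no operator, so even simpler), using that $J$ is good and $\mathcal{H}\subseteq H$.

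For the final conditional LLN, convexity of $C$ makes $T^{-1}C$ convex, so strict convexity of $I$ (resp.\ $J$) plus attainment forces $\mathcal{G}=\{\lb\}$ (resp.\ $\mathcal{H}=\{\xb\}$), and $T\lb\in C$ with $J(T\lb)\le I(\lb)=J(C)$ gives $\xb=T\lb$. Applying the first part with $G$ (resp.\ $H$) ranging over the open neighbourhoods of $\lb$ (resp.\ $\xb$): each such $G$ admits $\delta_0>0$ with $\lim_n\mathbb{P}(L_n\in G\mid TL_n\in C_\delta)=1$ for all $\delta\le\delta_0$, whence $\lim_{\delta\to0}\lim_n\mathbb{P}(L_n\in G\mid TL_n\in C_\delta)=1$; as this holds for every neighbourhood of $\lb$, the conditional law converges weakly to $\delta_{\lb}$, and likewise to $\delta_{\xb}$ for $X_n$. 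The hard part is the gap lemma — the subnet extraction that both passes the infimum to the limiting constraint set and lands the limit point inside $\mathcal{G}$, resting entirely on inf-compactness of $I$; the rest is routine bookkeeping with the LDP bounds. (One should also keep in mind the standard, here implicit, measurability caveat that $G$ and $H$ must lie in the cylinder $\sigma$-field carrying the LDP.)
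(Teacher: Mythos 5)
Your proof is correct and takes essentially the same route as the paper's: the same conditional-probability decomposition, LDP upper bound for the numerator and lower bound for the denominator via (B$_C$)(3), and an inf-compactness plus lower-semicontinuity argument along $\delta\downarrow 0$ (your ``gap lemma'' is the paper's Lemma \ref{lem-J} combined with the strictness step forced by $\mathcal{G}\subset G$), followed by the routine uniqueness and weak-convergence bookkeeping. The only cosmetic difference is order of treatment: the paper proves the $X_n$ statement first and transfers it to $L_n$ through Lemma \ref{condlem} on $A_\delta=T^{-1}C_\delta$, whereas you argue directly for $L_n$ and note the $X_n$ case is the simpler instance.
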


\subsection{Empirical measures of independent samples}

Let $Z_1,Z_2,\dots$ be an iid sequence of $\ZZ$-valued random
variables which are $R$-distributed. The LDP for $ L_n=\moyn
\delta_{Z_i}$ is described by Sanov's theorem: $L_n$ obeys the LDP
in $\PZ$ endowed with the topology $\sigma(\PZ,B_\ZZ)$  with the
relative entropy $I(.|R)$ as its rate function, see \cite[Thm
6.2.10]{DZ}. Here, $B_\ZZ\index{BZ@$B_\ZZ,$ bounded measurable
functions on $\ZZ$}$ denotes the space of all bounded measurable
functions on $\ZZ.$

One can improve this result: this LDP still holds in
$P_{\mathrm{exp}}\subset\PZ$ with the topology
$\sigma(P_{\mathrm{exp}},\Eexp)$ where
 $\Eexp=\{u;\forall a>0,\IZ e^{a|u|}\,dR<\infty\}
 \index{Eexp@$\Eexp$}$
is $E_\tau(R)$ without identifying the $R$-a.e.\! equal functions
and $P_{\mathrm{exp}}=\{P\in\PZ; \IZ |u|\,dP<\infty,\forall u\in
\Eexp\},\index{Pexp@$P_{\mathrm{exp}}$}$ see \cite{Sch98,LeoN02}.

A further improvement is obtained in \cite{LeoN02}: $L_n$ obeys
the LDP in $\OZ$ endowed with the topology $\sigma(\OZ,\Lexp)$
with the extended relative entropy $\Ib(.|R)$ as its rate
function. Here, $\Lexp=\{u;\exists a>0,\IZ e^{a|u|}\,dR<\infty\}
\index{Lexp@$\Lexp$}$ is $L_\tau(R)$ without identifying the
$R$-a.e.\! equal functions.
\\
Hence, under the assumptions (B$_C$) on $\{C_\delta\}$ and $C,$ we
can apply Theorem \ref{res-09} with $I=\Ib(.|R)$ and $J=\Xi$ which
are defined at \eqref{ext-entrop} and \eqref{cramer}. More
precisely, if the hypotheses of Proposition \ref{res:entrop1} are
fulfilled, we see that
\begin{equation}\label{eq-41}
  \lim_\delta \lim_n\mathbb{P}\left(\moyn\delta_{Z_i}\in\cdot\mid
  \moyn \theta(Z_i)\in C_\delta\right)=\delta_{\widehat{P}}
\end{equation}
where $\widehat{P}$ is characterized by \eqref{eq-33} and
\eqref{eq-35} and this limit corresponds to the usual weak
topology on the set of probability measures on $P_{\mathrm{exp}}$
furnished with the topology $\sigma(P_{\mathrm{exp}},\Eexp).$ If
the hypotheses of Proposition \ref{res:entrop2} are fulfilled and
one of the following assumptions
\begin{enumerate}
    \item
          $\XXo=\R^K$ and  $\CC\cap\icordom
          I\not=\emptyset$ or
    \item
         $\|\cdot\|_{\tau^*}$-$\inter(\CC)\not=\emptyset.$
\end{enumerate}
holds, Theorem \ref{res-09} and Proposition \ref{res:entrop2} tell
us that \eqref{eq-41} still holds, more precisely
\begin{equation*}
  \lim_\delta \lim_n\mathbb{P}\left(\moyn\delta_{Z_i}\in\cdot\mid \moyn \theta(Z_i)\in C_\delta\right)=\delta_{P_\diamond}
\end{equation*}
with respect to the same topology
$\sigma(P_{\mathrm{exp}},\Eexp),$ where $P_\diamond$ is
characterized at \eqref{eq-34}. See \cite{Csi84} for similar
results w.r.t.\! $\sigma(\PZ,B_\ZZ).$

\subsection{A probabilistic interpretation of the singular
component}\label{sec-interp}

On the other hand, considering $L_n\in\PZ\subset\OZ,$
\eqref{eq-41} fails in $\OZ$ endowed with topology
$\sigma(\OZ,\Lexp).$ Instead of this, the laws
$\mathbb{P}\left(L_n\in\cdot\mid \moyn \theta(Z_i)\in
C_\delta\right)$ admit cluster points which are probability
measures on ${\OZ}$ whose support consists of solutions $\lh$ to
the minimization problem \eqref{eq-22}. We know that
$\lh^a=P_\diamond$ for any $\lh.$

By Proposition \ref{res-B4}, $\langle\lh^s,u\rangle=0$ for all
$u\in\Eexp.$ In particular, the mass of $\lh^s$ is
$\langle\lh^s,\mathbf{1}\rangle=0.$ Nevertheless, $\lh^s$ may be
nonzero. This reflects the fact for some indices $1\le i \le k_n$
(we choose the first ones without loss of generality) with a
vanishing ratio: $\limn k_n/n=0,$ and for some functions $u$ in
$\Lexp\setminus\Eexp,$ $\frac 1n\sum_{i=1}^{k_n}u(Z_i)$ does not
vanish as $n$ tends to infinity. Meanwhile, the remaining
variables are such that $\limn\frac 1{n-k_n}\sum_{i=
k_n+1}^n\delta_{Z_i}=P_\diamond$ with respect to
$\sigma(\OZ,\Lexp).$

\begin{example}\label{exCsi3}
Consider Example \ref{exCsi1} with $c>x_*.$ Suppose that
$L_n-\frac 1n\delta_{Z_1}$ is close to $P_\diamond=P_1$ and
$Z_1/n\ge c-x_*$ so that the event $\moyn Z_i\ge c-1/n$ is
realized. When $n$ is large, this happens  with a probability
$p_n\simeq\exp[-(n-1)I(P_\diamond|R)]\mathbb{P}(Z_1\ge n(c-x_*))$
and with the notation of Example \ref{exCsi2}, $\limn -\frac
1n\log
p_n=I(P_\diamond|R)+(c-x_*)=\Gamma^*(x_*)+[\Gamma^*(c)-\Gamma^*(x_*)]=\Gamma^*(c).$
Therefore, $p_n$ has the optimal logarithmic behavior.
\\
What is performed by the first particle  $i=1,$ may also be
performed by any collection of $k_n$ different particles with a
probability $p_n(a_1^n,\dots,a_{k_n}^n)\simeq
\begin{pmatrix}n\\k_n\end{pmatrix}
 \exp[-(n-k_n)I(P_\diamond|R)]\mathbb{P}(Z_1\ge a_1^n)\cdots\mathbb{P}(Z_{k_n}\ge a_{k_n}^n)$
 for any $a_1^n,\dots,a_{k_n}^n$ such that
\begin{equation*}
    \left\{
    \begin{array}{lcl}
      \limn k_n/n&=&0, \\
       \limn a_k^n&=&+\infty,\quad \forall 1\le k\le k_n,\\
       \limn(a_1^n+\cdots+a_{k_n}^n)/n&=&c-x_*.\\
    \end{array}
    \right.
\end{equation*}
Again, $p_n(a_1^n,\dots,a_{k_n}^n)$ is logarithmically optimal
since $\limn -\frac 1n\log
p_n(a_1^n,\dots,a_{k_n}^n)=I(P_\diamond|R)+\limn
(a_1^n+\cdots+a_{k_n}^n)/n=I(P_\diamond|R)+(c-x_*)=\Gamma^*(c).$

A \emph{vanishing ratio} of particles performing very large jumps
is responsible for the appearance of the singular component.
\end{example}

For other examples of minimization problems \eqref{eq-22} whose
solutions exhibit a nonzero singular component, one can have a
look at \cite[Section 7.4]{Leo08}.

\subsection{Empirical measures with random weights}
In \cite{DDCG90,GG97}, one gives a Bayesian interpretation of the
conditional LLN of the empirical measures with random weights
$L_n=\moyn W_i\delta_{z_i},$ see \eqref{eq-27}. Let the weights
$W$ be iid  random variables with a common Cram\'er transform
$\gamma^*.$  Let $\ZZ$ be a topological space with its Borel
$\sigma$-field. Denote $C_\ZZ\index{CZ@$C_\ZZ,$ bounded continuous
functions on $\ZZ$}$ the space of all bounded continuous functions
on $\ZZ.$ The LDP for $L_n$ is given at \cite[Thm 7.2.3]{DZ}: if
$\gamma^*$ satisfies \eqref{A4}, $L_n$ obeys the LDP in $\MZ$ with
the topology $\sigma(\MZ,C_\ZZ)$ and the rate function
$I=I_{\gamma^*}.$

 Under the hypotheses of Theorems \ref{res-05} and
\ref{res-09}, we obtain
\begin{equation*}
  \lim_\delta \lim_n\mathbb{P}\left(\moyn W_i\delta_{z_i}\in\cdot
  \mid \moyn W_i\theta(z_i)\in C_\delta\right)=\delta_{\Qd}
\end{equation*}
where $\Qd$ is described at \eqref{eq-18} and the topology on
$\MZ$ is  $\sigma(\MZ,C_\ZZ).$

Under the more restrictive hypotheses of Proposition
\ref{res:Iproj}, we have $\Qd=\Qh$ which is the solution of
\eqref{pc}. If the weights $W$ are nonnegative and
$\mathbb{E}W=1,$ the reference probability measure $R$ is
interpreted as the prior distribution while $\Qh$ is a posterior
distribution which is selected in $\CC$ by means of \eqref{pc}.
Hence, it is possible to estimate $\Qh$ with $L_n.$ It is the aim
of the method of Maximum Entropy in the Mean (MEM) to provide an
effective simulation procedure of the posterior distribution.

The reverse entropy corresponds to a $\gamma^*$ which doesn't
satisfy \eqref{A4}; we have $\Ll=L_\infty$ and $\Ll'=L_1R\oplus
L_\infty'.$ This situation is investigated in details in
\cite{GG97} with a compact space $\ZZ.$ This compactness allows to
restrict the attention to the subspace $\MZ\subset L_1R\oplus
L_\infty'$ which is decomposed into the direct sum of the spaces
of measures which are absolutely continuous and singular with
respect to $R.$ It is proved that the $(\delta,n)$-sequence
$\mathbb{P}\left(\moyn W_i\delta_{z_i}\in\cdot\mid \moyn
W_i\theta(z_i)\in C_\delta\right)$ admits cluster points in $\MZ$
whose support consist of solutions of \eqref{pbc} which may have a
singular component. The probabilistic interpretation of the Dirac
part of this singular component is: a \emph{non-vanishing ratio}
of indices $i$ accumulate in areas where the $z_i$ are close to
each other, with large values of $W_i.$

One can expect that $L_n$ obeys LDPs  with respect to topologies
weakened by unbounded functions, but no results of this kind
appear in the literature. If this holds true, for any $\gamma^*$
satisfying \eqref{A4} but such that $\lmax$ isn't
$\Delta_2$-regular, an interpretation of the singular component in
terms a vanishing ratio of indices is still available, as in
Section \ref{sec-interp}.

\appendix

\section{Complements for the statements of Theorems \ref{res-02} and \ref{res-03}}\label{annex}

 The decomposition into positive and negative parts of
linear forms is necessary to state the extended dual problem which
is needed for the characterization of the minimizers. If $\lambda$
is not an even function, one has to consider
\begin{equation*}
    \left\{
    \begin{array}{l}
       \lambda_+(z,s)=\lambda(z,|s|) \\
       \lambda_-(z,s)=\lambda(z,-|s|) \\
    \end{array}
    \right.
\end{equation*}
which are Young functions and the corresponding Orlicz spaces
$L_{\lambda_\pm}.$

\subsection*{Some definitions needed for stating Theorem \ref{res-02}}\

\par\smallskip\noindent \textsf{The cone $K_\lambda$.}\index{Klambda@$K_\lambda$}\
It is the cone of all measurable
 functions $u$ with a positive part $u_+$ in
$L_{\lambda_+}$ and a negative part $u_-$ in $L_{\lambda_-}:$
    $
  K_\lambda=\{u \hbox{ measurable};\exists a>0,
  \IZ\lambda(au)\,dR<\infty\}.
    $

\par\smallskip\noindent \textsf{The cone $\YYt$.}\index{YYt@$\YYt$}\
The $\sigma(K_{\lambda},L_\pm)$-closure $\overline{A}$ of a set
$A\subset K_\lambda$ is defined as follows: $u\in K_\lambda$ is in
$\overline{A}$ if ${u_\pm}$ is in the
$\sigma(L_{\lambda_\pm},L_{\lambda^*_\pm})$-closure of
$A_\pm=\{u_\pm;u\in A\}.$ Clearly,
$\overline{A}_\pm=\{u_\pm;u\in\overline{A}\}.$ The cone
$\YYt\subset\XX^*$ is the extension of $\YYo$ which is isomorphic
to the $\sigma(K_{\lambda},L_\pm)$-closure $\widetilde{T^*\YYo}$
of $T^*\YYo$ in $K_{\lambda}$ in the sense that
$T^*\YYt=\widetilde{T^*\YYo}.$

\par\smallskip\noindent\textsf{The extended dual problem ($\dtc$).}\
The extended dual problem associated with (\ref{pc}) is
\begin{equation*}
    \textsl{maximize } \inf_{x\in \CX}\langle\omega,x\rangle-I_\gamma(\langle\omega,\theta\rangle),
   \quad \omega\in\YYt\tag{$\dtc$}
   \index{Dual problems!D2@\eqref{dtc}}
\end{equation*}

Note that the dual bracket $\langle\omega,x\rangle$ is meaningful
for each $\omega\in\YYt$ and $x\in\XX.$

\subsection*{Some definitions needed for stating Theorem \ref{res-03}}

Again, one needs to introduce several cones to state the extended
dual problem $\dbc.$

Recall that there is a natural order on the algebraic dual
 space $E^*$ of a Riesz vector space $E$ which is defined by:
 $e^*\leq f^*\Leftrightarrow\langle e^*,e\rangle \leq \langle
 f^*,e\rangle $ for any $e\in E$ with $e\geq 0.$ A linear form
 $e^*\in E^*$ is said to be \emph{relatively bounded} if
 for any $f\in E,$ $f\geq 0,$ we have $\sup_{e:\vert e\vert
 \leq f} \vert \langle e^*,e\rangle \vert <+\infty .$ Although
 $E^*$ may not be a Riesz space in general, the vector space
 $E^b$ of all the relatively bounded linear forms on $E$ is always
 a Riesz space. In particular, the elements of $E^b$ admit a
 decomposition in positive and negative parts
 $e^*=e^*_+-e^*_-.$

\par\smallskip\noindent\textsf{The cone $K_\lambda''.$}\index{Klambda2@$K_\lambda''$}\
It is the cone of all relatively bounded linear forms
  $\zeta\in\Ll'^b$ on $\Ll'$ with a positive part $\zeta_+$ whose
  restriction to $L_{\lambda_+}'\subset\Ll'$ is in
  $L_{\lambda_+}''$ and with a negative part $\zeta_-$ whose
  restriction to $L_{\lambda_-}'\subset\Ll'$ is in
  $L_{\lambda_-}'':$
    $
  K_\lambda''=\{\zeta\in\Ll'^b;{\zeta_\pm}_{|L_{\lambda_\pm}'}\in
  L_{\lambda_\pm}''\}.
    $
Note that $L_{\lambda_\pm}'\subset\Ll'.$

\par\smallskip\noindent\textsf{A decomposition in $K_\lambda''.$}\
Let $\rho$ be any Young function.  Translating  decomposition
\eqref{eq-dec} onto $K_\lambda''$ leads to
$K_\lambda''=[K_\lambda\oplus K_{\lambda^*}^s]\oplus
K_\lambda^{s\prime}$ where one defines
 $
  K_{\lambda^*}^s=\{\zeta\in(\LlsR)^b;{\zeta_\pm}_{|L_{\lambda^*_\pm} R}\in
  L_{\lambda^*_\pm}^s\}
  $
  and
  $
  K_\lambda^{s\prime}=\{\zeta\in\Ll^{sb};{\zeta_\pm}_{|L_{\lambda_\pm}^s}\in
  L_{\lambda_\pm}^{s\prime}\}.
  $
Note that $L_{\lambda^*_\pm} R\subset\LlsR$ and
$L_{\lambda_\pm}^s\subset\Ll^s.$ With these cones in hand, the
decomposition \eqref{eq-dec} holds for any $\zeta\in K_\lambda''$
with
\begin{equation*}
    \left\{
\begin{array}{l}
  \zeta_1=\zeta_1^a+\zeta_1^s\in K_\lambda\oplus K_{\lambda^*}^s=K_{\lambda^*}',\\
   \zeta_2\in K_\lambda^{s\prime}. \\
\end{array}%
\right.
\end{equation*}

\par\smallskip\noindent\textsf{The set $\YYb.$}\index{YYb@$\YYb$}\
The $\sigma(K_{\lambda}'',L_{\pm}')$-closure $\overline{A}$ of a
set $A\subset K_\lambda''$ is defined as follows: $\zeta\in
K_\lambda''$ is in $\overline{A}$ if ${\zeta_\pm}$ is in the
$\sigma(L_{\lambda_\pm}'',L_{\lambda_\pm}')$-closure of
$A_\pm=\{\zeta_\pm;\zeta\in A\}.$ Clearly,
$\overline{A}_\pm=\{\zeta_\pm;\zeta\in\overline{A}\}.$ Let
$\overline{T^*\YYo}$ denote the
$\sigma(K_{\lambda}'',L_{\pm}')$-closure of $T^*\YYo$ in
$K_{\lambda}''.$
\\
Let $D$ denote the $\sigma(K_\lambda^{s\prime},L_\pm^s)$-closure
of $\dom I_{\lambda},$ that is $\zeta\in K_\lambda^{s\prime}$ is
in $D$ if and only if $\zeta_\pm$ is in the
$\sigma(L_{\lambda_\pm}^{s\prime},L_{\lambda_\pm}^s)$-closure of
$\{u_\pm;u\in \dom I_{\lambda}\}.$
\\
The set $\YYb\subset\XX^*$ is the extension of $\YYo$ which is
isomorphic to $\overline{T^*\YYo}\cap\{\zeta\in
K_{\lambda}'';\zeta_1^s=0, \zeta_2\in D\}$ in the sense that
$$T^*\YYb=\overline{T^*\YYo}\cap\{\zeta\in
K_{\lambda}'';\zeta_1^s=0,\zeta_2\in D\}.$$

\par\smallskip\noindent\textsf{The extended dual problem ($\dbc$).}\
 The extended dual problem associated with
(\ref{pbc}) is
\begin{equation*}
   \textsl{maximize } \inf_{x\in \CX}\langle\omega,x\rangle-
   I_\gamma\big([T^*\omega]_1^a\big),
   \quad \omega\in\YYb \tag{$\dbc$}
   \index{Dual problems!D3@\eqref{dbc}}
\end{equation*}

The exact statement of Theorem \ref{res-03}-(c) is

\begin{theorem}\label{res-03bis}
Under the assumptions of Theorem \ref{res-03}, we have
\begin{enumerate}[]
 \item[(c)]
 Let us denote $\xh:= T\lh.$ There exists  $\ob\in\YYb\index{o2@$\ob,$ see \eqref{eq-40}, \eqref{eq-40bis}}$ such that
 \begin{equation}\label{eq-40bis}
    \left\{\begin{array}{cl}
      (a) & \xh\in C\cap\dom\Gamma^* \\
      (b) & \langle \ob,\xh\rangle_{\XX^*,\XX} \leq \langle \ob,x\rangle_{\XX^*,\XX}, \forall x\in C\cap\dom\Gamma^* \\
      (c) & \lh\in\gamma'_z([T^*\ob]_1^a)\,R+D^\bot([T^*\ob]_2) \\
    \end{array}\right.
\end{equation}
where
$$
D^\bot(\eta)=\{k\in \Ll^s; \forall h\in \Ll, \eta+h\in D
\Rightarrow \langle h,k\rangle\le0\}
$$
is the outer normal cone of $D$ at $\eta.$
\\
$T^*\ob$ is in the $\sigma(K_\lambda'',L_\pm')$-closure of
    $T^*(\{y\in\YYo; \IZ\lambda(\langle y,\theta\rangle)\,dR<\infty\})$
and there exists some $\ot\in\XXo^*$ such that
$$
[T^*\ob]_1^a=\langle\ot,\theta(\cdot)\rangle_{\XXo^*,\XXo}
$$
is a measurable function in the strong closure of $T^*(\{y\in\YYo;
\IZ\lambda(\langle y,\theta\rangle)\,dR<\infty\})$ in $K_\lambda:$
the set of all $u\in K_\lambda$ such that $u_\pm$ is in the strong
closure of $T^*(\{y\in\YYo; \IZ\lambda(\langle
y,\theta\rangle)\,dR<\infty\})_\pm$ in $L_{\lambda_\pm}.$
\\
Furthermore, $\lh\in\Ll'$ and $\ob\in\YYb$ satisfy
\eqref{eq-40bis} if and only if $\lh$ solves \eqref{pbc} and $\ob$
solves \eqref{dbc}.
\end{enumerate}
\end{theorem}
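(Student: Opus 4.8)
The plan is to derive Theorem \ref{res-03bis} from the abstract convex-duality machinery of Theorem \ref{res-01} together with the structural content of Theorem \ref{res-03}; this is exactly the route taken in \cite[Theorem 4.2]{Leo08}, and I sketch its skeleton. As in the proof of Theorem \ref{res-07}-(b) one may assume $m\equiv 0$ and $\gamma=\lambda$ without loss of generality, so that by \eqref{eq-08} the problem \eqref{pbc} is the problem \eqref{eq-56} with objective $\Phi_L^*=\Ib$ on $\Ll'=\LlsR\oplus\Ll^s$, and \eqref{dbc} is the extension of its dual over the enlarged cone $\YYb$. By Theorem \ref{res-03}-(a,b), under the stated hypotheses one has $\inf(\pbc)=\sup(\dbc)=\inf_{x\in C}\Gamma^*(x)<\infty$ and \eqref{pbc} is attained; moreover the extra assumption $C\cap\icordom\Gamma^*\ne\emptyset$ makes the primal value function subdifferentiable at the origin, which is precisely attainment of \eqref{dbc}. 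Fix a solution $\lh$ of \eqref{pbc}, a solution $\ob\in\YYb$ of \eqref{dbc}, and set $\xh:=T\lh$.

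The conditions (a) and (b) are read off from the absence of a duality gap. Feasibility $\xh\in C$ holds by construction, while $\Gamma^*(\xh)=\Jb(\xh)\le\Ib(\lh)=\inf(\pbc)<\infty$ gives $\xh\in\dom\Gamma^*$, hence (a). Since $\ob$ attains $\sup(\dbc)$ and there is no gap, $\ob$ must be a supporting functional of $C$ at $\xh$, i.e. $\inf_{x\in C}\langle\ob,x\rangle=\langle\ob,\xh\rangle$; restricted to $C\cap\dom\Gamma^*$ this is (b).

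The representation (c) is the substantive part. Joint optimality of $\lh$ and $\ob$, combined with $\Ib=\Phi_L^*$, forces equality in the Fenchel--Young inequality between $\lh$ and $T^*\ob$, i.e. $\lh$ must belong to the subdifferential of $\Il:u\mapsto\IZ\lambda(u)\,dR$ at $T^*\ob$, interpreted through the bidual decomposition $\Ll''=[\Ll\oplus L_{\lambda^*}^s]\oplus\Ll^{s\prime}$ of \eqref{eq-dec}. One then splits this subdifferential: by Rockafellar's theorem on integral functionals \cite{Roc68}, the ordinary component $[T^*\ob]_1^a\in\Ll$ produces the absolutely continuous measure $\gamma'_z([T^*\ob]_1^a)\,R$, while the positively homogeneous part $I^s=\iota^*_{\dom I_\gamma}$ of $\Ib$ (see \eqref{III}, \eqref{eq-51}), whose subdifferentials are the normal cones of $\dom I_\gamma$ and its closures, contributes exactly the outer normal cone $D^\bot([T^*\ob]_2)$ of the closed convex set $D$ at the singular component $[T^*\ob]_2$; this gives $\lh\in\gamma'_z([T^*\ob]_1^a)\,R+D^\bot([T^*\ob]_2)$. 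The claim that $T^*\ob$ lies in the $\sigma(K_\lambda'',L_\pm')$-closure of $T^*(\{y\in\YYo;\IZ\lambda(\langle y,\theta\rangle)\,dR<\infty\})$, and that $[T^*\ob]_1^a=\langle\ot,\theta(\cdot)\rangle$ for some $\ot\in\XXo^*$ in the corresponding strong closure in $K_\lambda$, is just the definition of the cone $\YYb$ from Appendix \ref{annex}, together with Proposition \ref{res-B4} identifying singular forms as those annihilating $\El$; it requires only unwinding that definition. Finally, the ``furthermore'' equivalence is the general fact that, in a convex program with zero duality gap and attainment on both sides, a primal-feasible $\lh$ and a dual-feasible $\ob$ are jointly optimal if and only if they satisfy the complementary slackness relations encoded by (a)--(c) of \eqref{eq-40bis}.

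The step I expect to be the main obstacle is the subdifferential calculus in the Orlicz space $\Ll$, which does not satisfy the $\Delta_2$-condition: one must carefully separate the absolutely continuous and genuinely singular contributions to the subdifferential, pass to the bidual without losing mass, and control the singular part through the closure operations defining $D$ and $\YYb$. This is precisely the point at which $T^*\ob$ has to be approximated by functionals $T^*y_n$ with $y_n\in\YYo$ of finite $\Il$-energy, and where the delicate functional analysis of \cite{Leo08} is concentrated.
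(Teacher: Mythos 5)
Your overall route is the paper's own: the article gives no self-contained proof of Theorem \ref{res-03bis} --- it is the detailed restatement of Theorem \ref{res-03}-(c), whose proof is simply the citation of \cite[Theorem 4.2]{Leo08} --- and your sketch of the underlying duality mechanism (reduction to $m\equiv0$, dual attainment in the enlarged cone $\YYb$ obtained from subdifferentiability of the value function at the origin, Fenchel--Young equality split along the decomposition \eqref{eq-dec} into the absolutely continuous representation $\gamma'_z([T^*\ob]_1^a)\,R$ plus the outer normal cone $D^\bot([T^*\ob]_2)$, and the complementary-slackness equivalence for the ``furthermore'' part) is consistent with that argument.

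One step, however, is misattributed and, as you state it, would fail. You claim that the assertions ``$T^*\ob$ lies in the $\sigma(K_\lambda'',L_\pm')$-closure of $T^*(\{y\in\YYo;\IZ\lambda(\yt)\,dR<\infty\})$'' and ``$[T^*\ob]_1^a=\langle\ot,\theta(\cdot)\rangle$ lies in the strong closure of that set in $K_\lambda$'' are ``just the definition of the cone $\YYb$'' combined with Proposition \ref{res-B4}. Unwinding the definition of $\YYb$ only gives $T^*\ob\in\overline{T^*\YYo}\cap\{\zeta\in K_\lambda'';\zeta_1^s=0,\ \zeta_2\in D\}$, i.e.\ weak approximation by arbitrary elements $T^*y$ with $y\in\YYo$. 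The theorem asserts more: approximation by the finite-energy elements $\{y\in\YYo; I_\lambda(\yt)<\infty\}$ --- a set which, under the critical assumption \eqref{A-exists}, may be a proper subset of $\YYo$ --- and, for the absolutely continuous component, approximation in the \emph{norm} topology of $L_{\lambda_\pm}$ rather than weakly, together with the representation by some $\ot\in\XXo^*$. These refinements come from the way the dual optimizer is actually produced in \cite[Theorem 4.2]{Leo08} (as a limit of maximizing sequences of the dual problem, whose members have finite $I_\lambda$-value once the common optimal value is finite, followed by a convexity/Mazur-type passage from weak to strong closure for the absolutely continuous parts); Proposition \ref{res-B4} plays no role in that argument. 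So your skeleton is the right one, but this portion of the conclusion is an additional result established in \cite{Leo08}, not a definitional unwinding, and your sketch leaves it unproved.
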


The dual bracket $\langle [T^*\ob]_2,\lh^s \rangle$ in Theorem
\ref{res-03}-(d.3) is intended to be
 $\langle [T^*\ob]_2,\lh^s \rangle_{{K_\lambda^s}',K_\lambda^s}.$

\subsection*{A complement to Theorem \ref{res-07}}
As an easy corollary of Theorem \ref{res-03bis} and the proof of
Theorem \ref{res-07} we obtain the following

\begin{proposition}[Complement to Theorem \ref{res-07}]\label{res-07bis}
For each  $\od\index{od@$\od,$ see \eqref{eq-18}}$ satisfying
\eqref{eq-18} at Theorem \ref{res-07}, $\langle\od,
\theta(\cdot)\rangle$ is in the strong closure of $T^*\dom\Gamma$
in $K_\lambda:$ the set of all $u\in K_\lambda$ such that $u_\pm$
is in the $\|\cdot\|_{\lambda_\pm}$-closure of
$\{\yt_\pm;y\in\dom\Gamma\}$ in $L_{\lambda_\pm},$ and there
exists $\ob\in\YYb$ solution of \emph{(\ref{dbc})} such that
$[T^*\ob]_1^a=\langle\od, \theta(\cdot)\rangle.$
\end{proposition}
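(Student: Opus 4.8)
The statement to prove is Proposition~\ref{res-07bis}, which refines Theorem~\ref{res-07}-(b) by locating the dual optimizer $\od$ inside a strong closure of $T^*\dom\Gamma$ and by identifying it as the absolutely continuous part $[T^*\ob]_1^a$ of some solution $\ob$ of the extended dual problem~\eqref{dbc}.

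The plan is to reduce everything to objects already produced in the proof of Theorem~\ref{res-07} and in the exact statement Theorem~\ref{res-03bis}. First I would recall that, under the hypothesis $C\cap\icordom\Gamma^*\neq\emptyset$, Theorem~\ref{res-07}-(b) gives a solution $\od\in\YYt$ of \eqref{dtc} with $\Qd(dz)=\gamma'_z(\langle\od,\theta(z)\rangle)\,R(dz)$, and that in the proof of Theorem~\ref{res-07}-(b) this $\od$ was obtained as $\od=\ot$, where $\ot$ comes from Theorem~\ref{res-03}-(c) (equivalently Theorem~\ref{res-03bis}) applied to a solution $\lh=\lb$ of \eqref{pbc}, via the identity $[T^*\ob]_1^a=\langle\ot,\theta(\cdot)\rangle$. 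So the very $\ob\in\YYb$ furnished by Theorem~\ref{res-03bis} already satisfies $[T^*\ob]_1^a=\langle\od,\theta(\cdot)\rangle$; what remains is (i) to check $\ob$ solves \eqref{dbc}, and (ii) to identify the closure in which $\langle\od,\theta(\cdot)\rangle$ lies.

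For (i): Theorem~\ref{res-03bis} states precisely that $\lh$ solves \eqref{pbc} and $\ob$ solves \eqref{dbc} if and only if \eqref{eq-40bis} holds; since $\lb$ solves \eqref{pbc} and $\ob$ was chosen so that \eqref{eq-40bis} holds, $\ob$ solves \eqref{dbc}. For (ii): Theorem~\ref{res-03bis} already asserts that $[T^*\ob]_1^a=\langle\ot,\theta(\cdot)\rangle$ is a measurable function lying in the strong closure, in $K_\lambda$, of $T^*(\{y\in\YYo;\ \IZ\lambda(\langle y,\theta\rangle)\,dR<\infty\})$, i.e.\ of $T^*\dom\Gamma$ in the sense that $u_\pm$ lies in the $\|\cdot\|_{\lambda_\pm}$-closure of $\{\yt_\pm; y\in\dom\Gamma\}$ in $L_{\lambda_\pm}$. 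Here one uses that $\dom\Gamma=\{y\in\YYo;\ I_\gamma(\yt)<\infty\}=\{y\in\YYo;\ \IZ\lambda(\langle y,\theta\rangle)\,dR<\infty\}$ under the normalization $m\equiv0$, $\gamma=\lambda$ permitted in the proof of Theorem~\ref{res-03}; without the normalization the same set appears up to the affine shift by $mR$, which does not affect the closure statement. Combining, $\langle\od,\theta(\cdot)\rangle=[T^*\ob]_1^a$ is in the desired strong closure of $T^*\dom\Gamma$, and $\ob$ solves \eqref{dbc}, which is exactly the claim.

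The only genuinely non-routine point is the \emph{uniqueness of $\od$ direction}: Theorem~\ref{res-07}-(b) says \eqref{eq-18} holds for \emph{any} solution $\od$ of \eqref{dtc}, whereas the construction above produces one particular such $\od$. So the main obstacle is to argue that \emph{every} solution $\od$ of \eqref{dtc} arises this way, i.e.\ that $\langle\od,\theta(\cdot)\rangle$ agrees $R$-a.e.\ with $[T^*\ob]_1^a$ for an appropriate solution $\ob$ of \eqref{dbc}. This follows because $\Qd=\gamma'_z(\langle\od,\theta\rangle)R$ is forced, $\gamma'_z$ is strictly increasing on the relevant range by strict convexity of $\gamma^*_z$ (equivalently essential strict convexity of $\gamma_z$, guaranteed by \eqref{A4} and (A$_{\gamma^*}$)), so $\langle\od,\theta(z)\rangle$ is determined $R$-a.e.\ on $\{z:\ \gamma'_z$ invertible$\}$ by $\Qd$; and on the complementary null-derivative set the value is pinned by the optimality relation $\inf_{x\in C}\langle\od,x\rangle=\langle\od,\xb^a\rangle$ together with the representation of $\dom\Gamma$-closure membership. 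Hence the direction of $\langle\od,\theta(\cdot)\rangle$ is independent of the chosen optimizer, and the particular $\ob$ from Theorem~\ref{res-03bis} works for all of them. I would state this last paragraph as a short lemma-free remark invoking strict convexity and the already-established uniqueness of $\Qd$ (Theorem~\ref{res-07}-(a)).
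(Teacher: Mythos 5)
Your first three paragraphs are in substance the paper's own (one-line) derivation: the proposition is obtained by combining Theorem \ref{res-03bis} with the proof of Theorem \ref{res-07}, where $\od=\ot$, $[T^*\ob]_1^a=\langle\ot,\theta(\cdot)\rangle$, the ``if and only if'' clause of Theorem \ref{res-03bis} shows that $\ob$ solves \eqref{dbc}, and the closure assertion of Theorem \ref{res-03bis} places $\langle\ot,\theta(\cdot)\rangle$ in the strong closure of $T^*\{y\in\YYo;\ \IZ\lambda(\yt)\,dR<\infty\}$, a set which indeed coincides with $T^*\dom\Gamma$ because $m\in\Lls$ and $\yt\in\Ll$ make $\IZ m\,\yt\,dR$ finite. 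That part is fine.

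The gap is in your last paragraph, i.e.\ exactly at the point you flag as non-routine: passing from the particular $\ot$ constructed in the proof of Theorem \ref{res-07} to an arbitrary $\od$ satisfying \eqref{eq-18}. Strict convexity of $\gamma^*_z$ does \emph{not} make $\gamma'_z$ strictly increasing; by conjugate duality it gives smoothness of $\gamma_z$, whereas injectivity of $\gamma'_z$ would require differentiability of $\gamma^*_z$, which is not assumed. For instance, for the $L_p$-entropy one has $\gamma_z(s)=(s_+)^q/q$, so $\gamma'_z\equiv0$ on $(-\infty,0]$ and \eqref{eq-18} does not determine $\langle\od,\theta(\cdot)\rangle$ $R$-a.e.\ from $\Qd$: two solutions of \eqref{dtc} may correspond to genuinely different functions $\langle\od,\theta\rangle$. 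Your fallback claim that on such flat regions the value is ``pinned by the optimality relation'' is an assertion, not an argument. As written, your proof therefore only covers the single $\od=\ot$ furnished by Theorem \ref{res-03bis}, not ``each $\od$'' as the statement requires; for a general $\od$ one must produce an $\ob$ adapted to that $\od$ (for example by running the chain of dual equalities in the proof of Theorem \ref{res-07}-(b) in reverse to see that any solution of \eqref{dtc} also solves $(\widetilde{\mathrm{D}}_{C-\xb^s})$, and then invoking the equivalence clauses of Theorem \ref{res-02}-(c) and Theorem \ref{res-03bis} to recover a corresponding pair $(\lh,\ob)$), rather than arguing that all solutions $\od$ yield the same function $\langle\od,\theta\rangle$.
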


Recall that $\Gamma$ is defined at \eqref{eq-45}.

\section{Proof of Theorem \ref{res-09}}\label{sec:proofcondLLN}

This proof is an easy variation on \cite[Section 7.3]{DZ}. It is
given for the reader's convenience. Theorem \ref{res-09} is a
restatement of Propositions \ref{uppercond}, \ref{lowercond},
\ref{condLLN-X} and \ref{condLLN-L} below.
\\
We begin with the proof for $X_n.$ As $T$ is continuous and $I$ is
a good rate function, $J$ is also a good rate function. Let us
first state the upper bound of a conditional LDP.

\begin{proposition}
\label{uppercond} Under the assumptions of Theorem \ref{res-09},
for all closed subset $F$ of $\XXo,$ we have
\begin{displaymath}
  \limsupdn \frac 1n \log\mathbb{P}(X_n\in F\mid X_n\in C_\delta)\leq -J_{C}(F)
\end{displaymath}
where
\begin{displaymath}
  J_{C}(x):= \left\{
    \begin{array}{ll}
J(x)-J(C) &\mathrm{if\ }x\in C\\
+\infty &\mathrm{if\ }x\not\in C
    \end{array}
\right.
\end{displaymath}
\end{proposition}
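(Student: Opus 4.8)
The plan is to derive this conditional upper bound from the ordinary large deviations upper bound for $X_n$ together with the lower bound on the denominator coming from the family $(C_\delta)$. First I would write, for a closed set $F\subset\XXo$ and any $\delta>0$,
\[
\mathbb{P}(X_n\in F\mid X_n\in C_\delta)
=\frac{\mathbb{P}(X_n\in F\cap C_\delta)}{\mathbb{P}(X_n\in C_\delta)}
\le\frac{\mathbb{P}(X_n\in F\cap C_\delta)}{\mathbb{P}(X_n\in C_\delta)}.
\]
Since $F\cap C_\delta$ is closed (both sets are closed), the LDP upper bound for $X_n$ gives
\[
\Limsup n\frac1n\log\mathbb{P}(X_n\in F\cap C_\delta)\le -J(F\cap C_\delta)=-\inf_{x\in F\cap C_\delta}J(x).
\]
For the denominator I would use assumption (B$_C$): under alternative (a) one has $C_\delta=C$ and $J(\mathrm{int}\,C)=J(C)$, so the LDP lower bound applied to the open set $\mathrm{int}\,C$ yields $\Liminf n\frac1n\log\mathbb{P}(X_n\in C_\delta)\ge -J(\mathrm{int}\,C)=-J(C)$; under alternative (b) one has $C\subset\mathrm{int}\,C_\delta$, so again the lower bound gives $\Liminf n\frac1n\log\mathbb{P}(X_n\in C_\delta)\ge -J(\mathrm{int}\,C_\delta)\ge -J(C)$, using $J(C)<\infty$ from (B$_C$)(1) to ensure this is not $-\infty$. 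Combining numerator and denominator for each fixed $\delta$,
\[
\Limsup n\frac1n\log\mathbb{P}(X_n\in F\mid X_n\in C_\delta)\le -\inf_{x\in F\cap C_\delta}J(x)+J(C).
\]

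Next I would take $\limsup_{\delta\to0}$ on the right-hand side. The family $F\cap C_\delta$ decreases to $F\cap C$ as $\delta\downarrow0$ (using $C=\cap_\delta C_\delta$ and $C_\delta$ decreasing), and because $J$ is a good rate function it is inf-compact, which forces $\inf_{x\in F\cap C_\delta}J(x)\uparrow\inf_{x\in F\cap C}J(x)$ as $\delta\downarrow0$; this is the standard fact that the infimum of an inf-compact function over a decreasing family of closed sets converges to the infimum over the intersection (one extracts a convergent subnet of near-minimizers from a sublevel set). Hence
\[
\limsupdn\frac1n\log\mathbb{P}(X_n\in F\mid X_n\in C_\delta)\le -\inf_{x\in F\cap C}J(x)+J(C)=-J_C(F),
\]
where the last equality is just the definition of $J_C$: if $F\cap C=\emptyset$ then $\inf_{F\cap C}J=+\infty=J_C(F)$, and otherwise $\inf_{F\cap C}J-J(C)=\inf_{x\in F}J_C(x)=J_C(F)$.

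The step I expect to require the most care is the passage to the limit in $\delta$, i.e. justifying $\inf_{F\cap C_\delta}J\to\inf_{F\cap C}J$; this is where goodness (inf-compactness) of $J$ is genuinely used, and one must be slightly careful when $\inf_{F\cap C}J=+\infty$, handling that degenerate case separately (if $F\cap C_\delta=\emptyset$ for some $\delta$ the bound is trivial; otherwise one argues that any sequence of points $x_\delta\in F\cap C_\delta$ with bounded $J$-values would have a cluster point in $F\cap C$ by inf-compactness and closedness, contradicting $\inf_{F\cap C}J=+\infty$). The handling of the denominator via the dichotomy in (B$_C$)(3) is routine once the two cases are separated, and the finiteness $J(C)<\infty$ guarantees the quotient estimate is not an indeterminate $\infty/\infty$ or $0/0$ form after taking logarithms.
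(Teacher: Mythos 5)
Your proof is correct and follows essentially the same route as the paper: the ratio decomposition, the LDP upper bound on the closed set $F\cap C_\delta$, the lower bound on the denominator via $\inter C_\delta$ and the dichotomy in (B$_C^3$), and then the limit $J(F\cap C_\delta)\to J(F\cap C)$, which is exactly the paper's Lemma \ref{lem-J} proved by the same inf-compactness and lower semicontinuity argument you sketch. No gaps.
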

\proof Clearly, for all measurable set $B$ and all $n\geq 1,$
$\delta>0,$ we have $\frac 1n \log\mathbb{P}(X_n\in B\mid X_n\in
C_\delta)=\frac 1n \log\mathbb{P}(X_n\in B\cap C_\delta)-\frac
1n\log\mathbb{P}(X_n\in C_\delta).$ Hence,
\begin{eqnarray*}
&&  \limsup_n \frac 1n\log\mathbb{P}(X_n\in F\mid  X_n\in C_\delta)\\
&\leq& \limsup_n\frac 1n \log\mathbb{P}(X_n\in F\cap \cl C_\delta)
-\liminf_n\frac 1n\log\mathbb{P}(X_n\in \inter C_\delta)\\
&\leq& -J(F\cap C_\delta)+J(\inter C_\delta)\\
&\leq& -J(F\cap C_\delta)+J(C)
\end{eqnarray*}
where the last inequality follows from the assumption (B$_C^3$).
We complete the proof with the following lemma.
\endproof

\begin{lemma}
\label{lem-J} For any closed set $F,$ $\lim_\delta J(F\cap
C_\delta)=J(F\cap C)\in[0,\infty].$
\end{lemma}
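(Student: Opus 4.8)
The statement to prove is Lemma \ref{lem-J}: for any closed set $F$, $\lim_{\delta\to 0} J(F\cap C_\delta) = J(F\cap C)$, where $C = \cap_\delta C_\delta$ and the $C_\delta$ decrease as $\delta\to 0$. Since $F\cap C \subset F\cap C_\delta$ for every $\delta$, monotonicity of the infimum immediately gives $J(F\cap C_\delta)\le J(F\cap C)$ for all $\delta$, hence $\limsup_{\delta\to 0} J(F\cap C_\delta)\le J(F\cap C)$. The whole content of the lemma is therefore the reverse inequality $\liminf_{\delta\to 0} J(F\cap C_\delta)\ge J(F\cap C)$. The first thing I would observe is that $\delta\mapsto J(F\cap C_\delta)$ is nondecreasing as $\delta$ decreases (because the sets $F\cap C_\delta$ shrink), so the limit exists in $[0,\infty]$ and equals $\sup_\delta J(F\cap C_\delta)$; call this value $v$. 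We know $v\le J(F\cap C)$ and must show $v\ge J(F\cap C)$. If $v=+\infty$ there is nothing to prove, so assume $v<\infty$.

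\textbf{Main argument via goodness of $J$.} The key tool is that $J$ is a good rate function (inf-compact), which holds because $I$ is good by (B$_L$) and $T$ is continuous by (B$_T$) — the contraction principle preserves goodness. Fix any $a>v$; I would like to produce a point of $F\cap C$ with $J$-value $\le v$. For each $\delta$, pick $x_\delta\in F\cap C_\delta$ with $J(x_\delta)\le J(F\cap C_\delta) + \delta \le v + \delta$ (or just $\le v$ if the infimum is attained; in general take a near-minimizer). All these points lie in the sublevel set $\{J\le v+1\}$ for $\delta$ small, which is compact. Hence, taking a sequence $\delta_n\downarrow 0$, the points $x_{\delta_n}$ have a convergent subnet (or subsequence, since compact level sets of good rate functions in the topologies at hand are sequentially relevant — if $\XXo$ is not metrizable I would argue with nets) with some limit $x_*$. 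Since $J$ is lower semicontinuous, $J(x_*)\le \liminf J(x_{\delta_n}) \le v$. It remains to check $x_*\in F\cap C$: each $x_{\delta_n}\in F$ which is closed, so $x_*\in F$; and for any fixed $\delta$, eventually $\delta_n\le\delta$, so $x_{\delta_n}\in C_{\delta_n}\subset C_\delta$, and $C_\delta$ is closed, whence $x_*\in C_\delta$; as this holds for all $\delta$, $x_*\in\cap_\delta C_\delta = C$. Therefore $J(F\cap C)\le J(x_*)\le v$, which is the desired reverse inequality.

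\textbf{The anticipated obstacle.} The delicate point is the compactness/convergence step when $\XXo$ is merely a topological vector space that need not be metrizable: compact sets need not be sequentially compact, so I cannot blindly extract a convergent subsequence from $(x_{\delta_n})$. The clean fix is to work with nets: index by $\delta>0$ directed by $\delta'\le\delta$, choose $x_\delta$ as above, note the net eventually lies in a fixed compact sublevel set of $J$, extract a convergent subnet $x_{\delta_\alpha}\to x_*$, and run the same three checks ($x_*\in F$, $x_*\in C_\delta$ for each $\delta$, hence $x_*\in C$; and $J(x_*)\le \liminf_\alpha J(x_{\delta_\alpha})\le v$ by lower semicontinuity along the subnet). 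One must also be slightly careful that ``near-minimizer'' is legitimate, i.e. that $J(F\cap C_\delta)<\infty$ for small $\delta$ — but this follows since $v<\infty$ and $J(F\cap C_\delta)\le v$. Everything else is a routine unwinding of definitions, and the lemma then plugs directly into the proof of Proposition \ref{uppercond} as indicated.
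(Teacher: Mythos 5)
Your proof is correct and follows essentially the same route as the paper: exploit monotonicity for the easy inequality, then use inf-compactness of the good rate function $J$ to extract (near-)minimizers $x_\delta\in F\cap C_\delta$ with a cluster point $x_*\in F\cap C$, and conclude by lower semicontinuity. The only differences are cosmetic — the paper takes exact minimizers and a convergent subsequence where you take near-minimizers and a subnet, the latter being a slightly more careful treatment of possible non-metrizability of $\XXo$.
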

\proof As $C\subset C_\delta,$ for all $\delta>0,$ we have
$J(F\cap C_\delta)\leq J(F\cap C).$ Since $C_\delta$ is
decreasing, $J(F\cap  C_\delta)$ is nondecreasing and $\lim_\delta
J(F\cap  C_\delta)=\sup_\delta J(F\cap C_\delta)\in[0,\infty].$ If
$\sup_\delta J(F\cap C_\delta)=\infty,$ the inequality $J(F\cap
C_\delta)\leq J(F\cap C)$ leads to the desired result.
\\
Now, let us suppose that $\sup_\delta J(F\cap C_\delta)<\infty.$
As $F\cap C_\delta$ is closed and $J$ is inf-compact, for any
$\delta$ there exists $x_\delta\in F\cap C_\delta$ such that
$J(x_\delta)=J(F\cap C_\delta)$ and we can extract a converging
subsequence $x_k \rightarrow x_*.$ Because the $C_\delta$'s are
decreasing, we get $\cap_\delta C_\delta =\cap_k C_{\delta_k}$ and
$\lim_\delta J(F\cap C_\delta)=\lim_k J(x_k).$ More, $x_*\in F\cap
(\cap_k C_{\delta_k})=F\cap C$ and as $J$ is lsc: $\lim_k
J(x_k)\geq J(x_*)\geq J(F\cap C).$ Therefore, $\lim_\delta J(F\cap
C_\delta)\geq J(F\cap C)$ which completes the proof.
\endproof
Let us state the lower bound corresponding to Proposition
\ref{uppercond}.

\begin{proposition}
\label{lowercond} If the assumption \emph{(B$_C^3$)} on the
conditioning event is restricted to (3-b):
\begin{equation}
  \label{eq:cond2}
  C\subset \inter C_\delta, \forall\delta>0
\end{equation}
then, for all open subset $G$ of $\XXo,$ we have
\begin{displaymath}
 \liminfdn \frac 1n\log\mathbb{P}(X_n\in G\mid X_n\in C_\delta)\geq -J_{C}(G).
\end{displaymath}
\end{proposition}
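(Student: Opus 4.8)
The plan is to prove the conditional LDP lower bound by following the standard argument of Dembo--Zeitouni \cite[Section 7.3]{DZ}, adapted to the present notation. As in the proof of Proposition \ref{uppercond}, write for any open $G\subset\XXo$, any $n\ge1$ and any $\delta>0$,
\[
\frac1n\log\mathbb{P}(X_n\in G\mid X_n\in C_\delta)
=\frac1n\log\mathbb{P}(X_n\in G\cap C_\delta)-\frac1n\log\mathbb{P}(X_n\in C_\delta).
\]
The denominator is controlled from above: since $C_\delta$ is closed, the LDP upper bound for $X_n$ gives $\limsup_n\frac1n\log\mathbb{P}(X_n\in C_\delta)\le -J(C_\delta)\le -J(C)$, because $C\subset C_\delta$. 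Here I use that under (B$_C^1$) we have $J(C)<\infty$. For the numerator I want the lower bound $\liminf_n\frac1n\log\mathbb{P}(X_n\in G\cap C_\delta)\ge -J(G\cap C_\delta)$, which is exactly the LDP lower bound for $X_n$ applied to the open set $G\cap C_\delta$ — and here is where the hypothesis \eqref{eq:cond2}, i.e. $C\subset\inter C_\delta$, is essential: it guarantees that $G\cap\inter C_\delta$ is a nonempty (when $G\cap C\ne\emptyset$) open set on which we may run the lower bound, so that $G\cap C_\delta$ has nonempty interior meeting the relevant part of $C$.

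Combining the two estimates, for each fixed $\delta>0$,
\[
\liminf_n\frac1n\log\mathbb{P}(X_n\in G\mid X_n\in C_\delta)\ge -J(G\cap C_\delta)+J(C).
\]
Then I take $\liminf_{\delta\to0}$ on the right-hand side. By monotonicity of $\delta\mapsto C_\delta$, the quantity $J(G\cap C_\delta)$ is nondecreasing as $\delta\downarrow0$, and one shows $\lim_{\delta\to0}J(G\cap C_\delta)=J(G\cap C)$ — this is the open-set analogue of Lemma \ref{lem-J}. Actually the clean way is: if $J(G\cap C)<\infty$ then for each $\varepsilon>0$ pick $x\in G\cap C$ with $J(x)\le J(G\cap C)+\varepsilon$; since $x\in C\subset C_\delta$ and $x\in G$, we have $x\in G\cap C_\delta$, hence $J(G\cap C_\delta)\le J(x)\le J(G\cap C)+\varepsilon$ for every $\delta$, so $\sup_\delta J(G\cap C_\delta)\le J(G\cap C)$; the reverse inequality is immediate from $G\cap C\subset G\cap C_\delta$. (If $J(G\cap C)=\infty$ the claimed bound $\ge -J_C(G)=-\infty$ is trivial, unless $G\cap C=\emptyset$, in which case $J_C(G)=\infty$ again.) This yields
\[
\liminfdn\frac1n\log\mathbb{P}(X_n\in G\mid X_n\in C_\delta)\ge -J(G\cap C)+J(C)=-J_C(G),
\]
using the definition of $J_C$: if $G\cap C=\emptyset$ then $J_C(G)=\infty$ and there is nothing to prove; otherwise $J_C(G)=\inf_{x\in G\cap C}J(x)-J(C)=J(G\cap C)-J(C)$.

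The only genuinely delicate point is making sure the LDP lower bound can legitimately be applied to $G\cap C_\delta$: this is an open set in $\XXo$ by (B$_T$)-type continuity and openness of $C_\delta$'s interior, but one must keep track of whether we use $G\cap C_\delta$ or $G\cap\inter C_\delta$; the hypothesis \eqref{eq:cond2} is exactly what reconciles these, since $\inter(G\cap C_\delta)\supset G\cap\inter C_\delta\supset G\cap C$, so that $J(G\cap C_\delta)$ — more precisely $J$ of the interior — is still bounded above by $J(G\cap C)$ and the lower-bound LDP delivers $\liminf_n\frac1n\log\mathbb{P}(X_n\in G\cap C_\delta)\ge -\inf_{G\cap\inter C_\delta}J\ge -J(G\cap C)$. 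Everything else is the bookkeeping of $\limsup/\liminf$ above, and the use of the goodness of $I$ (hence of $J$, via the contraction principle under (B$_T$)) which was already invoked for the upper bound. I do not expect any further obstacle; this is the mirror image of Proposition \ref{uppercond}.
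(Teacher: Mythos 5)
Your overall architecture (split the conditional probability into numerator and denominator, apply the LDP lower bound to the open set $G\cap\inter C_\delta$, and use \eqref{eq:cond2} to get $G\cap C\subset G\cap\inter C_\delta$, hence $-J(G\cap\inter C_\delta)\ge -J(G\cap C)$) is exactly the paper's, and the numerator is handled correctly. The gap is in the denominator. You write $\limsup_n\frac1n\log\mathbb{P}(X_n\in C_\delta)\le -J(C_\delta)\le -J(C)$ ``because $C\subset C_\delta$''. The inclusion $C\subset C_\delta$ gives $J(C_\delta)=\inf_{C_\delta}J\le\inf_{C}J=J(C)$, hence $-J(C_\delta)\ge -J(C)$: your second inequality is backwards, and the fixed-$\delta$ bound $\limsup_n\frac1n\log\mathbb{P}(X_n\in C_\delta)\le -J(C)$ is false in general (take $J(x)=x^2$, $C=\{1\}$, $C_\delta=[1-\delta,1+\delta]$; then $J(C_\delta)=(1-\delta)^2<1=J(C)$ and $\mathbb{P}(X_n\in C_\delta)$ is of order $e^{-n(1-\delta)^2}$, not $e^{-nJ(C)}$). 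Consequently your fixed-$\delta$ estimate $\liminf_n\frac1n\log\mathbb{P}(X_n\in G\mid X_n\in C_\delta)\ge -J(G\cap C_\delta)+J(C)$ is not justified as stated.

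The correct bookkeeping, which is what the paper does, is to keep $+J(C_\delta)$ at fixed $\delta$, i.e.\ $\liminf_n\frac1n\log\mathbb{P}(X_n\in G\mid X_n\in C_\delta)\ge -J(G\cap C)+J(C_\delta)$, and only then let $\delta\to0$, invoking Lemma \ref{lem-J} with $F=\XXo$ to get $\lim_\delta J(C_\delta)=J(C)$. That limit is the one genuinely nontrivial step: it uses the inf-compactness (goodness) of $J$, the closedness of the $C_\delta$'s, lower semicontinuity, and $C=\cap_\delta C_\delta$; the easy inclusion inequality only gives $\sup_\delta J(C_\delta)\le J(C)$, which goes the wrong way here. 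You did invoke a ``$\delta$-limit lemma'', but you applied it to $J(G\cap C_\delta)$ in the numerator, where the trivial inequality $J(G\cap C_\delta)\le J(G\cap C)$ already suffices, and skipped it precisely where it is needed. The proof is repaired by moving that one ingredient to the denominator term; with that change it coincides with the paper's argument.
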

\proof For all $\delta>0,$
\begin{eqnarray*}
&&  \liminf_n\frac 1n\log\mathbb{P}(X_n\in G\mid X_n\in C_\delta)\\
&\geq& \liminf_n\frac 1n\log\mathbb{P}(X_n\in G\cap\inter
C_\delta)
-\limsup_n \frac 1n\log\mathbb{P}(X_n\in C_\delta)\\
&\geq& -J(G\cap \inter C_\delta)+J( C_\delta)\\
&\geq& -J(G\cap C)+J( C_\delta).
\end{eqnarray*}
We conclude with Lemma \ref{lem-J}.
\endproof

Let us recall that $\mathcal{H}=\mathrm{argmin} J_{C}$ is the set
of the minimizers of $J$ on $C.$  As $C$ is closed and $J$ is
inf-compact, $\mathcal{H}$ is a compact set. As an immediate
corollary of  Proposition \ref{uppercond}, we have the following
conditional LLN which is the part of the statement of Theorem
\ref{res-09} concerning $X_n.$
\begin{proposition}
\label{condLLN-X} For all open subset $H$ of $\XXo$ such that
$\mathcal{H}\subset H,$ we have
\begin{displaymath}
\limsupdn \frac 1n \log\mathbb{P}(X_n\not\in H\mid X_n\in
C_\delta)<0.
\end{displaymath}
\end{proposition}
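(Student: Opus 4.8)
The plan is to apply Proposition \ref{uppercond} to the closed set $F:=\XXo\setminus H$. Since $H$ is open, $F$ is closed, and the events $\{X_n\notin H\}$ and $\{X_n\in F\}$ coincide, so Proposition \ref{uppercond} yields at once
\begin{equation*}
\limsupdn \frac 1n \log\mathbb{P}(X_n\not\in H\mid X_n\in C_\delta)\le -J_{C}(F)=-\Big(\inf_{x\in F\cap C}J(x)-J(C)\Big).
\end{equation*}
Since $J(C)<\infty$ by \emph{(B$_C^1$)}, it suffices to prove the strict inequality $\inf_{x\in F\cap C}J(x)>J(C)$; the right-hand side above is then a genuinely negative number (possibly $-\infty$).

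The key point is the inf-compactness of $J$: as $T$ is continuous and $I$ is a good rate function, $J$ is a good rate function, hence inf-compact. If $F\cap C=\emptyset$ the strict inequality is trivial, since then $\inf_{x\in F\cap C}J(x)=+\infty$. Otherwise $F\cap C$ is a nonempty closed set ($C$ is closed by \emph{(B$_C^2$)}, $F$ is closed), so by inf-compactness $J$ attains its infimum over $F\cap C$ at some $x_*$. I would then argue by contradiction: if $J(x_*)=J(C)$, then $x_*\in C$ realizes $\inf_{x\in C}J(x)$, so $x_*\in\mathcal{H}\subset H$, contradicting $x_*\in F=\XXo\setminus H$. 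Hence $J(x_*)>J(C)$, which is exactly what was needed, and the proposition follows with the explicit negative bound $-(J(x_*)-J(C))$.

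I do not anticipate any real obstacle: the statement is essentially an immediate corollary of Proposition \ref{uppercond} together with the inf-compactness of $J$. The only things requiring a little care are the degenerate case $F\cap C=\emptyset$ and the observation that the attainment argument uses both the closedness of $C$ (so that $F\cap C$ is closed) and the finiteness of $J(C)$ (so that the gap $J(x_*)-J(C)$ is strictly positive).
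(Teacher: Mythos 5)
Your argument is correct and is exactly the paper's intended route: the paper presents this proposition as an immediate corollary of Proposition \ref{uppercond}, using the inf-compactness of the good rate function $J$ (via the contraction principle) and the definition of $\mathcal{H}$ as the argmin of $J$ on the closed set $C$ to see that $J_{C}(\XXo\setminus H)>0$. Your handling of the degenerate case $F\cap C=\emptyset$ and of the attainment of the infimum on the closed set $F\cap C$ fills in precisely the details the paper leaves implicit.
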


Let us now have a look at $L_n.$ We are interested in the
asymptotic behavior of $\mathbb{P}(L_n\in\cdot\mid TL_n\in
C_\delta)$ with $C_\delta\subset \XXo.$ Let us denote $A_\delta:=
T^{-1}(C_\delta)=\{\ell\in\LLo ; T\ell\in C_\delta\}$ and $A=
T^{-1}C.$ It is useful to state the assumptions on the
$C_\delta$'s rather than on the $A_\delta$'s. In fact, we have the
following transfer result.

\begin{lemma}
\label{condlem} We assume that $T$ is continuous.
\begin{itemize}
\item[(a)] If $C$ is closed and $J(C)=J(\inter C),$ then $A$ is
closed and $I(A)=I(\inter A).$

\item[(b)] If $C:=\cap_{\delta} C_\delta\subset \inter C_\delta$
for all $\delta>0,$ then $A=\cap_{\delta}\cl A_\delta$ and
$A\subset\inter A_\delta$ for all $\delta>0.$
\end{itemize}
\end{lemma}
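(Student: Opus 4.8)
The plan is to prove each of the two implications in Lemma \ref{condlem} by transporting the relevant topological and variational properties through the continuous linear map $T$. For part (a), the starting observation is that $A=T^{-1}(C)$ is closed because $T$ is continuous and $C$ is closed; likewise $T^{-1}(\inter C)$ is open and contained in $A$, so $\inter A \supset T^{-1}(\inter C)$. The content of the statement $I(A)=I(\inter A)$ is the reverse inequality $I(\inter A)\le I(A)$ (the inequality $I(A)\le I(\inter A)$ being trivial since $\inter A\subset A$). To get this, first note the identity between the two infimum problems: by the contraction-principle definition $J(x)=\inf\{I(\ell);T\ell=x\}$, one has $I(A)=\inf_{\ell\in A}I(\ell)=\inf_{x\in C}J(x)=J(C)$ and similarly, using $T^{-1}(\inter C)\subset\inter A$, the bound $I(\inter A)\le \inf\{I(\ell);\ell\in T^{-1}(\inter C)\}=J(\inter C)$. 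Since by hypothesis $J(C)=J(\inter C)$, chaining these gives $I(\inter A)\le J(\inter C)=J(C)=I(A)$, as required. The only mild care needed is the verification that $\inf_{\ell:T\ell\in C}I(\ell)=\inf_{x\in C}\inf_{\ell:T\ell=x}I(\ell)$, which is just reindexing the infimum along the fibers of $T$.

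For part (b), the hypothesis is $C=\cap_\delta C_\delta$ with $C\subset\inter C_\delta$ for every $\delta>0$, and I must show $A=\cap_\delta \cl A_\delta$ together with $A\subset\inter A_\delta$ for all $\delta$. The inclusion $A\subset\inter A_\delta$ is immediate: $C\subset\inter C_\delta$ implies $A=T^{-1}C\subset T^{-1}(\inter C_\delta)$, and the latter is an open set contained in $A_\delta=T^{-1}C_\delta$, hence contained in $\inter A_\delta$. For the equality $A=\cap_\delta\cl A_\delta$, the inclusion $\subset$ follows from $A\subset A_\delta\subset\cl A_\delta$ for every $\delta$. The reverse inclusion $\cap_\delta\cl A_\delta\subset A$ is the substantive point. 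Here I would argue: $A_\delta=T^{-1}(C_\delta)$ and $T$ continuous give $\cl A_\delta\subset T^{-1}(\cl C_\delta)$; but each $C_\delta$ is closed (this is part of assumption (B$_C$)), so $\cl A_\delta\subset T^{-1}(C_\delta)=A_\delta$, i.e.\ $\cl A_\delta=A_\delta$. Therefore $\cap_\delta\cl A_\delta=\cap_\delta A_\delta=\cap_\delta T^{-1}(C_\delta)=T^{-1}(\cap_\delta C_\delta)=T^{-1}(C)=A$.

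I expect the main (though still minor) obstacle to be bookkeeping about which closedness hypotheses are in force: in part (b) one needs the $C_\delta$ to be closed, which is supplied by (B$_C^2$), and one should state this reliance explicitly rather than deduce it from the weaker data of the lemma statement. A secondary point worth a sentence is that $T$ continuous gives $\cl(T^{-1}B)\subset T^{-1}(\cl B)$ for any $B$ but \emph{not} equality in general; fortunately only the inclusion is needed, and combined with $C_\delta=\cl C_\delta$ it yields exactly $\cl A_\delta=A_\delta$. With these remarks in place both parts are short: (a) is a fiberwise reindexing of infima plus the trivial behaviour of preimages under continuous maps, and (b) is purely set-theoretic manipulation of preimages once closedness of the $C_\delta$ is invoked.
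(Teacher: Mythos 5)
Your proof is correct and follows essentially the same route as the paper: part (a) is the fiberwise reindexing $I(A)=J(C)$ combined with $T^{-1}(\inter C)\subset\inter A$, and part (b) is the same set-theoretic manipulation of preimages, where the closedness of the $C_\delta$ (assumption (B$_C^2$)) yields $\cl A_\delta\subset T^{-1}(C_\delta)=A_\delta$. Your explicit remark that only the inclusion $\cl(T^{-1}B)\subset T^{-1}(\cl B)$ is available, and that closedness of $C_\delta$ must be invoked, is a point the paper uses implicitly, so there is no substantive difference.
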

\proof Since $T$ is continuous, for any $A'=T^{-1}C',$ we have:
$T^{-1}(\inter C')\subset\inter A'\subset A'\subset \cl A'\subset
T^{-1}(\cl C').$
\\
Let us begin with (a).  As $C$ is closed, so is $A.$ For any
$A=T^{-1}C,$ we have $I(A)=\inf\{\Phi^*(\ell) ; T\ell\in C\}=
\inf_{x\in C} \inf\{\Phi^*(\ell); T\ell=x\}=\inf_{x\in C}
J(x)=J(C).$ Hence, $I(A)=J(C)=J(\inter C)$ (by hypothesis)
$=I(T^{-1}(\inter C)\geq I(\inter A),$ since $T^{-1}(\inter
C)\subset \inter A.$ But the converse inequality: $I(A)\leq
I(\inter A)$ is clear.
\\
Let us prove (b). We have: $\cap_\delta \cl A_\delta \subset
T^{-1}(\cap_\delta  C_\delta):= A:= T^{-1}(C)\subset
T^{-1}(\cap_\delta \inter C_\delta)$ (by hypothesis) $\subset
\cap_\delta \inter A_\delta \subset \cap_\delta \cl A_\delta.$
This proves that all these sets are equal, and in particular that
$A=\cap_\delta \cl A_\delta.$ On the other hand, as for any
$\delta>0,$ $C\subset \inter C_\delta,$ we have
$A=T^{-1}(C)\subset T^{-1}(\inter C_\delta)\subset \inter
A_\delta.$
\endproof

Let us recall that $\mathcal{G}$ is the set of the minimizers of
$I$ on $A.$ By the above Lemma \ref{condlem}, in the situation of
the $L_n$'s, Proposition \ref{condLLN-X} becomes the following

\begin{proposition}
\label{condLLN-L} Under our general assumptions, for all open
subset $G$ of $\LLo$ such that $\mathcal{G}\subset G,$ we have
\begin{displaymath}
\limsupdn \frac 1n \log\mathbb{P}(L_n\not\in G\mid TL_n\in
C_\delta)<0.
\end{displaymath}
\end{proposition}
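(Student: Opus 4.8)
The plan is to deduce Proposition~\ref{condLLN-L} from the statement already established for $X_n$ (Propositions~\ref{uppercond} and \ref{condLLN-X}) by pushing all the hypotheses back through the continuous operator $T$. The crucial elementary remark is that $\{TL_n\in C_\delta\}=\{L_n\in A_\delta\}$, where $A_\delta:=T^{-1}(C_\delta)$ and $A:=T^{-1}C$; in particular $\mathbb{P}(L_n\in A_\delta)=\mathbb{P}(X_n\in C_\delta)>0$ for all $n$ and all $\delta>0$, so the conditional probabilities are well defined, and the desired conclusion is exactly the upper conditional large deviation bound for $\{L_n\}$ relative to the conditioning sets $A_\delta$.

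First I would verify that the pair $(A,\{A_\delta\}_\delta)$ inherits from $(C,\{C_\delta\}_\delta)$ precisely the structural properties listed in (B$_C$), so that the argument of Proposition~\ref{uppercond} applies verbatim with $(\LLo,I)$ in place of $(\XXo,J)$. This is the content of Lemma~\ref{condlem}: continuity of $T$ gives $T^{-1}(\inter C')\subset\inter(T^{-1}C')\subset\cl(T^{-1}C')\subset T^{-1}(\cl C')$ for every set $C'$, and the contraction formula $J(x)=\inf\{I(\ell);T\ell=x\}$ yields the key identity $I(T^{-1}C')=\inf_{x\in C'}J(x)=J(C')$. Hence (B$_C^1$), i.e.\ $J(C)<\infty$, gives $I(A)<\infty$; and if alternative (3-a) holds for $C$ then $A_\delta=A$ is closed with $I(\inter A)=I(A)$, whereas if (3-b) holds then $A=\bigcap_\delta\cl A_\delta$ and $A\subset\inter A_\delta$ for every $\delta>0$. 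Since $\{L_n\}$ obeys the LDP in $\LLo$ with the good rate function $I$ by (B$_L$), all the assumptions needed for Proposition~\ref{uppercond} are now in force for $\{L_n\}$ and the sets $A_\delta$.

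Next, Proposition~\ref{uppercond} applied in this setting gives, for every closed $F\subset\LLo$,
$$\limsupdn\frac 1n\log\mathbb{P}(L_n\in F\mid TL_n\in C_\delta)\le -I_A(F),$$
with $I_A(\ell):=I(\ell)-I(A)$ for $\ell\in A$ and $I_A(\ell):=+\infty$ for $\ell\notin A$. Take $F:=\LLo\setminus G$, which is closed since $G$ is open; it then suffices to show $I_A(F)>0$. If $F\cap A=\emptyset$ this is immediate. Otherwise, since $I$ is inf-compact and $I(A)=J(C)<\infty$, the value $I(A)$ is attained over $A$ exactly on the nonempty set $\mathcal{G}$, and likewise $I$ attains its infimum over the closed set $F\cap A$ at some $\ell_\star$; as $\ell_\star\notin G\supset\mathcal{G}$, it is not a minimizer of $I$ on $A$, whence $I(\ell_\star)>I(A)$ and $I_A(F)=I(\ell_\star)-I(A)>0$. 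This yields the strict inequality claimed and completes the proof.

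I do not expect a genuine obstacle here: the only point requiring care is the clause-by-clause bookkeeping in the first step, namely checking that the dichotomy (3-a)/(3-b) together with the strict-interior requirement of (B$_C$) is stable under $T^{-1}$ — which is exactly why Lemma~\ref{condlem} was isolated beforehand — after which the reasoning is a line-for-line transcription of the proof for $X_n$ (Proposition~\ref{condLLN-X}), since conditioning on $\{TL_n\in C_\delta\}$ and on $\{L_n\in A_\delta\}$ are one and the same.
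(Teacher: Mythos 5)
Your proposal is correct and follows essentially the same route as the paper: identify the conditioning event $\{TL_n\in C_\delta\}$ with $\{L_n\in A_\delta\}$, use Lemma \ref{condlem} to transfer the (B$_C$)-type hypotheses from $(C,C_\delta,J)$ to $(A,A_\delta,I)$, and then re-run the argument of Propositions \ref{uppercond} and \ref{condLLN-X} with $(\LLo,I)$ in place of $(\XXo,J)$, the strict inequality coming from inf-compactness of $I$ exactly as you spell out. This is precisely how the paper obtains Proposition \ref{condLLN-L} as a corollary, with your last step merely making explicit what the paper leaves as "immediate".
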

Note that by Lemma \ref{condlem}, the $A_\delta$'s  share the same
properties as the $C_\delta$'s. In particular, $I(\inter
A_\delta)\leq I(A)$ also holds for them.

%\bibliographystyle{plain}
%\bibliography{bib-christian}

\begin{theindex}

  \item Assumptions (A), 7
    \subitem (A$_R$) on the measure $R$, 7
    \subitem (A$_C$) on the set $C$, 7
    \subitem (A$_{\gamma^*}$) on $\gamma^*$, 8
    \subitem (A$_\theta$) on $\theta$, 8
      \subsubitem \eqref{A-forall}, 9
      \subsubitem \eqref{A-exists}, 9
  \item Assumptions (B), 30
    \subitem (B$_L$) on $\{L_n\}$, 30
    \subitem (B$_T$) on $T$, 30
    \subitem (B$_C$) on the constraint set $C$, 30

  \indexspace

  \item $B_\ZZ,$ bounded measurable functions on $\ZZ$, 31

  \indexspace

  \item $\CC,$ constraint set, 2, 4, 16
  \item $C,$ constraint set, 4, 16
  \item $\CCb,$ extended constraint set, 16
  \item convex conjugate, $f^*,$ see \eqref{eq-61}, 6
  \item $C_\ZZ,$ bounded continuous functions on $\ZZ$, 32

  \indexspace

  \item dominating point, see Definition \ref{defdomp}, 27
  \item Dual problems
    \subitem \eqref{dc}, 10
    \subitem \eqref{dtc}, 10, 33
    \subitem \eqref{dbc}, 12, 34
    \subitem \eqref{eq-58}, 19
    \subitem \eqref{eq-59}, 20

  \indexspace

  \item $\Eexp$, 31
  \item entropic projection, see Definition \ref{defCsiszar}, 23
  \item $E_\tau$, 14

  \indexspace

  \item $\Phi_E, \Phi_L$, 18
  \item $\Phi_E^*, \Phi_L^*$, 18
  \item Functions of $(s,z)$ or $(t,z)$
    \subitem $\gamma,$ see \eqref{eq-49}, 8
    \subitem $\gamma^*,$ integrand, see \eqref{eq-25}, 2, 3, 7
    \subitem $\lambda,$ see \eqref{eq-49}, 8
    \subitem $\lmax,$ see \eqref{eq-50}, 8
    \subitem $m,$ see (A$_{\gamma^*}$), 8
    \subitem $\rho,$ Young function, 6
    \subitem $\tau$, 14
    \subitem $\tau^*$, 14

  \indexspace

  \item $\mathcal{G}$, 31
  \item $\Gamma,$ see \eqref{eq-45}, 10
  \item $\Gamma^\ast,$ see \eqref{eq-53}, 10
  \item $\widetilde{\Gamma^*},$ see \eqref{eq-60}, 28
  \item generalized entropic projection, see Definition \ref{defCsiszar},
        23
  \item generalized $I$-projection, see Definition \ref{defCsiszar}, 23

  \indexspace

  \item $\mathcal{H}$, 31

  \indexspace

  \item $\iota_A,$ convex indicator, see \eqref{eq-42}, 3
  \item $I_\varphi,$ integral functional, 6
  \item $I,$ entropy, see \eqref{eq-16}, \eqref{eq-08}, 2, 9, 18
  \item $\Ib,$ extended entropy, see \eqref{III}, \eqref{eq-08}, 9, 18
  \item $I^s,$ singular entropy, see \eqref{eq-51}, 9
  \item $I(\cdot\mid R),$ relative entropy, see \eqref{eq-54}, 2, 14
  \item $\Ib(\cdot\mid R),$ extended relative entropy, see \eqref{ext-entrop},
        14
  \item $\icor$, 6
  \item $\icordom$, 6
  \item inf-convolution, $f\Box g$, 6
  \item $I$-projection, see Definition \ref{defCsiszar}, 23

  \indexspace

  \item $J$, 27, 30
  \item $\Jb,J^s$, 27

  \indexspace

  \item $K_\lambda$, 33
  \item $K_\lambda''$, 34

  \indexspace

  \item $\Lexp$, 31
  \item $\lh,$ solution to \eqref{pbc}, see \eqref{eq-40}, 12
  \item $\Ll$, 8
  \item $\LLo$, 30
  \item $\Lls$, 8
  \item $L_n$, 1, 2, 30
  \item $L_\tau$, 14
  \item $L_{\tau^*}$, 14

  \indexspace

  \item $\MZ,$ space of signed measures on $\ZZ$, 2

  \indexspace

  \item non-recessive $x,$ see Definition \ref{def-01}, 28

  \indexspace

  \item $\ot,$ see \eqref{eq-36}, 11
  \item $\ob,$ see \eqref{eq-40}, \eqref{eq-40bis}, 12, 34
  \item $\od,$ see \eqref{eq-18}, 15, 35
  \item $\OZ,$ see \eqref{eq-44}, 14
  \item Orlicz spaces, 6
    \subitem decomposition
      \subsubitem $\la,$ absolutely continuous part of $\ell,$ see \eqref{decomp},
        7
      \subsubitem $\lsing,$ singular part of $\ell,$ see \eqref{decomp},
        7
      \subsubitem $\zeta_1^a,\zeta_1^s,\zeta_2,$ see \eqref{eq-dec}, 12
      \subsubitem $\Lrs R,$ space of absolutely continuous forms, 7
      \subsubitem $\Lrsi,$ space of singular forms, 7
    \subitem $\Delta_2$-condition, see \eqref{delta2}, 7
    \subitem $\Er,$ small Orlicz space, 6
    \subitem $\Lr,$ Orlicz space, 6
    \subitem $\Vert\cdot\Vert_ \rho,$ Luxemburg norm, 6
    \subitem $\rho,$ Young function, 6

  \indexspace

  \item $P_\diamond,$ see \eqref{eq-34}, 25
  \item $P_{\mathrm{exp}}$, 31
  \item $\widehat{P},$ see \eqref{eq-33}, 24
  \item Primal problems
    \subitem \eqref{pc}, 9
    \subitem \eqref{pbc}, 9
    \subitem \eqref{pcc}, 16
    \subitem \eqref{eq-55}, 19
    \subitem \eqref{eq-56}, 19
    \subitem \eqref{eq-57}, 19
  \item $\PZ,$ set of probability measures on $\ZZ$, 2

  \indexspace

  \item $\Qd,$ see \eqref{eq-18}, 15
  \item $\Qh,$ minimizer of \eqref{pc}, 11
  \item $Q_*,$ generalized  $I$-projection, 23

  \indexspace

  \item $R,$ reference measure, 2, 7
  \item recessive $x,$ see Definition \ref{def-01}, 28
  \item reverse relative entropy, 3

  \indexspace

  \item $T,$ constraint operator, see \eqref{eq-52}, 9
  \item $\theta,$ constraint function, see \eqref{eq-52}, 4, 7, 9
  \item $T^\ast,$ adjoint of $T$, 10

  \indexspace

  \item $\UUo$, 30

  \indexspace

  \item $\xh,$ see \eqref{eq-36}, \eqref{eq-40}, 11, 12
  \item $\Xi,$ see \eqref{cramer}, 24
  \item $\Xi$-dominating point, see Definition \ref{dompointNey}, 27
  \item $\XX,$ topological dual of $\YYo$ and $\YY$, 10
  \item $\XXo,$ algebraic dual of $\YYo$, 7
  \item $\xb^a, \xb^s$, 16
  \item $x^a, x^s$, 27

  \indexspace

  \item $\YY,$ completion of $\YYo$, 10
  \item $\YYo$, 7
  \item $\YYb$, 12, 34
  \item $\YYt$, 10, 33

  \indexspace

  \item $\ZZ,$ reference space, 7

\end{theindex}

%\printindex

\end{document}